\numberwithin{equation}{section}
\newtheorem{thmintro}{Theorem}[section]
\newtheorem{thm}{Theorem}[subsection]
\newtheorem{lem}[thm]{Lemma}
\newtheorem{prop}[thm]{Proposition}
\theoremstyle{definition}
\newtheorem{dfintro}[thmintro]{Definition}
\newtheorem{df}[thm]{Definition}
\newtheorem{rmk}[thm]{Remark}
\newtheorem{exm}[thm]{Example}
\newtheorem{const}[thm]{Construction}
\newcommand{\A}{\mathbb{A}}
\newcommand{\C}{\mathbb{C}}
\newcommand{\G}{\mathbb{G}}
\newcommand{\N}{\mathbb{N}}
\renewcommand{\P}{\mathbb{P}}
\newcommand{\R}{\mathbb{R}}
\newcommand{\Z}{\mathbb{Z}}
\newcommand{\cO}{\mathcal{O}}
\DeclareMathOperator{\Spec}{Spec}
\newcommand{\colim}{\mathop{\mathrm{colim}}}
\newcommand{\id}{\mathrm{id}}
\newcommand{\ul}{\underline}
\newcommand{\ol}{\overline}
\newcommand{\iprod}{\mathbin{\lrcorner}}
\newcommand{\Sym}{\mathrm{Sym}}
\newcommand{\Cone}{\mathrm{Cone}}
\newcommand{\bd}{\mathbf{d}}
\newcommand{\Bl}{\mathrm{Bl}}
\newcommand{\sd}{\mathrm{sd}}
\newcommand{\ess}{\mathrm{ess}}
\newcommand{\Div}{\mathrm{Div}}
\newcommand{\PsDiv}{\mathrm{PsDiv}}
\newcommand{\bary}{\mathrm{bary}}
\DeclareMathOperator{\im}{im}
\newcommand{\op}{\mathrm{op}}
\newcommand{\res}{\mathrm{res}}
\newcommand{\CH}{\mathrm{CH}}
\newcommand{\sta}{\mathrm{sta}}
\newcommand{\Sta}{\mathrm{Sta}}
\newcommand{\BM}{\mathrm{BM}}
\DeclareSymbolFontAlphabet{\mathbb}{AMSb} 
\DeclareSymbolFontAlphabet{\mathbbl}{bbold}
\begin{document}
\title[Construction of logarithmic cohomology theories II]{Construction of logarithmic cohomology theories II: On Chow groups}
\author{Doosung Park}
\address{Department of Mathematics and Informatics, University of Wuppertal, Germany}
\email{dpark@uni-wuppertal.de}
\subjclass[2020]{Primary 14M25; Secondary 14C15, 14F42}
\keywords{toric varieties, Chow groups, motivic homotopy theory}
\date{\today}
\begin{abstract}
The purpose of this second part of the series is to show a technical result on Chow groups of toric varieties.
This is a crucial ingredient for the first part.
\end{abstract}
\maketitle

\section{Introduction}

In the first part \cite{logSHF1} of the series,
we introduced a method of extending cohomology theories of schemes to fs log schemes.
As an application,
we showed that the K-theory of schemes is representable in the logarithmic motivic homotopy category \cite[Theorem C]{logSHF1},
and we construct a logarithmic cyclotomic trace \cite[Theorem F]{logSHF1},
assuming the following result:

\begin{thmintro}\label{boundarify.9}
We have $\CH^q(\C)\cong L_{C\partial}\CH^q(\square_{\C}^r)$ for all integers $q,r\geq 0$.
\end{thmintro}
This second part is devoted to proving this theorem.
In the introduction,
we will explain how to write the right-hand side purely in terms of Chow groups of toric varieties.
After that,
we will not need \cite{logSHF1} anymore,
i.e., the readers can read the proof of Theorem \ref{intro.2} below without reading \cite{logSHF1} first.

We need the following definitions for this purpose:

\begin{dfintro}
\label{intro.1}
Let $0\leq r\leq n$ be integers.
An \emph{$r$-standard subdivision of the fan $(\P^1)^n$} is a smooth subdivision $\Sigma$ of $(\P^1)^n$ satisfying the following conditions:
\begin{enumerate}
\item[(i)]
For $1\leq i\leq r$,
$e_i$ is a unique ray of $\Sigma$ whose $i$th coordinate is positive.
\item[(ii)]
$\Cone(e_{r+1},\ldots,e_n)\in \Sigma$.
\end{enumerate}
Here,
$e_1,\ldots,e_n$ denote the standard coordinates in $\Z^n$.
Let $\Sta_{n,r}$ denote the category of $r$-standard subdivisions of $(\P^1)^n$.
We refer to Proposition \ref{ordering.26} for a geometric meaning of $r$-standard subdivisions.
\end{dfintro}

\begin{dfintro}
\label{intro.3}
For a fan $\Sigma$ in a lattice $N$ and cone $\sigma$ of $\Sigma$,
let $N_\sigma$ be the sublattice of $N$ generated by $\sigma$.
Consider the projection
\[
\overline{(-)}\colon N\to N/N_\sigma.
\]
Let $V(\sigma)$ be the fan in $N/N_\sigma$ consisting of the cones $\ol{\tau}$ for $\tau\in \Sigma$.
\end{dfintro}

\begin{dfintro}
\label{intro.4}
Let $0\leq r\leq  n$ be integers.
For an $r$-standard subdivision $\Sigma$ of $(\P^1)^n$
and integer $r+1\leq i\leq n$,
let $D_{i.0}(\Sigma):=V(e_i)$,
and let $D_{i,1}(\Sigma)$ be the restriction of $\Sigma$ to the lattice $\Z^{i-1}\times 0 \times \Z^{n-i}$.
Observe that $D_{i,0}(\Sigma)$ and $D_{i,1}(\Sigma)$ are $r$-standard subdivisions of $(\P^1)^{n-1}$.
\end{dfintro}

For example,
we have $D_{i,0}((\P^1)^n)\cong D_{i,1}((\P^1)^n)\cong (\P^1)^{n-1}$.

\begin{dfintro}
\label{intro.5}
For integers $0\leq r\leq n$,
we define
\[
\CH_\sta^*(n,r)
:=
\colim_{\Sigma\in \Sta_{n,r}^\op}\CH^*(\Sigma_{\C}),
\]
where $\Sigma_\C$ denotes the toric variety over $\C$ associated with $\Sigma$.
For $r+1\leq i\leq n$,
let
\[
\delta_{i,0}^*,\delta_{i,1}^*\colon
\CH_\sta^*(n,r)\to \CH_\sta^*(n-1,r)
\]
be the maps induced by the closed immersions $D_{i,0}(\Sigma)_{\C},D_{i,1}(\Sigma)_{\C}\to \Sigma_\C$ for all $\Sigma\in \Sta_{n,r}$.
We define
\begin{gather*}
\CH_\sta^{*,\flat}(n,r)
:=
\bigcap_{i=r+1}^n
\ker \delta_{i,0}^*,
\\
\delta^*:=\sum_{i=r+1}^n (-1)^{i-r}\delta_{i,1}^*\colon \CH_\sta^{*,\flat}(n,r)
\to
\CH_\sta^{*,\flat}(n,r-1).
\end{gather*}
Recall from \cite[Remark 7.8]{logSHF1} (after adjusting $n$ and $r$) that we have maps
\begin{gather*}
p_i^*\colon \CH_\sta^*(n,r)\to \CH_\sta^*(n+1,r) \text{ for } r+1\leq i\leq n+1,
\\
\mu_i^*\colon \CH_\sta^*(n,r)\to \CH_\sta^*(n+1,r) \text{ for }r+1\leq i\leq n,
\end{gather*}
and there are relations
\begin{gather}
\label{intro.5.1}
\delta_{i,\epsilon}^*\delta_{j,\epsilon'}^*
=
\delta_{j,\epsilon'}^*\delta_{i+1,\epsilon}^*
\text{ if }j\leq i,
\\
\label{intro.5.2}
\delta_{i,\epsilon}^*p_j^*
=
\left\{
\begin{array}{ll}
p_j^*\delta_{i-1,\epsilon}^*& \text{if $j<i$},
\\
\id& \text{if $j=i$},
\\
p_{j-1}^*\delta_{i,\epsilon}^*& \text{if $j>i$},
\end{array}
\right.
\\
\label{intro.5.3}
\delta_{i,\epsilon}^*\mu_j^* 
=
\left\{
\begin{array}{ll}
\mu_j^*\delta_{i-1,\epsilon}^*& \text{if $j<i-1$},
\\
p_j^*\delta_{j,0}^*& \text{if $j=i-1,i$ and $\epsilon=0$},
\\
\id& \text{if $j=i-1,i$ and $\epsilon=1$},
\\
\mu_{j-1}^*\delta_{i,\epsilon}^*& \text{if $j>i$}.
\end{array}
\right.
\end{gather}
We will not need the explicit descriptions of $p_i^*$ and $\mu_i^*$.
\end{dfintro}

With all these definitions,
according to \cite[Remark 7.8]{logSHF1} (after adjusting $n$ and $r$),
Theorem \ref{boundarify.9} is equivalent to the following:

\begin{thmintro}
\label{intro.2}
For all integers $q,r\geq 0$,
the complex
\[
\cdots \xrightarrow{\delta^*}
\CH_\sta^{q,\flat}(r+1,r)
\xrightarrow{\delta^*}
\CH_\sta^{q,\flat}(r,r)
\to
0
\]
is quasi-isomorphic to 
$\CH^q(\C)$,
where $\CH_\sta^{q,\flat}(r,r)$ sits in degree $0$.
\end{thmintro}

The proof is finished at the end of this paper.
Let us explain the outline of the proof.
For $x\in \CH_\sta^{q,\flat}(n,r)$ with an integer $n\geq r$ such that $\delta^*x=0$,
we need to show that $x=\delta^*y$ for some $y\in \CH_\sta^{q,\flat}(n+1,r)$.
To prove this claim,
we may assume that $x$ comes from an element of $\CH^q(\Sigma):=\CH^q(\Sigma_\C)$ for some $r$-standard subdivision $\Sigma$ of $(\P^1)^n$.
We divide the proof into three parts.

Part I. \emph{Construction of $\Theta_{n,r,\bd}$}.
In \S \ref{dist} and \ref{subdivision},
we construct a specific $r$-standard subdivision $\Theta_{n,r,\bd}$ of $(\P^1)^n$,
where $\bd$ is a finite sequence of positive integers.
This is sufficiently fine in the following sense:
For every $r$-standard subdivision $\Sigma$ of $(\P^1)^n$,
there exists a sequence of star subdivisions 
\begin{equation}
\label{intro.0.1}
\Sigma_m\to \cdots \to \Sigma_0=\Theta_{n,r,\bd}
\end{equation}
for some nonempty finite sequence $\bd$ of positive integers such that $\Sigma_m$ is a subdivision of $\Sigma$ and for each $i$,
$\Sigma_{i+1}$ is the star subdivision of $\Sigma_i$ relative to a $2$-dimensional cone $\sigma_i$ satisfying $e_1,\ldots,e_n\notin \sigma_i$.

In  \S \ref{subdivision},
we show that $\Theta_{n,r,\bd}$ is a very $r$-standard subdivision,
which is a technical property used in \S \ref{induction}.
An $r$-standard subdivision $\Sigma$ of $(\P^1)^n$ is called very $r$-standard if $I:=\{i_1,\ldots,i_s\}$ is a subset of $\{r+1,\ldots,n\}$ such that
\[
\Cone( \sigma,e_{i_1}),\ldots,\Cone( \sigma,e_{i_s}) \in \Sigma,
\]
then $\Cone( \sigma,e_{i_1},\ldots,e_{i_s}) \in \Sigma$.
For example,
if $\Sigma$ is a $0$-standard subdivision of $(\P^1)^3$ containing the left subfan in the following figure with some rays $\alpha$ and $\beta$,
then $\Sigma$ is not a very $0$-standard subdivision since we have $\Cone(e_1,\beta),\Cone(e_2,\beta)\in \Sigma$ but $\Cone(e_1,e_2,\beta)\notin \Sigma$.
We further need to subdivide as the right subfan for the condition of very $0$-standard subdivisions.
\[
\begin{tikzpicture}[scale = 0.8]
\draw (2,0)--(0,3.5)--(-2,0)--(2,0);
\draw (2,0) node[below right] {$\beta$};
\draw (-2,0) node[below left] {$e_2$};
\draw (0,3.5) node[above] {$e_1$};
\draw (2,0)--(0,3.5/3);
\draw (-2,0)--(0,3.5/3);
\draw (0,3.5)--(0,3.5/3);
\draw (0,3.2/3) node[above left] {$\alpha$};
\begin{scope}[shift={(6,0)}]
\draw (2,0)--(0,3.5)--(-2,0)--(2,0);
\draw (2,0) node[below right] {$\beta$};
\draw (-2,0) node[below left] {$e_2$};
\draw (0,3.5) node[above] {$e_1$};
\draw (-2,0)--(1,1.75);
\draw (0,0)--(0,3.5);
\draw (0,3.5/3)--(2,0);
\draw (0,0) node[below] {$e_2+\beta$};
\draw (1,1.7) node[above right] {$e_1+\beta$};
\draw (0,3.2/3) node[above left] {$\alpha$};
\end{scope}
\end{tikzpicture}
\]

Part II. \emph{Proof for $\Theta_{n,r,\bd}$}.
The purpose of \S \ref{ordering} and \ref{resolution} is to prove the main claim for this specific $\Theta_{n,r,\bd}$.
We need two different computational methods for Chow groups of toric varieties: finding a basis of the Chow group given a nice ordering of the maximal cones \cite[Theorem in p.\ 102]{Fulton:1436535}, and a spectral sequence converging to the Chow group \cite[\S 5]{MR3264256}.
For an $r$-standard subdivision $\Sigma$ of $(\P^1)^n$ and integer $p$,
let
\begin{gather*}
\Sigma^\circ
:=
\{\sigma\in \Sigma:\Cone(\sigma,e_i)\in \Sigma \text{ for some }r+1\leq i\leq n\},
\\
\CH^\flat_p(\Sigma):=\CH_p(\Sigma_{\C}-\Sigma_{\C}^\circ).
\end{gather*}
In Proposition \ref{ordering.22},
we show that the maximal cones of $\Theta_{n,r,\bd}$ admit an ordering such that \cite[Theorem in p.\ 102]{Fulton:1436535} is applicable to $\Theta_{n,r,\bd}$ and $\Theta_{n,r,\bd}^\circ$, and hence we obtain explicit bases of $\CH_p(\Theta_{n,r,\bd})$ and $\CH_p^{\flat}(\Theta_{n,r,\bd})$.
After that,
we use  \cite[\S 5]{MR3264256} to obtain an explicit free resolution
\[
\cdots \xrightarrow{d} Z_{p,1}^\flat(\Theta_{n,r,\bd}) \xrightarrow{d} Z_{p,0}^\flat(\Theta_{n,r,\bd})
\to \CH_p^\flat(\Theta_{n,r,\bd})\to 0
\]
in Proposition \ref{ordering.16}.
We also construct maps $\delta_{i,1}^*$, $\rho_i^*$, and $\nu_i^*$ for $Z_{p,q}^\flat(\Theta_{n,r,\bd})$ enjoying cubical identities.
Using these identities,
we complete the proof of the claim for $\Theta_{n,r,\bd}$ in Proposition \ref{ordering.15}.

Part III. \emph{Proof for general $\Sigma$.}
Due to \eqref{intro.0.1} and Part II,
by induction,
it suffices to show that the claim for $\Sigma_i$ with $0\leq i<m$ implies the claim for $\Sigma_{i+1}$.
There exists a smooth blow-up square of toric schemes
\begin{equation}
\label{intro.0.2}
\begin{tikzcd}
(\Delta_i')_\C\ar[d]\ar[r]&
(\Sigma_{i+1})_\C\ar[d]
\\
(\Delta_i)_\C\ar[r]&
(\Sigma_i)_\C
\end{tikzcd}
\end{equation}
for some fans $\Delta_i$ and $\Delta_i'$,
and then we can use the blow-up formula for Chow groups.
Since $\Delta_i$ is no longer an $r$-standard subdivision,
the methods in the earlier sections do not apply to $\Delta_i$.
In \S \ref{admissible},
we introduce the notion of admissible subdivisions,
and $\Delta_i$ is an admissible subdivision as shown in Construction \ref{subdivision.27}.
Here,
we need that $\Sigma_i$ is a very $r$-standard subdivision,
which is a consequence of Part I.
After establishing cubical identities in Lemma \ref{subdivision.18} and constructing classes that behave like homotopies in Lemmas \ref{subdivision.21} and \ref{subdivision.22} for admissible subdivisions and hence for $\Delta_i$ too,
we finish the proof of Theorem \ref{intro.2} in \S \ref{induction} by investigating the Chow groups of the varieties appearing in \eqref{intro.0.2}.

\subsection*{Notation and convention}

We refer to \cite{MR495499}, \cite{Fulton:1436535}, and \cite{CLStoric} for toric geometry.
For a fan $\Sigma$ and its cone $\sigma$,
we employ the following notation throughout the paper.

\begin{tabular}{l|l}
$N(\Sigma)$ & lattice of $\Sigma$
\\
$M(\Sigma)$ & dual lattice of $\Sigma$
\\
$N_\sigma$ & sublattice of $N(\Sigma)$ generated by $\sigma$
\\
$\sigma^\vee$ & dual cone of $\sigma$
\\
$\sigma^\bot$ & set of $y\in M(\Sigma)$ such that $\langle x,y\rangle =0$ for all $x\in \sigma$
\\
$M(\sigma)$ & lattice $\sigma^\bot \cap M(\Sigma)$
\\
$\Sigma(d)$ & set of $d$-dimensional cones of $\Sigma$
\\
$\Sigma_{\max}$ & set of maximal cones of $\Sigma$
\\
$\lvert \sigma \rvert$ & support of $\sigma$
\\
$\lvert \Sigma \rvert$ & support of $\Sigma$
\\
$\Sigma^*(\sigma)$ & star subdivision of $\Sigma$ relative to $\sigma$
\\
$e_1,\ldots,e_n$ & standard coordinates in $\Z^n$
\\
$\Cone(x_1,\ldots,x_r)$ & cone generated by $x_1,\ldots,x_r$ in a lattice.
\\
$\Cone(\sigma_1,\ldots,\sigma_r)$ &smallest cone containing cones $\sigma_1,\ldots,\sigma_r$
\\
$\tau\prec \sigma$ & $\tau$ is a face of $\sigma$
\\
$\tau\prec_1 \sigma$ & $\tau$ is a facet of $\sigma$, i.e., a codimension $1$ face.
\end{tabular}

\

We often write a ray (i.e., a $1$-dimensional cone) as an element of $\Z^n$ such that the $\gcd$ of the coordinates is $1$.
A ray of a cone means a $1$-dimensional face.
We have the fan $\A^1$ (resp.\ $\P^1-0$, resp.\ $\G_m$) in $\Z$ whose single maximal cone is $\Cone(e_1)$ (resp.\ $\Cone(-e_1)$, resp.\ the zero cone).
We have the fan $\P^n$ in $\Z^n$ for every integer $n\geq 0$ whose maximal cones are $\Cone(e_1,\ldots,e_n)$ and
\[
\Cone(e_1,\ldots,e_{i-1},e_{i+1},\ldots,e_n,-e_1-\cdots-e_n)
\]
for all integers $1\leq i\leq n$.

\section{Sufficiently fine subdivisions}
\label{dist}

In algebraic topology,
one can repeat barycentric subdivisions to a simplex to yield a sufficiently fine simplicial complex whose simplices have arbitrarily small diameters.
However, that does not work in toric geometry,
see Remark \ref{dist.14} below.
Instead,
we will repeat barycentric subdivisions relative to suitable subfans (the usual barycentric subdivision is the barycentric subdivision relative to itself).
See Proposition \ref{dist.10} below for the main result of this section.
The reader who accepts this result can skip the technical details of \S \ref{repeat}.

\subsection{\texorpdfstring{$\eta$}{eta}-excluded barycentric subdivision}

We will need a variant of the barycentric subdivision as follows.

\begin{df}
\label{subdivision.29}
Consider $\Sigma:=\A^n$ and its cones $\eta:=\Cone(e_1,\ldots,e_t)$ and $\tau:=\Cone(e_1,\ldots,e_m)$,
where $0\leq t\leq m\leq n$ are integers.
Let $\sigma_1,\ldots,\sigma_p$ be a list of the faces of $\tau$ not contained in $\eta$
such that $2\leq \dim \sigma_1\leq \cdots \leq \dim \sigma_p$.
Then the \emph{$\eta$-excluded barycentric subdivision of $\Sigma$ relative to $\tau$} is
\[
\Sigma_\eta^\bary(\tau)
:=
(\cdots (\Sigma^*(\sigma_p))^*(\sigma_{m-1})\cdots )^*(\sigma_1).
\]
Note that it is the usual barycentric subdivision of $\Sigma$ \cite[Exercise 11.1.10]{CLStoric} if $\eta=0$ and $\tau=\Cone(e_1,\ldots,e_n)$.

The description in Proposition \ref{subdivision.32} shows that the $\eta$-excluded barycentric subdivision of $\A^n$ is independent of the ordering on the list $\sigma_1,\ldots,\sigma_p$ as long as the inequalities $\dim \sigma_1\leq \cdots \leq \dim \sigma_p$ are satisfied.
\end{df}

\begin{df}
\label{subdivision.31}
Let $0\leq t\leq m$ be integers.
A \emph{$t$-admissible permutation of $\{1,\ldots,m\}$} is a permutation $\alpha$ of $\{1,\ldots,m\}$ satisfying the following condition:
If $t<m$ and $c$ is the smallest integer satisfying $\alpha(c+1)>t$,
then $\alpha(1)<\cdots<\alpha(c)$.
If $t=m$, then $\alpha=\id$.
\end{df}

The following combinatorial description of the maximal cones of $\Sigma_{\eta}^\bary(\tau)$ will be useful later especially in \S \ref{repeat}.

\begin{prop}
\label{subdivision.32}
Consider $\Sigma:=\A^n$ and its cones $\eta:=\Cone(e_1,\ldots,e_t)$ and $\tau:=\Cone(e_1,\ldots,e_m)$,
where $0\leq t\leq m\leq n$ are integers.
Then for every maximal cone $\sigma$ of $\Sigma_\eta^\bary(\tau)$,
there exists a unique $t$-admissible permutation $\alpha$ of $\{1,\ldots,n\}$ such that $\sigma$ is equal to
\[
\sigma_\alpha
:=
\Cone(e_{\alpha(1)},\ldots,e_{\alpha(c)};e_{\alpha(1)}+\cdots +e_{\alpha(c+1)},\ldots,e_{\alpha(1)}+\cdots + e_{\alpha(m)} ;e_{m+1},\ldots,e_n),
\]
where $c$ is the smallest integer satisfying $\alpha(c+1)>t$.
\end{prop}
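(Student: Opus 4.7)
The plan is to proceed by induction on $m$. First, I would observe that since every $\sigma_i$ is a face of $\tau$ and no $e_j$ with $j > m$ lies in $\tau$, the rays $e_{m+1}, \ldots, e_n$ are unaffected by any star subdivision and must appear in every maximal cone of $\Sigma_\eta^\bary(\tau)$. This reduces the proof to the case $m = n$; the rays $e_{m+1}, \ldots, e_n$ then get appended uniformly to both sides of the claimed identity. For the base case $m = t$, the equality $\eta = \tau$ forces $p = 0$, so no subdivision takes place, the unique maximal cone is $\Cone(e_1, \ldots, e_n)$, and the only $t$-admissible permutation is $\alpha = \id$, whose associated $\sigma_\alpha$ is precisely this cone (all sum generators in the formula vanish since no $c$ with $\alpha(c+1) > t$ exists).

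For the inductive step with $t < m$, the key observation is that the innermost operation in the nested formula $(\cdots(\Sigma^*(\sigma_p))^*(\sigma_{p-1})\cdots)^*(\sigma_1)$ is $\Sigma^*(\sigma_p) = \Sigma^*(\tau)$, because $\tau$ itself is the unique $m$-dimensional face of $\tau$ not contained in $\eta$. This first star subdivision replaces $\Cone(e_1, \ldots, e_n)$ with the $m$ smooth cones
\[
\rho_i := \Cone(e_1, \ldots, \hat{e}_i, \ldots, e_m, e_\tau, e_{m+1}, \ldots, e_n), \quad i = 1, \ldots, m,
\]
where $e_\tau := e_1 + \cdots + e_m$. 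Every remaining $\sigma_j$ is a proper face of $\tau$ and is a face of $\rho_i$ exactly when $e_i \notin \sigma_j$, so the subsequent star subdivisions restrict on each $\rho_i$ to the $\eta'_i$-excluded barycentric subdivision of $\tau'_i := \Cone(\{e_j : j \leq m, j \neq i\})$ inside $\rho_i$, where $\eta'_i := \eta \cap \tau'_i$. After relabeling the basis of $\rho_i$ as a new standard basis of $\Z^n$, this sits inside the scope of the inductive hypothesis at parameters $(t', m-1, n)$, with $t' = t$ when $i > t$ and $t' = t - 1$ when $i \leq t$.

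By the inductive hypothesis, the maximal cones inside $\rho_i$ are $\sigma_\beta$ for $t'$-admissible permutations $\beta$ of $\{1, \ldots, m\} \setminus \{i\}$ (after translating back from the relabeled basis). I would then show that the map $(i, \beta) \mapsto \alpha$ defined by $\alpha(m) := i$ and $\alpha(k) := \beta(k)$ for $k < m$ is a bijection between such pairs and the set of $t$-admissible permutations of $\{1, \ldots, m\}$, and that under this bijection the cone indexed by $\beta$ inside $\rho_i$ equals $\sigma_\alpha$ in the original coordinates, using the identity $e_{\alpha(1)} + \cdots + e_{\alpha(m)} = e_\tau$. The most delicate bookkeeping is the case $i \leq t$: relabeling shrinks the ``$\leq t$'' part of $\{1, \ldots, m\} \setminus \{i\}$ from size $t$ down to $t - 1$, and one must check that the integer $c$ for $\alpha$ agrees with the corresponding integer for $\beta$ and that the ordering constraint $\alpha(1) < \cdots < \alpha(c)$ transports correctly; this reduces to the fact that there is a unique order-preserving bijection $\{1, \ldots, t\} \setminus \{i\} \to \{1, \ldots, t - 1\}$. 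Uniqueness of $\alpha$ then follows because distinct $\alpha$'s either have distinct last values $\alpha(m)$ (so live in distinct $\rho_i$'s) or coincide at $\alpha(m)$ and are separated by the inductive uniqueness inside a fixed $\rho_i$.
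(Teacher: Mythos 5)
Your proof is correct, and it takes a genuinely different inductive route from the paper's. The paper performs a single decreasing induction on the dimension parameter $d$, explicitly characterizing the maximal cones of each intermediate fan $\Sigma_d$ (the result of star subdividing only by faces of dimension $\geq d+1$) as cones $\sigma_{\alpha,c}$ satisfying one of two combinatorial conditions, then taking $d=1$ at the end. By contrast, you recurse on $m$: you peel off the outermost star subdivision $\Sigma^*(\tau)$, observe that it produces the $m$ cones $\rho_i$, and then recognize that the remaining subdivisions restricted to each $\rho_i$ are exactly a smaller instance of the same excluded-barycentric-subdivision construction, with parameters $(t', m-1)$ after an order-preserving relabeling of the generators of $\rho_i$. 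This exploits the self-similar structure of the definition and trades the paper's bookkeeping over intermediate fans for bookkeeping over the relabeling and the bijection $(i,\beta)\leftrightarrow\alpha$. Both pieces of delicate bookkeeping you flag — that the relevant subsequent subdivisions are precisely those faces $\sigma_j$ with $e_i\notin\sigma_j$, that these form the $\eta'_i$-excluded barycentric subdivision of $\tau'_i$ in $\rho_i$ with the dimension ordering preserved by restriction, that $c'=c$ under the relabeling (except in the degenerate $\alpha=\id$ boundary case where $m=t+1$), and that $\sigma_\alpha\subset\rho_i$ iff $\alpha(m)=i$ — all check out, so the uniqueness argument via distinct $\rho_i$'s or inductive uniqueness inside a fixed $\rho_i$ is sound. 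One small remark: strictly speaking, before the proposition is proved, $\Sigma^\bary_\eta(\tau)$ depends on the chosen within-dimension ordering of the faces; but since your reduction restricts a fixed valid ordering on the faces of $\tau$ to a valid ordering on the faces of $\tau'_i$, and the inductive hypothesis is for arbitrary such orderings, there is no circularity.
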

\begin{proof}
For an integer $1\leq d\leq m$,
let $\sigma_{p_d+1},\ldots,\sigma_p$ be the faces of $\tau$ not contained in $\eta$ such that $d+1\leq \dim \sigma_{p_d+1}\leq \cdots \leq \dim \sigma_p$.
Consider
\[
\Sigma_d
:=
(\cdots (\Sigma^*(\sigma_p))^*(\sigma_{p-1})\cdots )^*(\sigma_{p_d+1}).
\]
For a permutation $\alpha$ on $\{1,\ldots,m\}$ and integer $1\leq c\leq m$,
let $\sigma_{\alpha,c}$ denote
\[
\Cone(e_{\alpha(1)},\ldots,e_{\alpha(c)};e_{\alpha(1)}+\cdots + e_{\alpha(c+1)},\ldots,e_{\alpha(1)}+\cdots+e_{\alpha(m)} ;e_{m+1},\ldots,e_n).
\]
We claim that the maximal cones of $\Sigma_d$ are of the form $\sigma_{\alpha,c}$ for $\alpha$ and $c$ satisfying one of the following conditions:
\begin{enumerate}
\item[(i)]
$c\geq d$, $\alpha(1),\ldots,\alpha(c)\leq t$, and $\alpha(c+1)>t$.
\item[(ii)]
$c=d$ and $\alpha(i)>t$ for some integer $1\leq i\leq d$.
\end{enumerate}

We proceed by decreasing induction on $d$.
The claim is obvious if $d=m$.
Assume that the claim holds for $d+1$.
Consider a maximal cone $\sigma_{\alpha,c}$ of $\Sigma_{d+1}$ such that $\alpha$ and $c$ satisfy one of the conditions (i) and (ii) for $d+1$.

Assume that $\alpha$ and $c$ satisfy the condition (i) for $d+1$.
Then we have $\sigma_{\alpha,c}\in \Sigma_d$, i.e., $\sigma_{\alpha,c}$ is not divided in $\Sigma_d$.

Assume that $\alpha$ and $c$ satisfy the condition (ii) for $d+1$.
Then we have $c=d+1$,
and $\sigma_{\alpha,c}$ is divided into $d$ cones obtained by replacing one of $e_{\alpha(1)},\ldots,e_{\alpha(c)}$, say $e_{\alpha(j)}$, by $e_{\alpha(1)}+\cdots+e_{\alpha(c)}$.
The resulting cone is
\begin{align*}
\Cone(& e_{\alpha(1)},\ldots,e_{\alpha(j-1)},e_{\alpha(j+1)},\cdots,e_{\alpha(c)};
\\
& e_{\alpha(1)}+\cdots+e_{\alpha(c)},\ldots,e_{\alpha(1)}+\cdots+e_{\alpha(n)};e_{m+1},\ldots,e_n),
\end{align*}
which is $\sigma_{\beta,c-1}$ for the permutation $\beta$ of $\{1,\ldots,m\}$ given by
\[
\beta(i)
:=
\left\{
\begin{array}{ll}
\alpha(i) & \text{if $i\leq j-1$ or $i\geq c+1$},
\\
\alpha(i+1) & \text{if $j\leq i\leq c-1$},
\\
\alpha(j) & \text{if $i=c$}.
\end{array}
\right.
\]
If $\alpha(j)>t$ and $\alpha(i)\leq t$ for all $i\in \{1,\ldots,c\}-\{j\}$, then $\sigma_{\beta,c-1}$ satisfies the condition (i) for $d$.
Otherwise,
$\sigma_{\beta,c-1}$ satisfies the condition (ii) for $d$.

To finish the induction argument,
observe that every pair of $\beta$ and $c-1$ satisfying one of the conditions (i) and (ii) for $d$ can be obtained by a pair of $\alpha$ and $c$ satisfying one of the conditions (i) and (ii) for $d+1$.

Now, assume $d=1$,
Then we have $\Sigma_1=\Sigma_\eta^\bary(\tau)$.
For a pair of $\alpha$ and $c$ satisfying on the conditions (i) and (ii) for $d=1$,
there exists a unique permutation $\gamma$ on $\{1,\ldots,m\}$ such that $\{\alpha(1),\ldots,\alpha(c)\}=\{\gamma(1),\ldots,\gamma(c)\}$ and $\gamma(1)<\cdots < \gamma(c)$.
The conditions (i) and (ii) imply that $\gamma$ is a $t$-admissible permutation.
To conclude,
observe that we have $\sigma_{\alpha,c}=\sigma_\gamma$.
\end{proof}

\begin{exm}
\label{dist.13}
We illustrate the $\eta$-excluded barycentric subdivision of $\A^3$ with $\eta:=\Cone(e_1,e_2)$ as follows:
\[
\begin{tikzpicture}[scale = 0.8]
\draw (2,0)--(0,3.5)--(-2,0)--(2,0);
\draw (2,0) node[below right] {$e_3$};
\draw (-2,0) node[below left] {$e_2$};
\draw (0,3.5) node[above] {$e_1$};
\draw (-2,0)--(1,1.75);
\draw (0,0)--(0,3.5);
\draw (0,3.5/3)--(2,0);
\draw (0,0) node[below] {$e_2+e_3$};
\draw (1,1.7) node[above right] {$e_1+e_3$};
\end{tikzpicture}
\]
The $2$-admissible permutations of $\{1,2,3\}$ are
\[
(\alpha(1),\alpha(2),\alpha(3))
=
(1,2,3),(1,3,2),(2,3,1),(3,1,2),(3,2,1).
\]
These correspond to the maximal cones
\begin{gather*}
\Cone(e_1,e_2,e_1+e_2+e_3),
\text{ }
\Cone(e_1,e_1+e_3,e_1+e_2+e_3),
\\
\Cone(e_2,e_2+e_3,e_1+e_2+e_3),
\text{ }
\Cone(e_3,e_1+e_3,e_1+e_2+e_3),
\\
\Cone(e_3,e_2+e_3,e_1+e_2+e_3).
\end{gather*}
\end{exm}

The above $\Sigma_\eta^\bary(\tau)$ was only defined for $\Sigma=\A^n$,
and now we provide a definition for general $\Sigma$.

\begin{df}
\label{dist.12}
Let $\Sigma$ be a fan,
let $\eta$ be its cone,
and let $A$ be a subfan of $\Sigma$.
Assume that for every $\sigma\in \Sigma$,
there exists $a(\sigma)\in A$ containing every other cone $\tau$ of $A$ such that $\tau\subset \sigma$.
The \emph{$\eta$-excluded barycentric subdivision of $\Sigma$ relative to $A$}, denoted $\Sigma_\eta^\bary(A)$,
is obtained by replacing every cone $\sigma\in \Sigma$ by its $\eta\cap \sigma$-excluded barycentric subdivision relative to $a(\sigma)$.
\end{df}

\begin{exm}
Consider the fan in Example \ref{dist.13}.
Note that its subfan $A$ with the maximal cones
\[
\Cone(e_1,e_1+e_3),\Cone(e_1+e_3,e_3),\Cone(e_2,e_2+e_3),\Cone(e_2+e_3,e_3)
\]
satisfies the condition in Definition \ref{dist.12}.
\end{exm}

\subsection{Repetition of \texorpdfstring{$\eta$}{eta}-excluded barycentric subdivisions}
\label{repeat}
The barycentric subdivision in algebraic topology provides a sufficiently fine subdivision in the following sense: Given $\epsilon>0$, one can repeat barycentric subdivisions to an $n$-simplex such that every small simplex is contained in a ball with radius $\epsilon$.

In this subsection,
we will do a similar task for fans.
The repetition of barycentric subdivisions in the usual sense does not work as explained in Remark \ref{dist.14}.
Instead, we introduce $\sd_{\eta,u,d}^s$ in Definition \ref{dist.1},
which yields a sufficiently fine subdivision after repetition as shown in Proposition \ref{dist.10}.
The notion of a ball with radius $\epsilon$ is replaced with $H_\epsilon^{\geq 0}$ in Definition \ref{dist.15}. 

All we need to remember from this subsection are basically Definition \ref{dist.1} and Proposition \ref{dist.10}.

\begin{df}
\label{dist.1}
Let $\Sigma$ be an $n$-dimensional fan with a cone $\eta$,
where $n\in \N$.
Consider an integer $2\leq u\leq n$.
Let $\sd_{\eta,u,1}(\Sigma)$ be the $\eta$-excluded barycentric subdivision of $\Sigma$ relative to itself.
For an integer $d\geq 1$,
we inductively define
\[
\sd_{\eta,u,d+1}(\Sigma)
:=
(\sd_{\eta,u,d}(\Sigma))_\eta^\bary(A_{u,d}),
\]
where $A_{u,d}$ is the subfan of $\sd_{\eta,u,d}(\Sigma)$ generated by the cones of $\sd_{\eta,u,d}(\Sigma)$ contained in some $u$-dimensional cone of $\Sigma$.
Note that $A_{u,d}$ satisfies the condition in Definition \ref{dist.12}.
To show this,
note that for every cone $\sigma\in \sd_{\eta,u,d}(\Sigma)$,
\[
a(\sigma)
:=
\lvert \sigma \rvert \cap \bigcup_{\tau \in \Sigma(u)} \lvert \tau \rvert
\]
is a face of $\sigma$ and satisfies the condition in Definition \ref{dist.12}.

For an integer $s\geq 0$,
let
\[
\sd_{\eta,u,d}^s
:=(\sd_{\eta,u,d})^{s}
\]
denote the $s$th iterate. of $\sd_{\eta,u,d}$.
By convention,
$\sd_{\eta,u,d}^0(\Sigma):=\Sigma$.
\end{df}

\begin{exm}
We illustrate $\sd_{\eta,2,2}(\A^3)$ with $\eta:=\Cone(e_1,e_2)$ as follows:
\[
\begin{tikzpicture}[scale = 0.8]
\draw (2,0)--(0,3.5)--(-2,0)--(2,0);
\draw (-2,0)--(1,1.75);
\draw (0,0)--(0,3.5);
\draw (0,3.5/3)--(2,0);
\draw (-1,0) node[below] {$2e_2+e_3$};
\draw (1,0) node[below] {$e_2+2e_3$};
\draw (0.5,2.4) node[above right] {$2e_1+e_3$};
\draw (1.5,0.95) node[right] {$e_1+2e_3$};
\draw (0,3.5/3)--(-1,0);
\draw (0,3.5/3)--(1,0);
\draw (0,3.5/3)--(0.5,10.5/4);
\draw (0,3.5/3)--(1.5,3.5/4);
\end{tikzpicture}
\]
\end{exm}

For the convenience of writing, we introduce the following.

\begin{df}
Let $I=\{i_1,\ldots,i_s\}$ be a subset of $\{1,\ldots,n\}$,
where $n\in \N$.
A ray $f$ in $\N^n$ is \emph{$I$-rigid} if for all $i,j\in I$,
the $i$th and $j$th coordinates of $f$ are equal.
A cone $\sigma$ in $\N^n$ is \emph{$I$-rigid} if
\[
\sigma=\Cone(e_{i_1},\ldots,e_{i_s},f_{s+1},\ldots,f_n)
\]
for some $I$-rigid rays $f_{s+1},\ldots,f_n\in \N^n$.
\end{df}

We have the following result on the structure of $\sd_{\eta,u,d}(\A^n)$.

\begin{prop}
\label{dist.8}
Consider the cone $\eta:=\Cone(e_1,\ldots,e_t)$ of $\Sigma:=\A^n$,
where $0\leq t\leq n$ are integers.
Let $2\leq u\leq n$ and $d\geq 1$ be integers.
Then every maximal cone $\sigma$ of $\sd_{\eta,u,d}(\Sigma)$ is of the form
\begin{equation}
\label{dist.8.1}
\sigma=\Cone(e_{i_1},\ldots,e_{i_s},f_{s+1},\ldots,f_n)
\end{equation}
such that for every integer $s+1\leq i\leq n$,
$f_i$ is not contained in any $(i-1)$-dimensional cone of $\A^n$ and
\[
\Cone(e_{i_1},\ldots,e_{i_s},f_{s+1},\ldots,f_i)
\]
is contained in an $i$-dimensional cone of $\A^n$.
Furthermore, $\sigma$ is $\{i_1,\ldots,i_s\}$-rigid.
\end{prop}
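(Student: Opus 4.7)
The plan is to induct on $d \geq 1$, reducing each step to Proposition \ref{subdivision.32}. For the base case $d = 1$, Definition \ref{dist.1} gives $\sd_{\eta,u,1}(\A^n)$ as the $\eta$-excluded barycentric subdivision of $\A^n$ relative to its maximal cone $\Cone(e_1, \ldots, e_n)$, i.e., the case $m = n$ of Proposition \ref{subdivision.32}. Every maximal cone is therefore $\sigma_\alpha$ for some $t$-admissible permutation $\alpha$. Setting $s := c$, $i_k := \alpha(k)$ for $k \leq s$, and $f_{s+j} := e_{\alpha(1)} + \cdots + e_{\alpha(s+j)}$ for $1 \leq j \leq n-s$, the support of $f_{s+j}$ is $\{\alpha(1), \ldots, \alpha(s+j)\}$ of exactly $s+j$ elements (hence $f_{s+j}$ is not in any $(s+j-1)$-dimensional cone), the partial cone $\Cone(e_{i_1}, \ldots, e_{i_s}, f_{s+1}, \ldots, f_{s+j})$ sits in the $(s+j)$-dimensional coordinate cone $\Cone(e_{\alpha(1)}, \ldots, e_{\alpha(s+j)})$, and each $f_{s+j}$ has coordinate $1$ at every position in $\{i_1, \ldots, i_s\}$, giving $\{i_1, \ldots, i_s\}$-rigidity.

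For the inductive step, fix a maximal cone $\sigma = \Cone(e_{i_1}, \ldots, e_{i_s}, f_{s+1}, \ldots, f_n)$ of $\sd_{\eta,u,d}(\A^n)$ in the asserted form. By Definitions \ref{dist.1} and \ref{dist.12}, $\sd_{\eta,u,d+1}(\A^n)$ replaces $\sigma$ by its $(\eta \cap \sigma)$-excluded barycentric subdivision relative to $a(\sigma)$. The inductive hypothesis implies the nested structure $\mathrm{supp}(f_{s+1}) \subsetneq \cdots \subsetneq \mathrm{supp}(f_n)$ with each support containing $\{i_1, \ldots, i_s\}$; from this I identify $a(\sigma) = \Cone(e_{i_1}, \ldots, e_{i_s}, f_{s+1}, \ldots, f_u)$ and $\eta \cap \sigma$ as the face generated by those rays of $\sigma$ lying in $\eta$ (namely the $e_{i_k}$'s with $i_k \leq t$, after arranging the decomposition so $\{i_1, \ldots, i_s\} \subseteq \{1, \ldots, t\}$). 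Viewing $\sigma$ as $\A^n$ in local coordinates via its ordered rays and applying Proposition \ref{subdivision.32} with local parameters $t_{\mathrm{loc}} = s$ and $m_{\mathrm{loc}} = u$, each new maximal cone of $\sd_{\eta,u,d+1}(\A^n)$ inside $\sigma$ is the image of some $\sigma_\beta$ for an $s$-admissible permutation $\beta$ of $\{1, \ldots, u\}$. Translating back to $\Z^n$, the new cone retains $\rho_{\beta(k)}$ for $k \leq c'$ among $\{e_{i_1}, \ldots, e_{i_s}, f_{s+1}, \ldots, f_u\}$, introduces the partial sums $\rho_{\beta(1)} + \cdots + \rho_{\beta(k)}$ for $c'+1 \leq k \leq u$, and leaves the tail $f_{u+1}, \ldots, f_n$ untouched. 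Setting $s' := c'$, $\{i'_1, \ldots, i'_{s'}\} := \{i_{\beta(k)} : k \leq c',\ \beta(k) \leq s\}$, and assembling the new partial-sum rays and the tail into $f'_{s'+1}, \ldots, f'_n$, the three conditions follow from the base-case computation combined with the inductive hypothesis applied to the tail $f_{u+1}, \ldots, f_n$.

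The main obstacle is the rigidity verification for the new partial-sum rays. Each such ray is a combination of some $e_{i_\ell}$'s (each contributing $1$ at a single position $i_\ell \in \{i_1, \ldots, i_s\}$) and some $f_{s+j}$'s with $j \leq u - s$ (each, by the inductive rigidity, having the same value at every position in $\{i_1, \ldots, i_s\}$). The symmetry across the positions in $\{i_1, \ldots, i_s\}$ is broken by the $e_{i_\ell}$ contributions, so the new rigidity index set must drop exactly those positions $i_\ell$ corresponding to $e_{i_\ell}$'s absorbed into partial sums and retain those corresponding to $e_{i_\ell}$'s that survive as standalone rays. Careful tracking of which $\beta(k)$'s lie in $\{1, \ldots, s\}$ versus $\{s+1, \ldots, u\}$, together with verifying that the common coordinate value at each surviving position agrees across every partial-sum ray, is the main bookkeeping challenge.
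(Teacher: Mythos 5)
Your proposal is correct and follows essentially the same route as the paper's proof: both induct on $d$, both invoke Proposition \ref{subdivision.32} for the base case and then apply it locally to the cone $\Cone(e_{i_1},\ldots,e_{i_s},f_{s+1},\ldots,f_u)$ in the inductive step, and both track rigidity by observing that the partial-sum rays receive a uniform contribution of $1$ at each retained position $i_{\alpha(k)}$ ($k\leq c'$) while the remaining summands are already $\{i_1,\ldots,i_s\}$-rigid. The one place where you stop short of a full verification (the "main bookkeeping challenge" of rigidity) is handled in the paper by exactly the decomposition you sketch — splitting each partial sum into $e_{i_{\alpha(1)}}+\cdots+e_{i_{\alpha(c')}}$ plus a sum of $J$-rigid terms — so your outline would close without further ideas.
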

\begin{proof}
We proceed by induction on $d$.
The claim follows from Proposition \ref{subdivision.32} if $d=1$.
Assume that the claim holds for $d$.
Let $\tau$ be a maximal cone of $\sd_{\eta,u,d+1}(\Sigma)$,
and let $\sigma$ be the unique maximal cone of $\sd_{\eta,u,d}(\Sigma)$ containing $\tau$.
Without loss of generality,
we may assume that $\sigma$ is of the form \eqref{dist.8.1} with $i_1=1$, $\ldots$, $i_s=s$.
We also set $f_i:=e_i$ for $1\leq i\leq s$.

Using the assumption on $f_i$,
to determine $\tau$,
we can apply Proposition \ref{subdivision.32} to $\Cone(f_1,\ldots,f_u)$ and add the part $\Cone(f_{u+1},\ldots,f_n)$.
Hence there exists an $s$-admissible permutation $\alpha$ of $\{1,\ldots,u\}$ such that $\tau$ is of the form
\begin{equation}
\label{dist.8.2}
\Cone(f_{\alpha(1)},\ldots,f_{\alpha(c)} ;f_{\alpha(1)}+\cdots+f_{\alpha(c+1)},\ldots,f_{\alpha(1)}+\cdots+f_{\alpha(u)} ;f_{u+1},\ldots,f_n),
\end{equation}
where $c$ is the smallest integer satisfying $\alpha(c+1)>s$.
Let us write this cone as $\Cone(g_1,\ldots,g_n)$.

We set $J:=\{\alpha(1),\ldots,\alpha(c)\}$.
By the inductive hypothesis, $f_{s+1},\ldots,f_n$ are $\{1,\ldots,s\}$-rigid and hence $J$-rigid.
For $i\in \{1,\ldots,s\}-J$,
$f_i=e_i$ is $J$-rigid.
Since $f_{\alpha(1)}+\cdots+f_{\alpha(c)}$ is $J$-rigid too,
we see that $\tau$ is $J$-rigid.

For an integer $c+1\leq i\leq n$,
$\Cone(g_1,\ldots,g_i)$ is contained in an $i$-dimensional cone of $\A^n$ since it is contained in $\Cone(f_1,\ldots,f_s)$.
If $i\leq u$,
then there exists an integer $c+1\leq j\leq i$ such that $\alpha(j)\geq i$,
so $g_i$ is not contained in any $(i-1)$-dimensional cone of $\A^n$ since $f_j$ is not contained in any $(j-1)$-dimensional cone of $\A^n$.
If $i>u$,
then $g_i=f_i$ is not contained in any $(i-1)$-dimensional cone of $\A^n$.
Hence the claim holds for $d+1$.
\end{proof}

Next, we introduce $H_\epsilon^{\geq 0}(\eta,\Sigma)$, which behaves like a ball with radius $\epsilon$.

\begin{df}
\label{dist.15}
Consider the cone $\eta:=\Cone(e_1,\ldots,e_t)$ in $\Sigma:=\A^n$,
where $1\leq t<n$ are integers.
For a real number $\epsilon>0$,
we set
\begin{gather*}
H_\epsilon^{\geq 0}(\eta,\Sigma)
:=
\{(a_1,\ldots,a_n)\in \R_{\geq 0}^n
:
\epsilon(a_1+\cdots+a_t)
\geq
a_{t+1}+\cdots+a_n\},
\\
H_\epsilon^{\leq 0}(\eta,\Sigma)
:=
\{(a_1,\ldots,a_n)\in \R_{\geq 0}^n
:
\epsilon(a_1+\cdots+a_t)
\leq
a_{t+1}+\cdots+a_n\},
\end{gather*}
and $H_\epsilon^0(\eta,\Sigma):=H_\epsilon^{\geq 0}(\eta,\Sigma)\cap H_\epsilon^{\leq 0}(\eta,\Sigma)$.
\end{df}

In algebraic topology, every simplex $\Delta'$ of the barycentric subdivision of an $n$-simplex $\Delta$ enjoys the inequality of diameters
\[
\mathrm{diam}(\Delta')
\leq 
\frac{n}{n+1}\,\mathrm{diam}(\Delta).
\]
We provides an analogous but more sophisticated inequality for the case of fans as follows.

\begin{lem}
\label{dist.2}
Consider the cone $\eta:=\Cone(e_1,\ldots,e_t)$ in $\A^n$,
where $1\leq t< n$ are integers.
Then for every integer $d\geq 1$ and maximal cone $\sigma\in \sd_{\eta,t+1,d}^{n-t}(\A^n)$ such that $\eta\subset \sigma$,
we have
\[
\lvert \sigma \rvert
\subset
H_{(n-t)^{n-t}/dt}^{\geq 0}(\eta,\A^n).
\]
\end{lem}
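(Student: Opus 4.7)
My approach is to describe explicitly, by induction on $s$, the rays of every maximal cone $\sigma\in\sd_{\eta,t+1,d}^{s}(\A^n)$ containing $\eta$, and then to extract the required bound by tracking two coordinate-sum quantities of those rays.

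\textbf{Recursive shape.} Iterating Proposition~\ref{subdivision.32} inside each maximal cone of the previous step, I would first establish that every such $\sigma$ is of the form $\Cone(e_1,\ldots,e_t,f_{t+1}^{(s)},\ldots,f_n^{(s)})$, where the rays are built by a permutation $\beta$ of $\{t+1,\ldots,n\}$ (depending on $\sigma$) via
\[
f_{t+1}^{(s+1)}=d(e_1+\cdots+e_t)+f_{\beta(t+1)}^{(s)},\quad f_{t+k}^{(s+1)}=(e_1+\cdots+e_t)+\sum_{j=1}^{k} f_{\beta(t+j)}^{(s)}\ (k\geq 2),
\]
with the convention $f_{\cdot}^{(0)}:=e_{\cdot}$. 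The crucial point to verify is that among the rays created by the initial full $\eta$-excluded barycentric substep of each iteration, only $(e_1+\cdots+e_t)+f_{\beta(t+1)}^{(s)}$ lies inside a $(t+1)$-dimensional face of the previous maximal cone, since any other new ray involves at least $t+2$ of the previous basis vectors; hence each of the $d-1$ subsequent restricted barycentric substeps merely adds $e_1+\cdots+e_t$ to that single ray. A direct induction then shows that every $f_k^{(s)}$ has its first $t$ coordinates equal to a common positive value $\nu_k^{(s)}$; write $B_k^{(s)}$ for the sum of its last $n-t$ coordinates.

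\textbf{Two inductive estimates.} The recursion immediately yields $\nu_{t+1}^{(s+1)}=d+\nu_{\beta(t+1)}^{(s)}$, $\nu_{t+k}^{(s+1)}=1+\sum_{j\leq k}\nu_{\beta(t+j)}^{(s)}$ and parallel additive formulas for $B_k^{(s+1)}$. From these I would deduce by induction on $s$: (i) $\max_k B_k^{(s)}\leq (n-t)^{s}$, starting from $\max_k B_k^{(1)}=n-t$ and using $B_{t+k}^{(s+1)}\leq k\max_j B_j^{(s)}\leq(n-t)\max_j B_j^{(s)}$; and (ii) letting $S_s:=\{k:\nu_k^{(s)}\geq d\}$, the lower bound $|S_s|\geq\min(s,n-t)$. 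For (ii), $f_{t+1}^{(s+1)}$ automatically lies in $S_{s+1}$, and for $k\geq 2$ one has $f_{t+k}^{(s+1)}\in S_{s+1}$ as soon as $\{\beta(t+1),\ldots,\beta(t+k)\}\cap S_s\neq\emptyset$; since a permutation $\beta$ cannot push all $|S_s|$ special indices beyond position $n-t-|S_s|+1$, this yields $|S_{s+1}|\geq|S_s|+1$ whenever $|S_s|<n-t$.

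\textbf{Conclusion and main obstacle.} At $s=n-t$, estimate (ii) forces $|S_{n-t}|=n-t$, so every ray $f_k^{(n-t)}$ satisfies $A_{f_k}:=t\nu_k^{(n-t)}\geq dt$; together with (i), every such ray obeys $B_{f_k}/A_{f_k}\leq(n-t)^{n-t}/(dt)$. An arbitrary point $p=\sum_{j\leq t}c_je_j+\sum_{k>t}c_kf_k^{(n-t)}\in|\sigma|$ with $c_i\geq 0$ then satisfies
\[
a_{t+1}(p)+\cdots+a_n(p)=\sum_{k>t}c_k B_{f_k}\leq\frac{(n-t)^{n-t}}{dt}\sum_{k>t}c_k A_{f_k}\leq\frac{(n-t)^{n-t}}{dt}\bigl(a_1(p)+\cdots+a_t(p)\bigr),
\]
which is the asserted containment. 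The hard part will be the rigorous verification of the recursive shape: at every substep of every iteration one must check that the subfan $A_{t+1,d'}$ isolates exactly the single ray $(e_1+\cdots+e_t)+f_{\beta(t+1)}^{(s)}$, so that the $d-1$ restricted barycentric subdivisions collapse to an iterated shift. The two combinatorial bounds and the final inequality are then essentially elementary.
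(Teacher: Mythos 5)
Your proposal follows essentially the same route as the paper: describe the rays of a maximal cone containing $\eta$ recursively via $t$-admissible permutations (your $f_k^{(s)}$ formulas match the paper's $f_{i,j}$), then run the same two inductive estimates — the paper's ``claim (i)'' (at least $j$ rays have $\varphi_1\geq dt$) is your $|S_s|\geq\min(s,n-t)$, and the paper's ``claim (ii)'' ($\varphi_2\leq (n-t)^j$) is your $\max_k B_k^{(s)}\leq(n-t)^s$ — and conclude at $s=n-t$ by checking the defining inequality of $H^{\geq 0}$ on generators. The ``hard part'' you flag (that the $d-1$ restricted substeps in each iteration only translate the single ray $g_{t+1}$ by $e_1+\cdots+e_t$, because all other new rays have at least two nonzero $f$-coefficients and so lie in the relative interior of a face of dimension $>t+1$) is precisely what the paper verifies via Proposition~\ref{subdivision.32} in the $j=1$ case and then asserts persists for higher $j$, so your concern is valid but readily discharged.
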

\begin{proof}
Consider the functions
\[
\varphi_1(a_1,\ldots,a_n):=a_1+\cdots+a_t,
\text{ }
\varphi_2(a_1,\ldots,a_n):=a_{t+1}+\cdots+a_n.
\]
Consider a maximal cone $\sigma \in \sd_{\eta,t+1,1}(\A^n)$ such that $\eta\subset \sigma$.
Then by Proposition \ref{subdivision.32},
$\sigma$ is of the form
\[
\Cone(e_1,\ldots,e_t,f_{t+1},\ldots,f_n)
\]
such that $\varphi_1(f_i)=t$ and $\varphi_2(f_i)=i-t$ for $t+1\leq i\leq n$.
Furthermore,
$\Cone(e_1,\ldots,e_t,f_{t+1})$ is contained in an $(t+1)$-dimensional cone of $\A^n$,
and $f_i$ is not contained in any $(t+1)$-dimensional cone of $\A^n$ for $t+2\leq i\leq n$.
Hence there exists a unique cone $\sigma_1\in \sd_{\eta,t+1,d}(\Sigma)$ such that $\eta\subset \sigma_1\subset \sigma$,
and we have
\[
\sigma_1
=
\Cone(e_1,\ldots,e_t,f_{t+1,1},\ldots,f_{n,1}),
\]
with $f_{t+1,1}:=(d-1)(e_1+\cdots+e_t)+f_{t+1}$ and $f_{i,1}=f_i$ for $t+2\leq i\leq n$.
Note that we have $\varphi_1(f_{t+1,1})=dt$.
For $2\leq j\leq n-t$,
consider a cone
\[
\sigma_j=\Cone(e_1,\ldots,e_t,f_{t+1,j},\ldots,f_{n,j})
\in \sd_{\eta,t+1,d}^j(\A^n)
\]
such that $\eta\subset \sigma_j \subset \sigma_{j-1}$.
For $1\leq j\leq n-t$,
we claim:
\begin{enumerate}
\item[(i)]
$\lvert \{ t+1\leq i\leq n:\varphi_1(f_{i,j})\geq dt\}\rvert \geq j$,
\item[(ii)]
$\varphi_2(f_{i,j})\leq (n-t)^j$ for $t+1\leq i\leq n$.
\end{enumerate}
The claims are clear if $j=1$.
Assume that the claims hold for $j-1$.
We may assume that there exists a permutation $\alpha$ of $\{t+1,\ldots,n\}$ such that
\[
f_{t+1,j}
=
d(e_1+\cdots+e_t)+f_{\alpha(t+1),j-1}
\]
and
\[
f_{i,j}
=
e_1+\cdots+e_t+f_{\alpha(t+1),j-1}+\cdots+ f_{\alpha(i),j-1}
\]
for $t+2\leq i\leq n$.
We deduce the claim (ii) from these two.

Observe that we have $\varphi_1(f_{t+1,j})\geq dt$.
If $i\geq n-j+2$,
then there exists an integer $t+1\leq k\leq i$ such that $\varphi_1(f_{\alpha(k),j-1})\geq dt$ by the claim (i) for $j-1$.
Then we have $\varphi_1(f_{i,j})\geq dt$ too.
Since $\lvert \{t+1\}\cup \{n-j+2,\ldots,n\} \rvert = j$,
we deduce the claim (i).

Now, with $j=n-t$,
the claim (i) implies that $\varphi_1(f_{i,n-t})\geq dt$ for every $t+1\leq i\leq n$.
Hence we have
\[
(n-t)^{n-t} \varphi_1(f_i)
\geq 
dt \varphi_2(f_i)
\]
for every integer $t+1\leq i\leq n$.
The same inequality holds for $1\leq i\leq t$ too, so we have $\lvert \sigma_{n-t} \rvert
\subset
H_{(n-t)^{n-t}/dt}^{\geq 0}(\eta,\A^n)$.
To conclude,
observe that every cone $\tau\in \sd_{\eta,n+1,d}^{n-t}(\Sigma)$ such that $\eta\subset \tau$ is of the form $\sigma_{n-t}$.
\end{proof}

We need the following technical result, which will be used in Lemma \ref{dist.4} below.

\begin{lem}
\label{dist.9}
Consider the cone $\eta:=\Cone(e_1,\ldots,e_t)$ in $\A^n$,
where $1\leq t< n$ are integers.
Let $\tau$ be an $n$-dimensional cone contained in $\N^n$ such that $\tau\cap \eta$ is a face of $\eta$.
Consider the fan $\Sigma$ with a single maximal cone $\tau$.
Then there exists $C>0$ such that
\[
H_\epsilon^{\geq 0}(\eta\cap \tau,\Sigma)
\subset
H_{C\epsilon}^{\geq 0}(\eta,\A^n)
\]
for every $0<\epsilon<1$.
\end{lem}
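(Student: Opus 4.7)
The plan is to unpack the hypothesis using the primitive ray generators of $\tau$, then translate the resulting inequality on the coefficients of a ray decomposition back into the ambient coordinates of $\R^n$.

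First, I would identify the combinatorial setup. Since $\eta\cap\tau$ is a face of $\eta=\Cone(e_1,\ldots,e_t)$, it has the form $\Cone(e_{i_1},\ldots,e_{i_s})$ for some $I=\{i_1,\ldots,i_s\}\subset\{1,\ldots,t\}$. Because $\tau\subset\R_{\geq 0}^n$, the linear forms $e_{t+1}^*,\ldots,e_n^*$ all lie in $\tau^\vee$, and $\eta\cap\tau=\tau\cap\{e_{t+1}^*=\cdots=e_n^*=0\}$ is therefore a face of $\tau$ as well. Thus each $e_{i_k}$ is a primitive ray generator of $\tau$; I would label them $w_1=e_{i_1},\ldots,w_s=e_{i_s}$, and denote the remaining primitive ray generators of $\tau$ by $w_{s+1},\ldots,w_q\in\N^n$. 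Each $w_k$ with $k>s$ has a nonzero coordinate in some position $>t$, for otherwise $w_k\in\eta\cap\tau$, contradicting that $w_1,\ldots,w_s$ exhaust the rays of $\eta\cap\tau$.

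Next, I would set $C:=\max_{s<k\leq q}\|w_k\|_1$, a positive constant depending only on $\tau$. Given $x=(x_1,\ldots,x_n)\in H_\epsilon^{\geq 0}(\eta\cap\tau,\Sigma)$, I would fix a decomposition $x=\sum_k c_k w_k$ with $c_k\geq 0$ witnessing the defining inequality $\epsilon\sum_{k\leq s}c_k\geq\sum_{k>s}c_k$. To bound the part of $x$ outside $\eta$ from above: since $(w_k)_\ell=0$ for $k\leq s$ and $\ell>t$, one computes $\sum_{\ell>t}x_\ell=\sum_{k>s}c_k\sum_{\ell>t}(w_k)_\ell\leq C\sum_{k>s}c_k$. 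To bound the part of $x$ inside $\eta$ from below: for each $k\leq s$ we have $x_{i_k}=c_k+\sum_{k'>s}c_{k'}(w_{k'})_{i_k}\geq c_k$, hence $\sum_{\ell\leq t}x_\ell\geq\sum_{k\leq s}x_{i_k}\geq\sum_{k\leq s}c_k$.

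Combining these two estimates with the hypothesis yields $\sum_{\ell>t}x_\ell\leq C\sum_{k>s}c_k\leq C\epsilon\sum_{k\leq s}c_k\leq C\epsilon\sum_{\ell\leq t}x_\ell$, which is exactly $x\in H_{C\epsilon}^{\geq 0}(\eta,\A^n)$. The only mild subtlety is that when $\tau$ is not simplicial the decomposition $x=\sum_k c_k w_k$ is not unique, but the argument needs only the existence of one such decomposition, which is precisely what membership in $H_\epsilon^{\geq 0}(\eta\cap\tau,\Sigma)$ records. The bound $\epsilon<1$ plays no role; the constant $C$ depends only on the ray generators of $\tau$.
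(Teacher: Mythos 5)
Your proof is correct, and it takes a genuinely different (and cleaner) route than the paper's. The paper changes coordinates via the \emph{inverse} of the matrix whose columns are the ray generators of $\tau$; that inverse $M=(m_{ij})$ has entries of both signs, so the paper sets $1/C=\sum_{i,j}\lvert m_{ij}\rvert$ and invokes $0<\epsilon<1$ to control the resulting signed cross-terms. You work with the \emph{forward} matrix instead, writing $x=\sum_k c_k w_k$ and exploiting that the $w_k$ have nonnegative integer entries because $\tau\subset\N^n$. This yields two clean one-sided estimates, $\sum_{\ell>t}x_\ell\leq C\sum_{k>s}c_k$ and $\sum_{k\leq s}c_k\leq\sum_{\ell\leq t}x_\ell$ with $C=\max_{k>s}\lVert w_k\rVert_1$, and your observation that $\epsilon<1$ is never needed is correct. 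Two minor points: the preliminary remark that each $w_k$ with $k>s$ has a positive coordinate in some position $>t$ is true but never used in the estimate, and the parenthetical about non-simplicial $\tau$ is slightly beside the point, since the paper only defines $H_\epsilon^{\geq 0}(-,\Sigma)$ for \emph{smooth} fans and its own proof inverts the ray matrix (presupposing $\tau$ smooth); the lemma should simply be read with a smoothness hypothesis on $\tau$, which is what holds in all its applications.
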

\begin{proof}
Without loss of generality,
we may assume $\tau\cap \eta=\Cone(e_1,\ldots,e_s)$ with an integer $0\leq s\leq t$.
Write $\tau$ as
\[
\Cone(e_1,\ldots,e_s,f_{s+1},\ldots,f_n)
\]
such that $f_{s+1},\ldots,f_n\notin \Cone(e_1,\ldots,e_s)$.
For integers $s+1\leq i\leq n$ and $1\leq j\leq n$,
let $f_{i,j}$ be the $j$th coordinate of $f_i$.
We have
\[
H_\epsilon^{\geq 0}(\eta\cap \tau,\Sigma)
=
\{(b_1,\ldots,b_n)\in \R_{\geq 0}^n:\epsilon(b_1+\cdots+b_s)\geq b_{s+1}+\cdots+b_n\}
\]
with the coordinates $b_1,\ldots,b_n$ for $\lvert \Sigma\rvert$.
Let $(a_1,\ldots,a_n)$ be the coordinates for $\R_{\geq 0}^n=\lvert \A^n \rvert$.
Consider the $n\times n$ matrix such that its $j$th column agrees with $e_j$ for $1\leq j\leq s$ and $f_j$ for $s+1\leq j\leq n$,
and let $M=(m_{ij})$ be its inverse matrix.
Note that we have $m_{ij}=0$ if $s+1\leq i\leq n$ and $1\leq j\leq s$ or if $1\leq i,j\leq s$ and $i\neq j$.
An element $(a_1,\ldots,a_n)$ using the coordinates for $\lvert \A^n\rvert$ corresponds to $(b_1,\ldots,b_n):=M(a_1,\ldots,a_n)$ using the coordinates for $\lvert \Sigma \rvert$.
We have the formula
\[
b_i
=
\left\{
\begin{array}{ll}
a_i+\sum_{j=s+1}^n m_{ij}a_j & \text{if $1\leq i\leq s$},
\\
\sum_{j=s+1}^n m_{ij}a_j & \text{otherwise}.
\end{array}
\right.
\]
Plug these in the inequality
\[
\epsilon(b_1+\cdots+b_s)\geq b_{s+1}+\cdots+b_s
\]
and use the condition $0<\epsilon<1$ to deduce
\[
H_\epsilon^{\geq 0}(\eta\cap \tau,\Sigma)
\subset
\{(a_1,\ldots,a_n)\in \R_{\geq 0}^n:C\epsilon(a_1+\cdots+a_s)\geq a_{s+1}+\cdots+a_n\}
\]
with $1/C:=\sum_{i=1}^n \sum_{j=s+1}^n \lvert m_{ij}\rvert$.
Compare this with
\[
H_{C\epsilon}^{\geq 0}(\eta,\A^n)
\subset
\{(a_1,\ldots,a_n)\in \R_{\geq 0}^n:C\epsilon(a_1+\cdots+a_t)\geq a_{t+1}+\cdots+a_n\}
\]
to conclude.
\end{proof}

Here is another technical result that will be used in Lemma \ref{dist.5} below.

\begin{lem}
\label{dist.11}
Consider the cone $\eta:=\Cone(e_1,\ldots,e_t)$ in $\A^n$,
where $1\leq t< n$ are integers.
Let $\tau$ be a smooth cone contained in $\N^n$ such that $\tau\cap \eta$ is a face of $\eta$.
Then there exists $\epsilon>0$ satisfying the following condition:
Let $I:=\{i_1,\ldots,i_s\}$ be a subset of $\{1,\ldots,t\}$,
and let $\sigma=\Cone(e_{i_1},\ldots,e_{i_s},f_{s+1},\cdots,f_n)$ be an $I$-rigid cone contained in $H_{\epsilon}^{\geq 0}(\eta,\A^n)$.
Then $\sigma\cap \tau$ is generated by $\sigma\cap \tau\cap \eta$ and some cone contained in $\Cone(f_{s+1},\ldots,f_n)$.
\end{lem}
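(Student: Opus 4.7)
The plan is to reduce the lemma to showing that for $\epsilon$ sufficiently small (depending on $\tau$), every $x \in \sigma \cap \tau$ admits a decomposition $x = a + c$ with $a \in \sigma \cap \tau \cap \eta$ and $c \in \Cone(f_{s+1},\ldots,f_n) \cap \tau$. This immediately yields the Minkowski decomposition $\sigma \cap \tau = (\sigma \cap \tau \cap \eta) + (\Cone(f_{s+1},\ldots,f_n) \cap \tau)$, whose second summand is the asserted cone. In the main case where $\sigma$ is simplicial of dimension $n$ and all $f_j \notin \eta$, uniqueness of the $\sigma$-decomposition $x = u + v$ (with $u = \sum_i a_i e_{i_i}$ and $v = \sum_j b_j f_j$, all coefficients nonnegative) forces $a = u$ and $c = v$, so the task reduces to showing that $a_i = 0$ whenever $i_i \in I \setminus J$ and that $v \in \tau$, where $J \subseteq \{1,\ldots,t\}$ will index the rays of $\tau$ that lie in $\eta$.

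As a preliminary, smoothness of $\tau$ together with the hypothesis on $\tau \cap \eta$ forces $\tau \cap \eta$ to be a face of $\tau$ as well: writing $\tau = \Cone(g_1,\ldots,g_k)$ with primitive generators extending to a $\Z$-basis, any $e_l \in \tau$ has the form $\sum_i c_i g_i$ with $c_i \in \Z_{\geq 0}$, and examining the non-$l$ coordinates together with primitivity forces $g_i = e_l$ for the unique $i$ with $c_i = 1$. Hence I may write $\tau = \Cone(e_{j_l} : l \in J) + \Cone(h_1,\ldots,h_q)$ with $J \subseteq \{1,\ldots,t\}$ indexing $\tau \cap \eta$, and each $h_m \notin \eta$ so that $\sum_{\iota > t}(h_m)_\iota \geq 1$ by integrality.

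The heart of the argument is a coordinate analysis in both bases. Writing $x = \sum_l \mu_l e_{j_l} + \sum_m \nu_m h_m$ in the $\tau$-decomposition (so $\mu_l, \nu_m \geq 0$) and using $I$-rigidity (so $(f_j)_\iota = c_j$ for all $\iota \in I$), the $\iota$-coordinate of $x$ equals $a_{i(\iota)} + B$ with $B := \sum_j b_j c_j$; equating this with the $\tau$-side for $\iota \in I \setminus J$ gives the key identity
\[
a_{i(\iota)} + B = \sum_m \nu_m (h_m)_\iota.
\]
Summing $\iota$-coordinates over $\iota > t$ and invoking the $H_\epsilon^{\geq 0}$-hypothesis yields the bound $\sum_m \nu_m \leq \sum_{\iota > t} x_\iota \leq \epsilon \sum_j b_j T_j$, where $T_j := \sum_{\iota \leq t}(f_j)_\iota$. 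Combining the key identity with this bound and with the analogous coordinate identities for $\iota \in J \setminus I$ and $\iota \in \{1,\ldots,t\} \setminus (I \cup J)$ (which express $\sum_j b_j (f_j)_\iota$ via the $\mu_l$'s and $\nu_m$'s) gives a closed linear system in the nonnegative quantities $a_i, \mu_l, \nu_m, B$. For $\epsilon$ sufficiently small in terms of $\max_{m, \iota \leq t}(h_m)_\iota$, the only solutions have $a_{i(\iota)} = 0$ for $\iota \in I \setminus J$ and $B = 0$; since the $H_\epsilon^{\geq 0}$-condition combined with primitivity of $f_j$ forces $c_j > 0$ for every $f_j \notin \eta$, $B = 0$ gives $b_j = 0$ for all such $j$ and hence $v \in \tau$.

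The main obstacle is the subcase $I \subsetneq \{1,\ldots,t\}$: the coordinates of $f_j$ in $\{1,\ldots,t\} \setminus I$ are not directly controlled by $H_\epsilon^{\geq 0}$, so $T_j$ can substantially exceed $s c_j$ and the naive expectation that $\sum_j b_j T_j$ is comparable to $B$ breaks down. Closing the estimate requires the middle-coordinate identities together with the nonnegativity of $\mu_l$ and $\nu_m$, which produce additional inequalities forcing the vanishing conclusions. If the direct bookkeeping becomes unwieldy, a fallback is a compactness argument: assuming a sequence of counterexamples with $\epsilon_n \to 0$, one rescales so that the offending $f_{j,n}$ tend into $\eta$ and derives a contradiction from the limiting configuration. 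The degenerate case where some $f_j$ lies in $\eta$ (which by primitivity forces $f_j = e_l$ for some $l \in \{1,\ldots,t\} \setminus I$) is easier, since such $f_j$ may be absorbed into the $\eta$-part of the decomposition.
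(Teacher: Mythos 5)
Your reduction to the Minkowski decomposition $\sigma\cap\tau = (\sigma\cap\tau\cap\eta) + \bigl(\sigma\cap\tau\cap\Cone(f_{s+1},\ldots,f_n)\bigr)$ matches the paper's plan, and your preliminary observation that $\tau\cap\eta$ is automatically a face of $\tau$ is correct and implicitly used there as well. The issue lies in the route you take to prove $v\in\tau$.

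You aim to force $B:=\sum_j b_j c_j=0$ and then read off $v\in\tau$ from the ensuing vanishing of the $b_j$'s. This fails in the principal subcase $I\subseteq J$, i.e., when every $e_{i_l}$ with $i_l\in I$ is also a ray of $\tau$. In that situation your ``key identity'' has no index $\iota\in I\setminus J$ to apply to, so the linear system does not constrain $B$ at all; and indeed $B\neq 0$ in general. For a concrete instance take $n=3$, $t=1$, $\eta=\Cone(e_1)$, $\tau=\A^3$, $I=\{1\}$, $\sigma=\Cone(e_1,(M,1,0),(M,0,1))\subset H_{1/M}^{\geq 0}(\eta,\A^3)$: the element $x=(2M,1,1)\in\sigma\cap\tau$ has $v=x$ and $B=2M$, yet $v$ is of course in $\tau$. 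So $v\in\tau$ must be established without killing $B$. Moreover, in the remaining subcase $I\setminus J\neq\emptyset$ you acknowledge that the bookkeeping (the $T_j$ versus $sc_j$ mismatch when $I\subsetneq\{1,\ldots,t\}$) is not closed, and the proposed compactness fallback is only sketched; the bound $\sum_m\nu_m\leq\epsilon\sum_j b_j T_j$ also drops the $\sum a_i$ contribution from $\sum_{\iota\leq t}x_\iota$.

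The paper sidesteps both problems. First, from $I$-rigidity it upgrades the global hypothesis $\sigma\subset H_\epsilon^{\geq 0}$ to the coordinate-wise statement $\lvert\sigma\rvert\subset U_i^{\geq 0}:=\{a:\ s\epsilon a_i\geq a_{s+1}+\cdots+a_n\}$ for each $i\in I$, and picks $\epsilon$ so that the non-$\eta$ rays $g_j$ of $\tau$ lie strictly in $U_i^{\leq 0}$. When $I\not\subseteq J$ (the paper's $u<s$) it then gets the much stronger conclusion $\sigma\cap\tau=\Cone(e_1,\ldots,e_u)\subset\eta$ in one stroke, so no further decomposition is needed. When $I\subseteq J$ (the paper's $u=s$) it does not attempt any vanishing; it compares the $\sigma$-projection $v=(b_1,\ldots,b_s,a_{s+1},\ldots,a_n)$ of $x$ onto $\Cone(f_{s+1},\ldots,f_n)$ with the $\tau$-projection $(c_1,\ldots,c_s,a_{s+1},\ldots,a_n)$ of $x$ onto $\Cone(g_{s+1},\ldots,g_n)$ and proves the single inequality
\[
c_i\ \leq\ \frac{a_{s+1}+\cdots+a_n}{s\epsilon}\ \leq\ b_i\qquad(1\leq i\leq s),
\]
from which $v=(c_1,\ldots,c_s,a_{s+1},\ldots,a_n)+\sum_i(b_i-c_i)e_i\in\tau$ with all $b_j$ left alone. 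That comparison inequality is the idea missing from your proposal; without it, the $I\subseteq J$ case is unaddressed.
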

\begin{proof}
Without loss of generality,
we may assume $I=\{1,\ldots,s\}$.
Since $\sigma$ is $I$-rigid and $\sigma\subset H_\epsilon^{\geq 0}(\eta,\A^n)$,
we have
\[
\lvert \sigma \rvert
\subset
U_i^{\geq 0}
:=
\{(a_1,\ldots,a_n):\R_{\geq 0}^n:s\epsilon a_i\geq a_{s+1}+\cdots+a_n\}
\]
for every integer $1\leq i\leq s$.
Consider also
\[
U_i^{\leq 0}
:=
\{(a_1,\ldots,a_n):\R_{\geq 0}^n:s\epsilon a_i\leq a_{s+1}+\cdots+a_n\}
\]
and $U_i^0:=U_i^{\geq 0}\cap U_i^{\leq 0}$.
Without loss of generality,
we may assume
\[
\tau = \Cone(e_1,\ldots,e_u,g_{u+1},\ldots,g_n)
\]
for some integer $0\leq u\leq s$ and $g_{u+1},\ldots,g_n\in \N^n-\sigma\cap \eta$.
There exists $\epsilon>0$ such that
\begin{equation}
\label{dist.11.1}
g_{u+1},\ldots,g_n \in U_i^{\leq 0}-U_i^0
\end{equation}
for every $1\leq i\leq s$.

Assume $u<s$.
Then we have $e_1,\ldots,e_u\in U_i^0$ and $\lvert \sigma \rvert \subset U_i^{\geq 0}$ for every $1\leq i\leq s$.
Together with \eqref{dist.11.1},
we have $\sigma\cap \tau=\Cone(e_1,\ldots,e_u)$,
which is contained in $\eta$.

Assume $u=s$.
Consider an element $(a_1,\ldots,a_n)\in \lvert \sigma \cap \tau \rvert$ and its projections
\[
(b_1,\ldots,b_s,a_{s+1},\ldots,a_n),
(c_1,\ldots,c_s,a_{s+1},\ldots,a_n)
\]
to the faces $\lvert\Cone(f_{s+1},\ldots,f_n)\rvert$ and $\lvert \Cone(g_{s+1},\ldots,g_n)\rvert$ when we regard $(a_1,\ldots,a_n)$ as an element of $\lvert \sigma \rvert$ and $\lvert \tau \rvert$ respectively.
We have
\[
c_i\leq (a_{s+1}+\cdots+a_n)/(s\epsilon) \leq b_i
\]
for every $1\leq i\leq s$ since $\lvert \sigma \rvert \subset U_i^{\geq 0}$ and $f_i\in U_i^{\leq 0}$.
It follows that we have
\[
(b_1,\ldots,b_s,a_{s+1},\ldots,a_n)\in \lvert \sigma \cap \tau\rvert,
\]
Now,
let $\sigma'$ be the projection of $\sigma\cap \tau$ onto $\Cone(f_{s+1},\ldots,f_n)$.
Then we have $\sigma'\subset \sigma\cap \tau$ and hence $\sigma\cap \tau$ is generated by $\sigma\cap \tau\cap \eta=\Cone(e_1,\ldots,e_s)$ and $\sigma'$.
\end{proof}

Now, we generalize the definition of $H_\epsilon^{\geq 0}(\eta,\A^n)$ to general $\Sigma$.

\begin{df}
Let $\Sigma$ be a smooth fan,
and let $\eta$ be its cone.
For $\epsilon>0$,
we set
\[
H_\epsilon^{\geq 0}(\eta,\Sigma)
:=
\bigcup_{\sigma\in \Sigma_{\max}}H_\epsilon^{\geq 0}(\eta,\langle \sigma \rangle).
\]
Here,
$\langle \sigma\rangle$ denotes the fan in the lattice of $\Sigma$ with the single maximal cone $\sigma$.
\end{df}

In algebraic topology, an iterated barycentric subdivision makes the diameter of an $n$-simplex arbitrarily small.
The following is analogous to this result.

\begin{lem}
\label{dist.4}
Consider $\Sigma:=(\P^1-0)^r\times (\P^1)^{n-r}$ and $\eta:=\Cone(e_{r+1},\ldots,e_n)\in \Sigma$ with integers $0\leq r\leq n$.
Then for every $\epsilon>0$, there exist integers $d_2,\ldots,d_{n-r+1}\geq 0$ such that for every maximal cone
\[
\sigma
\in 
\sd_{\eta,2,d_2}^{n-1}
\sd_{\eta,3,d_3}^{n-2}
\cdots
\sd_{\eta,n-r+1,d_{n-r+1}}^{r}
(\Sigma)
\]
satisfying $\sigma\cap \eta \neq 0$,
we have $\lvert \sigma \rvert \subset H_\epsilon^{\geq 0}(\eta,\Sigma)$.
\end{lem}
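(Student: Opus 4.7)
The plan is to reduce the statement cone-by-cone to an application of Lemma \ref{dist.2} inside each maximal cone of $\Sigma$. Since $H_\epsilon^{\geq 0}(\eta,\Sigma)=\bigcup_{\tau\in\Sigma_{\max}}H_\epsilon^{\geq 0}(\eta\cap\tau,\langle\tau\rangle)$ by definition, it suffices to work with one maximal cone $\tau$ of $\Sigma$ at a time. Fix such $\tau$, set $\eta_\tau:=\eta\cap\tau$ and $t_\tau:=\dim\eta_\tau$, and pick a lattice isomorphism $\langle\tau\rangle\cong\A^n$ sending $\eta_\tau$ to $\Cone(e_1,\ldots,e_{t_\tau})$. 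The case $t_\tau=0$ is vacuous, so assume $t_\tau\geq 1$. By direct inspection of Definitions \ref{dist.12} and \ref{dist.1}, the restriction of $\sd_{\eta,u,d}$ to $\tau$ coincides with $\sd_{\eta_\tau,u,d}$ applied to $\langle\tau\rangle$, reducing the problem to one inside $\A^n$ with $\eta$ replaced by $\eta_\tau$.

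Next, by an iterated application of Proposition \ref{dist.8}, every maximal cone $\sigma$ of the iterated subdivision has the explicit form $\Cone(e_{i_1},\ldots,e_{i_s},f_{s+1},\ldots,f_n)$ with $\{i_1,\ldots,i_s\}\subset\{1,\ldots,t_\tau\}$ and a prescribed $I$-rigid structure. From this description the face $\sigma\cap\eta_\tau$ is of the form $\eta':=\Cone(e_i: i\in I')$ for some $I'\supset\{i_1,\ldots,i_s\}$ (depending on which of the $f_j$'s happen to lie in $\eta_\tau$); set $t':=\dim\eta'$. The condition $\sigma\cap\eta\neq 0$ becomes $t'\geq 1$.

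The key step is to observe that the subdivisions $\sd_{\eta,u,d_u}^{n-u+1}$ for $u=2,\ldots,n-r+1$ collectively play the role of iteratively applying Lemma \ref{dist.2} to the various faces of $\eta_\tau$. Specifically, the step with $u=t'+1$ is set up to produce the bound of Lemma \ref{dist.2} applied to $\eta'$: for $d_{t'+1}$ large enough, $\sigma$ lies in $H_{\delta}^{\geq 0}(\eta',\langle\tau\rangle)$ with $\delta=(n-t')^{n-t'}/(d_{t'+1}t')$. The remaining subdivisions (with smaller $u$) only further refine $\sigma$ inside $\lvert\sigma\rvert$ and thus preserve this membership. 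Finally $H_{\delta}^{\geq 0}(\eta',\langle\tau\rangle)\subset H_{\delta}^{\geq 0}(\eta_\tau,\langle\tau\rangle)$, because enlarging the numerator index set in the defining inequality only relaxes it. Choosing each $d_u$ sufficiently large forces $\delta<\epsilon$ uniformly in $t'$, giving the desired conclusion.

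The main obstacle is the identification invoked in the previous paragraph: Lemma \ref{dist.2} is stated for $\sd_{\eta',\cdot,\cdot}$ (which excludes the smaller face $\eta'$), whereas we actually perform $\sd_{\eta_\tau,\cdot,\cdot}$ (which excludes the larger $\eta_\tau$). Verifying that the latter still yields the $H_{\delta}^{\geq 0}(\eta',\langle\tau\rangle)$ bound for cones with $\sigma\cap\eta_\tau=\eta'$ of dimension $t'$ will require an induction mirroring the proof of Lemma \ref{dist.2}, carefully tracked through the $I$-rigid structure provided by Proposition \ref{dist.8} at each intermediate stage of the composition.
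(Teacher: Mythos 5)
You correctly reduce to a single maximal cone $\tau$ of $\Sigma$ and identify the relevant cone $\eta':=\sigma\cap\eta_\tau$, and you also correctly flag the mismatch between $\sd_{\eta',\cdot,\cdot}$ (what Lemma~\ref{dist.2} controls) and $\sd_{\eta_\tau,\cdot,\cdot}$ (what is actually performed). That mismatch is the genuine gap, and your proposal leaves it open; the fix you suggest (re-deriving a variant of Lemma~\ref{dist.2} that excludes the larger cone $\eta_\tau$) is not needed and is harder than what the argument actually requires.

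The missing idea is to pass to the \emph{intermediate} fans rather than staying inside the fixed $\tau\in\Sigma_{\max}$. Set
\[
\Sigma_j:=\sd_{\eta,j,d_j}^{\,n-j+1}\sd_{\eta,j+1,d_{j+1}}^{\,n-j}\cdots\sd_{\eta,n-r+1,d_{n-r+1}}^{\,r}(\Sigma),
\]
and prove by downward induction on $j$ (from $n-r+2$ down to $2$) that every maximal cone $\sigma\in\Sigma_j$ with $\dim(\sigma\cap\eta)\geq j-1$ satisfies $\lvert\sigma\rvert\subset H_\epsilon^{\geq 0}(\eta,\Sigma)$. Given such a $\sigma$, let $\tau'$ be the maximal cone of $\Sigma_{j+1}$ containing it. Either $\dim(\tau'\cap\eta)\geq j$ and the inductive hypothesis applies to $\tau'$, or $\dim(\tau'\cap\eta)=j-1$ exactly, and then the restriction of $\sd_{\eta,j,d_j}^{\,n-j+1}$ to $\tau'$ is literally $\sd_{\tau'\cap\eta,\,j,\,d_j}^{\,n-j+1}(\langle\tau'\rangle)$ with excluded cone $\tau'\cap\eta$ of dimension exactly $j-1$; this is precisely the situation of Lemma~\ref{dist.2} (with $t=j-1$, $u=t+1$, exponent $n-t$), with no mismatch. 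You also omit Lemma~\ref{dist.9}, which is the ingredient needed to transport the resulting bound $H_\delta^{\geq 0}(\tau'\cap\eta,\langle\tau'\rangle)$ in $\tau'$-coordinates to $H_{C\delta}^{\geq 0}(\eta,\Sigma)$, with the constant $C$ absorbed by taking $d_j$ large. In short: the exclusion discrepancy you identified disappears once you look at a cone of the appropriate intermediate fan $\Sigma_{j+1}$ instead of the original maximal cone $\tau$; the paper's downward induction on $\dim(\sigma\cap\eta)$ is exactly the device that makes this work, and Lemma~\ref{dist.9} is required to finish each step.
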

\begin{proof}
We claim that for $2\leq j\leq n-r+2$,
there exist integers $d_{j},d_{j+1},\ldots,d_{n-r+1}$ such that for every maximal cone $\sigma$ of
\[
\Sigma_j
:=
\sd_{\eta,j,d_j}^{n-j+1}
\sd_{\eta,j+1,d_{j+1}}^{n-j}
\cdots
\sd_{\eta,n-r+1,d_{n-r+1}}^{r}
(\Sigma)
\]
satisfying $\dim( \sigma\cap \eta) \geq j-1$,
we have $\lvert \sigma \rvert \subset H_{\epsilon}^{\geq 0}(\eta,\Sigma)$.
The claim holds if $j=n-r+2$ since $\dim \eta=n-r$.

Assume that the claim holds for $j+1$.
Let $\sigma$ be a maximal cone of $\Sigma_j$ satisfying $\dim( \sigma\cap \eta)\geq j-1$,
and let $\tau$ be the unique maximal cone of $\Sigma_{j+1}$ containing $\sigma$.
If $\dim (\tau\cap \eta)\geq j$,
then we have  $\lvert \tau \rvert \subset H_{\epsilon}^{\geq 0}(\eta,\Sigma)$,
which implies  $\lvert \sigma \rvert \subset H_{\epsilon}^{\geq 0}(\eta,\Sigma)$.
Hence assume $\dim (\tau\cap \eta) \leq j-1$,
which implies $\dim (\tau\cap \eta)=j-1$ since $\dim (\sigma \cap \eta)\geq j-1$.

Apply Lemmas \ref{dist.2} and \ref{dist.9} to $\tau$ to obtain an integer $d_\tau\geq 0$ satisfying the following condition:
If $d_j\geq d_{\tau}$ and $\Sigma_j$ is formed by this $d_j$,
then for every maximal cone $\sigma'$ of $\Sigma_j$ contained in $\tau$ such that $\sigma'\cap \eta\neq 0$,
we have  $\lvert \sigma' \rvert \subset H_{\epsilon}^{\geq 0}(\eta,\Sigma)$.
Let $d_j$ be the maximum of $d_\tau$ for all $\tau\in \Sigma_{j+1}$ such that $\dim (\tau\cap \eta)=j-1$.
With this $d_j$, $\Sigma_j$ defined above satisfies the inductive hypothesis.
To conclude,
set $j=2$.
\end{proof}

The following is a consequence of Lemma \ref{dist.4}.

\begin{lem}
\label{dist.5}
Let $\Delta$ be a smooth subdivision of $\Sigma:=(\P^1-0)^r\times (\P^1)^{n-r}$ with integers $0\leq r\leq n$.
Assume $\eta:=\Cone(e_{r+1},\ldots,e_n)\in \Delta$.
Then there exist integers $d_2,\ldots,d_{n-r+1}\geq 0$ satisfying the following condition:
Let $\delta\in \Delta$ and
\[
\sigma\in \sd_{\eta,2,d_2}^{n-1}
\sd_{\eta,3,d_3}^{n-2}
\cdots
\sd_{\eta,n-r+1,d_{n-r+1}}^{r}
(\Sigma)
\]
be cones contained in the same maximal cone of $\Sigma$.
Then $\sigma\cap \delta$ is generated by $\sigma\cap \delta\cap \eta$ and a cone contained in a face $\sigma'$ of $\sigma$ such that $\sigma'\cap \eta=0$.
\end{lem}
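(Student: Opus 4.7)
The plan is to combine Lemma \ref{dist.4} and Lemma \ref{dist.11} with an iterated version of Proposition \ref{dist.8}, feeding the rigidity produced by iterated barycentric subdivisions into the geometric hypothesis of Lemma \ref{dist.11}. Since both $\sigma$ and $\delta$ lie in some maximal cone $\tau_0$ of $\Sigma$, I would work one $\tau_0$ at a time. After a lattice automorphism we may identify $\tau_0$ with the maximal cone of $\A^n$ spanned by $e_1,\ldots,e_n$, so that $\eta\cap\tau_0=\Cone(e_1,\ldots,e_t)$ for some $0\leq t\leq n-r$, and the restriction of the iterated subdivision to $\tau_0$ becomes an iterated application of $\sd_{\eta\cap\tau_0,u,d}$ on $\A^n$.

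First I would apply Lemma \ref{dist.11} to every pair $(\tau_0,\delta)$ with $\delta\in\Delta$ contained in $\tau_0$ (finitely many), producing constants $\epsilon_{\tau_0,\delta}>0$, and set $\epsilon$ to be their minimum. Next I would invoke Lemma \ref{dist.4} with this $\epsilon$ to extract integers $d_2,\ldots,d_{n-r+1}\geq 0$ such that every maximal cone $\tilde{\sigma}$ of the iterated subdivision satisfying $\tilde{\sigma}\cap\eta\neq 0$ lies in $H_\epsilon^{\geq 0}(\eta,\Sigma)$; these are the required integers.

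To verify the conclusion for arbitrary $\sigma$ and $\delta$ contained in a common $\tau_0$: if $\sigma\cap\eta=0$, take $\sigma':=\sigma$. Otherwise extend $\sigma$ to a maximal cone $\tilde{\sigma}$ of the iterated subdivision, so that $\lvert\sigma\rvert\subset\lvert\tilde{\sigma}\rvert\subset H_\epsilon^{\geq 0}(\eta,\Sigma)$. An extension of Proposition \ref{dist.8} to mixed iterates, proved by the same inductive argument, shows that $\tilde{\sigma}=\Cone(e_{i_1},\ldots,e_{i_s},f_{s+1},\ldots,f_n)$ is $\{i_1,\ldots,i_s\}$-rigid with $\{i_1,\ldots,i_s\}\subset\{1,\ldots,t\}$; hence $\sigma\prec\tilde{\sigma}$ is $J$-rigid for some $J\subset\{i_1,\ldots,i_s\}$. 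The explicit formula from Proposition \ref{subdivision.32} further shows that each $f_i$ has a positive coordinate at some index greater than $t$, so $\Cone(f_{s+1},\ldots,f_n)\cap\eta=0$. Applying Lemma \ref{dist.11} to $\sigma$ and $\delta$ then yields the desired decomposition of $\sigma\cap\delta$, with $\sigma'$ being the face of $\sigma$ spanned by the rays not equal to any $e_{j}$ with $j\in J$.

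The main obstacle is this iterated-rigidity step: Proposition \ref{dist.8} is stated for $\sd_{\eta,u,d}$ with a single $u$, while Lemma \ref{dist.5} involves mixed iterates for $u=2,\ldots,n-r+1$. I expect its inductive proof to carry over with only bookkeeping changes, and the supplementary claim that each $f_i$ carries a nonzero coordinate outside $\{1,\ldots,t\}$ should be visible from the fact that in Proposition \ref{subdivision.32} the integer $c$ is defined so that $\alpha(c+1)>t$, forcing every generator $e_{\alpha(1)}+\cdots+e_{\alpha(k)}$ with $k\geq c+1$ to have a positive coordinate at the index $\alpha(c+1)>t$.
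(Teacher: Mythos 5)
Your proposal follows the same strategy as the paper's proof: use Lemma \ref{dist.4} to produce the integers $d_2,\ldots,d_{n-r+1}$ achieving $\epsilon$-fineness, use (an extension of) Proposition \ref{dist.8} to obtain the rigidity hypotheses needed for Lemma \ref{dist.11}, and apply Lemma \ref{dist.11} to decompose $\sigma\cap\delta$ for each maximal cone of $\Sigma$, with $\epsilon$ chosen in advance as the minimum over the finitely many cones of $\Delta$. The concern you raise -- that Proposition \ref{dist.8} is formally stated only for $\sd_{\eta,u,d}$ with a single $u$, while the mixed iterates appearing in Lemma \ref{dist.5} require it -- is a real but minor gap that the paper's terse proof also glosses over, and the inductive argument of Proposition \ref{dist.8} (whose inductive step is really about a single application of $(-)^{\bary}_{\eta}(A)$ preserving the rigid normal form) carries over to mixed iterates exactly as you expect.
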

\begin{proof}
For every $\epsilon>0$,
by Lemma \ref{dist.4},
there exist integers $d_2,\ldots,d_{n-r+1}\geq 0$ such that for every cone $\sigma\in \sd_{\eta,2,d_2}^{n-1}
\sd_{\eta,3,d_3}^{n-2}
\cdots
\sd_{\eta,n-r+1,d_{n-r+1}}^{r}
(\Sigma)$ satisfying $\sigma\cap \eta\neq 0$,
we have $\sigma \subset H_\epsilon^{\geq 0}(\eta,\Sigma)$.
Apply Lemma \ref{dist.11} to all the maximal cones of $\Delta$ and use Proposition \ref{dist.8} to conclude.
\end{proof}

Now, we are ready to prove the main result of this subsection.

\begin{prop}
\label{dist.10}
Let $\Delta$ be a smooth subdivision of $\Sigma:=(\P^1-0)^r\times (\P^1)^{n-r}$.
Assume $\eta:=\Cone(e_{r+1},\ldots,e_n)\in \Delta$.
Then there exists a sequence of star subdivisions
\[
\Delta_m \to \cdots \to \Delta_0:=
\sd_{\eta,2,d_2}^{n-1}
\sd_{\eta,3,d_3}^{n-2}
\cdots
\sd_{\eta,n-r+1,d_{n-r+1}}^r
(\Sigma)
\]
for some integers $d_2,\ldots,d_{n-r+1}\geq 0$ such that $\Delta_m$ is a subdivision of $\Delta$ and for each $i$,
$\Delta_{i+1}=\Delta_i^*(\delta_i)$ for some $2$-dimensional cone $\delta_i$ of $\Delta_i$ such that $e_{r+1}\ldots,e_n\notin \delta_i$.
\end{prop}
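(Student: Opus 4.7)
The plan is to apply Lemma \ref{dist.5} to select the integers $d_j$, take a smooth common refinement $\Delta^\wedge$ of $\Delta_0$ and $\Delta$, and then realize the passage $\Delta_0\to \Delta^\wedge$ by a sequence of two-dimensional star subdivisions whose 2-cones avoid $e_{r+1},\ldots,e_n$.

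First I would apply Lemma \ref{dist.5} to $\Delta$ and $\Sigma$ to obtain integers $d_2,\ldots,d_{n-r+1}\geq 0$, and form $\Delta_0$ as in the statement. By that lemma, for every maximal cone $\sigma\in \Delta_0$ and every $\delta\in \Delta$ contained in a common maximal cone of $\Sigma$, the intersection $\sigma\cap \delta$ is generated by $\sigma\cap \delta\cap \eta$ and a cone in a face $\sigma'$ of $\sigma$ with $\sigma'\cap \eta=0$. Since $\eta=\Cone(e_{r+1},\ldots,e_n)$ is preserved as a cone of $\Delta_0$ by the $\eta$-excluded barycentric subdivisions, the rays of $\Delta_0$ lying in $\eta$ are exactly $e_{r+1},\ldots,e_n$, so no ray of such a face $\sigma'$ is among them.

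Next I would take a smooth common refinement $\Delta^\wedge$ of $\Delta_0$ and $\Delta$ (refining further inside each $\sigma'$ if needed). Since $\sigma\cap \delta\cap \eta$ is a face of $\eta$ and $\sigma$ is smooth, the decomposition above realizes $\sigma\cap \delta$ as a sum of two cones sitting in complementary sublattices, one spanned by some of $e_{r+1},\ldots,e_n$ and the other inside $\sigma'$. Consequently, every new ray of $\Delta^\wedge$—that is, a ray not already present in $\Delta_0$—lies in some face $\sigma'$ of a maximal cone of $\Delta_0$ with $\sigma'\cap \eta=0$, and in particular avoids $e_{r+1},\ldots,e_n$.

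The main obstacle is to realize $\Delta_0\to \Delta^\wedge$ as the desired sequence of two-dimensional star subdivisions. I would introduce the new rays of $\Delta^\wedge$ one at a time. Given a new ray $w$ lying in a smooth face $\sigma'$ of the current fan with rays $v_1,\ldots,v_k\notin \{e_{r+1},\ldots,e_n\}$, write $w=\sum_i c_i v_i$ with $c_i\in \Z_{\geq 0}$ (unique by smoothness of $\sigma'$) and induct on $\ell(w):=\sum_i c_i$. If $\ell(w)=1$ then $w$ is already a ray. Otherwise, pick $i,j$ with $c_i,c_j\geq 1$ and star-subdivide the 2-cone $\Cone(v_i,v_j)$—a 2-face of the current fan whose rays lie in $\sigma'$ and hence avoid $e_{r+1},\ldots,e_n$. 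Replacing $v_i$ or $v_j$ in the smooth basis of $\sigma'$ by $v_i+v_j$ still yields a smooth basis, so smoothness is preserved; a direct computation shows that in the unique new cone containing $w$, the length drops to $\ell(w)-\min(c_i,c_j)<\ell(w)$. Iterating across all new rays terminates in a refinement of $\Delta^\wedge$ (hence of $\Delta$) built entirely from the allowed 2-dim star subdivisions. The technical delicacy is bookkeeping, at each step, which maximal cone of the current fan contains $w$, which is manageable since only faces inside $\sigma'$ are being modified throughout.
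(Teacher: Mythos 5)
Your setup is on track: you correctly invoke Lemma \ref{dist.5} to choose the $d_j$, and you correctly observe that the relevant intersections live away from $e_{r+1},\ldots,e_n$, so that any new rays to be introduced avoid $\eta$. This matches the first part of the paper's argument.

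The gap is in your core step of realizing $\Delta_0\to\Delta^\wedge$ by $2$-dimensional star subdivisions. You introduce the new rays of $\Delta^\wedge$ one at a time, with a length induction to get each ray in. That part is fine: each individual primitive vector in the interior of a smooth cone can be exposed as a ray by repeated $2$-dimensional star subdivisions inside that cone. But your conclusion ``iterating across all new rays terminates in a refinement of $\Delta^\wedge$'' does not follow: having the same rays does not make one smooth fan a refinement of another. Concretely, take $\Delta_0=\A^3$; introducing $e_1+e_2$ via $\Cone(e_1,e_2)$ and then $e_2+e_3$ via $\Cone(e_2,e_3)$ gives a fan with maximal cones $\Cone(e_1,e_1+e_2,e_3)$, $\Cone(e_2,e_1+e_2,e_2+e_3)$, $\Cone(e_2+e_3,e_1+e_2,e_3)$, whereas introducing the rays in the opposite order gives $\Cone(e_1,e_1+e_2,e_2+e_3)$, $\Cone(e_1+e_2,e_2,e_2+e_3)$, $\Cone(e_1,e_2+e_3,e_3)$. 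Neither refines the other even though they share all rays, so your procedure need not land inside $\Delta^\wedge$ (nor inside $\Delta$). You would have to keep inserting further rays and prove termination; at that point you are re-proving the toric weak factorization theorem, which is exactly what the paper invokes as a black box ([Theorem 2.4]{MR803344}, or \cite[pp.\,39--40]{TOda}).

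The paper's route avoids this by passing to the subfans $\Delta_0^\flat$ and $\Gamma^\flat$ of cones avoiding $e_{r+1},\ldots,e_n$, applying the cited factorization theorem to produce a sequence of $2$-dimensional star subdivisions $\Delta_m^\flat\to\cdots\to\Delta_0^\flat$ refining $\Gamma^\flat$, and then lifting this to $\Delta_0$; the decomposition of every cone into an $\eta$-face and a flat part (guaranteed by Lemma \ref{dist.5} on the $\Gamma$ side and by construction on the $\Delta_m$ side) then shows $\Delta_m$ refines $\Gamma$ and hence $\Delta$. To fix your proof, you should either cite the same factorization theorem, or supply a genuine termination argument for the insertion procedure.
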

\begin{proof}
Choose integers $d_2,\ldots,d_{n-r+1}\geq 0$ as in Lemma \ref{dist.5}.
Consider the fans
\begin{gather*}
\Delta_0^\flat:=\{\delta\in \Delta_0:e_{r+1},\ldots,e_n\notin \delta\},
\text{ }
\Gamma
:=
\{\delta\cap \delta' : \delta\in \Delta,\delta'\in \Delta_0\},
\\
\Gamma^\flat:=\{\gamma\in \Gamma:e_{r+1},\ldots,e_n\notin \gamma\}.
\end{gather*}
By \cite[Theorem 2.4]{MR803344} (see also \cite[pp.\ 39-40]{TOda}),
there exists a sequence of star subdivisions relative to $2$-dimensional cones
\[
\Delta_m^\flat \to \cdots \to \Delta_0^\flat
\]
such that $\Delta_m^\flat$ is a subdivision of $\Gamma^\flat$.
Consider the corresponding sequence of star subdivisions relative to the same cones
\[
\Delta_m\to \cdots \to \Delta_0.
\]
Every cone of $\Delta_m$ (resp.\ $\Gamma$) is generated by a face of $\eta$ and a cone of $\Delta_m^\flat$ (resp.\ $\Gamma^\flat$) by construction (resp.\ the conclusion of Lemma \ref{dist.5}).
It follows that $\Delta_m$ is a subdivision of $\Gamma$ and hence a subdivision of $\Delta$ too.
\end{proof}

\begin{rmk}
\label{dist.14}
Let us explain why the usual barycentric subdivision is not enough for Proposition \ref{dist.10} even if $n-r=1$.
Consider $\Sigma_1:=\A^3$.
Let $\Sigma_{d+1}$ be the barycentric subdivision of $\Sigma_d$ relative to itself for every integer $d\geq 1$.
Consider a cone $\Cone(e_1,f_2,f_3)$ of $\Sigma_d$.
Then we have the cone $\sigma:=\Cone(e_1,e_1+f_2,e_1+f_2+f_3)$ of $\Sigma_{d+1}$ and then the cone $\alpha(\sigma):=\Cone(e_1,2e_1+f_2+f_3,3e_1+2f_2+f_3)$ of $\Sigma_{d+2}$.

Now, consider the cone $\tau:=\Cone(e_1,e_2,e_3)$ of $\A^3$.
Then for every integer $m\geq 1$,
we have
$\alpha^m(\tau)=\Cone(e_1,(x_m,y_m,z_m),(x_m',y_m',z_m'))$ with
\[
\begin{pmatrix}
x_m\\
x_m'
\end{pmatrix}
=
(M^{m-1}+\cdots+M^0)
\begin{pmatrix}
2\\
3
\end{pmatrix},
\text{ }
\begin{pmatrix}
y_m\\
y_m'
\end{pmatrix}
=
M^m
\begin{pmatrix}
1\\
0
\end{pmatrix},
\text{ }
\begin{pmatrix}
z_m\\
z_m'
\end{pmatrix}
=
M^m
\begin{pmatrix}
0\\
1
\end{pmatrix},
\]
where
\[
M:=
\begin{pmatrix}
1 & 1\\
2 & 1
\end{pmatrix}.
\]
Diagonalize $M$ to show that the ratios $[x_m:y_m:z_m]$ and $[x_m':y_m':z_m']$ do not converge to $[1:0:0]$.
Hence if $\epsilon>0$ is sufficiently small,
then we have $\alpha^m(\tau)\not \subset H_{\epsilon}^{\geq 0}(\eta,\A^3)$ for all $m\geq 0$,
where $\eta:=\Cone(e_1)$.

On the other hand, for every integer $d\geq 1$,
the rays of $\sd_{\eta,2,d}(\A^3)$ adjacent to $e_1$ are precisely 
\[
de_1+e_2,e_1+e_2+e_3,de_1+e_3.
\]
Hence the rays of $\sd_{\eta,2,d}^2(\A^3)$ adjacent to $e_1$ are precisely
\[
2de_1+e_2,(d+1)e_1+2e_2+e_3,(d+1)e_1+e_2+e_3,(d+1)e_1+e_2+2e_3,2de_1+e_3.
\]
It follows that if $(d+1)\epsilon \geq 3$ is satisfied, then every maximal cone of $\sd_{\eta,2,d}^2(\A^3)$ is contained in $H_{\epsilon}^{\geq 0}(\eta,\A^3)$. We illustrate a part of $\sd_{\eta,2,2}^2(\A^3)$ as follows to help the reader,
where the five rays adjacent to $e_1$ are marked:
\[
\begin{tikzpicture}[scale = 1]
\draw (0,4)--(0,0)--(2,2)--(0,4);
\draw (0,4)--(-2,2)--(0,0);
\draw (0,4) node[above] {$e_1$};
\draw (2,2) node[right] {$2e_1+e_3$};
\draw (-2,2) node[left] {$2e_1+e_2$};
\draw (0,0) node[below] {$e_1+e_2+e_3$};
\draw (1/2,7/2) node[above right] {$4e_1+e_3$};
\draw (-1/2,7/2) node[above left] {$4e_1+e_2$};
\filldraw[black] (1/2,7/2) circle (2pt);
\filldraw[black] (-1/2,7/2) circle (2pt);
\filldraw[black] (0,3) circle (2pt);
\filldraw[black] (2/3,2) circle (2pt);
\filldraw[black] (-2/3,2) circle (2pt);
\draw (2/3,2)--(0,4);
\draw (2/3,2)--(1/2,7/2);
\draw (2/3,2)--(1,3);
\draw (2/3,2)--(3/2,5/2);
\draw (2/3,2)--(2,2);
\draw (2/3,2)--(3/2,3/2);
\draw (2/3,2)--(1,1);
\draw (2/3,2)--(1/2,1/2);
\draw (2/3,2)--(0,0);
\draw (2/3,2)--(0,1);
\draw (2/3,2)--(0,2);
\draw (2/3,2)--(0,3);
\draw (2/3,2)--(0,4);
\draw (-2/3,2)--(0,4);
\draw (-2/3,2)--(-1/2,7/2);
\draw (-2/3,2)--(-1,3);
\draw (-2/3,2)--(-3/2,5/2);
\draw (-2/3,2)--(-2,2);
\draw (-2/3,2)--(-3/2,3/2);
\draw (-2/3,2)--(-1,1);
\draw (-2/3,2)--(-1/2,1/2);
\draw (-2/3,2)--(0,0);
\draw (-2/3,2)--(0,1);
\draw (-2/3,2)--(0,2);
\draw (-2/3,2)--(0,3);
\end{tikzpicture}
\]
\end{rmk}

\section{Standard subdivisions}
\label{subdivision}

In this section,
we introduce the notion of very standard subdivisions,
and we construct specific very $r$-standard subdivisions $\Gamma_{n,r}$ and $\Theta_{n,r,\bd}$ for integers $0\leq r\leq n$ and a finite sequence of positive integers $\bd$.
These will play technical roles in the later sections.

We refer to Definition \ref{intro.1} for $r$-standard subdivisions of $(\P^1)^n$,
where $0\leq r\leq n$ are integers.
Combining the conditions (i) and (ii) in Definition \ref{intro.1},
we have
\[
\Cone(e_1,\ldots,e_n)\in \Sigma
\]
for every $r$-standard subdivision $\Sigma$ of $(\P^1)^n$.
This condition, which is weaker than the original conditions (i) and (ii),
has the following geometric meaning:
The induced morphism of $\C$-realizations
\[
f\colon \Sigma_{\C}\to (\P_{\C}^1)^n
\]
satisfies
\[
f^{-1}(\A_{\C}^n)
\simeq
\A_{\C}^n.
\]
In the language of \cite{logSHF1}, this is precisely the condition that there exists an admissible blow-up $g\colon X\to \square_{\C}^n$ such that the underlying morphism of schemes $\ul{g}$ is identified to $f$ (this information is only for the readers of \cite{logSHF1} and not strictly needed in this paper).

\subsection{Definition and basic properties of very standard subdivisions}

The notion of very standard subdivisions will play a crucial role in Construction \ref{subdivision.27}.
In Lemma \ref{moving.5} and Proposition \ref{ordering.18},
we will show that the class of very standard subdivisions is closed under two different types of subdivisions.

\begin{df}
\label{ordering.4}
Let $0\leq r\leq n$ be integers.
A \emph{very $r$-standard subdivision of $(\P^1)$} is an $r$-standard subdivision $\Sigma$ of $(\P^1)^n$ satisfying the following condition too:
Let $\sigma$ be a cone of $\Sigma$ such that $e_1,\ldots,e_n\notin \sigma$.
If $I:=\{i_1,\ldots,i_s\}$ is a subset of $\{r+1,\ldots,n\}$ such that $\Cone( \sigma,e_{i_1}),\ldots,\Cone( \sigma,e_{i_s}) \in \Sigma$,
then $\Cone( \sigma,e_{i_1},\ldots,e_{i_s}) \in \Sigma$.
\end{df}

Let us first record two basic results on standard subdivisions.

\begin{prop}
\label{ordering.20}
Let $\Sigma$ be an $r$-standard subdivision of $(\P^1)^n$ with integers $0\leq r\leq n$,
and let $\Sigma'$ be the star subdivision of $\Sigma$ relative to a cone $\sigma$ such that $\{e_1,\ldots,e_r\}\notin \sigma$ and $\sigma\not\subset \Cone(e_{r+1},\ldots,e_n)$.
Then $\Sigma'$ is an $r$-standard subdivision of $(\P^1)^n$.
\end{prop}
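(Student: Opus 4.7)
My plan is to verify directly the two defining conditions of Definition \ref{intro.1}, together with smoothness of $\Sigma'$. Write $v_\sigma$ for the new ray introduced by the star subdivision, namely the sum of the primitive generators of the rays of $\sigma$. Since $\Sigma$ is smooth, the cone $\sigma$ is smooth, and star subdivision along a smooth cone of a smooth fan preserves smoothness by standard toric geometry \cite{CLStoric}; so $\Sigma'$ is a smooth subdivision of $(\P^1)^n$, and it remains only to check conditions (i) and (ii) for the new ray $v_\sigma$.

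For condition (i), I would observe that the set of rays of $\Sigma'$ equals $\Sigma(1) \cup \{v_\sigma\}$. By hypothesis, none of $e_1, \ldots, e_r$ is a ray of $\sigma$, and condition (i) for $\Sigma$ says that $e_i$ is the unique ray of $\Sigma$ with positive $i$th coordinate for each $1 \leq i \leq r$. Consequently every ray of $\sigma$ has zero $i$th coordinate for $1 \leq i \leq r$, and so does the sum $v_\sigma$. Hence $e_i$ remains the unique ray of $\Sigma'$ with positive $i$th coordinate, establishing condition (i).

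For condition (ii), I would use that a star subdivision alters only those cones of $\Sigma$ having $\sigma$ as a face. The hypothesis $\sigma \not\subset \Cone(e_{r+1}, \ldots, e_n)$ means $\sigma$ contains a ray lying outside $\Cone(e_{r+1}, \ldots, e_n)$, which rules out $\sigma$ being a face of $\Cone(e_{r+1}, \ldots, e_n)$. Therefore $\Cone(e_{r+1}, \ldots, e_n)$ is not affected and still belongs to $\Sigma'$. No step presents a genuine obstacle: the two hypotheses on $\sigma$ are designed precisely so that the new ray $v_\sigma$ cannot disturb condition (i) and so that the distinguished cone of condition (ii) survives the subdivision.
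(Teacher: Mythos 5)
Your proof is correct and follows essentially the same approach as the paper, which simply observes that the two hypotheses on $\sigma$ are equivalent to conditions (i) and (ii) of Definition~\ref{intro.1} holding for $\Sigma'$; you spell this out with a bit more detail and also note smoothness explicitly, which the paper leaves implicit. One minor imprecision: you claim that every ray of $\sigma$ has \emph{zero} $i$th coordinate for $1\leq i\leq r$. Condition~(i) for $\Sigma$ together with $e_i\notin\sigma$ only gives that these coordinates are non-positive (a ray such as $-e_1-e_2$, arising from a star subdivision, has $1$st coordinate $-1$); the sign becomes zero only when $\sigma$ happens to lie in the orthant containing $e_i$. This does not affect the conclusion, since a non-positive sum still fails to be positive, so $e_i$ remains the unique ray of $\Sigma'$ with positive $i$th coordinate — but you should say ``non-positive'' rather than ``zero.''
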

\begin{proof}
The condition $\{e_1,\ldots,e_r\}\notin \sigma$ (resp.\ $\sigma\not\subset \Cone(e_{r+1},\ldots,e_n)$) is equivalent to the condition that $\Sigma'$ satisfies the condition (i) (resp.\ (ii)) in Definition \ref{intro.1}.
\end{proof}

\begin{prop}
\label{ordering.26}
Let $0\leq r\leq n$ be integers.
Then there is a one-to-one correspondence between the set of $r$-standard subdivisions of $(\P^1)^n$ and the set of smooth subdivisions of $(\P^1-0)^r\times (\P^1)^{n-r}$ containing $\Cone(e_{r+1},\ldots,e_n)$ as a cone.
\end{prop}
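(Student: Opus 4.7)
The plan is to construct explicit mutually inverse maps. In the forward direction, given an $r$-standard subdivision $\Sigma$ of $(\P^1)^n$, I set
\[
\Sigma^\flat := \{\sigma \in \Sigma : e_1,\ldots,e_r \notin \sigma\}.
\]
This is automatically a smooth subfan. By condition (i) of Definition \ref{intro.1}, any ray of $\Sigma$ other than $e_1,\ldots,e_r$ has non-positive $i$-th coordinate for all $i \leq r$, so $|\Sigma^\flat| \subseteq \R_{\leq 0}^r \times \R^{n-r}$. For the reverse inclusion, given such an $x$, the minimal $\sigma \in \Sigma$ containing $x$ in its relative interior cannot contain any $e_i$ with $i \leq r$ (otherwise smoothness would force $x_i > 0$), so $\sigma \in \Sigma^\flat$. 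Hence $\Sigma^\flat$ is a smooth subdivision of $(\P^1-0)^r \times (\P^1)^{n-r}$, containing $\Cone(e_{r+1},\ldots,e_n)$ by (ii).

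For the reverse direction, given such a $\Sigma'$, for each $\tau \in \Sigma'$ let $J_\tau := \{i \in \{1,\ldots,r\} : \tau \subseteq \{x_i = 0\}\}$ and set
\[
\Sigma := \{\Cone(\tau, \{e_i\}_{i \in I}) : \tau \in \Sigma',\ I \subseteq J_\tau\}.
\]
Each such cone is smooth: since the rays of $\tau$ have zero $i$-th coordinate for $i \in I$, projecting onto the coordinates outside $I$ leaves them a part of a $\Z$-basis of $\Z^{n-|I|}$, and reattaching the trivially independent $\{e_i\}_{i \in I}$ gives part of a $\Z$-basis of $\Z^n$. Faces of cones of $\Sigma$ lie in $\Sigma$ since $J_{\tau'} \supseteq J_\tau$ whenever $\tau' \prec \tau$. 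The crucial intersection identity
\[
\Cone(\tau_1, \{e_i\}_{i \in I_1}) \cap \Cone(\tau_2, \{e_i\}_{i \in I_2}) = \Cone(\tau_1 \cap \tau_2, \{e_i\}_{i \in I_1 \cap I_2})
\]
will follow by observing that a point of $\Cone(\tau, \{e_i\}_{i \in I})$ has $i$-th coordinate $\geq 0$ iff $i \in I$ and $\leq 0$ iff $i \in \{1,\ldots,r\} \setminus I$, so at a common point the coordinates in $I_1 \triangle I_2$ vanish; the unique decomposition of this point in each smooth simplicial cone then forces its $\tau$-component to lie in $\tau_1 \cap \tau_2$. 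For support, given $x \in \R^n$, subtracting $\sum_{i \leq r,\, x_i > 0} x_i e_i$ lands in $|\Sigma'|$, and the minimal cone $\tau \in \Sigma'$ containing the remainder satisfies $\{i \leq r : x_i > 0\} \subseteq J_\tau$; refinement of $(\P^1)^n$ is routine since $\tau$ refines a product cone and $\tau \subseteq \{x_i = 0\}$ for $i \in I$. Conditions (i) and (ii) of Definition \ref{intro.1} are then immediate, the latter using $J_{\Cone(e_{r+1},\ldots,e_n)} = \{1,\ldots,r\}$.

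The two maps are mutual inverses because any $\sigma \in \Sigma$ decomposes uniquely as $\Cone(\tau, \{e_i\}_{i \in I})$ with $I := \{i \leq r : e_i \in \sigma\}$ and $\tau$ the face of $\sigma$ spanned by the remaining rays, and this $\tau$ lies in $\Sigma^\flat$. The main obstacle will be verifying the intersection identity needed for the fan axiom of $\Sigma$ in the reverse direction, which requires carefully combining the sign analysis of the first $r$ coordinates with the uniqueness of the decomposition of a point in a smooth simplicial cone into a $\tau$-part and an $\{e_i\}$-part.
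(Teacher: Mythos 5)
Your proposal is correct and takes essentially the same approach as the paper: the forward map is restriction to the support of $(\P^1-0)^r\times(\P^1)^{n-r}$ (which, as you observe, is exactly the subfan of cones not containing any $e_i$ with $i\le r$), and the reverse map attaches $\Cone(\{e_i\}_{i\in I})$ to each $\tau$ lying in the coordinate hyperplanes $\{x_i=0\}$ for $i\in I\subseteq\{1,\ldots,r\}$. The paper leaves the fan-axiom verification and the mutual-inverse check as a one-line ``observe''; you have spelled those out, and your arguments are sound (your ``smoothness would force $x_i>0$'' step also uses that $\Sigma$ refines $(\P^1)^n$, so that all rays of a cone containing $e_i$ have nonnegative $i$-th coordinate, but this is a minor point).
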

\begin{proof}
For an $r$-standard subdivision $\Sigma$ of $(\P^1)^n$,
its restriction $\res(\Sigma)$ to the support of $(\P^1-0)^r\times (\P^1)^{n-r}$ still contains $\Cone(e_{r+1},\ldots,e_n)$ as a cone.
If $\Sigma$ and $\Sigma'$ are $r$-standard subdivisions of $(\P^1)^n$ such that $\res(\Sigma)=\res(\Sigma')$,
then the condition (ii) in Definition \ref{intro.1} implies $\Sigma=\Sigma'$.

When a smooth subdivision $\Delta$ of $(\P^1-0)^{r}\times (\P^1)^{n-r}$ containing $\Cone(e_{r+1},\ldots,e_n)$ as a cone is given,
let $\Sigma$ be the set of cones of the form
$\Cone(\delta,e_{i_1},\ldots,e_{i_s})$
such that $\delta\in \Delta$, $I:=\{i_1,\ldots,i_s\}$ is a subset of $\{1,\ldots,r\}$,
and $\delta$ is contained in the lattice $\Z^{\{1,\ldots,r\}-I}\times \Z^{n-r}$.
To conclude that,
observe that $\Sigma$ is an $r$-standard subdivision of $(\P^1)^n$,
and we have $\res(\Sigma)=\Delta$.
\end{proof}

The following combinatorial result will be needed in Lemma \ref{moving.5}.

\begin{lem}
\label{moving.3}
Consider the fan $\A^n$ and its cones $\eta:=\Cone(e_1,\ldots,e_t)$ and $\tau:=\Cone(e_1,\ldots,e_m)$,
where $0\leq t\leq m\leq n$ are integers.
Let $\Sigma$ be the $\eta$-excluded barycentric subdivision of $\A^n$ relative to $\tau$,
and let $\sigma$ be a cone of $\Sigma$ such that $e_1,\ldots,e_t\notin \sigma$.
If $I:=\{i_1,\ldots,i_s\}$ is a subset of $\{1,\ldots,t\}$ such that
\[
\Cone( \sigma,e_{i_1}),\ldots,\Cone( \sigma,e_{i_s}) \in \Sigma,
\]
then we have $\Cone(\sigma,e_{i_1},\ldots,e_{i_s}) \in \Sigma$.
\end{lem}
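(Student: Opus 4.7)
The plan is to unpack Proposition \ref{subdivision.32} combinatorially. That proposition identifies the maximal cones of $\Sigma$ with $t$-admissible permutations $\alpha$ of $\{1,\ldots,m\}$: writing $c = c(\alpha)$ for the smallest index with $\alpha(c+1)>t$ and $S_j^\alpha := \{\alpha(1),\ldots,\alpha(j)\}$, the rays of $\sigma_\alpha$ split into \emph{first-type} rays $e_i$ for $i\in T_\alpha:=S_c^\alpha\subseteq\{1,\ldots,t\}$; \emph{second-type} rays $v_{S_j^\alpha}:=\sum_{i\in S_j^\alpha}e_i$ for $c+1\le j\le m$; and the \emph{outside} rays $e_{m+1},\ldots,e_n$. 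A key observation is that every $S_j^\alpha$ with $j\ge c+1$ contains $\alpha(c+1)>t$, so every second-type ray lives on a set containing some index exceeding $t$.

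Because $e_1,\ldots,e_t\notin\sigma$, the rays of $\sigma$ inside some ambient $\sigma_\alpha$ must be of the form $v_{A_1},\ldots,v_{A_k}$ for a subchain $A_1\subsetneq\cdots\subsetneq A_k$ of the flag of $\sigma_\alpha$, together with possibly some outside rays (and each $A_i$ contains an index $>t$). For each $j$ the hypothesis $\Cone(\sigma,e_{i_j})\in\Sigma$ places that cone inside some $\sigma_{\alpha_j}$, so $i_j\in T_{\alpha_j}$ and $A_1,\ldots,A_k$ all appear in the flag of $\sigma_{\alpha_j}$. Consequently $T_{\alpha_j}\subsetneq A_1$, which gives the crucial inclusion $\{i_1,\ldots,i_s\}\subseteq A_1\cap\{1,\ldots,t\}$.

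To produce a single witness I construct a $t$-admissible $\beta$ with $T_\beta=\{i_1,\ldots,i_s\}$ and with every $A_i$ in the flag of $\sigma_\beta$: list $\beta(1)<\cdots<\beta(s)$ as $i_1,\ldots,i_s$ in increasing order; choose $\beta(s+1)$ to be some element of $A_1\setminus T_\beta$ exceeding $t$ (which exists because $A_1$ contains an index $>t$ while $T_\beta\subseteq\{1,\ldots,t\}$, thereby enforcing admissibility); then list the remaining elements of $A_1\setminus T_\beta$, then $A_2\setminus A_1$, \ldots, $A_k\setminus A_{k-1}$, and finally $\{1,\ldots,m\}\setminus A_k$ in any order. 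By construction $\beta$ is $t$-admissible and $A_1,\ldots,A_k$ lie in its flag, so $\sigma_\beta$ has $e_{i_1},\ldots,e_{i_s}$, the $v_{A_i}$, and the outside rays all among its rays; hence $\Cone(\sigma,e_{i_1},\ldots,e_{i_s})$ is a face of $\sigma_\beta$ and lies in $\Sigma$. The main subtlety is the admissibility constraint $\beta(s+1)>t$; routing that first ``free'' index into $A_1$ is precisely what allows one flag to simultaneously realize $T_\beta=\{i_1,\ldots,i_s\}$ and contain each $A_i$. The degenerate cases --- $s=0$; $k=0$, where $\sigma$ sits in $\Cone(e_{m+1},\ldots,e_n)$ and is a face of every maximal cone; and $t=m$, where the only admissible permutation is $\id$ and $\sigma_{\id}=\Cone(e_1,\ldots,e_n)$ contains everything --- are immediate.
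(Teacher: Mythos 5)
Your proof is correct, but it takes a genuinely different route from the paper's. The paper proceeds by induction on $m$: if $f := e_1 + \cdots + e_m \notin \sigma$, it reduces to a facet $\tau_i$ of $\tau$ where the restriction of $\Sigma$ is a smaller barycentric subdivision; if $f \in \sigma$, it applies the inductive hypothesis to the facet $\sigma'$ of $\sigma$ obtained by dropping $f$, takes the witness $\sigma_\alpha$ supplied by Proposition \ref{subdivision.32} for $\Cone(\sigma',e_{i_1},\ldots,e_{i_s})$, and notes that $f = e_{\alpha(1)} + \cdots + e_{\alpha(m)}$ is automatically a ray of $\sigma_\alpha$, so that $\Cone(\sigma,e_{i_1},\ldots,e_{i_s})$ is still a face of $\sigma_\alpha$. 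You bypass the induction entirely: you read off from Proposition \ref{subdivision.32} the flag structure of each maximal cone, observe that the second-type rays of $\sigma$ determine a fixed chain $A_1 \subsetneq \cdots \subsetneq A_k$ that must embed in the flag of any maximal cone containing $\sigma$ (since the rays are ray-for-ray identical by smoothness), and extract $\{i_1,\ldots,i_s\} \subseteq A_1$ from the inclusions $i_j \in T_{\alpha_j} \subsetneq A_1$. You then assemble a single explicit $t$-admissible $\beta$ whose initial segment is $\{i_1,\ldots,i_s\}$ and whose flag passes through each $A_i$. The observation that $A_1$ necessarily swallows all the $i_j$'s is the crux, and it cleanly replaces the paper's inductive descent. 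Both proofs ultimately rest on Proposition \ref{subdivision.32}; yours uses it in one shot and makes the role of $t$-admissibility more visible --- the constraint $\beta(s+1) > t$ is satisfiable precisely because $A_1$, being a second-type set, contains $\alpha(c+1) > t$.
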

\begin{proof}
We proceed by induction on $m$.
The claim is obvious for $m=0$.
Assume that the claim holds for $m-1$,
Consider $f:=e_1+\cdots+e_m$.
If $f\notin \sigma$,
then $\sigma$ is contained in
\[
\tau_i:=\Cone(e_1,\ldots,e_{i-1},e_{i+1},\ldots,e_n)
\]
for some integer $1\leq i\leq m$.
Hence by induction,
we have $\Cone(\sigma,e_{i_1},\ldots,e_{i_s}) \in \Sigma$.

If $f\in \sigma$,
then consider the facet $\sigma'$ of $\sigma$ such that $\sigma'$ and $f$ generate $\sigma$.
By induction,
we have $\Cone( \sigma',e_{i_1},\ldots,e_{i_s})\in\Sigma$.
Proposition \ref{subdivision.32} implies that there exists a $t$-admissible permutation $\alpha$ of $\{1,\ldots,m\}$ such that $\Cone(\sigma',e_{i_1},\ldots,e_{i_s})$ is a face of
\[
\sigma_\alpha=
\Cone(e_{\alpha(1)},\ldots,e_{\alpha(c)},e_{\alpha(1)}+\cdots+e_{\alpha(c+1)},\ldots,e_{\alpha(1)}+\cdots+e_{\alpha(m)},e_{m+1},\ldots,e_n),
\]
where $c$ is the smallest integer such that $\alpha(c+1)>t$.
Since $f=e_{\alpha(1)}+\cdots+e_{\alpha(m)}$,
we see that $\Cone(\sigma,e_{i_1},\ldots,e_{i_s})$ is a face of $\sigma_\alpha$ and hence a cone of $\Sigma$.
\end{proof}

Next, we show that certain excluded barycentric subdivisions remain very $r$-standard.

\begin{lem}
\label{moving.5}
Let $\Sigma$ be an $r$-standard subdivision of $(\P^1)^n$ with integers $0\leq r\leq n$,
and let $A$ be a subfan of $\Sigma$ satisfying the condition in \textup{Definition \ref{dist.12}}.
Assume that all cones of $A$ are contained in the support of $(\P^1-0)^r\times (\P^1)^n$ and not contained in $\eta$.
Consider the $\eta$-excluded barycentric subdivision $\Delta :=\Sigma_\eta^\bary(A)$ of $\Sigma$ relative to $A$,
where $\eta:=\Cone(e_{r+1},\ldots,e_n)$.
If $\Sigma$ is very $r$-standard,
then $\Delta$ is very $r$-standard too.
\end{lem}
\begin{proof}
By Proposition \ref{ordering.20},
$\Delta$ is an $r$-standard subdivision of $(\P^1)^n$.
It remains to check the condition in Definition \ref{ordering.4} for $\Delta$.

Let $\tau'$ be a cone of $\Delta$ such that $e_1,\ldots,e_n\notin \tau'$,
and let $\{i_1,\ldots,i_t\}$ be a subset of $\{1,\ldots,n\}$ such that $\Cone(\tau',e_{i_1}),\ldots,\Cone(\tau',e_{i_t})\in \Delta$.
Consider the cone $\tau$ of $\Sigma$ generated by $\tau'$.
If $\Cone(\tau,e_{i_j})\notin \Sigma$ for some integer $1\leq j\leq t$,
then let $\eta$ be a cone of $\Sigma$ such that $\dim \eta=\dim \Cone(\tau,e_{i_j})$ and
\[
\tau \subset \eta\subset \Cone(\tau,e_{i_j}).
\]
Then $\eta=\Cone(\tau,f)$ for some ray $f$ of $\Sigma$,
and we have
\[
\tau\subsetneq \Cone(\tau,f)\subsetneq \Cone(\tau,e_{i_j}).
\]
This implies that we have
\[
\tau'\subsetneq \Cone(\tau',f)\subsetneq \Cone(\tau',e_{i_j}).
\]
Since $\dim \Cone(\tau',f)=\dim \Cone(\tau',e_{i_j})$,
we have $\Cone(\tau',e_{i_j})\notin \Delta$,
which is a contradiction.
It follows that we have $\Cone(\tau,e_{i_j})\in \Sigma$ for all $1\leq j\leq t$.

Since $\Sigma$ is very $r$-standard,
we have $\sigma':=\Cone(\tau,e_{i_1},\ldots,e_{i_t}) \in \Sigma$.
The collection of all cones of $\Delta$ contained in $\sigma'$ forms a fan,
and we can apply Lemma \ref{moving.3} to this fan to conclude.
\end{proof}

We also record the following result here, which will be used in Lemma \ref{subdivision.24}.

\begin{prop}
\label{ordering.18}
Let $\Sigma$ be a very $r$-standard subdivision of $(\P^1)^n$ with integers $0\leq r\leq n$,
and let $\Sigma'$ be the star subdivision of $\Sigma$ relative to a $2$-dimensional cone $\sigma$ such that $e_i\notin \sigma$ for all $1\leq i\leq r$ and $\sigma\not\subset \Cone(e_{r+1},\ldots,e_n)$.
Then $\Sigma'$ is a very $r$-standard subdivision of $(\P^1)^n$.
\end{prop}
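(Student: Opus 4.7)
The plan is to apply Proposition \ref{ordering.20} to get that $\Sigma'$ is $r$-standard, reducing the task to verifying the extra condition of Definition \ref{ordering.4}. Recall every cone of $\Sigma'=\Sigma^*(\sigma)$ is either (a) a cone of $\Sigma$ not having $\sigma$ as a face, or (b) of the form $\Cone(v_\sigma,\rho)$ with $v_\sigma=v_1+v_2$ the new ray (where $\sigma=\Cone(v_1,v_2)$), $\rho\in\Sigma$, $\sigma\not\preceq\rho$, and $\Cone(\sigma,\rho)\in\Sigma$. The key geometric fact I would use repeatedly is that $v_\sigma$ lies in the relative interior of $\sigma$, so $v_\sigma\in|\gamma|$ for $\gamma\in\Sigma$ forces $\sigma\preceq\gamma$; in particular, $v_\sigma$ is never a ray of a cone of $\Sigma$.

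Fix $\tau'\in\Sigma'$ with $e_1,\ldots,e_n\notin\tau'$, and $I=\{i_1,\ldots,i_s\}\subset\{r+1,\ldots,n\}$ with $\kappa_j:=\Cone(\tau',e_{i_j})\in\Sigma'$; the goal is $\kappa:=\Cone(\tau',e_{i_1},\ldots,e_{i_s})\in\Sigma'$. If $\tau'\in\Sigma$ (type (a)), comparing extreme rays shows each $\kappa_j$ is also of type (a): otherwise $v_\sigma$ would have to equal a ray of $\tau'$ or some $e_{i_j}$, which is impossible. The very $r$-standard property of $\Sigma$ applied to $\tau'$ then gives $\kappa\in\Sigma$, and a short contradiction argument using $\sigma\not\subset\Cone(e_{r+1},\ldots,e_n)$, which forces one ray of $\sigma$ to lie in $\tau'$ and the other to be some $e_{i_k}$, shows $\sigma\not\preceq\kappa$, hence $\kappa\in\Sigma'$.

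If $\tau'=\Cone(v_\sigma,\rho)$ (type (b)), the fact that $v_\sigma$ is not a ray of any cone of $\Sigma$ forces each $\kappa_j$ to be of type (b), say $\kappa_j=\Cone(v_\sigma,\rho_j)$; matching ray sets yields $\rho_j=\Cone(\rho,e_{i_j})\in\Sigma$. Since the rays of $\rho$ contain no $e_i$, applying the very $r$-standard property of $\Sigma$ to $\rho$ gives $\Cone(\rho,e_I)\in\Sigma$, and a contradiction argument analogous to the one above shows $\sigma\not\preceq\Cone(\rho,e_I)$. It remains to check $\Cone(\sigma,\rho,e_I)\in\Sigma$; then $\kappa=\Cone(v_\sigma,\rho,e_I)$ assembles as a type-(b) cone of $\Sigma'$.

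The main obstacle is the subcase in which one of $v_1,v_2$, say $v_2$, equals some $e_\ell$ with $\ell\in\{r+1,\ldots,n\}$: then $\Cone(\sigma,\rho)$ has $e_\ell$ as a ray, so the very property does not apply to it directly. The workaround is to apply the very property instead to the face $\Cone(v_1,\rho)$, whose rays avoid all $e_i$ (the hypothesis $\sigma\not\subset\Cone(e_{r+1},\ldots,e_n)$ ensures $v_1$ is not itself any $e_{\ell'}$), with the enlarged index set $I\cup\{\ell\}\subset\{r+1,\ldots,n\}$; the hypotheses $\Cone(\sigma,\rho)\in\Sigma$ and $\Cone(\sigma,\rho,e_{i_j})\in\Sigma$ coming from the type-(b) descriptions of $\tau'$ and $\kappa_j$ supply the required cones $\Cone(v_1,\rho,e_\ell)$ and $\Cone(v_1,\rho,e_{i_j})$ as faces. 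In the easier subcase where neither $v_i$ is an $e_\ell$, the very property applies directly to $\Cone(\sigma,\rho)$ to give the same conclusion.
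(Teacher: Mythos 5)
Your proof is correct and follows essentially the same route as the paper's: both reduce to the very-$r$-standard property of $\Sigma$ applied to the cone $\Cone(\sigma,\rho)$ surrounding $\tau'$, and in the tricky case where a ray of $\sigma$ equals some $e_\ell$ with $\ell>r$ both drop to the facet $\Cone(v_1,\rho)$ (which avoids all $e_i$) and enlarge the index set to $I\cup\{\ell\}$. Your explicit type-(a)/type-(b) casework is a minor reorganization of the paper's argument, which instead works with $\tau$ defined as the smallest cone of $\Sigma$ containing $\tau'$ and splits on whether $\tau$ contains an $e_m$ with $m>r$.
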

\begin{proof}
Since $\Sigma$ is an $r$-standard subdivision of $(\P^1)^n$ by Proposition \ref{ordering.20},
it suffices to check the condition in Definition \ref{ordering.4} for $\Sigma'$.

Let $\tau'$ be a cone of $\Sigma'$ such that $e_1,\ldots,e_n\notin \tau'$,
and let $I:=\{i_1,\ldots,i_s\}$ be a subset of $\{1,\ldots,n\}$ such that $\Cone( \tau',e_{i_1}),\ldots,\Cone( \tau',e_{i_s}) \in \Sigma$.

Consider the cone $\tau$ of $\Sigma$ generated by $\tau'$.
Let us argue as in the proof of Lemma \ref{moving.5}.
If $\Cone(\tau,e_{i_j})\notin \Sigma$ for some $1\leq j\leq s$,
then there exists a ray $f$ of $\Sigma$ such that
\[
\tau\subsetneq \Cone(\tau,f)\subsetneq \Cone(\tau,e_{i_j}).
\]
This implies that we have
\[
\tau'\subsetneq \Cone(\tau',f)\subsetneq \Cone(\tau',e_{i_j})
\]
and hence $\Cone(\tau',e_{i_j})\notin \Sigma'$,
which is a contradiction.
Hence we have $\Cone(\tau,e_{i_j})\in \Sigma$ for every integer $1\leq j\leq s$.

If $e_j\notin \tau$ for every integer $r+1\leq j\leq n$,
then we have $\Cone(\tau,e_{i_1},\ldots,e_{i_s})\in \Sigma$ since $\Sigma$ is a very $r$-standard subdivision.
Using the description of the star subdivision,
we have $\Cone(\tau',e_{i_1},\ldots,e_{i_t})\in \Sigma'$ too.

If $e_j\in \tau$ for some integer $r+1\leq j\leq n$,
then we have $e_j\in \sigma$.
Consider the face $\eta$ of $\tau$ such that $\Cone(e_j,\eta)=\tau$.
Then there exists a ray $f\in \eta$ such that $\sigma=\Cone(e_j,f)$,
and we have $\Cone(e_j+f,\eta)=\tau'$.
It follows that we have $\Cone(\eta,e_j)\in \Sigma$.
Together with $\Cone(\eta,e_{i_1}),\ldots,\Cone(\eta,e_{i_s})\in \Sigma$ and the assumption that $\Sigma$ is a very $r$-standard subdivision,
we have $\Cone(\eta,e_j,e_{i_1},\ldots,e_{i_s})\in \Sigma$, i.e., $\Cone(\tau,e_{i_1},\ldots,e_{i_s})\in \Sigma$.
Using the description of the star subdivision,
we have $\Cone(\tau',e_{i_1},\ldots,e_{i_s})\in \Sigma'$ too.
\end{proof}

\subsection{Construction of the fans \texorpdfstring{$\Gamma_{n,r}$}{Gamma} and \texorpdfstring{$\Theta_{n,r,\bd}$}{Theta}}

The fan $\Theta_{n,r,\bd}$ constructed in this section will serve as the fan appearing in the base step of the proof of Theorem \ref{intro.2},
see the proof of Lemma \ref{subdivision.35}.
We will show in Proposition \ref{ordering.13} that $\Theta_{n,r,\bd}$ is a very $r$-standard subdivision.

\begin{df}
For integers $0\leq r\leq n$,
let $\Gamma_{n,r}$ be the $\Cone(e_{r+1},\ldots,e_n)$-excluded barycentric subdivision of $(\P^1)^n$ relative to $(\P^1-0)^r\times (\P^1)^n$.
Observe that $\Gamma_{n,r}$ is an $r$-standard subdivision of $(\P^1)^n$.
\end{df}

\begin{exm}
We illustrate the fan $(\P^1)^3$ as follows:
\[
\begin{tikzpicture}[scale = 0.8]
\draw (2,0)--(0,2)--(-2,0)--(0,-2)--(2,0)--(0,0)--(-2,0);
\draw (0,2)--(0,0)--(0,-2);
\draw (2,0) node[right] {$e_1$};
\draw (0,2) node[above] {$e_2$};
\draw (0,0) node[above right] {$e_3$};
\draw (-2,0) node[left] {$-e_1$};
\draw (0,-2) node[below] {$-e_2$};
\begin{scope}[shift={(6,0)}]
\draw (2,0)--(0,2)--(-2,0)--(0,-2)--(2,0)--(0,0)--(-2,0);
\draw (0,2)--(0,0)--(0,-2);
\draw (2,0) node[right] {$e_1$};
\draw (0,2) node[above] {$e_2$};
\draw (0,0) node[above right] {$-e_3$};
\draw (-2,0) node[left] {$-e_1$};
\draw (0,-2) node[below] {$-e_2$};
\end{scope}
\end{tikzpicture}
\]
Each cone with vertices $v_1$, $v_2$, and $v_3$ in the figure corresponds to the maximal cone $\Cone(v_1,v_2,v_3)$ of $(\P^1)^3$.

We illustrate the subdivision $\Gamma_{3,1}$ of $(\P^1)^3$ as follows:
\[
\begin{tikzpicture}[scale = 0.8]
\draw (2,0)--(0,2)--(-2,0)--(0,-2)--(2,0)--(0,0)--(-2,0);
\draw (0,2)--(0,0)--(0,-2);
\draw (0,2)--(-1,0);
\draw (0,0)--(-1,1);
\draw (-2,0)--(-2/3,2/3);
\draw (0,-2)--(-1,0);
\draw (0,0)--(-1,-1);
\draw (-2,0)--(0,-1);
\draw (2,0)--(0,-1);
\draw (2,0) node[right] {$e_1$};
\draw (0,2) node[above] {$e_2$};
\draw (0,0) node[above right] {$e_3$};
\draw (-2,0) node[left] {$-e_1$};
\draw (0,-2) node[below] {$-e_2$};
\begin{scope}[shift={(6,0)}]
\draw (2,0)--(0,2)--(-2,0)--(0,-2)--(2,0)--(0,0)--(-2,0);
\draw (0,2)--(0,0)--(0,-2);
\draw (2,0)--(0,1);
\draw (2,0)--(0,-1);
\draw (0,0)--(-1,-1);
\draw (0,0)--(-1,1);
\draw (-2,0)--(0,1);
\draw (0,2)--(-1,0);
\draw (-2,0)--(0,-1);
\draw (0,-2)--(-1,0);
\draw (2,0) node[right] {$e_1$};
\draw (0,2) node[above] {$e_2$};
\draw (0,0) node[above right] {$-e_3$};
\draw (-2,0) node[left] {$-e_1$};
\draw (0,-2) node[below] {$-e_2$};
\end{scope}
\end{tikzpicture}
\]
\end{exm}

\begin{df}
\label{subdivision.36}
Let $0\leq r\leq n$, $s\geq 2$, and $d_2,\ldots,d_s\geq 1$ be integers.
Consider the cone $\eta:=\Cone(e_{r+1},\ldots,e_n)$ in the fan $(\P^1-0)^r\times (\P^1)^{n-r}$.
Let $\Theta_{n,r,d_2,\ldots,d_s}$ be the $r$-standard subdivision of $(\P^1)^n$ corresponding to the subdivision
\[
\sd_{\eta,\min(2,n-r+1),d_2}^{r+s-1}
\sd_{\eta,\min(3,n-r+1),d_3}^{r+s-2}
\cdots
\sd_{\eta,\min(s,n-r+1),d_s}^r((\P^1-0)^r\times (\P^1)^{n-r})
\]
of $(\P^1-0)^r\times (\P^1)^{n-r}$ obtained by Proposition \ref{ordering.26}.
If $\bd:=(d_2,\ldots,d_s)$,
then we set
\[
\Theta_{n,r,\bd}:=\Theta_{n,r,d_2,\ldots,d_s}.
\]
If $\bd$ is an empty sequence,
then we set
\[
\Theta_{n,r,\bd}:=(\P^1)^n.
\]
\end{df}

\begin{rmk}
With the above notation,
observe that $\Theta_{n,r,\bd}$ is a subdivision of $\Gamma_{n,r}$ if $\bd$ is nonempty.
Furthermore,
$\Theta_{n,r,\bd}$ is obtained from $(\P^1)^n$ by a sequence of $\eta$-excluded barycentric subdivisions,
which we will use several times later for induction arguments.
\end{rmk}

\begin{lem}
\label{moving.4}
For integers $0\leq r\leq n$,
$(\P^1)^n$ is a very $r$-standard subdivision of $(\P^1)^n$.
\end{lem}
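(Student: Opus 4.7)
My plan is to verify directly the two conditions of Definition \ref{intro.1} together with the additional condition of Definition \ref{ordering.4}, exploiting the product structure of $(\P^1)^n$. Each factor $\P^1$ has only three cones $\{0\}$, $\Cone(e_1)$, $\Cone(-e_1)$, so every cone of $(\P^1)^n$ has the form $\Cone(\epsilon_j e_j : j \in J)$ for some subset $J \subset \{1,\ldots,n\}$ and signs $\epsilon_j \in \{\pm 1\}$; in particular, the rays of $(\P^1)^n$ are exactly $\pm e_1, \ldots, \pm e_n$.

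From this description, conditions (i) and (ii) of Definition \ref{intro.1} are immediate: for $1 \leq i \leq r$ the only ray with positive $i$-th coordinate is $e_i$, and $\Cone(e_{r+1},\ldots,e_n)$ is a face of the maximal cone $\Cone(e_1,\ldots,e_n)$.

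For the "very" condition, I would argue as follows. Given a cone $\sigma$ of $(\P^1)^n$ with $e_1,\ldots,e_n \notin \sigma$, write $\sigma = \Cone(-e_j : j \in J)$ for some $J \subset \{1,\ldots,n\}$. Suppose $I = \{i_1,\ldots,i_s\} \subset \{r+1,\ldots,n\}$ satisfies $\Cone(\sigma, e_{i_k}) \in (\P^1)^n$ for each $k$. Then necessarily $i_k \notin J$, since otherwise $-e_{i_k}$ and $e_{i_k}$ would both be rays of the same cone of the product fan, which is impossible. Consequently the rays $\{-e_j : j \in J\} \cup \{e_{i_1},\ldots,e_{i_s}\}$ involve pairwise distinct coordinates, and hence $\Cone(\sigma, e_{i_1},\ldots,e_{i_s})$ is a cone of $(\P^1)^n$, as required.

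There is essentially no obstacle here: the statement is the base case on which the later constructions build (e.g.\ $\Theta_{n,r,\bd}$ starts from $(\P^1)^n$), and the only substantive input is the observation that opposite rays $\pm e_i$ cannot coexist in a single cone of the product fan $(\P^1)^n$.
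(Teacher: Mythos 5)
Your proof is correct and essentially the same as the paper's: both identify a cone $\sigma$ avoiding all $e_i$ as $\Cone(-e_j : j \in J)$ for some $J$, observe that $\Cone(\sigma, e_i) \in (\P^1)^n$ forces $i \notin J$, and conclude that adjoining several such $e_i$ still produces a cone of the product fan since the coordinates remain disjoint.
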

\begin{proof}
Observe that $(\P^1)^n$ is an $r$-standard subdivision of $(\P^1)^n$.
If $\sigma$ is a cone of $\Sigma$ such that $e_1,\ldots,e_n\notin \sigma$,
then $\sigma=\Cone(-e_{j_1},\ldots,-e_{j_t})$ for some subset $J:=\{j_1,\ldots,j_t\}\subset \{1,\ldots,n\}$.
Let $I:=\{i_1,\ldots,i_s\}$ be the complement of $J$.
Then for an integer $1\leq i\leq n$, $\Cone(\sigma,e_i)\in (\P^1)^n$ if and only $i\in I$.
Furthermore, we have $\Cone(\sigma,e_{i_1},\ldots,e_{i_s})\in (\P^1)^n$.
It follows that $(\P^1)^n$ is a very $r$-standard subdivision.
\end{proof}

\begin{prop}
\label{ordering.13}
For integers $0\leq r\leq n$ and a finite sequence $\bd$ of positive integers,
$\Theta_{n,r,\bd}$ is a very $r$-standard subdivision of $(\P^1)^n$.
\end{prop}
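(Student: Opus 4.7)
The plan is to induct on the number of individual $\eta$-excluded barycentric subdivisions (in the sense of Definition \ref{dist.12}) needed to construct $\Theta_{n,r,\bd}$ starting from $(\P^1)^n$, where $\eta := \Cone(e_{r+1},\ldots,e_n)$. Unfolding Definition \ref{subdivision.36} through Definition \ref{dist.1} and Proposition \ref{ordering.26} exhibits $\Theta_{n,r,\bd}$ as the result of a finite sequence of such subdivisions, as noted in the remark following Definition \ref{subdivision.36}. The base case is $\bd$ empty, for which $\Theta_{n,r,\bd} = (\P^1)^n$ is very $r$-standard by Lemma \ref{moving.4}.

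For the inductive step I would apply Lemma \ref{moving.5} to each subdivision in the sequence. The task then reduces to verifying its three hypotheses on the relevant subfan $A$: (a) the condition in Definition \ref{dist.12}; (b) all cones of $A$ lie in the support of $(\P^1-0)^r \times (\P^1)^{n-r}$; and (c) no cone of $A$ is contained in $\eta$. Condition (a) is precisely the content of Definition \ref{dist.1} for the subfan $A_{u,d}$, and it is trivial in the $d=1$ step where $A = \Sigma$ with $a(\sigma) = \sigma$. Condition (b) holds because the operations in Definition \ref{dist.1} act on the restriction $(\P^1-0)^r \times (\P^1)^{n-r}$, while cones of the form $\Cone(\delta,e_{i_1},\ldots,e_{i_s})$ with $i_j \leq r$ are reattached untouched by Proposition \ref{ordering.26}.

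The main obstacle is condition (c), since both $A_{u,d}$ and $\Sigma$ itself generically contain faces of $\eta$. The resolution is to observe that, by Definition \ref{subdivision.29}, an $\eta$-excluded barycentric subdivision ignores every face of $a(\sigma)$ contained in $\eta$; thus replacing $A$ by its subfan $A^\flat := \{\alpha \in A : \alpha \not\subset \eta\}$ yields an identical output subdivision. Using the explicit formula $a(\sigma) = \lvert \sigma \rvert \cap \bigcup_{\tau \in \Sigma(u)} \lvert \tau \rvert$, a short verification shows that $A^\flat$ still satisfies condition (a): for any $\sigma \in \Sigma$ with $\sigma \not\subset \eta$, the faces of the original $a(\sigma)$ not lying in $\eta$ assemble into a cone of $A^\flat$ which dominates all cones of $A^\flat$ inside $\sigma$. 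Lemma \ref{moving.5} then applies to $A^\flat$, and the induction propagates the very $r$-standard property all the way up to $\Theta_{n,r,\bd}$.
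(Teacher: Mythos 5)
Your overall strategy coincides with the paper's proof, which simply cites Lemmas \ref{moving.4} and \ref{moving.5}: iterate Lemma \ref{moving.5} across the successive $\eta$-excluded barycentric subdivisions building $\Theta_{n,r,\bd}$, starting from $(\P^1)^n$. The place where you diverge is your worry about the hypothesis ``not contained in $\eta$'' of Lemma \ref{moving.5}, and the repair you propose is the part that breaks down. The set $A^\flat := \{\alpha\in A : \alpha\not\subset\eta\}$ is not a subfan: it lacks the zero cone, and more seriously it is not closed under faces --- if $g\notin\eta$ is a ray and $\Cone(e_{r+1},g)\in A$, then $\Cone(e_{r+1},g)\in A^\flat$ while its face $\Cone(e_{r+1})\subset\eta$ is not. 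So $A^\flat$ cannot be substituted into Definition \ref{dist.12}, which requires a subfan. Your ``short verification'' of condition (a) for $A^\flat$ also only treats $\sigma\in\Sigma$ with $a(\sigma)\not\subset\eta$; when every cone of $A$ inside $\sigma$ lies in $\eta$, there is no candidate $a^\flat(\sigma)\in A^\flat$ at all, and the claim that the faces of $a(\sigma)$ not lying in $\eta$ ``assemble into a cone'' is just the trivial observation that $a(\sigma)$ is its own top face when $a(\sigma)\not\subset\eta$.

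What you were missing is that no repair is needed: the clause ``not contained in $\eta$'' in Lemma \ref{moving.5} is effectively a safety remark, because the $\eta$-excluded barycentric subdivision never star-subdivides along a cone contained in $\eta$ in the first place. Indeed, $\Sigma_\eta^\bary(A)$ is produced by star-subdividing each $\sigma\in\Sigma$ along those faces $\sigma_i$ of $a(\sigma)$ that are not contained in $\eta\cap\sigma$ (Definition \ref{subdivision.29}); since $\sigma_i\subset\sigma$, this already forces $\sigma_i\not\subset\eta$, which is the second hypothesis of Proposition \ref{ordering.20}. Together with your correct check that $A$ lies in the support of $(\P^1-0)^r\times(\P^1)^{n-r}$ (giving $e_1,\ldots,e_r\notin\sigma_i$), Proposition \ref{ordering.20} applies to every star subdivision in the sequence, and Lemma \ref{moving.5} can be invoked exactly as the paper does even though $A$ does contain faces of $\eta$.
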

\begin{proof}
This follows from Lemmas \ref{moving.4} and \ref{moving.5}.
\end{proof}

\begin{rmk}
For integers $0\leq r\leq n$ and a finite sequence $\bd$ of positive integers,
we expect that $\Theta_{n,r,\bd}$ can be obtained by a sequence of star subdivisions relative to suitable $2$-dimensional cones.
If this holds,
then Proposition \ref{ordering.18} implies Proposition \ref{ordering.13}.
\end{rmk}

\begin{exm}
The star subdivision $\Sigma$ of $(\P^1)^3$ relative to $\sigma:=\Cone(e_1,e_2,-e_3)$ is \emph{not} a very $0$-standard subdivision.
Indeed, we have $\Cone(-e_3,e_1),\Cone(-e_3,e_2)\in \Sigma$ but $\Cone(-e_3,e_1,e_2)\notin \Sigma$.
Hence when we deal with very $r$-standard subdivisions, we mainly work with star subdivisions relative to $2$-dimensional cones instead of $m$-dimensional cones with $m>2$.
\end{exm}

\section{Ordering maximal cones}
\label{ordering}

For a smooth complete fan $\Sigma$,
Fulton \cite[Theorem in p.\ 102]{Fulton:1436535} showed that $\CH_*(\Sigma_{\C})$ is a free abelian group and provided a basis under the assumption that the set of maximal cones $\Sigma_{\max}$ admits a suitable ordering.
The purpose of this section is to apply this computational result to $\Theta_{n,r,\bd}$ for integers $0\leq r\leq n$ and a finite sequence $\bd$ of positive integers.
For this,
we impose a suitable ordering on the maximal cones of $\Theta_{n,r,\bd}$.

\subsection{Admissible ordering}
In this subsection,
we will introduce the notion of an admissible ordering on the set of maximal cones $\Sigma_{\max}$ of an $r$-standard subdivision $\Sigma$ of $(\P^1)^n$.
We will show in Proposition \ref{ordering.22} that $(\Theta_{n,r,\bd})_{\max}$ admits an admissible ordering for all $0\leq r\leq n$ and $\bd$.

\begin{df}
Let $\Sigma$ be a smooth complete fan.
Suppose that we have an ordering $>$ on $\Sigma_{\max}$.
For $\sigma\in \Sigma_{\max}$,
we set
\[
\ess(\sigma)
:=
\bigcap_{\tau>\sigma, \dim\lvert \tau\cap \sigma \rvert = n-1} \tau,
\]
where $\ess$ stands for \emph{essence}.
If the indexing set is empty, then we set $\ess(\sigma):=\sigma$.
\end{df}

\begin{df}
\label{ordering.6}
Let $0\leq r\leq n$ be integers.
For an $r$-standard subdivision $\Sigma$ of $(\P^1)^n$,
let
\[
\Sigma^\circ
:=
\{\sigma\in \Sigma:\Cone(\sigma,e_i)\in \Sigma \text{ for some }r+1\leq i\leq n\}.
\]
If $r<n$,
then $\Sigma^\circ$ is nonempty and hence a subfan of $\Sigma$.

Let $\Sigma^\flat$ be the set $\Sigma-\Sigma^\circ$,
i.e.,
the set of cones of $\Sigma$ not in $\Sigma^\circ$.
Observe that $\Sigma^\flat$ does not need to be a fan.

See Definition \ref{ordering.28} and Proposition \ref{ordering.3} for the reason why we introduce $\Sigma^\circ$.
\end{df}

\begin{df}
\label{ordering.1}
Let $\Sigma$ be an $r$-standard subdivision of $(\P^1)^n$ with integers $0\leq r\leq n$.
For a ray $f$ of $\sigma\in \Sigma_{\max}$,
let $\widehat{\sigma}_f\in \Sigma_{\max}$ be the unique maximal cone of $\Sigma$ such that $f\not\in\widehat{\sigma}_f$ and $\dim\lvert \sigma\cap \widehat{\sigma}_f\rvert = n-1$.
An ordering on $\Sigma_{\max}$ is \emph{pre-admissible} if the following conditions are satisfied.
\begin{enumerate}
\item[(i)] $\ess(\sigma)\subset \tau$ implies 
$\sigma<\tau$ for all $\sigma,\tau\in \Sigma_{\max}$.
\item[(ii)] $\Cone( e_1,\ldots,e_n)$ is the least element of $\Sigma_{\max}$.
\item[(iii)] 
For $\sigma\in \Sigma_{\max}$ different from $\Cone( e_1,\ldots,e_n)$,
there exists $\tau\in \Sigma_{\max}$ such that $\tau<\sigma$ and $\dim \lvert \sigma\cap \tau \rvert=n-1$.
\item[(iv)] $e_i\in \sigma$ implies $\sigma < \widehat{\sigma}_{e_i}$ for $\sigma\in \Sigma_{\max}$ and $r+1\leq i\leq n$.
\item[(v)] If $e_i\not\in \sigma$ and $r+1\leq i\leq n$,
then the cone generated by $e_i$ and $\ess(\sigma)$ is not in $\Sigma$.
\end{enumerate}
A pre-admissible ordering on $\Sigma_{\max}$ is \emph{admissible} if the following conditions is also satisfied:
\begin{enumerate}
\item[(vi)] If $\sigma\in \Sigma_{\max}^\circ$ and $\tau\in \Sigma_{\max}-\Sigma_{\max}^\circ$,
then $\sigma<\tau$.
\end{enumerate}
\end{df}

\begin{exm}
We have an admissible ordering on $(\Gamma_{3,1})_{\max}$ as follows:
\[
\begin{tikzpicture}
\draw (2,0)--(0,2)--(-2,0)--(0,-2)--(2,0)--(0,0)--(-2,0);
\draw (0,2)--(0,0)--(0,-2);
\draw (0,2)--(-1,0);
\draw (0,0)--(-1,1);
\draw (-2,0)--(-2/3,2/3);
\draw (0,-2)--(-1,0);
\draw (0,0)--(-1,-1);
\draw (-2,0)--(0,-1);
\draw (2,0)--(0,-1);
\draw (2/3,2/3) node {\small 1};
\draw (-0.2,1) node {\small 4};
\draw (0.6,-0.27) node {\small 2};
\draw (0.6,-1.05) node {\small 11};
\draw (-0.55,0.25) node {\small 5};
\draw (-0.75,1) node {\small 6};
\draw (-0.23,-0.55) node {\small 7};
\draw (-0.57,-0.25) node {\small 8};
\draw (-1.2,0.15) node {\small 15};
\draw (-1.1,0.65) node {\small 16};
\draw (-0.2,-1.2) node {\small 17};
\draw (-1.2,-0.2) node {\small 18};
\draw (-0.67,-1.05) node {\small 21};
\draw (-1.1,-0.65) node {\small 22};
\begin{scope}[shift={(6,0)}]
\draw (2,0)--(0,2)--(-2,0)--(0,-2)--(2,0)--(0,0)--(-2,0);
\draw (0,2)--(0,0)--(0,-2);
\draw (2,0)--(0,1);
\draw (2,0)--(0,-1);
\draw (0,0)--(-1,-1);
\draw (0,0)--(-1,1);
\draw (-2,0)--(0,1);
\draw (0,2)--(-1,0);
\draw (-2,0)--(0,-1);
\draw (0,-2)--(-1,0);
\draw (0.4,1.2) node {\small 3};
\draw (0.4,0.45) node {\small 12};
\draw (0.4,-1.2) node {\small 14};
\draw (0.4,-0.45) node {\small 13};
\draw (-0.2,1.2) node {\small 9};
\draw (-0.72,1.06) node {\small 10};
\draw (-0.2,0.55) node {\small 19};
\draw (-1.05,0.7) node {\small 20};
\draw (-0.6,0.2) node {\small 23};
\draw (-1.15,0.2) node {\small 24};
\draw (-1.03,-0.7) node {\small 25};
\draw (-0.7,-1.04) node {\small 26};
\draw (-0.17,-1.15) node {\small 27};
\draw (-1.2,-0.2) node {\small 28};
\draw (-0.2,-0.5) node {\small 29};
\draw (-0.5,-0.2) node {\small 30};
\end{scope}
\end{tikzpicture}
\]
\end{exm}

Now, we begin the proof that $\Theta_{n,r,\bd}$ admits an admissible ordering. We first treat the base pre-admissible case.

\begin{lem}
\label{ordering.23}
Let $0\leq r\leq n$ be integers.
Regard $(\P^1)^n$ as an $r$-standard subdivision of $(\P^1)^n$.
Then $(\P^1)^n$ admits a pre-admissible ordering.
\end{lem}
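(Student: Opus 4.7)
Parameterize the $2^n$ maximal cones of $(\P^1)^n$ by subsets $S\subseteq \{1,\ldots,n\}$ via
\[
\sigma_S := \Cone(\{e_i : i \in S\} \cup \{-e_i : i \notin S\}).
\]
Two such cones $\sigma_S$ and $\sigma_T$ share an $(n-1)$-dimensional face precisely when $|S\triangle T|=1$. To each $S$ I attach the binary string $b(S)=(b_1,\ldots,b_n)$ with $b_i=0$ if $i\in S$ and $b_i=1$ otherwise, and declare $\sigma_S<\sigma_T$ iff $b(S)$ is lexicographically smaller than $b(T)$ (compared left-to-right with $0<1$). The plan is to verify that this total order is pre-admissible.

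Conditions (ii), (iii) and (iv) are almost immediate from the definition. The cone $\sigma_{\{1,\ldots,n\}}$ has string $(0,\ldots,0)$, which is the lex-minimum, giving (ii). For any $S\neq \{1,\ldots,n\}$, pick any $i\notin S$; then $\sigma_{S\cup\{i\}}$ is an $(n-1)$-face neighbor whose string differs from $b(S)$ only at position $i$, by flipping $1$ to $0$, hence is strictly smaller — this gives (iii). Dually, for $i\in S$ the string $b(S\setminus\{i\})$ differs from $b(S)$ only at position $i$ by flipping $0\to 1$, so $\sigma_S<\sigma_{S\setminus\{i\}}=\widehat{\sigma}_{e_i}$, which in fact covers (iv) uniformly for all $i$, a fortiori for $r+1\leq i\leq n$.

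The heart of the verification is to compute $\ess(\sigma_S)$. By the string analysis above, the set $U(S):=\{j:\sigma_{S\triangle\{j\}}>\sigma_S\}$ of upward neighbors is exactly $S$, so $\ess(\sigma_S)=\bigcap_{j\in S}\sigma_{S\setminus\{j\}}$ as a subset of $\R^n$. A direct coordinate-wise check — using that a point lies in $\sigma_{S\setminus\{j\}}$ iff $x_k\geq 0$ for $k\in S\setminus\{j\}$ and $x_k\leq 0$ otherwise — forces $x_k=0$ for $k\in S$ and $x_k\leq 0$ for $k\notin S$, yielding $\ess(\sigma_S)=\Cone(\{-e_k:k\notin S\})$. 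For condition (i): the containment $\ess(\sigma_S)\subset \sigma_T$ amounts to $-e_k\in \sigma_T$ for every $k\notin S$, i.e.\ $T\subseteq S$; when $T\subsetneq S$ the string $b(T)$ is obtained from $b(S)$ by flipping some $0$'s to $1$'s and is therefore strictly lex-greater, so $\sigma_T>\sigma_S$. For condition (v): if $i\notin S$ and $i\geq r+1$, then $-e_i\in \ess(\sigma_S)$, so $\Cone(e_i,\ess(\sigma_S))$ contains the entire line $\R e_i$, hence is not strongly convex and cannot be a cone of $(\P^1)^n$.

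The only mildly nontrivial step is identifying the set-theoretic intersection $\bigcap_{j\in S}\sigma_{S\setminus\{j\}}$: the containment $\supseteq$ is obvious from the listed rays, but the reverse requires the coordinate inequality argument above. Everything else reduces to straightforward lexicographic bookkeeping, and the construction is manifestly uniform in $r$.
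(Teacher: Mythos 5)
Your proof is correct and follows essentially the same strategy as the paper's: parameterize the maximal cones by sign vectors (your subsets $S$ correspond to the paper's $x\in\{-1,1\}^n$ via $x_i=1\iff i\in S$), impose the lexicographic order, compute $\ess(\sigma_S)=\Cone(\{-e_k:k\notin S\})$, and verify conditions (i)--(v) by direct sign/string bookkeeping. The only cosmetic difference is that you justify condition (v) by noting $\Cone(e_i,\ess(\sigma_S))$ contains the line $\R e_i$ and so fails strong convexity, whereas the paper phrases the same fact as $\Cone(e_i,-e_i)\notin(\P^1)^n$.
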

\begin{proof}
We need to check the conditions (i)--(v) in Definition \ref{ordering.1}.
Any maximal cone of $(\P^1)^n$ is of the form
\[
\eta_x:=\Cone( x_1 e_1,\ldots,x_n e_n),
\]
where $x:=(x_1,\ldots,x_n)\in \{-1,1\}^n$.
For $y\neq x\in \{-1,1\}^n$,
we assign $\eta_x>\eta_y$ if the following condition is satisfied: If $i$ is the smallest integer such that $x_i\neq y_i$, then $x_i<y_i$.
The condition (ii) is clear.

If $x\in \{-1,1\}^n$ and $x\neq (1,\ldots,1)$,
then consider $y\in \{-1,1\}^n$ obtained from $x$ by changing one $-1$ into $1$.
We have $\eta_y<\eta_x$, which proves the condition (iii).

If $e_i\in \eta_x$ for some $1\leq i\leq n$,
then $x_i=1$,
and consider $z\in \{-1,1\}^n$ obtained from $x$ by changing $x_i$ into $-1$.
We have $\eta_x<\eta_z$ and $(\widehat{\eta_x})_{e_i}=\eta_z$,
which proves the condition (iv).

Observe that $\ess(\eta_x)$ is the cone generated by $-e_i$ for all $1\leq i\leq n$ such that $x_i=-1$.
If $y\in \{-1,1\}^n$ and $\ess(\eta_x)\subset \eta_y$,
then $x_i=-1$ implies $y_i=-1$.
Hence we have $\eta_x<\eta_y$, which proves the condition (i).

If $e_i\notin \eta_x$ for some $1\leq i\leq n$,
then $x_i=-1$.
Since $\Cone( e_i,-e_i) \notin (\P^1)^n$,
we have $\Cone( e_i,\ess(\eta_x)) \notin (\P^1)^n$.
This proves the condition (v).
\end{proof}

Next, we discuss an induction step.

\begin{lem}
\label{ordering.24}
Let $0\leq r\leq n$ be integers,
let $\Sigma$ be an $r$-standard subdivision of $(\P^1)^n$,
and let $A$ be a subfan of $\Sigma$ satisfying the condition in \textup{Definition \ref{dist.12}}.
Assume that $\Sigma$ admits an admissible ordering.
Then $\Delta:=\Sigma_\eta^\bary(A)$ admits a pre-admissible ordering,
where $\eta:=\Cone(e_{r+1},\ldots,e_n)$.
\end{lem}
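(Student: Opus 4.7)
The plan is to construct the ordering on $\Delta_{\max}$ as a lexicographic refinement of the given pre-admissible ordering $<_\Sigma$ on $\Sigma_{\max}$. Let $\pi\colon \Delta_{\max}\to \Sigma_{\max}$ send a maximal cone of $\Delta$ to the unique maximal cone of $\Sigma$ containing it. I would declare $\sigma'<\tau'$ whenever $\pi(\sigma')<_\Sigma \pi(\tau')$, and then specify an internal ordering on each fiber $\pi^{-1}(\sigma)$ separately.

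To define the internal orderings, I would fix $\sigma\in \Sigma_{\max}$ and identify it with $\A^n$ so that $\eta\cap\sigma$ and $a(\sigma)$ become cones of the form $\Cone(e_1,\ldots,e_t)$ and $\Cone(e_1,\ldots,e_m)$ as in Definition~\ref{subdivision.29}. Proposition~\ref{subdivision.32} then parametrizes the maximal cones $\sigma_\alpha$ in $\pi^{-1}(\sigma)$ by the $t$-admissible permutations $\alpha$ of $\{1,\ldots,m\}$. I would order $\pi^{-1}(\sigma)$ so that (a) the identity permutation, yielding the cone that retains the most original rays of $\sigma$, is smallest, and (b) whenever a cone $\sigma_\alpha\in\pi^{-1}(\sigma)$ contains $e_i$ for some $i>r$, it is smaller than the neighboring cone in $\pi^{-1}(\sigma)$ across its $e_i$-facet.

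With this setup, conditions (ii) and (iii) are direct. For (ii) one checks that the internal minimum inside the least fiber is $\Cone(e_1,\ldots,e_n)$, either because this cone is left undivided by the excluded barycentric step or because it coincides with the identity-permutation cone from Proposition~\ref{subdivision.32}. For (iii), any non-least cone of $\Delta_{\max}$ either admits an internal predecessor in its fiber (by undoing one step of the barycentric replacement, which the combinatorics of $t$-admissible permutations always allows for $\alpha\neq \id$), or sits over a cone of $\Sigma$ with a $<_\Sigma$-predecessor sharing a $\Sigma$-facet, which lifts to the desired $\Delta$-facet predecessor.

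The main obstacle will be verifying conditions (i), (iv), and (v), all of which involve $\ess(\sigma')$ together with the rays $e_i$ for $i>r$. For this I would establish an auxiliary description: the rays of $\ess(\sigma')$ consist of those rays of $\ess(\pi(\sigma'))$ that survive the subdivision, together with the barycentric rays of $\pi(\sigma')$ lying opposite the higher facets in the internal ordering. Granting this description, condition (i) reduces to its analogue for $\Sigma$ combined with a permutation-level check inside a single fiber; condition (iv) across a $\Sigma$-boundary reduces to (iv) for $\Sigma$, while internal-facet cases are guaranteed by rule (b); and condition (v) follows because any hypothetical $\Cone(e_i,\ess(\sigma'))\in \Delta$ would, upon projection to $\Sigma$, either witness a failure of (v) for $\Sigma$ or violate the rigidity of Proposition~\ref{dist.8} combined with the combinatorial constraint of Lemma~\ref{moving.3}, giving the desired contradiction.
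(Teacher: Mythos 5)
Your overall architecture matches the paper's: order $\Delta_{\max}$ by lexicographic refinement of the pre-admissible ordering on $\Sigma_{\max}$, parametrize the fiber $\pi^{-1}(\sigma)$ by $t$-admissible permutations via Proposition~\ref{subdivision.32}, and then compute $\ess(\sigma_\alpha)$ to verify the five conditions. That much is correct and is the right plan.

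However there is a genuine gap: you never actually specify the internal ordering on a fiber $\pi^{-1}(\sigma)$, and the two constraints you offer do not determine one. Constraint (a) (identity permutation is least) is far from a total order once $|\pi^{-1}(\sigma)|\geq 3$. Constraint (b) is vacuous or confused: if $e_i$ with $i>r$ is a ray of $\sigma_\alpha$, it is already a ray of $\sigma$ (it belongs to $\eta$ and is never a barycenter), so the neighbor across the facet opposite $e_i$ lies \emph{outside} $\pi^{-1}(\sigma)$; the requirement you want for condition (iv) across that facet is already forced by the lexicographic step together with condition (iv) for $\Sigma$, not by any choice you make inside the fiber. The paper's proof supplies the missing ingredient explicitly: declare $\sigma_\alpha<\sigma_\beta$ when, at the largest index $j$ with $\alpha(j)\neq\beta(j)$, one has $\alpha(j)>\beta(j)$. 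This particular rule is what makes the descent set $I_\alpha=\{i: \alpha(i)>\alpha(i+1)\}$ parametrize the ``internal'' rays of $\ess(\sigma_\alpha)$, and it is used essentially in verifying conditions (i) and (iii). Your ``auxiliary description'' of $\ess(\sigma_\alpha)$ — barycentric rays opposite the higher internal facets, plus surviving rays of $\ess(\sigma)$ — is qualitatively right (it corresponds to the paper's formula in terms of $I_\alpha$), but you cannot establish it, nor derive conditions (i) and (iii) from it, without first committing to a concrete internal order. Your sketch for (v) also invokes Proposition~\ref{dist.8} and Lemma~\ref{moving.3}, which are not the right tools here; the actual argument is a case split on whether the offending $e_i$ equals some $f_j$ with $j\leq t$, handled via the structure of the barycentric rays, or not, handled by pushing forward to $\Sigma$ and using condition (v) there.
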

\begin{proof}
Step 1. \emph{Structure of $\Delta$.}
Let
\[
\sigma:=\Cone(f_1,\ldots,f_n)
\]
be a maximal cone of $\Sigma$ different from $\Cone(e_1,\ldots,e_n)$.
Using the condition in Definition \ref{dist.12},
without loss of generality,
we may assume that $\Cone(f_1,\ldots,f_m)$ is in $A$ and contains every other cone of $A$ contained in $\sigma$ for some integer $0\leq m\leq n$.
We may also assume
\[
f_1,\ldots,f_t\in \{e_{r+1},\ldots,e_n\},
\text{ }
f_{t+1},\ldots,f_m\notin \{e_{r+1},\ldots,e_n\}.
\]
By the condition (iv), we have $\sigma<
\widehat{\sigma}_{f_1},\ldots,\widehat{\sigma}_{f_t}$.
Hence we may assume
\begin{equation}
\label{ordering.22.1}
\widehat{\sigma}_{f_{s+1}},\ldots, \widehat{\sigma}_{f_m} < \sigma < 
\widehat{\sigma}_{f_1},\ldots,\widehat{\sigma}_{f_s}
\end{equation}
for some integer $t\leq s\leq m$.
We may also assume
\begin{equation}
\widehat{\sigma}_{f_{m+1}},\ldots, \widehat{\sigma}_{f_l} < \sigma < 
\widehat{\sigma}_{f_{l+1}},\ldots,\widehat{\sigma}_{f_n}
\end{equation}
for some integer $m\leq l\leq n$.
Here, if $l=m$ (resp.\ $l=n$), then interpret the first (resp.\ second) inequality as the empty condition.
Then we have
\begin{equation}
\label{ordering.22.3}
\ess(\sigma)=
\widehat{\sigma}_{f_1} \cap \cdots \cap \widehat{\sigma}_{f_s}
\cap
\widehat{\sigma}_{f_{l+1}}
\cap
\cdots
\cap
\widehat{\sigma}_{f_n}
=
\Cone( f_{s+1},\ldots,f_l).
\end{equation}
The condition (iii) implies $s<l$.

By Proposition \ref{subdivision.32},
the unique maximal cone of $\Delta$ contained in $\sigma$ is of the form
\[
\sigma_\alpha
:=
\Cone( f_{\alpha(1)},\ldots,f_{\alpha(c)},g_{c+1},\ldots,g_m,f_{m+1},\ldots,f_n)
\]
with
\[
g_i:=f_{\alpha(1)}+\cdots +f_{\alpha(i)}
\]
for $c+1\leq i\leq m$,
where $\alpha$ is a $t$-admissible permutation of $\{1,\ldots,m\}$,
and $c$ is the smallest integer satisfying $\alpha(c+1)>t$.
Recall from Definition \ref{subdivision.31} that we have $\alpha(1)<\cdots<\alpha(c)\leq t$.
We also have $c\leq t$.

We will keep using the above notation.
Also, keep in mind that we have the inequalities
\[
0\leq c\leq t\leq s\leq m \leq l\leq n,
\text{ }
s<l.
\]

Step 2. \emph{Assigning an ordering}.
Let $\tau'$ be a cone of $\Delta$ such that the maximal cone $\tau$ of $\Sigma$ containing $\tau'$ is different from $\sigma$.
If $\sigma>\tau$ (resp.\ $\sigma<\tau$),
then we assign $\sigma_\alpha>_\Delta\tau'$ (resp.\ $\sigma_\alpha<_{\Delta}\tau'$).
We use the notation $>_\Delta$ for the new ordering to distinguish with the original ordering $>$.

It remains to assign an ordering for two maximal cones of $\Delta$ contained in $\sigma$.
If $\beta$ is another $t$-admissible permutation of $\{1,\ldots,m\}$,
then we assign $\sigma_\alpha<_{\Delta}\sigma_\beta$ if the following condition is satisfied:
If $j$ is the largest number such that $\alpha(j)\neq \beta(j)$, then we have $\alpha(j)>\beta(j)$.

Step 3. \emph{Computation of $\ess(\sigma_\alpha)$}.
For $1\leq i\leq c$,
we have
$\sigma<\widehat{\sigma}_{f_{\alpha(i)}}$ by \eqref{ordering.22.1}.
This implies
$\sigma_\alpha<_{\Delta}(\widehat{\sigma_\alpha})_{f_{\alpha(i)}}$.

For $c+1\leq i\leq m-1$,
let $\alpha_i$ is the permutation of $\{1,\ldots,m\}$ obtained by switching $\alpha(i)$ and $\alpha(i+1)$ in $\alpha$.
We have $(\widehat{\sigma_\alpha})_{g_i}=\sigma_{\alpha_i}$ since
\[
(\widehat{\sigma_\alpha})_{g_i}\cap \sigma_{\alpha_i}
=
\Cone(f_{\alpha(1)},\ldots,f_{\alpha(c)},g_{c+1},\ldots,g_{i-1},g_{i+1},\ldots,g_m,f_{m+1},\ldots,f_n)
\]
has dimension $n-1$.
We also have $(\widehat{\sigma_\alpha})_{g_m}\subset \widehat{\sigma}_{f_{\alpha(m)}}$ since
\[
(\widehat{\sigma_\alpha})_{g_m}
\cap
\widehat{\sigma}_{f_{\alpha(m)}}
=
\Cone(f_{\alpha(1)},\ldots,f_{\alpha(c)},g_{c+1},\ldots,g_{m-1},f_{m+1},\ldots,f_m)
\]
has dimension $n-1$.
Hence $\sigma_\alpha<_{\Delta}(\widehat{\sigma_\alpha})_{g_m}$ if and only if $\alpha(m)\leq s$ by \eqref{ordering.22.1}.

Consider the subset $I_\alpha\subset \{c+1,c+2,\ldots,m-1\}$ such that $i\in I_\alpha$ if and only if $\alpha(i)>\alpha(i+1)$.
Then for $c+1\leq i\leq m-1$,
we have $\sigma_{\alpha_i}<_{\Delta}\sigma_\alpha$ if and only if $i\in I_\alpha$.
If $I_\alpha=\{i_1,\ldots,i_w\}$,
then combine what we have discussed above to have
\begin{equation}
\label{ordering.22.5}
\ess(\sigma_\alpha)
=
\left\{
\begin{array}{ll}
\Cone( g_{i_1},g_{i_2},\cdots, g_{i_w}, f_{m+1},\ldots,f_l) & \text{if $\alpha(m)\leq s$},
\\
\Cone( g_{i_1},g_{i_2},\cdots, g_{i_w},g_m,f_{m+1},\ldots,f_l) & \text{otherwise}.
\end{array}
\right.
\end{equation}

If $\alpha(m)\leq s$,
then we have $\alpha(i_w+1)<\cdots<\alpha(m)\leq s$.
Hence we have $\{s+1,\ldots,n\}\subset \{\alpha(1),\ldots,\alpha(i_w)\}$.
If $\alpha(m)\geq s$,
then we have $g_n\in \ess(\sigma_\alpha)$.
It follows that in both cases,
the smallest cone of $\Sigma$ containing $\ess(\sigma_\alpha)$ contains $\ess(\sigma)$.

Step 4. \emph{Verification of the condition }(ii).
By the condition (ii) for $\Sigma$,
we have $\Cone( e_1,\ldots,e_n) < \sigma$.
This implies $\Cone( e_1,\ldots,e_n ) <_{\Delta} \sigma_\alpha$.

Step 5. \emph{Verification of the condition }(iii).
Assume $\alpha=\id$.
If $l>m$,
then we have $\sigma_\alpha>_{\Delta}(\widehat{\sigma_\alpha})_{f_l}$ since $\sigma>\widehat{\sigma}_{f_l}$.
If $l=m$,
then $s<m$,
and hence we have $\sigma_\alpha>_{\Delta}(\widehat{\sigma_\alpha})_{g_m}$ as observed in Step 3.

If $\alpha\neq \id$,
then we have $I_\alpha\neq \emptyset$,
and we have $\sigma_{\alpha_i}<_{\Delta}\sigma_\alpha$ for $i\in I_\alpha$ as observed in Step 3.
Hence in any case,
$\sigma_\alpha$ satisfies the condition (iii).

Step 6. \emph{Verification of the condition }(iv).
Observe that we have
\[
\{f_1,\ldots,f_m\}\cap \{e_{r+1},\ldots,e_n\}=\{f_{\alpha(1)},\ldots,f_{\alpha(c)}\}.
\]
For $1\leq i\leq c$,
we have $\sigma_\alpha<_{\Delta}(\widehat{\sigma_\alpha})_{f_{\alpha(i)}}$ as observed in Step 3.
If $f_i\in \{e_{r+1},\ldots,e_n\}$ for some $i\geq m+1$,
then we have $\sigma_\alpha<_{\Delta}(\widehat{\sigma_\alpha})_{f(i)}$ since $\sigma<\widehat{\sigma}_{f(i)}$.
Hence $\sigma_\alpha$ satisfies the condition (iv).

Step 7. \emph{Verification of the condition} (i).
Consider the maximal cone $\sigma_\beta$ of $\Delta$ contained in $\sigma$ with a $t$-admissible permutation $\beta\neq \alpha$ of $\{1,\ldots,m\}$.
Assume $\ess(\sigma_\alpha)\subset \sigma_\beta$.
We need to show $\sigma_\alpha<_{\Delta}\sigma_\beta$.
Note that the rays of $\sigma_\beta$ are $f_{\beta(1)},\ldots,f_{\beta(v)},f_{m+1},\ldots,f_n$ and
\begin{equation}
\label{ordering.22.4}
f_{\beta(1)}+\cdots+f_{\beta(v+1)},\ldots,f_{\beta(1)}+\cdots+f_{\beta(m)},
\end{equation}
where $v$ is the smallest integer satisfying $\beta(v+1)>t$, which also satisfies $v\leq t$.
The assumption $\ess(\sigma_\alpha)\subset \sigma_\beta$ implies that $g_{i_j}=f_{\alpha(1)}+\cdots + f_{\alpha(i_j)}$ is a ray of $\sigma_\beta$ for $1\leq j\leq w$ and $g_{i_j}\notin \{e_{r+1},\ldots,e_n\}$.
Hence $g_{i_j}$ is equal to one of \eqref{ordering.22.4} that is the sum of $i_j$ elements,
so we have
\[
f_{\alpha(1)}+\cdots+f_{\alpha(i_j)}
=
f_{\beta(1)}+\cdots+f_{\beta(i_j)}
\]
for $1\leq j\leq w$.
We also have
\[
f_{\alpha(1)}+\cdots+f_{\alpha(m)}
=
f_{\beta(1)}+\cdots+f_{\beta(m)}.
\]
It follows that we have
\begin{equation}
\label{ordering.22.2}
\{\alpha(i_{j-1}+1),\ldots,\alpha(i_j)\}
=
\{\beta(i_{j-1}+1),\ldots,\beta(i_j)\}
\end{equation}
for $1\leq j\leq w+1$,
where we set $i_0:=v$ and $i_{w+1}:=m$.
We also have
\[
\{\alpha(1),\ldots,\alpha(v)\}
=
\{\beta(1),\ldots,\beta(v)\}
\]
and hence $\alpha(i)=\beta(i)$ for $1\leq i\leq v$.
Let $k$ be the largest number such that $\alpha(k)\neq \beta(k)$.
Then there exists $1\leq j\leq w+1$ such that $i_{j-1}+1\leq k\leq i_j$.
By the definition of $I_\alpha$,
we have $\alpha(i_{j-1}+1)<\cdots <\alpha(i_j)$.
Together with \eqref{ordering.22.2},
we have $\alpha(k)>\beta(k)$.
Hence we have $\sigma_\alpha<_{\Delta} \sigma_\beta$.

Let $\tau$ be a maximal cone of $\Sigma$ different from $\sigma$,
and let $\tau'$ be a maximal cone of $\Delta$ contained in $\tau$.
Assume $\ess(\sigma_\alpha)\subset \tau'$.
We need to show $\sigma_\alpha <\tau'$.
We have $\ess(\sigma_\alpha)\subset \tau$.
Since $g_m$ is not contained in any facet of $\sigma$, we have $g_m\notin \tau$,
which implies $\alpha(m)\leq s$.
We also have $g_{i_w}\in \tau$,
which implies $f_{\alpha(i)}\in \tau$ for $1\leq i\leq i_w$.
Since we have $\alpha(i_w+1)<\cdots <\alpha(m)\leq s$,
we have
\[
\{s+1,\ldots,m\} \subset \{\alpha(1),\ldots,\alpha(i_w)\}.
\]
Hence we have $\Cone( f_{s+1},\ldots,f_m) \subset \tau$.
Furthermore,
the inclusion $\ess(\sigma_\alpha)\subset \tau$ implies $\Cone(f_{m+1},\ldots,f_l)\subset \tau$ by \eqref{ordering.22.5}.
Since $\ess(\sigma)=\Cone( f_{s+1},\ldots,f_l)$ by \eqref{ordering.22.3},
the condition (i) for $\Sigma$ implies $\sigma<\tau$.
It follows that we have $\sigma_\alpha<_{\Delta}\tau'$.
Hence $\sigma_\alpha$ satisfies the condition (i).

Step 8. \emph{Verification of the condition }(v).
Assume that $r+1\leq i\leq n$, $e_i\notin \sigma_\alpha$, and the cone generated by $e_i$ and $\ess(\sigma_\alpha)$ is in $\Delta$.
Note that we have $e_i\notin \{f_{m+1},\ldots,f_n\}$ since $\sigma_\alpha$ contains $f_{m+1},\ldots,f_n$.

Assume $e_i=f_j$ for some $1\leq j\leq t$.
If $\alpha=\id$,
then $c=t$ and $f_j\in \sigma_\alpha$,
which is a contradiction.
If $\alpha \neq \id$,
then $I_\alpha \neq 0$,
and we have $\Cone( f_j,g_{i_1}) \in \Delta$.
This implies $j\in \{\alpha(1),\ldots,\alpha(i_1)\}$ since otherwise $f_j+g_{i_1}$ is a ray of $\Delta$.
The inequality
\[
\alpha(1)<\cdots < \alpha(t)\leq s<\alpha(t+1)<\cdots <\alpha(i_1)
\]
implies $j\in \{\alpha(1),\ldots,\alpha(t)\}$.
Hence we have $f_j\in \sigma_\alpha$,
which is a contradiction.

Assume $e_i\neq f_j$ for all $1\leq j\leq t$.
Let $\tau$ be the maximal cone of $\Sigma$ containing $e_i$ and $\ess(\sigma_\alpha)$.
As observed in Step 3,
the smallest cone of $\Sigma$ containing $\ess(\sigma_\alpha)$ contains $\ess(\sigma)$.
Hence $\tau$ contains $e_i$ and $\ess(\sigma)$.
However, the cone generated by $e_i$ and $\ess(\sigma)$ is not a cone of $\Sigma$ by the condition (v) for $\Sigma$,
which is a contradiction.

This shows that $\sigma_\alpha$ satisfies the condition (v).
\end{proof}

The existence of a pre-admissible ordering gauntness an admissible ordering as follows.

\begin{lem}
\label{ordering.25}
Let $\Sigma$ be an $r$-standard subdivision of $(\P^1)^n$,
where $0\leq r\leq n$ are integers.
If $\Sigma_{\max}$ admits a pre-admissible ordering,
then it admits an admissible ordering.
\end{lem}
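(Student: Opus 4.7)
\emph{Proof plan.} Given a pre-admissible ordering $<$ on $\Sigma_{\max}$, I would define a new ordering $<'$ by putting the block $\Sigma_{\max}^\circ$ entirely before the block $\Sigma_{\max}-\Sigma_{\max}^\circ$, while keeping $<$ inside each block. Explicitly: for $\sigma,\tau\in\Sigma_{\max}$, declare $\sigma<'\tau$ if either $\sigma\in\Sigma_{\max}^\circ$ and $\tau\in\Sigma_{\max}-\Sigma_{\max}^\circ$, or $\sigma$ and $\tau$ lie in the same block and $\sigma<\tau$. Then (vi) is immediate, and (v) is independent of the ordering. For (ii): if $r=n$ then $\Sigma_{\max}^\circ=\emptyset$ and $<'$ coincides with $<$; if $r<n$ then $\Cone(e_1,\ldots,e_n)\in\Sigma_{\max}^\circ$, so placing this block first keeps it minimal.

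For (iv), suppose $e_i\in\sigma$ with $r+1\leq i\leq n$, so $\sigma\in\Sigma_{\max}^\circ$. If $\widehat{\sigma}_{e_i}\in\Sigma_{\max}-\Sigma_{\max}^\circ$, then $\sigma<'\widehat{\sigma}_{e_i}$ by the defining cross-block rule; otherwise both lie in $\Sigma_{\max}^\circ$ and the inequality is inherited from pre-admissible (iv). For (iii), let $\sigma\neq\Cone(e_1,\ldots,e_n)$ and pick a facet-neighbour $\tau_0<\sigma$ from pre-admissible (iii). If $\sigma\in\Sigma_{\max}-\Sigma_{\max}^\circ$, then $\tau_0<'\sigma$ regardless of which block $\tau_0$ lies in. If $\sigma\in\Sigma_{\max}^\circ$, write $\sigma=\Cone(f_1,\ldots,f_n)$ and $\tau_0=\widehat{\sigma}_{f_j}$. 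Then (iv) for $<$ forces $f_j\notin\{e_{r+1},\ldots,e_n\}$, while $\sigma\in\Sigma_{\max}^\circ$ provides some $e_{i_0}\in\sigma$ with $r+1\leq i_0\leq n$ distinct from $f_j$; hence $e_{i_0}$ is a shared ray of $\sigma$ and $\widehat{\sigma}_{f_j}$, so $\tau_0\in\Sigma_{\max}^\circ$ and $\tau_0<'\sigma$ is inherited from $<$.

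The key step is condition (i). Suppose $\ess(\sigma)\subset\tau$; the only case not immediately inherited from pre-admissible (i) or from the cross-block rule is $\sigma\in\Sigma_{\max}-\Sigma_{\max}^\circ$ and $\tau\in\Sigma_{\max}^\circ$, and I would show this case is vacuous. Choose $e_i$ with $r+1\leq i\leq n$ such that $e_i\in\tau$. Since $\ess(\sigma)$ is a cone of $\Sigma$ contained in the smooth cone $\tau$, it is a face of $\tau$ whose rays are rays of $\tau$; adjoining the ray $e_i$ of $\tau$ therefore yields $\Cone(\ess(\sigma),e_i)$, still a face of $\tau$ and hence a cone of $\Sigma$. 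But $\sigma\in\Sigma_{\max}-\Sigma_{\max}^\circ$ means $e_i\notin\sigma$, so pre-admissible (v) says $\Cone(\ess(\sigma),e_i)\notin\Sigma$, a contradiction.

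The main obstacle is exactly this cross-block verification of (i), and the point is that pre-admissible (v) is tailored precisely to rule out the configuration that would otherwise violate (i) after reordering; the remaining conditions reduce to case distinctions on which block each cone belongs to.
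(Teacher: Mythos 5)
Your $<'$ is exactly the $<_{\mathrm{new}}$ used in the paper, so the approach matches. Two remarks. First, a small imprecision: you assert that condition (v) is ``independent of the ordering,'' but (v) refers to $\ess(\sigma)$, which a priori depends on the ordering. What you actually need — and what the paper isolates as the crux of its proof — is that $<$ and $<'$ agree on all facet-adjacent pairs of maximal cones, so that $\ess$ is literally unchanged. This also matters for your verification of (i): the hypothesis there must be about $\ess$ computed for $<'$, and the contradiction appeals to (v) for $<$, so you need $\ess' = \ess$ to make the argument go through. The ingredients are already in your analysis of (iv): if $\sigma \in \Sigma_{\max}^\circ$ and $\tau \notin \Sigma_{\max}^\circ$ are facet-adjacent, the shared facet must omit every $e_i \in \sigma$ with $r+1 \le i \le n$, forcing $\tau = \widehat{\sigma}_{e_i}$ for such an $i$, whence (iv) gives $\sigma < \tau$ consistently with $\sigma <' \tau$; you should state this once and record $\ess' = \ess$. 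Second, in your favor: the paper passes directly from facet-pair agreement and $\ess$-invariance to ``(i)--(v) hold,'' which is too quick for (i), since (i) compares cones that need not be facet-adjacent. Your explicit handling of the cross-block case of (i) — showing it vacuous because $\ess(\sigma)$ is a face of the smooth cone $\tau$, so adjoining the ray $e_i \in \tau$ gives a cone of $\Sigma$, contradicting (v) — is correct and fills in exactly the step the paper compresses. With the $\ess$-invariance made explicit, the proof is complete.
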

\begin{proof}
Assume that an ordering $<$ on $\Sigma_{\max}$ satisfies the conditions (i)--(v) in Definition \ref{ordering.1}.
Now, for $\sigma',\tau'\in \Sigma_{\max}$,
we assign $\sigma'<_\mathrm{new}\tau'$ for the following three cases:
\begin{itemize}
\item
$\sigma',\tau'\in \Sigma_{\max}^\circ$ and $\sigma'<\tau'$.
\item
$\sigma'\in \Sigma_{\max}^\circ$ and $\tau'\notin \Sigma_{\max}^\circ$.
\item
$\sigma',\tau'\notin \Sigma_{\max}^\circ$ and $\sigma'<\tau'$.
\end{itemize}
The condition (vi) is obvious for the new ordering.

Assume $\sigma',\tau'\in \Sigma_{\max}$ and $\dim \lvert \sigma'\cap \tau'\rvert=n-1$.
If $\sigma',\tau'\in \Sigma_{\max}^\circ$ or $\sigma',\tau'\notin \Sigma_{\max}^\circ$,
then $\sigma'<\tau'$ if and only if $\sigma'<_\mathrm{new} \tau'$.
If $\sigma'\in \Sigma_{\max}^\circ$ and $\tau'\notin \Sigma_{\max}^\circ$, then $\sigma'<_\mathrm{new}\tau'$, and $\sigma'=\widehat{\tau'}_{e_i}$ for some integer $r+1\leq i\leq n$.
Hence the condition (iv) implies $\sigma'<\tau'$.
In particular, $\sigma'<\tau'$ is equivalent to $\sigma'<_\mathrm{new} \tau'$ whenever $\dim \lvert \sigma'\cap \tau'\rvert=n-1$.
This implies that $\ess(\sigma')$ does not change for $<_\mathrm{new}$ and the conditions (i)--(v) for $<_\mathrm{new}$ is satisfied.
\end{proof}

Now, we combine the above three results to conclude as follows.

\begin{prop}
\label{ordering.22}
Let $0\leq r\leq n$ be integers,
and let $\bd$ be a finite sequence of positive integers.
Then $(\Theta_{n,r,\bd})_{\max}$ admits an admissible ordering.
\end{prop}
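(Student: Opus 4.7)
The plan is to proceed by induction on the total number of $\eta$-excluded barycentric subdivisions used to construct $\Theta_{n,r,\bd}$ from $(\P^1)^n$, producing a pre-admissible ordering at each step via \cref{ordering.24}, and then upgrading to an admissible ordering at the end via \cref{ordering.25}.

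First, if $\bd$ is empty, then $\Theta_{n,r,\bd} = (\P^1)^n$, and we invoke \cref{ordering.23} to get a pre-admissible ordering on $((\P^1)^n)_{\max}$ directly. Otherwise, by \cref{subdivision.36} and the remark following it, $\Theta_{n,r,\bd}$ is obtained from $(\P^1)^n$ by a finite sequence
\[
\Sigma_0 := (\P^1)^n, \quad \Sigma_{k+1} := (\Sigma_k)_\eta^\bary(A_k), \quad k = 0, 1, \ldots, N-1,
\]
where $\eta = \Cone(e_{r+1},\ldots,e_n)$, each $A_k$ is the subfan appearing in \cref{dist.1} for the relevant parameters, and $\Sigma_N = \Theta_{n,r,\bd}$. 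By the observation in \cref{dist.1}, each $A_k$ satisfies the condition of \cref{dist.12}, and by \cref{ordering.20} applied iteratively (together with \cref{ordering.26} translating back from $(\P^1-0)^r\times (\P^1)^{n-r}$), every $\Sigma_k$ is an $r$-standard subdivision of $(\P^1)^n$.

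Now I would induct on $k$. The base case $k = 0$ is \cref{ordering.23}. For the inductive step, assume $(\Sigma_k)_{\max}$ admits a pre-admissible ordering; then the hypotheses of \cref{ordering.24} are met for $\Sigma_k$ and $A_k$, so $(\Sigma_{k+1})_{\max} = ((\Sigma_k)_\eta^\bary(A_k))_{\max}$ admits a pre-admissible ordering as well. After $N$ steps, $(\Theta_{n,r,\bd})_{\max}$ admits a pre-admissible ordering. Finally, \cref{ordering.25} promotes this to an admissible ordering, completing the proof.

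The only real content here is bookkeeping: confirming that $\Theta_{n,r,\bd}$ really is a composition of the barycentric operations to which \cref{ordering.24} applies, and that the subfans $A_k$ satisfy the hypothesis of \cref{dist.12}. Both are essentially built into the definitions in \S\ref{dist} and \S\ref{subdivision}, so there is no genuine obstacle beyond organizing the induction.
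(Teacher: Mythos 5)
Your proof takes exactly the same route as the paper: the base case is Lemma \ref{ordering.23}, the inductive step applies Lemma \ref{ordering.24} repeatedly along the chain of $\eta$-excluded barycentric subdivisions that build $\Theta_{n,r,\bd}$ from $(\P^1)^n$, and Lemma \ref{ordering.25} then upgrades the pre-admissible ordering to an admissible one. Your version just spells out more of the bookkeeping (the decomposition into $\Sigma_0,\ldots,\Sigma_N$ and the verification that each $A_k$ satisfies Definition \ref{dist.12}) than the paper does.
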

\begin{proof}
We know that $((\P^1)^n)_{\max}$ admits a pre-admissible ordering by Lemma \ref{ordering.23}.
Using Lemma \ref{ordering.24} repeatedly,
we see that $(\Theta_{n,r,\bd})_{\max}$ admits a pre-admissible ordering.
Lemma \ref{ordering.25} finishes the proof.
\end{proof}

\subsection{Application to Chow groups}

For a fan $\Sigma$ and its cone $\sigma$,
consider the projection
\begin{equation}
\label{ordering.5.2}
\overline{(-)}\colon N(\Sigma)\to N(\Sigma)/N_\sigma.
\end{equation}
See Definition \ref{intro.3} for the fan $V(\sigma)$.
We have a one-to-one correspondence of sets
\begin{equation}
\label{ordering.5.1}
\ol{(-)}
\colon
\{\tau\in \Sigma: \sigma\prec \tau\}
\xrightarrow{\cong}
V(\sigma).
\end{equation}
Recall that $\sigma \prec \tau$ means $\sigma$ is a face of $\tau$.
Do not confuse $\sigma\prec \tau$ with $\sigma<\tau$.
Let $\Sigma_{\C}$ be the toric variety over $\C$ associated with $\Sigma$. Let $O(\sigma)$ be the $0$ fan in the lattice $N(V(\sigma))$ so that $O(\sigma)_{\C}$ is the maximal torus of the toric variety $V(\sigma)$.
For integers $d$,
let $\CH_d(\Sigma)$ be the Chow group $\CH_d(\Sigma_{\C})$,
and let $H_d^\BM(\Sigma)$ be the Borel-Moore homology $H_d^\BM(\Sigma_{\C})$.
Recall from \cite[p.\ 53]{Fulton:1436535} that we can regard $V(\sigma)_{\C}$ as a closed subscheme of $\Sigma_{\C}$.
If $\sigma$ has dimension $d$,
let $[V(\sigma)]\in \CH_d(\Sigma)$ be the element $[V(\sigma)_{\C}]\in \CH_d(\Sigma_{\C})$.

\begin{df}
\label{ordering.28}
For an $r$-standard subdivision $\Sigma$ of $(\P^1)^n$ with integers $0\leq r\leq n$,
we set
\[
\CH_*^\flat(\Sigma)
:=
\CH_*(\Sigma_\C-\Sigma_\C^\circ).
\]
\end{df}

\begin{prop}
\label{ordering.19}
Let $\Sigma$ be a smooth complete fan.
If $\Sigma_{\max}$ admits an ordering satisfying the condition \textup{(i)} in \textup{Definition \ref{ordering.1}},
then $\CH_*(\Sigma)$ is the free abelian group generated by $[V(\ess(\sigma))]$ for $\sigma\in \Sigma_{\max}$.
\end{prop}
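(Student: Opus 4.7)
The plan is to apply the Białynicki-Birula-style cell decomposition of Fulton \cite[Theorem in p.\ 102]{MR1415592}, for which condition (i) in Definition \ref{ordering.1} is the precise combinatorial hypothesis. Enumerate $\Sigma_{\max} = \{\sigma_1 < \sigma_2 < \cdots < \sigma_N\}$ and set $\tau_i := \ess(\sigma_i)$. The closed subvarieties
\[
X_i := \bigcup_{j \leq i} V(\tau_j)_{\C}\subset \Sigma_{\C}
\]
give a filtration $\emptyset = X_0 \subset X_1 \subset \cdots \subset X_N = \Sigma_\C$, and I would show by induction on $i$ that the locally closed stratum $X_i \setminus X_{i-1}$ is isomorphic to an affine space $\A^{n - \dim \tau_i}$.

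To identify the stratum, first use the orbit-cone correspondence $V(\tau_i)_{\C} = \bigsqcup_{\tau_i \prec \rho} O(\rho)_{\C}$. An orbit $O(\rho)_\C$ with $\tau_i \prec \rho$ lies in some $V(\tau_j)_\C$ with $j < i$ exactly when $\tau_j \prec \rho$, and this forces $\tau_j$ and $\tau_i$ to be contained in a common maximal cone $\sigma_k$ adjacent to $\sigma_i$ with $k < i$. Condition (i), applied contrapositively, excludes such configurations, and the orbits remaining in $X_i \setminus X_{i-1}$ are exactly those $O(\rho)_\C$ with $\tau_i \prec \rho \prec \sigma_i$. Passing to the smooth affine chart $U_{\sigma_i} \cong \A^n$ associated to $\sigma_i$, whose coordinates are indexed by the rays of $\sigma_i$, these orbits sweep out the coordinate subspace obtained by setting to zero the coordinates corresponding to the rays of $\sigma_i$ lying in $\tau_i$, yielding an affine space of dimension $n - \dim \tau_i$.

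Given the cell decomposition, the localization sequence
\[
\CH_*(X_{i-1}) \to \CH_*(X_i) \to \CH_*(X_i\setminus X_{i-1}) \to 0
\]
together with $\CH_k(\A^{n-\dim\tau_i}) = \Z$ in degree $k=n-\dim\tau_i$ and zero otherwise shows inductively that $\CH_*(X_i)$ is the free abelian group on $\{[V(\tau_j)] : j \leq i\}$; the case $i = N$ is the proposition. I expect the main obstacle to be the combinatorial identification of $X_i \setminus X_{i-1}$ with the coordinate subspace of $U_{\sigma_i}$: one must verify that every orbit of $U_{\sigma_i}$ contained in $V(\tau_i)$ but not in $X_{i-1}$ comes from a face $\rho$ with $\tau_i \prec \rho \prec \sigma_i$, and that orbits lying outside the chart $U_{\sigma_i}$ contribute nothing new at this stage. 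Both points follow from condition (i) together with the description of $\ess(\sigma_i)$ as an intersection of adjacent maximal cones, but the bookkeeping is the step requiring care.
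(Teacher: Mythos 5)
Your plan to reprove Fulton's result via a cellular filtration and the localization sequence is the right idea in spirit (the paper itself only cites \cite[Theorem in p.\ 102]{MR1415592}, so reconstructing the argument is a reasonable goal), but the filtration you write down runs in the wrong direction, and the cells do not come out the way you claim.

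Concretely, you set $X_i := \bigcup_{j \le i} V(\tau_j)_\C$ with $\tau_j := \ess(\sigma_j)$ and the ordering $\sigma_1 < \cdots < \sigma_N$. Condition (ii) of Definition \ref{ordering.1} makes $\sigma_1 = \Cone(e_1,\ldots,e_n)$ the least element, and for a fan such as $\P^1$ or $(\P^1)^n$ one has $\ess(\sigma_1) = 0$, so $V(\tau_1)_\C = \Sigma_\C$ and your filtration already collapses at the first step: $X_1 = X_2 = \cdots = X_N = \Sigma_\C$. The filtration must instead be the \emph{decreasing} one $Z_i := \bigcup_{j \ge i} V(\tau_j)_\C$, with $Z_N = V(\ess(\sigma_N))_\C = V(\sigma_N)_\C$ a point, $Z_1 = \Sigma_\C$, and successive differences $Z_i \setminus Z_{i+1} = Y_i := \bigcup_{\tau_i \subset \gamma \subset \sigma_i} O(\gamma)_\C \cong \A^{n - \dim\tau_i}$. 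The reason is that condition (i) controls the index from above, not below: for any cone $\rho$, condition (i) implies that the unique $k$ with $\tau_k \subset \rho \subset \sigma_k$ satisfies $k = \max\{j : \tau_j \subset \rho\} = \min\{l : \rho \subset \sigma_l\}$; in particular $V(\tau_j)_\C$ meets only the cells $Y_k$ with $k \ge j$, so it is the $Z_i$ that are closed with the right strata. Your contrapositive argument (``this forces $\tau_j$ and $\tau_i$ to be contained in a common maximal cone $\sigma_k$ adjacent to $\sigma_i$ with $k < i$'') is not justified and is in fact false: both $\tau_i \subset \rho$ and $\tau_j \subset \rho$ for some $j<i$ can happen simultaneously (e.g.\ $\tau_1 = 0$ is contained in every $\rho$), with all maximal cones containing $\rho$ having index $\ge i$, so condition (i) yields no contradiction. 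Once the direction is corrected, the remainder of your sketch (identifying $Y_i$ with a coordinate subspace of the affine chart $U_{\sigma_i}$, and the inductive localization argument) is essentially Fulton's, though note that the localization sequence alone gives surjectivity, not freeness; as in the paper's proof of Proposition \ref{ordering.17}, one also needs the comparison with Borel--Moore homology (or the standard fact that cellular varieties have free Chow groups) to see that $\CH_*(Z_{i+1}) \to \CH_*(Z_i)$ is split injective.
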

\begin{proof}
See \cite[Theorem in p.\ 102]{MR1415592}.
\end{proof}

\begin{prop}
\label{ordering.17}
Let $\Sigma$ be an $r$-standard subdivision of $(\P^1)^n$ with integers $0\leq r\leq n$.
Assume that $\Sigma_{\max}$ admits an admissible ordering.
Then we have the following properties.
\begin{enumerate}
\item[\textup{(1)}] $\CH_*(\Sigma)\cong H_{2*}^\BM(\Sigma)$, and $H_i^\BM(\Sigma)=0$ for odd $i$.
\item[\textup{(2)}] $\CH_*(\Sigma^\circ)\cong H_{2*}^\BM(\Sigma^{ \circ})$, and $H_i^\BM(\Sigma^{\circ})=0$ for odd $i$.
\item[\textup{(3)}] $\CH_*^\flat(\Sigma)\cong H_{2*}^\BM (\Sigma_\C-\Sigma_\C^\circ)$, and $H_i^\BM(\Sigma_\C-\Sigma_\C^\circ)=0$ for odd $i$.
\item[\textup{(4)}] $\CH_*(\Sigma^\circ)$ is the free abelian group generated by $[V(\ess(\sigma))]$ for $\sigma\in \Sigma_{\max}^\circ$.
\item[\textup{(5)}] $\CH_*^\flat(\Sigma)$ is the free abelian group generated by $[V(\ess(\sigma))]$ for $\sigma\in \Sigma_{\max}-\Sigma_{\max}^\circ$.
\end{enumerate}
\end{prop}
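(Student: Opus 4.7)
The plan is to deduce all five parts from Fulton's cellular decomposition \cite[Theorem p.\ 102]{MR1415592} (i.e.\ Proposition \ref{ordering.19}). Using condition (i) of the ordering, the proof of that theorem (or a variant via Bialynicki--Birula for a generic $1$-parameter subgroup of the torus $T$) produces an affine stratification $\Sigma_\C = \bigsqcup_{\sigma\in\Sigma_{\max}} A_\sigma$ where each cell $A_\sigma$ is locally closed, isomorphic to an affine space, $T$-invariant (hence a union of torus orbits), has closure $V(\ess(\sigma))_\C$, and is contained in the affine chart $U_\sigma = \bigsqcup_{\tau\prec\sigma}O(\tau)_\C$. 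Part (1) is then standard: induct along the filtration by closed unions of cells using the localization long exact sequences in Chow groups and Borel--Moore homology and the elementary values of both invariants on affine space.

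For parts (2)--(5), the key claim is that $A_\sigma\subset\Sigma_\C^\circ$ when $\sigma\in\Sigma_{\max}^\circ$ and $A_\sigma\cap\Sigma_\C^\circ=\emptyset$ otherwise. Since $A_\sigma$ is a union of orbits $O(\tau)_\C$ with $\ess(\sigma)\prec\tau\prec\sigma$, this reduces to deciding which of these $\tau$ lie in $\Sigma^\circ$. If $\sigma\in\Sigma_{\max}^\circ$, pick $i$ with $r+1\leq i\leq n$ and $e_i\in\sigma$; then for every $\tau\prec\sigma$, either $e_i\in\tau$ or $\Cone(\tau,e_i)$ is a face of $\sigma$ and hence of $\Sigma$, so $\tau\in\Sigma^\circ$. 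Conversely, if $\sigma\notin\Sigma_{\max}^\circ$ and some such $\tau$ lay in $\Sigma^\circ$ via $i>r$ with $\Cone(\tau,e_i)\in\Sigma$, then $\Cone(\ess(\sigma),e_i)$ would be a face of $\Cone(\tau,e_i)\in\Sigma$, and condition (v) would force $e_i\in\sigma$, contradicting $\sigma\notin\Sigma_{\max}^\circ$.

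Therefore $\Sigma_\C^\circ=\bigsqcup_{\sigma\in\Sigma_{\max}^\circ}A_\sigma$ and $\Sigma_\C\setminus\Sigma_\C^\circ=\bigsqcup_{\sigma\notin\Sigma_{\max}^\circ}A_\sigma$. Admissibility condition (vi) places all cells of $\Sigma_\C^\circ$ before those of its complement in the ordering, so the cellular filtration on $\Sigma_\C$ restricts to affine stratifications of both pieces, with cell closures $V(\ess(\sigma))_\C\cap\Sigma_\C^\circ$ and $V(\ess(\sigma))_\C\cap(\Sigma_\C\setminus\Sigma_\C^\circ)$ respectively. Parts (2)--(5) then follow by the same argument as part (1) applied to these two restricted stratifications.

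The main obstacle I anticipate is verifying that the cells $A_\sigma$ in Fulton's decomposition are $T$-invariant and contained in the affine charts $U_\sigma$. If these properties are not immediate from \cite{MR1415592}, one can construct the cells instead via the Bialynicki--Birula decomposition for a generic $1$-parameter subgroup $\lambda\colon\G_m\to T$: the cells are then $T$-invariant because $T$ is abelian (so $T$ commutes with $\lambda$ and permutes attracting sets of fixed points, but fixes the fixed point $x_\sigma$), and contained in $U_\sigma$ because any orbit $O(\tau)_\C$ flowing to $x_\sigma$ has $x_\sigma\in\overline{O(\tau)_\C}=V(\tau)_\C$, forcing $\tau\prec\sigma$.
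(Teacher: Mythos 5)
Your proof is correct and takes essentially the same route as the paper: both invoke Fulton's cellular filtration $Z_i = Y_i\cup\cdots\cup Y_m$ from \cite[p.~103]{MR1415592}, both use conditions (iv)/(v) of \cref{ordering.1} to place each cell $Y_\sigma$ entirely inside $\Sigma_\C^\circ$ or entirely inside its complement according to whether $\sigma\in\Sigma^\circ_{\max}$, and both use condition (vi) to make the filtration split cleanly at the boundary index so that the localization sequences in $\CH_*$ and $H^\BM_*$ give all five statements. The only cosmetic difference is that the paper runs one long exact sequence plus the five lemma against $\CH_*(\Sigma)\cong H^\BM_{2*}(\Sigma)$, whereas you apply the cell-by-cell induction directly to the two restricted stratifications; your side remark about Bialynicki--Birula is unnecessary since Fulton's $Y_\sigma$ are already unions of orbits and hence $T$-invariant and contained in the chart $U_\sigma$ by construction.
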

\begin{proof}
Let $\sigma_1<\cdots<\sigma_m$ be an admissible ordering on $\Sigma_{\max}$.
For $1\leq i\leq m$,
consider
\[
Y_i:=\bigcup_{\ess(\sigma_i)\subset \gamma\subset \sigma_i} O(\gamma)_{\C}
\]
and $Z_i:=Y_i\cup Y_{i+1}\cup \cdots \cup Y_m$ in \cite[p.\ 103]{Fulton:1436535}.
Note that $Z_1=\Sigma_\C$ by \cite[Lemma (1) in p.\ 103]{Fulton:1436535}.
The argument in \cite[p.\ 103]{Fulton:1436535} shows that $\CH_*(\Sigma)\cong H_{2*}^\BM(\Sigma_{\C})$, $H_*^\BM(Z_i)$ vanishes in odd degrees, $H_*^\BM(Z_i)\to H_*^\BM(Z_{i+1})$ is injective for every $i$,
and $\CH_*(Z_i)$ is the free abelian group generated by the classes $[V(\sigma_i)]$ for $j\geq i$.
Hence $H_*^\BM(Z_i)\to H_*^\BM(\Sigma_{\C})$ is injective.
Together with the exact sequence
\[
H_{2*-1}^\BM(\Sigma_{\C}) \to H_{2*-1}^\BM(\Sigma_{\C}-Z_i)\to H_{2*}^\BM(Z_i)\to H_{2*}^\BM(\Sigma_{\C}),
\]
we have $H_{2d-1}^\BM(\Sigma_{\C}-Z_i)=0$.
Hence we have the commutative diagram with exact rows
\[
\begin{tikzcd}
&
\CH_*(Z_i)\ar[d,"\cong"']\ar[r]&
\CH_*(\Sigma)\ar[d,"\cong"]\ar[r]&
\CH_*(\Sigma_{\C}-Z_i)\ar[d]\ar[r]&
0
\\
0\ar[r]&
H_{2*}^\BM(Z_i)\ar[r]&
H_{2*}^\BM(\Sigma)\ar[r]&
H_{2*}^\BM(\Sigma_\C-Z_i)\ar[r]&
0.
\end{tikzcd}
\]
The five lemma implies that we have $\CH_*(\Sigma_{\C}-Z_i)\cong H_{2*}^\BM(\Sigma_{\C}-Z_i)$.
By the condition (vi) in Definition \ref{ordering.1},
there exists an integer $1\leq q\leq m$ such that $i\leq q$ if and only if $\sigma_q\in \Sigma^\circ$.
Since $\CH_*(Z_i)$ (resp.\ $\CH_*(\Sigma)$) is the free abelian group generated by $[V(\sigma_j)]$ for $j\geq i$ (resp.\ all $j$),
$\CH_*(\Sigma_{\C}-Z_i)$ is the free abelian group generated by $[V(\sigma_j)]$ for $j< i$.
To conclude,
observe that we have $Z_q=\Sigma_\C-\Sigma_\C^\circ$ and $\Sigma_{\C}-Z_q=\Sigma^\circ_\C$.
\end{proof}

\section{Resolution of toric Chow homology}
\label{resolution}

The purpose of this section is to show Proposition \ref{ordering.15}, which shows that a natural chain complex $\CH_p^\flat(\Theta_{r+\bullet,r,\bd})$ is quasi-isomorphic to $0$ for $0\leq p\leq r-1$.
This is a base step of the inductive argument in the proof of Theorem \ref{intro.2}, see Lemma \ref{subdivision.4} below.
To prove Proposition \ref{ordering.15},
we will first construct a free resolution $Z_{p.\bullet}^\flat(\Sigma)$ of $\CH_p^\flat(\Sigma)$ for every $r$-standard subdivision $\Sigma$ of $(\P^1)^n$ in Proposition \ref{ordering.16}.
Then we will show that each chain complex $Z_{p.\bullet}^\flat(\Theta_{n,r,\bd})$ is quasi-isomorphic to $0$, which requires cubical identities for $\Z_{p.\bullet}^\flat(\Theta_{n,r,\bd})$ in \S \ref{identities}.
After that, a usual argument using the spectral sequence for the double complex $Z_{p,\bullet}^\flat(\Theta_{r+\bullet,r,\bd})$ will finish the proof.

\subsection{A spectral sequence converging to Chow groups of toric varieties}

Let $\Sigma$ be an $n$-dimensional fan with an integer $n\geq 0$.
Recall from \cite[\S 5]{MR3264256} that we have a spectral sequence
\begin{equation}
\label{moving.1.1}
E_{p,q}^1=C_{p,q}(\Sigma):=\bigoplus_{\sigma \in \Sigma(n-p)} H_{p+q}^\BM( O(\sigma))
\Rightarrow
H_{p+q}^\BM(\Sigma),
\end{equation}
where $\Sigma(n-p)$ denotes the set of $(n-p)$-dimensional cones of $\Sigma$.
Note that $V(\sigma)$ and $O(\sigma)$ have dimension $p$ if $\sigma$ has dimension $n-p$.
By \cite[Corollary 6.3]{MR2255969},
this degenerates at the second page, and there is no extension problem.
For simplicity of notation,
we also set $C_{p,q}(\Sigma)_\sigma:=H_{p+q}^\BM(\cO(\sigma))$.

Following \cite[\S 6]{MR2255969},
we can describe $Z_{p,q}(\Sigma):=C_{p+q,p}(\Sigma)$ as follows.
We have
\begin{equation}
\label{moving.1.2}
Z_{p,q}(\Sigma)\cong\bigoplus_{\sigma \in \Sigma(n-p-q)} \wedge^q M(\sigma).
\end{equation}
We write an element of $Z_{p,q}(\Sigma)$ as $\sum_{\sigma\in \Sigma(n-p-q)} \alpha_\sigma [V(\sigma)]$,
where $\alpha_\sigma\in \wedge^q M(\sigma)$,
and $[V(\sigma)]$ is a symbol for the direct summand $\wedge^q M(\sigma)$.
As noted in \cite[Equation (9)]{MR2255969},
the differential $d\colon Z_{p,q}(\Sigma)\to Z_{p,q-1}(\Sigma)$ is given by
\[
d(\alpha[V(\sigma)])
:=
\sum_{\sigma \prec_1 \tau}
 u_{\tau,\sigma} \iprod \alpha
[V(\tau)]
\]
for $\sigma \in \Sigma(n-p)$ and $\alpha\in \wedge^q M(\sigma)$,
where $u_{\tau,\sigma}\in \tau$ is a lattice point whose image generates $N_\tau/N_\sigma$, $u_{\tau,\sigma}\iprod \colon \wedge^q M(\sigma) \to \wedge^{q-1} M(\tau)$ is the interior product,
and the notation $\sigma \prec_1 \tau$ means that $\sigma$ is a facet of $\tau$.
Here, the interior product is defined using the formula
\[
\langle \beta,u_{\tau,\sigma}\iprod \alpha
\rangle
=
\langle \beta'\wedge u_{\tau,\sigma} ,\alpha
\rangle
\]
for all $\beta\in \wedge^{q-1}(N(\Sigma)/N_\tau)$,
where $\langle -,-\rangle$ is the duality pairing, $u_{\tau,\sigma}$ is regarded as an element of $N(\Sigma)/N_\sigma$ in the right-hand side, and $\beta'$ is any lifting of $\beta$ to $\wedge^{q-1}(N(\Sigma)/N_\sigma)$.

\begin{prop}
\label{moving.1}
Let $\Sigma$ be a smooth $n$-dimensional fan such that the cycle class map $\CH_*(\Sigma)\to H_{2*}^\BM(\Sigma)$ is an isomorphism and $H_*^\BM(\Sigma)$ vanishes in odd degrees.
Then for every integer $p\geq 0$, there is a natural resolution of $\CH_p(\Sigma)$:
\begin{equation}
\cdots \xrightarrow{d} Z_{p,1}(\Sigma) \xrightarrow{d} Z_{p,0}(\Sigma) \to \CH_p(\Sigma)\to 0.
\end{equation}
\end{prop}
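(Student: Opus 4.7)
The plan is to read the given complex off as a single row of the spectral sequence \eqref{moving.1.1}. Concretely $Z_{p,q}=C_{p+q,p}$ sits at the $(p+q,p)$-spot of the $E^1$-page, and the differential $d\colon Z_{p,q}\to Z_{p,q-1}$ is the horizontal $d^1$, so the homology of the complex at $Z_{p,q}$ is precisely the group $E^2_{p+q,p}$. By \cite[Corollary 6.3]{MR2255969} this spectral sequence degenerates at $E^2$ with no extension problem, so each $E^2_{p',q'}$ is realized as a canonical direct summand of $H_{p'+q'}^\BM(\Sigma)$. It therefore suffices to prove two things: (a) $E^2_{p,p}\cong \CH_p(\Sigma)$ naturally and compatibly with the cycle class map, and (b) $E^2_{p+q,p}=0$ for every $q\geq 1$.

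For (a), I would first note that $H_k^\BM(O(\sigma))$ vanishes outside the range $\dim O(\sigma)\leq k\leq 2\dim O(\sigma)$, which forces $E^1_{p',q'}=0$ whenever $q'>p'$. In particular $E^1_{p-1,p}=0$, so $E^2_{p,p}$ is the cokernel of $d^1\colon Z_{p,1}\to Z_{p,0}$. The explicit formula for $d$ given in the excerpt, specialized to $q=1$, reproduces the Fulton--Sturmfels presentation of $\CH_p(\Sigma)$ via divisors of torus characters on the $(p+1)$-dimensional invariant subvarieties $V(\tau)_\C$, cf.\ \cite[Chapter 5]{Fulton:1436535}. Moreover, the edge map $\CH_p(\Sigma)\cong E^2_{p,p}\hookrightarrow H_{2p}^\BM(\Sigma)$ coming from the orbit-dimension filtration of $\Sigma_\C$ sends each generator $[V(\sigma)]$ to the Borel--Moore fundamental class of $V(\sigma)_\C$, so it agrees with the classical cycle class map.

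For (b), when $q$ is odd the group $E^2_{p+q,p}$ is a direct summand of $H_{2p+q}^\BM(\Sigma)$, which vanishes by hypothesis, so this case is immediate. When $q=2k$ with $k\geq 1$, the degeneration yields a direct-sum decomposition
\[
H_{2p+2k}^\BM(\Sigma)=\bigoplus_{p'+q'=2p+2k,\,0\leq q'\leq p'} E^2_{p',q'},
\]
in which the summand $E^2_{p+k,p+k}$ is, by (a) applied with $p$ replaced by $p+k$, canonically identified with $\CH_{p+k}(\Sigma)$ in such a way that its inclusion into $H_{2p+2k}^\BM(\Sigma)$ is the cycle class map. Since the latter is an isomorphism by hypothesis, every other summand in the decomposition must vanish; in particular $E^2_{p+2k,p}=0$.

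The only genuinely non-formal input is the compatibility of the edge map $E^2_{p,p}\hookrightarrow H_{2p}^\BM(\Sigma)$ with the cycle class map; once this is checked on the generators $[V(\sigma)]$, the rest is a direct-summand bookkeeping driven by the two hypotheses on $\Sigma$. I expect the verification of that compatibility, essentially pulled from the construction of the spectral sequence in \cite{MR2255969}, to be the main (albeit standard) obstacle.
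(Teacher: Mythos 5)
Your proposal matches the paper's proof: read the complex as the row $E^1_{\bullet,p}$, invoke $E^2$-degeneration from \cite[Corollary 6.3]{MR2255969}, identify $E^2_{p,p}\cong\CH_p(\Sigma)$ via \cite[Proposition 2.1]{MR1415592}, and kill the off-diagonal $E^2$ terms using the two hypotheses. The edge-map compatibility you flag at the end is indeed built into the cited references, but note it can also be bypassed entirely: since all groups involved are finitely generated abelian, the abstract isomorphism $H^{\BM}_{2m}(\Sigma)\cong E^2_{m,m}$ together with the direct-sum decomposition $H^{\BM}_{2m}(\Sigma)\cong\bigoplus_{p'+q'=2m}E^2_{p',q'}$ already forces the complementary summands to vanish by cancellation, so no identification of the inclusion with the cycle class map is needed.
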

\begin{proof}
Consider the spectral sequence $E_{p,q}^1$ in \eqref{moving.1.1},
which converges to $H_{p+q}^\BM(\Sigma)$ and degenerates at the second page.
Then there is an isomorphism
\(
E_{p,p}^2
\cong
\CH_p(\Sigma)
\)
by \cite[Proposition 2.1]{MR1415592}.
Together with the assumption on $H_*^{\BM}(\Sigma)$,
we have the vanishing $E_{p,q}^2=0$ whenever $p\neq q$.
From this,
we obtain the desired resolution.
\end{proof}

\begin{rmk}
For every fan $\Sigma$ and integer $p\geq 0$,
the sequence
\[
Z_{p,1}(\Sigma) \xrightarrow{d} Z_{p,0}(\Sigma) \to \CH_p(\Sigma)\to 0
\]
is exact by \cite[Proposition 2.1]{MR1415592}.
\end{rmk}
\begin{df}
Let $0\leq r\leq n$ and $p,q\geq 0$ be integers.
For an $r$-standard subdivision $\Sigma$ of $(\P^1)^n$,
we set
\[
Z_{p,q}^\flat(\Sigma)
:=
\ker(Z_{p,q}(\Sigma)\to Z_{p,q}(\Sigma^\circ)).
\]
\end{df}

\begin{prop}
\label{ordering.16}
Let $\Sigma$ be an $r$-standard subdivision of $(\P^1)^n$.
Assume that $\Sigma_{\max}$ admits an admissible ordering.
Then there is a natural resolution of $\CH_p^\flat(\Sigma)$:
\[
\cdots \xrightarrow{d} Z_{p,1}^\flat(\Sigma) \xrightarrow{d} Z_{p,0}^\flat(\Sigma)
\to \CH_p^\flat(\Sigma)\to 0.
\]
\end{prop}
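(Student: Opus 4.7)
The plan is to build the desired resolution by comparing the resolutions of Proposition \ref{moving.1} applied to $\Sigma$ and to its subfan $\Sigma^\circ$. If $r = n$ then $\Sigma^\circ$ is empty, $\CH_p^\flat(\Sigma) = \CH_p(\Sigma)$, and the claim reduces directly to Proposition \ref{moving.1}; so assume $r < n$. In that case $\Sigma^\circ$ is a nonempty subfan that contains an $n$-dimensional cone (any maximal cone of $\Sigma$ containing some $e_i$ with $r+1\leq i\leq n$), and the hypotheses of Proposition \ref{moving.1} for $\Sigma^\circ$ are supplied by Proposition \ref{ordering.17}(2).

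The crucial combinatorial input is that $\Sigma - \Sigma^\circ$ is closed under taking supercones. Since $\Sigma^\circ$ is a subfan, $\Sigma_\C^\circ$ is open in $\Sigma_\C$, and the torus-orbit stratifications give
\[
Z := \Sigma_\C - \Sigma_\C^\circ = \bigsqcup_{\sigma \in \Sigma - \Sigma^\circ} O(\sigma).
\]
For any $\sigma \in \Sigma - \Sigma^\circ$ the orbit closure $V(\sigma) = \bigsqcup_{\sigma \prec \tau} O(\tau)$ lies in the closed set $Z$, forcing every $\tau \succ \sigma$ into $\Sigma - \Sigma^\circ$. Because the differential of $Z_{p,\bullet}(\Sigma)$ travels along $\sigma \prec_1 \tau$, this closure property guarantees both that the inclusion $Z_{p,\bullet}^\flat(\Sigma) \hookrightarrow Z_{p,\bullet}(\Sigma)$ is a map of complexes and that the quotient is naturally identified with $Z_{p,\bullet}(\Sigma^\circ)$ equipped with its intrinsic subfan differential (the formulas for $d$ agree since $\Sigma^\circ$ sits in the same lattice as $\Sigma$). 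Thus we obtain a short exact sequence of complexes
\[
0 \to Z_{p,\bullet}^\flat(\Sigma) \to Z_{p,\bullet}(\Sigma) \to Z_{p,\bullet}(\Sigma^\circ) \to 0.
\]

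By Proposition \ref{moving.1} the middle and right complexes have $H_0$ equal to $\CH_p(\Sigma)$ and $\CH_p(\Sigma^\circ)$ with vanishing $H_i$ for $i \geq 1$. The associated long exact sequence in homology therefore forces $H_i(Z_{p,\bullet}^\flat(\Sigma)) = 0$ for all $i \geq 1$ and produces
\[
0 \to H_0(Z_{p,\bullet}^\flat(\Sigma)) \to \CH_p(\Sigma) \to \CH_p(\Sigma^\circ) \to 0.
\]
On the other hand, the Borel--Moore localization sequence for the open-closed pair $(\Sigma_\C^\circ, Z)$, combined with the odd-degree vanishings in Proposition \ref{ordering.17}(2),(3) and the identifications $\CH_* \cong H^\BM_{2*}$ supplied there, yields the short exact sequence $0 \to \CH_p^\flat(\Sigma) \to \CH_p(\Sigma) \to \CH_p(\Sigma^\circ) \to 0$. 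Comparing the two sequences identifies $H_0(Z_{p,\bullet}^\flat(\Sigma))$ with $\CH_p^\flat(\Sigma)$ and finishes the resolution. The main obstacle is the supercone-closure property of $\Sigma - \Sigma^\circ$ in the second paragraph; once that is in hand, the remainder is a standard diagram chase.
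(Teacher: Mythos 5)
Your proof is correct and follows essentially the same approach as the paper's: form the short exact sequence of complexes relating $Z_{p,\bullet}^\flat(\Sigma)$, $Z_{p,\bullet}(\Sigma)$, and $Z_{p,\bullet}(\Sigma^\circ)$, apply the resolutions from Proposition~\ref{moving.1} (via Proposition~\ref{ordering.17}(1),(2)), and extract the kernel complex. You spell out two points the paper treats more tersely --- the supercone-closure of $\Sigma - \Sigma^\circ$ (where the paper appeals to functoriality of the cycle complex under open immersions) and the identification of the $H_0$-kernel with $\CH_p^\flat(\Sigma)$ via Borel--Moore localization (where the paper relies on the injectivity of $\CH_*(Z_q)\to\CH_*(\Sigma)$ already established inside the proof of Proposition~\ref{ordering.17}).
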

\begin{proof}
Borel-Moore homology and Chow groups are contravariant with respect to open immersions.
Hence we have a commutative diagram
\[
\begin{tikzcd}
\cdots \ar[r,"d"]&
Z_{p,1}(\Sigma)\ar[d]\ar[r,"d"]&
Z_{p,0}(\Sigma)\ar[d]\ar[r]&
\CH_p(\Sigma)\ar[d]\ar[r]&
0
\\
\cdots \ar[r,"d"]&
Z_{p,1}(\Sigma^\circ)\ar[r,"d"]&
Z_{p,0}(\Sigma^\circ)\ar[r]&
\CH_p(\Sigma^\circ)\ar[r]&
0
\end{tikzcd}
\]
whose rows are exact by Propositions \ref{ordering.17}(1),(2) and \ref{moving.1}.
Furthermore, the morphism $\CH_p(\Sigma)\to \CH_p(\Sigma^\circ)$ is surjective since the induced morphism of $\C$-realizations $\Sigma_{\C}^\circ \to \Sigma_{\C}$ is an open immersion of schemes,
and the morphism $Z_{p,i}(\Sigma)\to Z_{p,i}(\Sigma^\circ)$ is surjective for every integer $i\geq 0$ using \eqref{moving.1.2}.
Take kernels along the columns to conclude.
\end{proof}

\begin{const}
\label{subdivision.2}
Let $1\leq i\leq n$ be integers,
and let $\Sigma$ be a smooth complete fan in $\Z^n$.
We have the $(n-1)$-dimensional fan
\[
D_{i,0}(\Sigma):=
V(e_i)
\]
under the assumption that $e_i$ is a ray of $\Sigma$.
We have the $(n-1)$-dimensional fan
\[
D_{i,1}(\Sigma):=\{\sigma\in \Sigma:\sigma \subset \Z^{i-1}\times \{0\} \times \Z^{n-i}\}
\]
in $\Z^{i-1}\times \{0\} \times \Z^{n-i}$.
We say that $\Sigma$ is \emph{$i$-admissible} if $D_{i,1}(\Sigma)$ is complete.
Observe that $\Sigma$ is $i$-admissible for every $1\leq i\leq n$ if $\Sigma$ is a subdivision of $(\P^1)^n$.

We have the induced closed immersions of toric varieties
\[
\delta_{i,0}\colon D_{i,0}(\Sigma)_{\C}\to \Sigma_\C,
\text{ }
\delta_{i,1}\colon D_{i,1}(\Sigma)_\C\to \Sigma_\C,
\]
where $\delta_{i,0}$ is due to \cite[p.\ 53]{Fulton:1436535}.
These induce the maps
\[
\delta_{i,0}^*\colon \CH_*(\Sigma)\to \CH_{*-1}(D_{i,0}(\Sigma)),
\text{ }
\delta_{i,1}^*\colon \CH_*(\Sigma)\to \CH_{*-1}(D_{i,1}(\Sigma)).
\]

For example, $\delta_{i,0}$ (resp.\ $\delta_{i,1}$) exhibits $D_{i,0}((\P^1)^n)_{\C}$ (resp.\ $D_{i,1}((\P^1)^n)_{\C}$) as the closed subscheme $(\P_{\C}^1)^{i-1} \times \{0\} \times (\P_{\C}^1)^{n-i}$ (resp.\ $(\P_{\C}^1)^{i-1} \times \{1\} \times (\P_{\C}^1)^{n-i}$) of $(\P_\C^1)^n$.
This explains the motivation for the choice of the subscript $\epsilon\in \{0,1\}$.
\end{const}

\begin{exm}
\label{subdivision.37}
Let $1\leq i\leq n$ be integers,
let $\Sigma$ be an $i$-admissible smooth complete fan in $\Z^n$,
let $\eta$ be a cone of $\Sigma$,
and let $A$ be a subfan of $\Sigma$ satisfying the condition in Definition \ref{dist.12}.
Then we have
\[
D_{i,1}(\Sigma_\eta^\bary(A))
\cong
D_{i,1}(\Sigma)_{\eta_i}^\bary(A_i),
\]
where $\eta_i:=\eta\cap (\Z^{i-1}\times 0 \times \Z^{n-i})$,
and $A_i:=\{\tau\in A:\tau\subset \Z^{i-1}\times 0 \times \Z^{n-i}\}$.
Note that $A_i$ satisfies the condition in Definition \ref{dist.12} too.
\end{exm}

The following result explains the reason why we need $\CH_*^\flat(\Sigma)$:

\begin{prop}
\label{ordering.3}
Let $\Sigma$ be an $r$-standard subdivision of $(\P^1)^n$.
Assume that $\Sigma_{\max}$ admits an admissible ordering.
Then for every integer $p\geq 0$,
there is a natural isomorphism
\[
\CH_p^\flat(\Sigma)
\cong
\bigcap_{i=r+1}^{n} \ker(\CH_p(\Sigma)\xrightarrow{\delta_{i,0}^*} \CH_{p-1}(D_{i,0}(\Sigma))).
\]
\end{prop}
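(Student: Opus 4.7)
The plan is to combine the open--closed localization sequence for Chow groups with the explicit Fulton-type bases of Proposition \ref{ordering.17}. Since $\Sigma^\circ$ is a subfan of $\Sigma$, the toric variety $\Sigma_\C^\circ$ embeds in $\Sigma_\C$ as an open subscheme. Applying parts (1), (2), (3) of Proposition \ref{ordering.17} --- the cycle-class isomorphisms $\CH_p \cong H_{2p}^\BM$ together with the vanishing of odd-degree Borel--Moore homology for $\Sigma$, $\Sigma^\circ$, and the closed complement $\Sigma_\C - \Sigma_\C^\circ$ --- the localization long exact sequence collapses to the short exact sequence
\[
0 \to \CH_p^\flat(\Sigma) \to \CH_p(\Sigma) \xrightarrow{r} \CH_p(\Sigma^\circ) \to 0,
\]
identifying $\CH_p^\flat(\Sigma)$ with $\ker r$ via pushforward along the closed immersion.

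I would then verify that $V(e_i)_\C \subset \Sigma_\C^\circ$ for each $r+1 \leq i \leq n$: any cone $\gamma$ of $\Sigma$ with $e_i$ among its rays automatically satisfies $\Cone(\gamma, e_i) = \gamma \in \Sigma$ and hence lies in $\Sigma^\circ$, so every orbit contributing to $V(e_i)_\C$ already sits in $\Sigma_\C^\circ$. Consequently each Gysin pullback factors as $\delta_{i,0}^* = (\delta_{i,0}^\circ)^* \circ r$, where $\delta_{i,0}^\circ\colon V(e_i)_\C \to \Sigma_\C^\circ$ is the restricted closed immersion. This yields the inclusion $\CH_p^\flat(\Sigma) = \ker r \subset \bigcap_i \ker \delta_{i,0}^*$ for free.

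The reverse inclusion reduces to showing that the combined pullback $\CH_p(\Sigma^\circ) \to \bigoplus_i \CH_{p-1}(V(e_i))$ induced by the $(\delta_{i,0}^\circ)^*$ is injective. Expanding a class in the Fulton basis of Proposition \ref{ordering.17}(4) as $\alpha = \sum_{\sigma \in \Sigma_{\max}^\circ} a_\sigma [V(\ess(\sigma))]$ and using toric intersection theory, each pullback can be computed via the admissibility conditions: when $e_i \in \sigma$, condition (iv) forces $\ess(\sigma) \subset \widehat{\sigma}_{e_i}$, so $e_i \notin \ess(\sigma)$ and $\Cone(\ess(\sigma), e_i)$ is a proper face of $\sigma$ in $\Sigma$, giving $\delta_{i,0}^*[V(\ess(\sigma))] = [V(\Cone(\ess(\sigma), e_i))]$; when $e_i \notin \sigma$, condition (v) forces $\Cone(\ess(\sigma), e_i) \notin \Sigma$ and the pullback vanishes (the same calculation, applied to $\sigma \in \Sigma_{\max} - \Sigma_{\max}^\circ$, recovers the easy direction above).

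The main obstacle is passing from this explicit formula to linear independence of the images. I plan to proceed by induction on the admissible ordering restricted to $\Sigma_{\max}^\circ$: for the largest $\sigma$ with $a_\sigma \neq 0$, condition (i) of admissibility forbids any other basis element $[V(\ess(\sigma'))]$ with $\sigma' \in \Sigma_{\max}^\circ$ from mapping to a class supported on the same cone, so the term $a_\sigma [V(\Cone(\ess(\sigma), e_i))]$ in the pullback to some $V(e_i)$ (with $e_i \in \sigma$) cannot be cancelled by smaller basis elements, forcing $a_\sigma = 0$ and allowing descent. Making this precise requires comparing the images with a Fulton basis of $\CH_{p-1}(V(e_i))$, which in turn requires checking that the ordering on $V(e_i)_{\max}$ induced from $\Sigma_{\max}$ is itself pre-admissible; this verification is the principal technical burden.
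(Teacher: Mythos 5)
Your proposal is essentially correct and follows the same line of computation as the paper: using conditions (iv) and (v) of the admissible ordering to evaluate $\delta_{i,0}^*[V(\ess(\sigma))]$ on the Fulton basis, and transporting the ordering along the bijection $\{\sigma \in \Sigma_{\max} : e_i \in \sigma\} \xrightarrow{\cong} D_{i,0}(\Sigma)_{\max}$ so that Proposition \ref{ordering.19} applies to $D_{i,0}(\Sigma)$. One small economy: for Proposition \ref{ordering.19} you only need the induced ordering on $D_{i,0}(\Sigma)_{\max}$ to satisfy condition (i) of Definition \ref{ordering.1}, not full pre-admissibility, which is all the paper verifies.

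Where you diverge is in the bookkeeping. You route through the localization exact sequence to realize $\CH_p^\flat(\Sigma)$ as $\ker\bigl(\CH_p(\Sigma) \to \CH_p(\Sigma^\circ)\bigr)$, then factor $\delta_{i,0}^*$ through $\CH_p(\Sigma^\circ)$ to get the easy inclusion, and finally reduce the reverse inclusion to an injectivity statement. The paper skips the localization detour: once one computes, in the Fulton bases, that $\delta_{i,0}^*$ sends $[V(\ess(\sigma))]$ to the basis element $[V(\ess(\ol{\sigma}))]$ when $e_i \in \sigma$ and to $0$ otherwise, $\delta_{i,0}^*$ is literally the projection $\bigoplus_{\sigma\in\Sigma_{\max}}\Z \to \bigoplus_{e_i\in\sigma}\Z$. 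The intersection of kernels is then the complementary summand $\bigoplus_{e_{r+1},\dots,e_n\notin\sigma}\Z$, which Proposition \ref{ordering.17}(5) identifies with $\CH_p^\flat(\Sigma)$. Same inputs, more direct assembly.

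The ``main obstacle'' you flag at the end --- the induction on the ordering and the worry that the image of the leading term could be cancelled by lower ones --- is a phantom. For a fixed $i$, the classes $[V(\ess(\ol{\sigma}))]$ as $\sigma$ ranges over distinct maximal cones containing $e_i$ are \emph{distinct free generators} of $\CH_*(D_{i,0}(\Sigma))$, by Proposition \ref{ordering.19} applied to $D_{i,0}(\Sigma)$ with the transported ordering. So $(\delta_{i,0}^\circ)^*\alpha = 0$ already kills every coefficient $a_\sigma$ with $e_i \in \sigma$, with no peeling argument; and since every $\sigma \in \Sigma_{\max}^\circ$ contains some $e_i$ with $r+1 \le i \le n$, the injectivity you wanted is immediate.
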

\begin{proof}
We have the one-to-one correspondence
$\ol{(-)}
\colon
\{\tau\in \Sigma: e_i\in \tau\}
\xrightarrow{\cong}
D_{i,0}(\Sigma)$ 
for every integer $r+1\leq i\leq n$,
see \eqref{ordering.5.1}.
We impose an ordering on $\delta_{i,0}^* \Sigma$ as follows:
For $\sigma,\tau \in \Sigma_{\max}$,
we assign $\ol{\sigma}<\ol{\tau}$ if $\sigma<\tau$.

If a cone $\sigma\in \Sigma_{\max}$ contains $e_i$,
then we can express $\sigma$ as $\Cone( e_i,f_1,\ldots,f_{n-1})$.
We may assume
$\widehat{\sigma}_{f_{t+1}},\ldots,\widehat{\sigma}_{f_{n-1}}<\sigma<\widehat{\sigma}_{f_1},\ldots,\widehat{\sigma}_{f_t}$ for some $t$ without loss of generality,
see Definition \ref{ordering.1} for $\widehat{\sigma}_{f_j}$.

The condition (iv) in Definition \ref{ordering.1} implies $\sigma<\sigma_{\widehat{e_i}}$.
Hence we have
\[
\ess(\sigma)
= 
\sigma
\cap
\widehat{\sigma}_{e_i}
\cap
\widehat{\sigma}_{f_1}
\cap
\cdots
\cap
\widehat{\sigma}_{f_t}
=
\Cone( f_{t+1},\ldots,f_{n-1}).
\]
Since $\ol{\sigma}=\Cone( f_1,\ldots,f_{n-1})$ and $\widehat{\ol{\sigma}}_{f_{t+1}},\ldots,\widehat{\ol{\sigma}}_{f_{n-1}}<\sigma<\widehat{\ol{\sigma}}_{f_1},\ldots,\widehat{\ol{\sigma}}_{f_t}$,
we also have
\[
\ess(\ol{\sigma})
=
\Cone( f_{t+1},\ldots,f_{n-1}).
\]
Hence the condition (i) in Definition \ref{ordering.1} for $\Sigma$ implies the condition (i) in Definition \ref{ordering.1} for $D_{i,0}(\Sigma)$.
Furthermore,
we have
\[
\delta_{i,0}^*[V(\ess(\sigma))]
=
[V(\ess(\ol{\sigma}))]
\]
for $\sigma\in \Sigma_{\max}$ such that $e_i\in \sigma$.
On the other hand,
the condition (v) in Definition \ref{ordering.1} implies
\[
\delta_{i,0}^*[V(\ess(\sigma))]
=
0
\]
for $\sigma\in \Sigma_{\max}$ such that $e_i\notin \sigma$.

Together with Proposition \ref{ordering.19},
we see that $\delta_{i,0}^*\colon \CH_*(\Sigma)\to \CH_{*-1}(D_{i,0}(\Sigma))$ is identified with the projection
\[
\bigoplus_{\sigma\in \Sigma_{\max}} \Z
\to
\bigoplus_{e_i\in \sigma\in \Sigma_{\max}} \Z.
\]
It follows that we have an isomorphism
\[
\bigcap_{i=r+1}^{n} \ker(\CH_*(\Sigma)\xrightarrow{\delta_{i,0}^*} \CH_{*-1}(D_{i,0}(\Sigma)))
\cong
\bigoplus_{e_{i+1},\ldots,e_n\notin \sigma \in \Sigma_{\max}} \Z.
\]
This can be identified with $\CH_*^\flat(\Sigma)$ using Proposition \ref{ordering.17}(5).
\end{proof}

\subsection{Cubical identities for \texorpdfstring{$Z_{p,q}^\flat(\Theta_{n,r,\bd})$}{Zpq}}
\label{identities}

Our next purpose is to construct some maps $\delta_{i,1}^*$, $\rho_i^*$, and $\nu_i^*$ for $Z_{p,q}^\flat(\Theta_{n,r,\bd})$,
which are crucial ingredients for the proof of Proposition \ref{ordering.14} below.
The reason of our detour through $Z_{p,q}^\flat(\Theta_{n,r,\bd})$ is that $\CH_p^\flat(\Theta_{n,r,\bd})$ does not enjoy $\rho_i^*$ and $\nu_i^*$.

\begin{const}
\label{ordering.7}
Let $\Sigma$ be an $r$-standard subdivision of $(\P^1)^n$ with integers $0\leq r\leq n$.
For $\sigma\subset \Z^{i-1}\times 0 \times \Z^{n-i}$ with an integer $r+1\leq i\leq n$,
let $\delta_{i,1}^*\sigma$ be the corresponding cone in $D_{i,1}(\Sigma)$.
The $i$th inclusion $\delta_{i,1}\colon N(D_{i,1}(\Sigma))\cong \Z^{n-1}\to N(\Sigma)\cong \Z^n$ sends $\delta_{i,1}^*\sigma$ onto $\sigma$.
Hence we have the induced map $\delta_{i,1}^*\colon M(\sigma)\to M(\delta_{i,1}^*\sigma)$.
We have the map
\[
\delta_{i,1}^*
\colon
Z_{p,q}^\flat(\Sigma)
\to
Z_{p-1,q}^\flat(D_{i,1}(\Sigma))
\]
given by
\[
\delta_{i,1}^*(\alpha[V(\sigma)])
:=
\left\{
\begin{array}{ll}
\delta_{i,1}^*\alpha [V(\delta_{i,1}^*\sigma)] & \text{if }\sigma \subset \Z^{i-1}\times 0 \times \Z^{n-i},
\\
0 & \text{otherwise},
\end{array}
\right.
\]
for $\sigma\in \Sigma^\flat:=\Sigma-\Sigma^\circ$ and $\alpha\in \wedge^q M(\sigma)$.
Note that $\delta_{i,1}^*$ is compatible with the differentials $d$ since
we have
\[
\delta_{i,1}^*(u_{\tau,\sigma}\iprod \alpha [V(\tau)])
=
\left\{
\begin{array}{ll}
u_{\tau,\sigma} \iprod \delta_{i,1}^* \alpha [V(\tau)]& \text{if }\tau \subset \Z^{i-1}\times 0 \times \Z^{n-i},
\\
0 & \text{otherwise}
\end{array}
\right.
\]
for $\sigma\prec_1 \tau$ and $\alpha\in \wedge^q M(\sigma)$.
\end{const}

\begin{const}
Let $0\leq r\leq n$ be integers, and let $\bd$ be a finite sequence of positive integers.
If $r<n$,
then we have
\[
D_{i,1}(\Theta_{n,r,\bd})\cong \Theta_{n-1,r,\bd}
\]
using Example \ref{subdivision.37}.
For integers $p$ and $q$,
we set
\[
\delta^*:=\sum_{i=r+1}^n (-1)^{i-r}\delta_{i,1}^*\colon Z_{p,q}^\flat (\Theta_{n,r,\bd})
\to
Z_{p-1,q}^\flat(\Theta_{n-1,r,\bd}).
\]
If $r=n$, then we simply set $\delta^*=0$.
We have the induced chain complex
\[
\cdots
\xrightarrow{\delta^*}
Z_{p+1,q}^\flat(\Theta_{r+1,r,\bd})
\xrightarrow{\delta^*}
Z_{p,q}^\flat(\Theta_{r,r,\bd}) \to 0.
\]
\end{const}

\begin{lem}
\label{ordering.12}
Let $0\leq r\leq n$ be integers,
and let  $a=(a_1,\ldots,a_n)$ be a ray of a subdivision $\Sigma$ of $\Gamma_{n,r}$ such that $a_1,\ldots,a_r\leq 0$ and $a\neq e_1,\ldots,e_n$.
If $a_s=0$ and $r+1\leq s\leq n$,
then $\Cone(a,e_s)\notin \Sigma$.
\end{lem}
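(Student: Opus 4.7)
The plan is to argue by contradiction: suppose $\Cone(a,e_s)\in\Sigma$, so as a $2$-dimensional cone it lies inside some maximal cone $\tau$ of $\Gamma_{n,r}$. By construction of $\Gamma_{n,r}$ as an $\eta$-excluded barycentric subdivision (with $\eta=\Cone(e_{r+1},\ldots,e_n)$) and Proposition~\ref{subdivision.32} applied to the underlying maximal cone $\sigma=\Cone(x_1e_1,\ldots,x_ne_n)$ of $(\P^1)^n$, I can reorder the rays of $\sigma$ as $y_1,\ldots,y_n$ so that $y_1,\ldots,y_t$ exhaust $\eta\cap\sigma$ (the $e_j$ for $j\geq r+1$, $x_j=1$), $y_{t+1},\ldots,y_m$ are the remaining rays of $a(\sigma)$ (the $-e_i$ with $x_i=-1$), and $y_{m+1},\ldots,y_n$ are the $e_i$ with $i\leq r$, $x_i=1$. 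Then the rays of $\tau=\sigma_\alpha$ are $y_{\alpha(1)},\ldots,y_{\alpha(c)}$, the sum rays $g_k:=y_{\alpha(1)}+\cdots+y_{\alpha(k)}$ for $c+1\leq k\leq m$, and $y_{m+1},\ldots,y_n$. Since $e_s$ is a ray of $\tau$, necessarily $x_s=1$ and $e_s=y_{\alpha(i_0)}$ for some $i_0\leq c$.

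Next I perform a coordinate analysis on $a$. Writing
\[
a=\sum_{i=1}^{c}\lambda_i y_{\alpha(i)}+\sum_{k=c+1}^{m}\mu_k g_k+\sum_{j=m+1}^{n}\nu_j y_j
\]
with all coefficients non-negative, I inspect the $s$-th coordinate. The only rays with nonzero $s$-component are $e_s=y_{\alpha(i_0)}$ and every sum ray $g_k$ (since $i_0\leq c<k$ forces $e_s$ to be a summand of $g_k$, while the other summands are $\pm e_j$ with $j\neq s$ by the $s$-rigid choice of indices). Hence $a_s=0$ forces $\lambda_{i_0}=0$ and $\mu_k=0$ for all $k$. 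Looking now at the $i$-th coordinate for $i\leq r$, only the rays $y_j$ with $j\geq m+1$ (which equal $e_i$ for some $i\leq r$) contribute, and positively; combined with $a_i\leq 0$ this forces $\nu_j=0$ for every $j\geq m+1$. The remaining expression shows $a=\sum_{i\neq i_0}\lambda_i e_{j_i}$, a non-negative combination of standard basis vectors $e_j$ with $j\geq r+1$ and $j\neq s$. In particular, $a\in\Cone(e_{r+1},\ldots,e_n)$.

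Finally I conclude using the $r$-standard structure. By Definition~\ref{intro.1}(ii), $\Cone(e_{r+1},\ldots,e_n)$ is a cone of $\Gamma_{n,r}$ (and of any $r$-standard subdivision $\Sigma$ refining it, which is the context in this section—the hypothesis $a_1,\ldots,a_r\leq 0$ is precisely condition (i) of Definition~\ref{intro.1} for the ray $a\neq e_1,\ldots,e_r$). Since $\Cone(a)$ is a $1$-dimensional cone of $\Sigma$ contained in $\Cone(e_{r+1},\ldots,e_n)\in\Sigma$, it must be a face of $\Cone(e_{r+1},\ldots,e_n)$; the only such $1$-dimensional faces are the rays $e_{r+1},\ldots,e_n$, forcing $a=e_j$ for some $j\geq r+1$ and contradicting $a\neq e_1,\ldots,e_n$.

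The main obstacle I expect is the bookkeeping in the coordinate computation—correctly identifying which rays of $\tau$ have vanishing versus nonvanishing $s$-th coordinate, and tracking how the $s$-rigidity of the auxiliary indices (i.e.\ the fact that the $y$'s involved in the sum rays $g_k$ are distinct standard basis vectors up to sign, and none of them equals $\pm e_s$ apart from $y_{\alpha(i_0)}$) guarantees that $a_s=\lambda_{i_0}+\sum_{k=c+1}^m\mu_k$ rather than a more complicated expression. Once this bookkeeping is set up correctly using Proposition~\ref{subdivision.32}, the rest of the argument is a direct non-negativity calculation followed by invoking the fan axioms.
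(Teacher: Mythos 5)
Your proof is correct and takes a genuinely different route from the paper's. The paper's argument is direct: after reducing WLOG to $s=n$, it picks $t$ with $r+1\leq t\leq n$ and $a_t<0$, reduces to $t=n-1$, and exhibits an $(n-1)$-dimensional cone of $\Gamma_{n,r}$ inside the hyperplane $\{x_{n-1}+x_n=0\}$ whose intersection with $\Cone(a,e_s)$ is an interior ray, which shows $\Cone(a,e_s)$ lies in no maximal cone of $\Gamma_{n,r}$ and hence in no subdivision $\Sigma$. You instead suppose $\Cone(a,e_s)\in\Sigma$, invoke Proposition~\ref{subdivision.32} to list the rays of the enclosing maximal cone of $\Gamma_{n,r}$, and read off the nonnegative coefficients of $a$ in that generating set: $a_s=0$ annihilates the coefficients of $e_s$ and of every sum ray $g_k$, then $a_1,\ldots,a_r\leq 0$ annihilates the coefficients of the positive rays $e_i$ with $i\leq r$, leaving $a\in\eta$, so that $\eta\in\Sigma$ forces $a\in\{e_{r+1},\ldots,e_n\}$, a contradiction. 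Your version is arguably more robust: for a ray such as $a=-e_1+e_{r+1}$ of $\Gamma_{n,r}$ (with $r\geq 1$, $n\geq r+2$, $r+2\leq s\leq n$), every $a_t$ with $t>r$ is $\geq 0$, so the paper's choice of $t$ does not go through as written and the separating wall would have to lie in a hyperplane involving an index $\leq r$; your coordinate analysis disposes of this case with no extra work, since it shows $a\in\eta$ directly and in particular that no coordinate in $\{1,\ldots,r\}$ can be negative. Both proofs implicitly use that $\eta$ is a cone of $\Sigma$ itself, not merely of $\Gamma_{n,r}$ (otherwise the lemma is false, e.g.\ after star subdividing $\eta$); you flag this explicitly, and it is satisfied in all of the paper's applications of the lemma, where $\Sigma$ is very $r$-standard, though it is not literally among the lemma's stated hypotheses.
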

\begin{proof}
Without loss of generality,
we may assume $s=n$.
Since $a\neq e_1,\ldots,e_n$,
there exists an integer $r+1\leq t\leq n$ such that $a_t<0$.
We may also assume $t=n-1$.
Then by considering the hyperplane $\{(x_1,\ldots,x_n)\in \Z^n:x_{n-1}+x_n=0\}$,
we see that there exists an $(n-1)$-dimensional cone $\tau$ of $\Gamma_{n,r}$ such that $\tau\cap \Cone(a,e_s)$ is equal to the ray $(a_1,\ldots,a_{n-2},a_{n-1},-a_{n-1})$.
This implies $\Cone(a,e_s)\notin \Sigma$.
\end{proof}

\begin{const}
\label{ordering.8}
Let $\Sigma$ be an $r$-standard subdivision of $(\P^1)^n$,
where $0\leq r\leq n$ be integers.
Assume that $\Sigma$ is a subdivision of $\Gamma_{n,r}$,
e.g., $\Theta_{n,r,\bd}$ for a nonempty finite sequence $\bd$ of positive integers.
For an integer $r+1\leq i\leq n$,
consider the $i$th projection $\rho_i\colon N(\Sigma)\cong \Z^n\to N(D_{i,1}(\Sigma))\cong \Z^{n-1}$.
For $\sigma\in D_{i,1}(\Sigma)^\flat$ such that $\dim \sigma >0$,
let $\rho_i^* \sigma$ is the unique cone of $\Sigma$ contained in $\Z^{i-1}\times 0\times \Z^{n-i}$ such that $\rho_i$ sends $\rho_i^*\sigma$ onto $\sigma$.
Then we have the map $\rho_i^*\colon M(\sigma)\to M(\rho_i^*\sigma)$ induced by the dual map $\rho_i^*\colon M(D_{i,1}(\Sigma))\to M(\Sigma)$.
We claim $\rho_i^*\sigma \in \Sigma^\flat$.
Since $\sigma\in D_{i,1}(\Sigma)^\flat$,
we only need to show that there exists no cone of $\Sigma$ containing $\rho_i^*\sigma$ and $e_i$.
This follows from Lemma \ref{ordering.12} since the assumption $\sigma \in D_{i,1}(\Sigma)$ implies that every ray of $\sigma$ satisfies the assumption in  Lemma \ref{ordering.12}.

For all integers $0\leq p\leq n-1$ (the condition $\dim \sigma>0$ corresponds to $p\leq n-1$) and $q\geq 0$,
we have the map
\[
\rho_i^*
\colon
Z_{p,q}^\flat (D_{i,1}(\Sigma))
\to
Z_{p+1,q}^\flat (\Sigma)
\]
given by
\[
\rho_i^*(\alpha[V(\sigma)]):=\rho_i^*\alpha [V(\rho_i^*\sigma)]
\]
for $\sigma\in D_{i,1}(\Sigma)^\flat$ and $\alpha\in \wedge^q M(\sigma)$.
\end{const}

\begin{lem}
\label{ordering.9}
Let $0\leq r\leq n$ be integers,
and let $\bd$ be a nonempty finite sequence of positive integers.
Consider the maps $\delta_{i,1}^*$ and $\rho_i^*$ for $Z_{p,q}^\flat(\Theta_{n,r,\bd})$ obtained by \textup{Constructions \ref{ordering.7}} and \textup{\ref{ordering.8}}.
If $0\leq p\leq n-1$,
we have
\[
\delta_{i,1}^*\rho_j^*
=
\left\{
\begin{array}{ll}
\rho_j^*\delta_{i-1,1}^* & \text{if $j<i$},
\\
\id & \text{if $j=i$},
\\
\rho_{j-1}^*\delta_{i,1}^* & \text{if $j>i$}.
\end{array}
\right.
\]
\end{lem}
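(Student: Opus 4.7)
The plan is to verify the identity by tracing both sides on a generator $\alpha[V(\sigma)]$ of $Z_{p,q}^\flat(D_{j,1}(\Theta_{n,r,\bd}))$, where $\sigma \in D_{j,1}(\Theta_{n,r,\bd})^\flat$ and $\alpha \in \wedge^q M(\sigma)$. In each of the three cases, the verification will split into showing (a) that the two sides produce the same symbol $[V(\cdot)]$ (in particular, that they vanish under the same circumstances), and (b) that the induced maps on $\wedge^q M$ agree.

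To do the bookkeeping cleanly I would first fix coordinates: let $x_1,\ldots,x_n$ be the standard coordinates on $N(\Theta_{n,r,\bd})=\Z^n$, and identify $N(D_{k,1}(\Theta_{n,r,\bd}))$ with $\Z^{n-1}$ by remembering the coordinates $\{x_\ell:\ell\neq k\}$ in natural order. Under this identification, $\delta_{k,1}\colon\Z^{n-1}\to\Z^n$ inserts $0$ in the $k$th slot, $\rho_k\colon\Z^n\to\Z^{n-1}$ forgets it, and the maps $\rho_k^*$, $\delta_{k,1}^*$ on $M$ are their duals. With this in place, part (b) in each case reduces to an equality of compositions of the underlying $N$-lattice maps.

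The case $j=i$ is immediate: by the construction of $\rho_i^*$, the cone $\rho_i^*\sigma$ lies in the hyperplane $\{x_i=0\}$, so $\delta_{i,1}^*$ does not annihilate it, and both on cones and on $M$-lattices the composition $\delta_{i,1}^*\rho_i^*$ is dual to $\rho_i\circ\delta_{i,1}=\id_{N(D_{i,1}(\Theta_{n,r,\bd}))}$. For the case $j<i$, I would invoke the set-theoretic identification $D_{i-1,1}(D_{j,1}(\Theta_{n,r,\bd}))=D_{j,1}(D_{i,1}(\Theta_{n,r,\bd}))$ (both are the codimension-two subfan obtained by imposing $x_i=x_j=0$, labeled by $\{1,\ldots,n\}\setminus\{i,j\}$ in natural order). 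Nonvanishing on both sides turns out to be equivalent to $\sigma\subset\{x_i=0\}$, and when this holds the produced cones agree because the two $N$-maps $\rho_j\circ\delta_{i,1}$ and $\delta_{i-1,1}\circ\rho_j$, both going from $N(D_{i,1}(\Theta_{n,r,\bd}))$ to $N(D_{j,1}(\Theta_{n,r,\bd}))$, coincide: each sends $(y_1,\ldots,y_{n-1})$ to $(y_1,\ldots,y_{j-1},y_{j+1},\ldots,y_{i-1},0,y_i,\ldots,y_{n-1})$. Dualizing then gives (b). The case $j>i$ is strictly analogous, with $\rho_j\circ\delta_{i,1}=\delta_{i,1}\circ\rho_{j-1}$ as maps $N(D_{i,1}(\Theta_{n,r,\bd}))\to N(D_{j,1}(\Theta_{n,r,\bd}))$.

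The hard part will not be any serious geometric or combinatorial content---it is just the bookkeeping of shifted indices when reidentifying $N(D_{k,1}(\Theta_{n,r,\bd}))$ with $\Z^{n-1}$ after a coordinate is dropped or inserted, and the verification that the intermediate cone $\rho_j^*\sigma$ remains in $\Sigma^\flat$ so that the composites stay inside the $Z^\flat$-subcomplexes throughout. The latter point has already been established in Construction~\ref{ordering.8}, by way of Lemma~\ref{ordering.12}.
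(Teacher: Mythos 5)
Your proposal is correct and is essentially the same argument as the paper's: both proofs reduce the identity to an explicit coordinate computation, checking separately (a) which cone symbol $[V(\cdot)]$ each side produces (including vanishing) and (b) that the induced maps on $\wedge^q M$ agree, with (b) following from the corresponding equalities of compositions $\rho_j\circ\delta_{i,1}$ etc.\ on the $N$-lattices, dualized. Your convention of keeping the original index labels $\{1,\ldots,n\}\setminus\{k\}$ on $N(D_{k,1})$ is a mild cosmetic improvement over the paper's renumbered-$\Z^{n-1}$ conventions, but it does not change the substance of the argument.
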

\begin{proof}
A cone $\sigma\in \Theta_{n,r,\bd}^\flat$ can be written as $\Cone( f_1,\ldots,f_m)$,
and we also write $f_q$ as $(f_{q,1},\ldots,f_{q,n})\in \Z^n$.
We have $\rho_i^* \sigma = \Cone( f_1',\ldots,f_m')$, where
\[
f_q':=(f_{q,1},\ldots,f_{q,i-1},0,f_{q,i},\ldots,f_{q,n})
\]
for $1\leq q\leq m$.
We have $\delta_{i,1}^*\sigma = \Cone( f_1'',\ldots,f_m'')$,
where
\[
f_q'':=(f_{q,1},\ldots,f_{q,i-1},f_{q,i+1},\ldots,f_{q,n+1}).
\]
Using these two descriptions,
we can show
\[
\delta_{i,1}^*\rho_j^*{[V(\sigma)]}
=
\left\{
\begin{array}{ll}
\rho_j^*\delta_{i-1,1}^* {[V(\sigma)]}& \text{if $j<i$},
\\
{[V(\sigma)]}& \text{if $j=i$},
\\
\rho_{j-1}^*\delta_{i,1}^* {[V(\sigma)]}& \text{if $j>i$}.
\end{array}
\right.
\]
The maps $\delta_{i,1}$ and $\rho_i$ for $N(\Theta_{n,r,\bd})$ obtained by Constructions \ref{ordering.7} and \ref{ordering.8} satisfy
\[
\rho_j\delta_{i,1}=
\left\{
\begin{array}{ll}
\delta_{i-1,1}\rho_j & \text{if $j<i$},
\\
\id & \text{if $j=i$},
\\
\delta_{i,1}\rho_{j-1} & \text{if $j>i$},
\end{array}
\right.
\]
which yields a similar identity for $\delta_{i,1}^*\rho_j^*\alpha$, where $\alpha\in \wedge^q M(\sigma)$.
Combine what we have discussed above to obtain the desired identity.
\end{proof}

\begin{const}
\label{ordering.10}
Let $0\leq r\leq n$ be integers,
and let $\bd$ be a nonempty finite sequence of positive integers.
Consider the map $\nu_i\colon N(\Theta_{n+1,r,\bd})\cong \Z^{n+1} \to N(\Theta_{n,r,\bd})\cong \Z^n$ given by
\[
(x_1,\ldots,x_{n+1})\in \Z^{n+1} \mapsto (x_1,\ldots,x_{i-1},x_i+x_{i+1},x_{i+2},\ldots,x_{n+1}).
\]
For $\sigma \in \Theta_{n,r,\bd}^\flat$ such that $\sigma\subset \Z^{i-1}\times 0 \times \Z^{n-i}$,
we have $\rho_i^* \sigma \subset \Z^{i-1}\times 0\times 0 \times \Z^{n-i}$.
Hence we have $\nu_i(\rho_i^*\sigma)\subset \sigma$,
so we have the induced map $\nu_i^* \colon M(\sigma)\to M(\rho_i^*\sigma)$.
For all integers $0\leq p\leq n-1$ and $q\geq 0$,
we have the map
\[
\nu_i^*
\colon
Z_{p,q}^\flat(\Theta_{n,r,\bd})
\to
Z_{p+1,q}^\flat(\Theta_{n+1,r,\bd})
\]
given by
\[
\nu_i^* (\alpha[V(\sigma)])
:=
\left\{
\begin{array}{ll}
\nu_i^*\alpha[V(\rho_i^*\sigma)] & \text{if $\sigma\subset \Z^{i-1}\times 0 \times \Z^{n-i}$},
\\
\rho_i^*(\alpha[V(\sigma)])+\rho_{i+1}^*(\alpha[V(\sigma)])& \text{otherwise}
\end{array}
\right.
\]
for $\sigma \in \Theta_{n,r,\bd}^\flat$ and $\alpha\in \wedge^q M(\sigma)$.
\end{const}

\begin{lem}
\label{ordering.11}
Let $0\leq r\leq n$ be integers,
and let $\bd$ be a nonempty finite sequence of positive integers.
Consider the maps $\delta_{i,1}^*$ and $\nu_i^*$ for $Z_{p,q}^\flat(\Theta_{n,r,\bd})$ in \textup{Constructions \ref{ordering.7}} and \textup{\ref{ordering.10}}.
We have
\[
\delta_{i,1}^*\nu_j^*
=
\left\{
\begin{array}{ll}
\nu_j^*\delta_{i-1,1}^* & \text{if $j<i-1$},
\\
\id & \text{if $j=i-1,i$},
\\
\nu_{j-1}^*\delta_{i,1}^* & \text{if $j>i$}.
\end{array}
\right.
\]
\end{lem}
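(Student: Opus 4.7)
The plan is to model the proof on that of Lemma \ref{ordering.9}, reducing the claimed identities to Lemma \ref{ordering.9} itself together with the elementary lattice-level identity
\[
\nu_j\delta_{i,1}
=
\begin{cases}
\delta_{i-1,1}\nu_j & \text{if } j<i-1,\\
\id_{\Z^n} & \text{if } j=i-1 \text{ or } j=i,\\
\delta_{i,1}\nu_{j-1} & \text{if } j>i,
\end{cases}
\]
viewed as maps $\Z^n\to \Z^n$, where $\nu_j$ and $\nu_{j-1}$ on the right-hand side refer to the corresponding summation maps out of $N(\Theta_{n,r,\bd})$ landing in $\Z^{n-1}$. A direct slot-by-slot computation establishes this; dualizing yields the corresponding identity for $\delta_{i,1}^*\nu_j^*$ on $M$-lattices, which handles the action of both sides of the lemma on $\alpha\in \wedge^q M(\sigma)$ once the cone parts match.

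The remaining work is to check each identity on a generator $\alpha[V(\sigma)]$ with $\sigma\in \Theta_{n,r,\bd}^\flat$. For each of the three cases I would split into two subcases depending on whether $\sigma\subset \Z^{j-1}\times 0\times \Z^{n-j}$, which is the condition selecting the first branch of $\nu_j^*$. In the hyperplane subcase, $\nu_j^*(\alpha[V(\sigma)])=\nu_j^*\alpha[V(\rho_j^*\sigma)]$; a coordinate computation identifies $\delta_{i,1}^*\rho_j^*\sigma$ with the cone appearing on the right-hand side when nonzero, and both sides vanish simultaneously when the secondary hyperplane condition on $\sigma$ fails. In the complementary subcase, one expands $\nu_j^*=\rho_j^*+\rho_{j+1}^*$, applies Lemma \ref{ordering.9} separately to $\delta_{i,1}^*\rho_j^*$ and $\delta_{i,1}^*\rho_{j+1}^*$, and reassembles.

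The principal obstacle is the identity cases $j=i-1$ and $j=i$, where one must check that exactly one of the two summands in the Case B expansion contributes $\alpha[V(\sigma)]$ while the other vanishes. Concretely, Lemma \ref{ordering.9} converts one of the two into the identity and the other into an expression of the form $\rho_{?}^*\delta_{?,1}^*(\alpha[V(\sigma)])$ whose inner factor $\delta_{?,1}^*$ annihilates $\alpha[V(\sigma)]$ precisely because $\sigma$ fails the relevant hyperplane condition. Once this cancellation is tracked carefully, the remaining verifications reduce to routine slot-matching supported by the lattice identity above, and the lemma follows.
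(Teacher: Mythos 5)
Your proposal is correct and follows essentially the same route as the paper's proof: the same two-case split on whether $\sigma\subset\Z^{j-1}\times 0\times\Z^{n-j}$, the same expansion $\nu_j^*=\rho_j^*+\rho_{j+1}^*$ in the complementary case with Lemma \ref{ordering.9} applied to each summand, the same observation that at $j=i-1,i$ the extra summand is killed because $\delta_{j,1}^*(\alpha[V(\sigma)])=0$ precisely when $\sigma$ fails the hyperplane condition, and the same lattice-level identity $\nu_j\delta_{i,1}=\cdots$ handling the $\wedge^q M(\sigma)$ part. You have correctly identified all the load-bearing steps of the argument.
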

\begin{proof}
Case 1:
Assume $\sigma\subset \Z^{j-1}\times 0 \times \Z^{n-j}$.
Then we can write $\sigma$ as $\Cone( f_1,\ldots,f_m)$ with
\[
f_q:=(f_{q,1},\ldots,f_{q,j-1},0,f_{q,j+1},\ldots,f_{q,n})
\]
for $1\leq q\leq m$.
We have $\rho_j^*\sigma =\Cone( f_1',\ldots,f_m')$, where
\[
f_q':=(f_{q,1},\ldots,f_{q,j-1},0,0,f_{q,j+1},\ldots,f_{q,n}).
\]
We can also write $\delta_{j,1}^* \rho_j^*\sigma$ (resp.\ $\delta_{j+1,1}^*\rho_j^*\sigma$) as $\Cone( f_1'',\ldots,f_m'')$,
where $f_q''$ is obtained from $f_q'$ by removing $0$ in the $j$th (resp.\ $(j+1)$th) component. Then we have $f_q''=f_q$.
Hence we have $\delta_{j,1}^* \rho_j^*\sigma=\delta_{j+1,1}^* \rho_j^*\sigma=\sigma$.
On the other hand,
we have
$\delta_{i,1}^*\rho_j^*\sigma=\rho_j^* \delta_{i-1,1}^*\sigma$ and $\delta_{i-1,1}^*\sigma \subset \Z^{j-1}\times 0 \times \Z^{n-j}$ if $j<i-1$,
and we have $\delta_{i,1}^*\rho_j^*\sigma=\rho_{j-1}^* \delta_{i,1}^*\sigma$ and $\delta_{i,1}^*\sigma \subset \Z^{j-2}\times 0 \times \Z^{n-j+1}$ if $j>i$,
where we need Lemma \ref{ordering.9} for the two identities.
Combine what we have shown above to have
\[
\delta_{i,1}^*\nu_j^*[V(\sigma)]
=
\left\{
\begin{array}{ll}
\nu_j^*\delta_{i-1,1}^* {[V(\sigma)]}& \text{if $j<i-1$},
\\
{[V(\sigma)]} & \text{if $j=i-1,i$},
\\
\nu_{j-1}^*\delta_{i,1}^* {[V(\sigma)]}& \text{if $j>i$}.
\end{array}
\right.
\]
The maps $\delta_{i,1}$ and $\nu_i$ for $N(\Theta_{n,r,\bd})$ in Constructions \ref{ordering.7} and \ref{ordering.10} satisfy
\[
\nu_j\delta_{i,1}=
\left\{
\begin{array}{ll}
\delta_{i-1,1}\nu_j & \text{if $j<i-1$},
\\
\id & \text{if $j=i-1,i$},
\\
\delta_{i,1}\nu_{j-1} & \text{if $j>i$},
\end{array}
\right.
\]
which yields a similar identity for $\delta_{i,1}^*\nu_j^*\alpha$, where $\alpha\in \wedge^q M(\sigma)$.
Combine what we have discussed above to obtain the desired identity.

Case 2:
Assume $\sigma\not\subset \Z^{j-1}\times 0 \times \Z^{n-j}$.
By Lemma \ref{ordering.9}, we have
\begin{align*}
\delta_{i,1}^*\nu_j^*(\alpha [V(\sigma)])
= &
\delta_{i,1}^*\rho_j^*(\alpha [V(\sigma)])+\delta_{i,1}^*\rho_{j+1}^*(\alpha [V(\sigma)])
\\
= &
\left\{
\begin{array}{ll}
\rho_j^*\delta_{i-1,1}^*(\alpha [V(\sigma)])+\rho_{j+1}^*\delta_{i-1,1}^*(\alpha [V(\sigma)])&
\text{if $j<i-1$},
\\
\rho_j^*\delta_{j,1}^* (\alpha[V(\sigma)])+\alpha[V(\sigma)] & \text{if $j=i-1$},
\\
\alpha[V(\sigma)]+\rho_{j+1}^*\delta_{j,1}^* (\alpha[V(\sigma)]) & \text{if $j=i$},
\\
\rho_{j-1}^*\delta_{i,1}^*(\alpha [V(\sigma)])+\rho_j^*\delta_{i,1}^*(\alpha [V(\sigma)])&
\text{if $j>i$}.
\end{array}
\right.
\end{align*}
To conclude, use the following three claims: If $j<i-1$, then we have $\delta_{i-1,1}^* \sigma \not\subset \Z^{j-1}\times 0 \times \Z^{n-j}$.
We have $\delta_{j,1}^*\sigma=0$.
If $j>i$, then we have $\delta_{i,1}^*\sigma \not\subset  \Z^{j-2}\times 0 \times \Z^{n-j+1}$.
\end{proof}

With $\delta_{i,1}^*$, $\rho_i^*$, and $\rho_i^*$ for $Z_{p,q}^\flat(\Theta_{n,r,\bd})$ in hand, the general theory of cubical abelian groups imply the following result.

\begin{prop}
\label{ordering.14}
Let $r\geq 0$ be an integer,
and let $\bd$ be a nonempty finite sequence of positive integers.
Then the chain complex
\begin{equation}
\label{ordering.14.1}
\cdots
\xrightarrow{\delta^*}
Z_{p+1,q}^\flat(\Theta_{r+1,r,\bd})
\xrightarrow{\delta^*}
Z_{p,q}^\flat(\Theta_{r,r,\bd})
\end{equation}
is quasi-isomorphic to $0$ for every integer $0\leq p\leq r-1$.
\end{prop}
\begin{proof}
Using the identities for $\delta_{i,1}^*$ and $\nu_i^*$ in \cref{ordering.11},
we can argue as in \cite[Proof of Proposition A.11]{MR3259031} to show that the above chain complex is quasi-isomorphic to the chain complex $N_\bullet$ given by
\[
N_m:=\bigcap_{i=1}^{m-1} \ker(\delta_{r+i,1}^*\colon Z_{p+m,q}^\flat (\Theta_{r+m,r,\bd})
\to
Z_{p+m-1,q}^\flat (\Theta_{r+m-1,r,\bd}))
\]
with the differential $\delta_{r+m,1}^*\colon N_m\to N_{m-1}$.
For $x\in N_m$ such that $\delta_{r+m,1}^*x=0$,
consider $\rho_{r+m+1}^*x \in Z_{p+m+1,q}^\flat(\Theta_{r+m +1,r,\bd})$.
By Lemma \ref{ordering.9},
we have $\delta_{r+i,1}^*\rho_{r+m+1}^*x=\rho_{r+m}^*\delta_{r+i,1}^*x=0$ for $1\leq i\leq m$ and $\delta_{r+m+1,1}^*\rho_{r+m+1}^*x=x$.
It follows that $N_\bullet$ is quasi-isomorphic to $0$.
\end{proof}

\begin{rmk}
\label{ordering.27}
Consider the maps $\nu_i^*$ for $Z_{p,q}^\flat(\Theta_{n,r,\bd})$ obtained by Construction \ref{ordering.10}.
As in Lemma \ref{ordering.11},
one can show
\begin{gather}
\label{ordering.27.1}
\nu_i^*\nu_j^*
=
\nu_{j+1}^*\nu_i^*
\text{ if }
j>i,
\\
\label{ordering.27.2}
\nu_i^*\rho_j^*
=
\left\{
\begin{array}{ll}
\rho_{j}^*\nu_{i-1}^* & \text{if }j<i,
\\
\rho_{i+1}^*\rho_i^* & \text{if }j=i,
\\
\rho_{j-1}^*\nu_i^* & \text{if }j>i.
\end{array}
\right.
\end{gather}
We omit the proof since we do not need these identities later.
The referee suggested the following conceptual proof of Proposition \ref{ordering.14} using these identities.

By Lemma \ref{ordering.11} and \eqref{ordering.27.1},
the groups
\[
C_m:=Z_{p+m+1,q}^\flat(\Theta_{r+m+1,r,\bd}),
\text{ }
m\geq {-1}
\]
form an augmented simplicial abelian group $C_\bullet$.
The chain complex associated with $C_\bullet$,
which is \eqref{ordering.14.1} (up to a degree shift),
is quasi-isomorphic to the normalized chain complex associated with $C_\bullet$,
which is $N_{\bullet}$ (up to a degree shift).
Furthermore,
by Lemma \ref{ordering.9} and \eqref{ordering.27.2},
the maps
\[
\rho_{r+m+1}^*\colon C_{m-1}\to C_m,
\text{ }
m\geq 0
\]
form an extra degeneracy of $C_\bullet$.
Hence we conclude the proof.
This is why $\rho_{r+m+1}^*$ alone is enough in the proof among all the $\rho_i^*$'s in the same degree.
\end{rmk}

\begin{prop}
\label{ordering.15}
Let $r\geq 0$ be an integer,
and let $\bd$ be a nonempty finite sequence of positive integers.
Then the chain complex
\[
\cdots
\xrightarrow{\delta^*}
\CH_{p+1}^\flat(\Theta_{r+1,r,\bd})
\xrightarrow{\delta^*}
\CH_p^\flat(\Theta_{r,r,\bd})
\]
is quasi-isomorphic to $0$ for every integer $0\leq p\leq r-1$.
\end{prop}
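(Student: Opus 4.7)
The plan is a standard two-spectral-sequence argument on a bicomplex built from the two resolutions we already have. Form the first-quadrant double complex
\[
E_{m,q}:=Z_{p+m,q}^\flat(\Theta_{r+m,r,\bd}),\qquad m,q\geq 0,
\]
with horizontal differential $\delta^*\colon E_{m,q}\to E_{m-1,q}$ coming from Proposition \ref{ordering.14} and vertical differential $d\colon E_{m,q}\to E_{m,q-1}$ coming from the toric/Borel--Moore resolution of Proposition \ref{ordering.16}. Before anything else one has to check that $d$ and $\delta^*$ commute up to sign, which I would do by comparing the explicit interior-product formula for $d$ recalled before Proposition \ref{moving.1} with the formula for $\delta_{i,1}^*$ given in Construction \ref{ordering.7}; this is essentially naturality of the spectral sequence \eqref{moving.1.1} with respect to the closed immersions $D_{i,1}(\Sigma)_\C\hookrightarrow \Sigma_\C$. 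One also needs to verify that the augmentation $Z_{p+m,0}^\flat\twoheadrightarrow \CH_{p+m}^\flat$ intertwines the horizontal differential of the bicomplex with the $\delta^*$ of the statement, which is immediate from Construction \ref{subdivision.2}.

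Next, formula \eqref{moving.1.2} gives $Z_{p+m,q}^\flat=0$ whenever $q>r-p$, so in each total degree $n=m+q$ the direct sum $\bigoplus_{m+q=n}E_{m,q}$ has at most $r-p+1$ nonzero summands. This bounded-in-each-total-degree property ensures that both standard spectral sequences of the bicomplex converge strongly to $H_\ast(\Tot(E_{\bullet,\bullet}))$. The column-first spectral sequence has $E_1^{m,q}=\CH_{p+m}^\flat(\Theta_{r+m,r,\bd})$ when $q=0$ and $0$ otherwise by Proposition \ref{ordering.16}, hence degenerates at $E_2$ and identifies $H_\ast(\Tot(E))$ with the homology of the complex we want to show is acyclic. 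The row-first spectral sequence has $E_1^{m,q}=0$ by Proposition \ref{ordering.14}. Comparing the two computations forces the complex in the statement to be quasi-isomorphic to $0$.

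The principal point of friction is the compatibility check in the first step: one must verify that the combinatorial maps in Constructions \ref{ordering.7} and \ref{subdivision.2} really arise from a single morphism at the level of the spectral sequence \eqref{moving.1.1}. This is a matter of unwinding functoriality of the filtration by orbit-strata with respect to the closed immersions $\delta_{i,1}$ and tracking the Koszul signs in the differential $d$. Once this compatibility is granted, the remainder of the proof is entirely formal homological algebra.
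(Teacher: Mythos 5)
Your proposal follows essentially the same route as the paper: form the double complex $A_{i,j}=Z_{p+i,j}^\flat(\Theta_{r+i,r,\bd})$, identify column homology via Proposition \ref{ordering.16} and row homology via Proposition \ref{ordering.14}, and compare the two spectral sequences. The commutation of $d$ with $\delta_{i,1}^*$ that you flag as a friction point is in fact already verified in Construction \ref{ordering.7}, so there is no gap.
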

\begin{proof}
Consider the double complex $A_{i,j}:=Z_{p+i,j}^\flat(\Theta_{r+i,r,\bd})$,
which can be described as
\[
\begin{tikzcd}[column sep=small, row sep=small]
&
\vdots\ar[d,"d"]&
\vdots\ar[d,"d"]
\\
\cdots\ar[r,"\delta^*"]&
Z_{p+1,1}^\flat(\Theta_{r+1,r,\bd})\ar[d,"d"]\ar[r,"\delta^*"]&
Z_{p,1}^\flat(\Theta_{r,r,\bd})\ar[d,"d"]
\\
\cdots \ar[r,"\delta^*"]&
Z_{p+1,0}^\flat(\Theta_{r+1,r,\bd})\ar[r,"\delta^*"]&
Z_{p,0}^\flat(\Theta_{r,r,\bd}).
\end{tikzcd}
\]
Proposition \ref{ordering.16} (resp.\ \ref{ordering.14}) implies that the chain complex $A_{i,*}$ with fixed $i$ (resp.\ $A_{*,j}$ with fixed $j$) is quasi-isomorphic to $\CH_{p+i}^\flat(\Theta_{r+i,r,\bd})$ (resp.\ $0$).
Compare the two spectral sequences associated with the double complex $A_{*,*}$ to finish the proof.
\end{proof}

\section{Admissible subdivisions}
\label{admissible}

In this section,
we are interested in a fan $\Sigma$ in $\Z^n$ equipped with maps $\delta_{i,0}^*$ for every $i\in I_0$ and $\delta_{i,1}^*$ for every $i\in I_1$ for some index sets $I_0$ and $I_1$ that resemble $\delta_{i,0}^*$ and $\delta_{i,1}^*$ in the previous sections.
For the precise condition of $\Sigma$, see Definition \ref{admissible.1}.
If a class $x\in \CH^*(\Sigma)$ satisfies $\delta_{i,0}^* x=0$ for every $i\in I_0$ and $\delta_{i,1}^*x=0$ for every $i\in I$,
then we will show in Lemmas \ref{subdivision.21} and \ref{subdivision.22} the existence of $y\in \CH^*(\Sigma')$ for some nice subdivision $\Sigma'$ of $\Sigma\times \P^1$ such that $\delta_{i,0}^*y=0$ for $i\in I_0\amalg \{n+1\}$, $\delta_{i,1}^*y=0$ for $i\in I_1$, and $\delta_{n+1,1}^*y=x$.

This result is a crucial ingredient of the proof of Lemma \ref{subdivision.11} below,
which is the induction step in the proof of Theorem \ref{intro.2}.

\subsection{Definition and basic properties of admissible subdivisions}

We begin with the following technical definition.

\begin{df}
\label{admissible.1}
Let $B$ be a finite set with subsets $I_0$, $I_1$, and $I_2$ such that $I_0\cup I_1$ is disjoint with $I_2$.
We say that a fan $\Sigma$ in $\Z^B$ is \emph{$(I_0,I_1,I_2)$-admissible} if the following conditions are satisfied:
\begin{enumerate}
\item[(i)] $\Sigma$ is complete and smooth.
\item[(ii)] If $I_0=\{a_1,\ldots,a_s\}$,
then $\Cone( e_{a_1},\ldots,e_{a_s})\in \Sigma$.
\item[(iii)] The fan $D_{i,1}(\Sigma)$ is complete for $i\in I_1$.
\item[(iv)]
Let $i\in I_2$.
Then $e_i$ is a unique ray of $\Sigma$ whose $i$th coordinate is $>0$.
\end{enumerate}
An \emph{$(I_0,I_1,I_2)$-admissible subdivision of $\Sigma$} is a subdivision $\Sigma'$ of $\Sigma$ such that $\Sigma'$ is an $(I_0,I_1,I_2)$-admissible fan in $\Z^B$.
\end{df}

\begin{exm}
Let $0\leq r\leq n$ be integers.
An $r$-standard subdivision of $(\P^1)^n$ is $(I_0,I_1,I_2)$-admissible with $I_0=I_1=\{r+1,\ldots,n\}$ and $I_2=\{1,\ldots,r\}$.

Let $\Sigma$ be a very $r$-standard subdivision of $(\P^1)^n$,
and let $a:=(a_1,\ldots,a_n)$ be a ray of $\Sigma$ such that $a\neq e_1,\ldots,e_n$ and $a_1,\ldots,a_r\leq 0$.
Then $V(a)$ does not need to be an $r$-standard subdivision of $(\P^1)^{n-1}$.
Rather, we will show in Construction \ref{subdivision.27} that $V(a)$ is an $(I_0,I_1,I_2)$-subdivision for some suitable $I_0$, $I_1$, and $I_2$.
This is the reason why we need $(I_0,I_1,I_2)$-admissible subdivisions, which are more flexible than $r$-standard subdivisions.
\end{exm}

Next, we discuss two basic results on refinements of admissible fans.

\begin{lem}
\label{admissible.2}
Let $\Sigma_1$ and $\Sigma_2$ be $(I_0,I_1,I_2)$-admissible fans in $\Z^B$,
where $B$ is a finite set, $I_0,I_1,I_2\subset B$, and $(I_0\cup I_1)\cap I_2=\emptyset$.
Then there exists a fan $\Delta$ in $\Z^B$ that is an $(I_0,I_1,I_2)$-admissible subdivision of both $\Sigma_1$ and $\Sigma_2$.
\end{lem}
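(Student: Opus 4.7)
The plan is to take the coarsest common refinement $\Gamma := \{\sigma_1 \cap \sigma_2 : \sigma_1 \in \Sigma_1,\, \sigma_2 \in \Sigma_2\}$, verify that it is a complete fan in $\Z^B$ satisfying (ii)--(iv), and then refine $\Gamma$ further to a smooth fan $\Delta$ by a sequence of star subdivisions chosen to preserve these conditions. Since $\Sigma_1$ and $\Sigma_2$ are both complete with support $\R^B$, the standard construction of the common refinement shows that $\Gamma$ is a complete fan.

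Condition (ii) is immediate: $\Cone(e_{a_1},\ldots,e_{a_s})$ lies in both $\Sigma_1$ and $\Sigma_2$, so it equals its own self-intersection in $\Gamma$. For (iii), I would observe that $D_{i,1}(\Gamma)$ is exactly the common refinement inside the hyperplane $\{x_i=0\}$ of the two complete fans $D_{i,1}(\Sigma_1)$ and $D_{i,1}(\Sigma_2)$, so it is still complete there. For (iv), let $\rho$ be a ray of $\Gamma$ whose $i$th coordinate is positive for some $i\in I_2$, and pick a maximal cone $\tau=\sigma_1\cap\sigma_2$ of $\Gamma$ having $\rho$ as a face. Since each $\sigma_j$ contains a point with positive $i$th coordinate, condition (iv) for $\Sigma_j$ forces $e_i$ to be a ray of $\sigma_j$; as $e_i$ remains extremal in $\tau$, it is also a ray of $\tau$. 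All other rays of $\tau$ have non-positive $i$th coordinate, so writing a primitive generator of $\rho$ as a non-negative combination of the rays of $\tau$ and applying extremality forces $\rho=\R_{\geq 0}e_i$.

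For the smoothing step I would invoke the toric resolution by star subdivisions at $2$-dimensional cones from \cite[Theorem 2.4]{MR803344}, already used in Proposition \ref{dist.10}. Since $\Cone(e_{a_1},\ldots,e_{a_s})$ is already smooth, none of its faces need be subdivided, which preserves (ii). Any star subdivision at a cone lying entirely in $\{x_i=0\}$ for $i\in I_1$ introduces a new ray (the centroid) in the same hyperplane, so (iii) is preserved automatically.

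The main obstacle is preserving (iv). When star subdividing a cone $\tau$ that contains some $e_i$ with $i\in I_2$, the sum of the rays of $\tau$ can have positive $i$th coordinate, so the naive centroid cannot be used. My intended remedy is to decompose every singular $\tau$ as $\tau=\tau_0+\tau'$, where $\tau_0$ is the face generated by those $e_i$ with $i\in I_2$ that occur as rays of $\tau$ and $\tau'$ is generated by the remaining rays, whose $I_2$-coordinates are all non-positive. Since $\tau_0$ is smooth and $\tau_0\cap\tau'=0$, smoothness of $\tau$ reduces to smoothness of $\tau'$, and star subdivisions performed inside $\tau'$ produce new rays whose $I_2$-coordinates remain non-positive. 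Organizing all such subdivisions into a globally consistent sequence on $\Gamma$, and checking that the three conditions interact correctly throughout the process, is the combinatorial heart of the argument.
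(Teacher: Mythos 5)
Your overall strategy matches the paper's: form the coarsest common refinement, check (ii)--(iv) hold for it, and then smooth it by star subdivisions chosen to avoid disturbing the $e_i$'s. Your verification that the refinement $\Gamma$ satisfies (ii)--(iv) is correct. Two points are worth flagging, though.

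First, your citation of \cite[Theorem 2.4]{MR803344} for the smoothing step is wrong: that result (and likewise \cite[pp.\ 39-40]{TOda}) is about connecting two \emph{smooth} subdivisions of the same fan by star subdivisions at $2$-dimensional cones, and it does not apply to a possibly singular fan like $\Gamma$. What you actually need is toric resolution of singularities by star subdivisions preserving already-smooth cones, which the paper takes from \cite[Theorem 11.1.9]{CLStoric}. Nothing in your argument requires the star centers to be $2$-dimensional, so switching to the correct resolution theorem is harmless and fixes this.

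Second, for preserving condition (iv), you decompose each singular $\tau$ as $\tau_0+\tau'$ and propose to subdivide ``inside $\tau'$,'' then remark that making this globally consistent is the combinatorial heart of the argument. The paper sidesteps that bookkeeping by a cleaner global formulation: let $\Sigma_{12}'$ be the largest subfan of the common refinement not containing $e_i$ for any $i\in I_2$; apply the resolution theorem to $\Sigma_{12}'$ alone (which automatically produces only new rays with non-positive $I_2$-coordinates, hence preserves (iv), and preserves (ii) because smooth cones survive); then take the corresponding star subdivisions of all of $\Sigma_{12}$. Because each $\tau\in\Sigma_{12}$ is generated by its face in $\Sigma_{12}'$ together with the relevant $e_i$'s (which are unit vectors transverse to the rest), smoothing the subfan smooths the whole fan, and there is no separate consistency argument to run. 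You would do well to adopt this formulation explicitly rather than leaving the gluing implicit.
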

\begin{proof}
Consider the fan $\Sigma_{12}:=\{\sigma_1\cap \sigma_2:\sigma_1\in \Sigma_1,\sigma_2\in \Sigma_2\}$, which satisfies the conditions (ii)--(iv) in Definition \ref{admissible.1}.
Let $\Sigma_{12}'$ be the largest subfan of $\Sigma_{12}$ not containing $e_i$ for all $i\in I_2$.
By toric resolution of singularities \cite[Theorem 11.1.9]{CLStoric},
$\Sigma_{12}'$ admits a sequence of star subdivisions
\[
\Delta_m'\to \cdots \to \Delta_0':=\Sigma_{12}'
\]
such that $\Delta_m'$ is smooth and for every smooth cone $\sigma$ of $\Sigma_{12}'$,
$\sigma$ is a cone of $\Delta_m'$.

Let $\Delta_m\to \cdots \to \Delta_0:=\Sigma_{12}$ be the corresponding sequence of star subdivisions.
Then $\Delta_m$ satisfies the conditions (i)--(iv) in \cref{admissible.1}.
\end{proof}

\begin{lem}
\label{subdivision.25}
Let $\Sigma$ be an $(I_0,I_1,I_2)$-admissible fan in $\Z^B$,
where $B$ is a finite set, $I_0,I_1,I_2\subset B$, and $(I_0\cup I_1)\cap I_2=\emptyset$,
and let $\Sigma'\to \Sigma$ be an $(I_0,I_1,I_2)$-admissible subdivision.
Then there exists a sequence of $(I_0,I_1,I_2)$-subdivisions
\[
\Sigma_m\to \cdots \to \Sigma_0:=\Sigma
\]
and an $(I_0,I_1,I_2)$-admissible subdivision $\Sigma_m\to \Sigma'$ such that for $1\leq j\leq m$, $\Sigma_j$ is the star subdivision of $\Sigma_{j-1}$ relative to a $2$-dimensional cone $\sigma_{j-1}\in \Sigma_{j-1}$ such that $e_i\notin \sigma_{j-1}$ for $i\in I_2$ and $\sigma_{j-1}\not\subset \Cone( e_{a_1},\ldots,e_{a_u})$ if $I_0=\{a_1,\ldots,a_u\}$.
\end{lem}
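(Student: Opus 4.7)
The plan is to adapt the strategy of the proof of Proposition \ref{dist.10} to the present setting. First, set
\[
\Sigma^\flat := \{\sigma \in \Sigma : e_i \notin \sigma \text{ for all } i \in I_2\},
\quad
(\Sigma')^\flat := \{\sigma \in \Sigma' : e_i \notin \sigma \text{ for all } i \in I_2\}.
\]
Condition (iv) in Definition \ref{admissible.1} implies that every cone of $\Sigma$ (respectively $\Sigma'$) is generated by a face of $\Cone(e_i : i \in I_2)$ and a cone of $\Sigma^\flat$ (respectively $(\Sigma')^\flat$). In particular, $\Sigma^\flat$ and $(\Sigma')^\flat$ have the common support $\{x \in |\Sigma| : x_i \leq 0 \text{ for all } i \in I_2\}$, and $(\Sigma')^\flat$ is a smooth subdivision of $\Sigma^\flat$. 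Moreover, since $I_0 \cap I_2 = \emptyset$, the smooth cone $\rho := \Cone(e_{a_1},\ldots,e_{a_u})$ lies in both $\Sigma^\flat$ and $(\Sigma')^\flat$.

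Next, I would invoke \cite[Theorem 2.4]{MR803344} (see also \cite[pp.\ 39--40]{TOda}) applied to the pair $(\Sigma^\flat, (\Sigma')^\flat)$ to produce a sequence of $2$-dimensional star subdivisions
\[
\Sigma_m^\flat \to \cdots \to \Sigma_0^\flat := \Sigma^\flat
\]
such that $\Sigma_m^\flat$ refines $(\Sigma')^\flat$. Each $2$-dimensional cone $\sigma_{j-1}^\flat \in \Sigma_{j-1}^\flat$ being subdivided has no ray equal to $e_i$ for any $i \in I_2$. Since $\rho$ is already a common cone of $\Sigma^\flat$ and $(\Sigma')^\flat$, the algorithm can be arranged so as to never star-subdivide a $2$-dimensional face of $\rho$, equivalently $\sigma_{j-1}^\flat \not\subset \rho$ at every step.

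I would then lift to a sequence $\Sigma_m \to \cdots \to \Sigma_0 := \Sigma$ of star subdivisions at the same cones $\sigma_{j-1} := \sigma_{j-1}^\flat$. This lift makes sense because every cone of $\Sigma_{j-1}$ is generated by a face of $\Cone(e_i : i \in I_2)$ and a cone of $\Sigma_{j-1}^\flat$; combining with the analogous decomposition for $\Sigma'$ shows that $\Sigma_m$ is a subdivision of $\Sigma'$.

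The main obstacle will be verifying that each intermediate $\Sigma_j$ remains $(I_0,I_1,I_2)$-admissible. Smoothness and completeness are preserved by $2$-dimensional star subdivisions. Condition (ii) is preserved because no $\sigma_{j-1}$ is a face of $\rho$, so $\rho$ itself stays intact throughout the process. For condition (iii) with $i \in I_1$, the fan $D_{i,1}(\Sigma_j)$ is either equal to $D_{i,1}(\Sigma_{j-1})$ or is a star subdivision of it, hence remains a complete subdivision of $D_{i,1}(\Sigma)$. For condition (iv) with $i \in I_2$, if $\sigma_{j-1} = \Cone(f,g)$ then $f, g \neq e_i$, so by the inductive hypothesis $f_i \leq 0$ and $g_i \leq 0$, and hence the new ray $f + g$ also has nonpositive $i$th coordinate; thus $e_i$ remains the unique ray of $\Sigma_j$ with positive $i$th coordinate.
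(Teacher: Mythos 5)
Your overall strategy matches the paper's: pass to the subfans $\Sigma^\flat$ and $(\Sigma')^\flat$ whose cones avoid the rays $e_i$ for $i\in I_2$, invoke the Ewald--Oda factorization on this restricted pair, and lift the resulting chain of two-dimensional star subdivisions back to $\Sigma$. You add an explicit verification that each intermediate fan stays $(I_0,I_1,I_2)$-admissible, which the paper compresses into the single sentence ``the corresponding star subdivisions \dots satisfy the desired properties''; that verification is correct and is a useful elaboration.

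The ``in particular'' step asserting $\lvert\Sigma^\flat\rvert = \lvert(\Sigma')^\flat\rvert = \{x\in\lvert\Sigma\rvert : x_i\leq 0 \text{ for all } i\in I_2\}$ does not, however, follow from the cone decomposition you cite. That decomposition yields only one inclusion: every cone of $\Sigma^\flat$ lies in the half-space intersection because, by condition (iv), all of its rays have nonpositive $i$th coordinate. The reverse inclusion would require that every point with all $x_i\leq 0$ lies in some cone containing no $e_i$, and condition (iv) alone does not guarantee this --- a smooth two-cone $\Cone(e_i,f)$ with $f_i<0$ has interior points with $x_i\leq 0$, and such a point need not belong to $\lvert\Sigma^\flat\rvert$. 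Without the support equality, the Ewald--Oda theorem cannot be applied to $(\Sigma^\flat,(\Sigma')^\flat)$ as fans with a common support; in fact one can arrange an $(I_0,I_1,I_2)$-admissible subdivision $\Sigma'\to\Sigma$ that introduces a ray with $i$th coordinate zero (for some $i\in I_2$) which no sequence of star subdivisions at two-cones avoiding $e_i$ could ever create, since such subdivisions only produce rays with strictly negative $i$th coordinate. Note that the paper's own proof tacitly presumes the same support equality when it asserts that $\Delta_m\to\Delta'$ is a subdivision, so this is a hypothesis you have inherited rather than introduced; still, it is exactly the point that needs to be isolated and justified --- either by strengthening Definition \ref{admissible.1} or by checking it directly for the fans $V(\alpha)$ to which the lemma is applied in Lemma \ref{subdivision.24}.
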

\begin{proof}
Consider the largest subfans $\Delta$ and $\Delta'$ of $\Sigma$ and $\Sigma'$ not containing $e_i$ for all $i\in I_2$.
By \cite[pp.\ 39-40]{TOda},
there exists a sequence of subdivisions $\Delta_m\to \cdots \to \Delta_0:=\Delta$ and a subdivision $\Delta_m\to \Delta'$ such that for $1\leq j\leq m$, $\Delta_j$ is the star subdivision of $\Delta_{j-1}$ relative to a $2$-dimensional cone $\sigma_{j-1}\in \Delta_{j-1}$ such that $\sigma_{j-1}\not \subset \Cone( e_{a_1},\ldots,e_{a_u})$.
The corresponding star subdivisions $\Sigma_m\to \cdots \to \Sigma_0$ satisfy the desired properties.
\end{proof}

We consider divisors of admissible fans as follows.

\begin{df}
For a fan $\Sigma$,
let $Z^1(\Sigma)$ be the free abelian group generated by $[V(f)]$ for the rays $f$ of $\Sigma$.

Let $\Sigma$ be an $(I_0,I_1,I_2)$-admissible fans in $\Z^B$,
where $B$ is a finite set, $I_0,I_1,I_2\subset B$, and $(I_0\cup I_1)\cap I_2=\emptyset$.
Let $Z_{I_0}^1(\Sigma)$ be the free abelian subgroup of $Z^1(\Sigma)$ generated by $[V(f)]$ for all rays of $\Sigma$ not in $\{e_i: i\in I_0\}$.
\end{df}

The following lemma justifies the definition of $Z_{I_0}^1(\Sigma)$.

\begin{lem}
\label{subdivision.19}
Let $\Sigma$ be an $(I_0,I_1,I_2)$-admissible fans in $\Z^B$,
where $B$ is a finite set, $I_0,I_1,I_2\subset B$, and $(I_0\cup I_1)\cap I_2=\emptyset$.
Then the map $\Sym^p Z_{I_0}^1(\Sigma)\to \CH^p(\Sigma)$ induced by $Z_{I_0}^1(\Sigma)\to \CH^1(\Sigma)$ is surjective for every integer $p\geq 0$.
\end{lem}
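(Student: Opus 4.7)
The plan is to reduce the statement to the classical fact that $\Sym^p Z^1(\Sigma) \to \CH^p(\Sigma)$ is surjective (without any restriction on generators), and then separately check that the image of $Z_{I_0}^1(\Sigma)$ already hits every generator $[V(e_i)]$ for $i \in I_0$.

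First I would invoke the standard result that for any smooth complete toric fan $\Sigma$, the Chow ring $\CH^*(\Sigma)$ is generated as a ring in degree one by the classes of the torus-invariant divisors $[V(f)]$, $f \in \Sigma(1)$, see \cite[\S 5.2]{Fulton:1436535}. Thus it suffices to show the degree-one map $Z_{I_0}^1(\Sigma) \to \CH^1(\Sigma)$ is surjective, i.e., that $[V(e_i)] \in \CH^1(\Sigma)$ lies in the image of $Z_{I_0}^1(\Sigma)$ for every $i \in I_0$.

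For this I would exploit the linear equivalence coming from the character $e_i^* \in M(\Sigma) = \Hom(\Z^B,\Z)$. The associated principal divisor relation in $\CH^1(\Sigma)$ reads
\[
\sum_{f \in \Sigma(1)} \langle e_i^*, u_f\rangle\, [V(f)] = 0,
\]
where $u_f$ denotes the primitive generator of $f$ and $\langle e_i^*, u_f\rangle$ is simply its $i$-th coordinate. Among the basis rays of $\Sigma$, the coefficient of $[V(e_j)]$ is $\delta_{ij}$, so only $[V(e_i)]$ contributes (with coefficient $1$). Rearranging yields
\[
[V(e_i)] = -\sum_{\substack{f \in \Sigma(1) \\ f \notin \{e_j : j \in B\}}} (u_f)_i\, [V(f)],
\]
and every ray $f$ on the right lies outside $\{e_j : j \in I_0\}$, hence the right-hand side already belongs to $Z_{I_0}^1(\Sigma)$.

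The argument is essentially a two-line bookkeeping computation, and there is no real obstacle: the only point requiring attention is the vanishing $\langle e_i^*, e_j\rangle = \delta_{ij}$ for $j \in B$, which ensures that the other basis rays $e_j$ (with $j \in I_0$, $j \neq i$) drop out of the relation, so that $[V(e_i)]$ is expressed using exclusively non-basis rays.
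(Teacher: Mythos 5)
Your argument is correct and follows essentially the same route as the paper: reduce to surjectivity in degree one via the standard fact that $\CH^*(\Sigma)$ is generated by torus-invariant divisor classes, then use the principal divisor relation attached to the character $e_i^*$ to express $[V(e_i)]$ as a $\Z$-linear combination of classes $[V(f)]$ with $f$ a non-basis ray, observing that $\langle e_i^*, e_j\rangle = 0$ for $j \neq i$ makes the other basis rays disappear. The paper cites Danilov's presentation of the Chow ring where you cite Fulton for the degree-one generation, but this is a cosmetic difference; the substance of the argument is the same.
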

\begin{proof}
The claim is clear if $p=0$,
so assume $p\geq 1$.
Let $f_1,\ldots,f_m$ be the rays of $\Sigma$.
By \cite[Theorem 10.8]{MR495499},
there is an isomorphism of graded rings
\begin{equation}
\label{subdivision.19.1}
\CH^*(\Sigma):=\CH^*(\Sigma_\C)
\cong
\Z[x_1,\ldots,x_m]/(I+J),
\end{equation}
where $x_j:=[V(f_j)]$,
$I$ is the ideal generated by the monomials $x_{j_1}\cdots x_{j_k}$ such that $\Cone(f_{j_1},\ldots,f_{j_k})\notin \Sigma$,
and $J$ is the ideal generated by the elements
\(
\sum_{j=1}^m \langle f_j,e_i\rangle x_j
\)
for all $i\in B$.
Using \eqref{subdivision.19.1},
we see that the induced map $\Sym^p \CH^1(\Sigma)\to \CH^p(\Sigma)$ is surjective.
Hence it suffices to show that $Z_{I_0}^1(\Sigma)\to \CH^1(\Sigma)$ is surjective.
If $f_j\neq e_i$ for all $i\in I_0$,
then $x_j=[V(f_j)]$ is in its image.
Hence it suffices to show that $[V(e_i)]$ is in its image for $i\in I_0$.
We have
\[
[V(e_i)]
=
-\sum_{f\in \Sigma(1),f\neq e_i} \langle f,e_i\rangle [V(f)].
\]
To conclude,
observe that $\langle e_k,e_i\rangle =0$ whenever $k\in B$ and $k\neq i$.
\end{proof}

\subsection{Cubical identities for \texorpdfstring{$Z_{I_0}^1(\Sigma)$}{ZI01}}

For a scheme $X$,
let $\PsDiv(X)$ denote the set of pseudo-divisors \cite[Definition 2.2.1]{Fulton} on $X$.
Recall that a pseudo-divisor on $X$ is a triple $(L,Z,s)$, where $L$ is a line bundle on $X$,
$Z$ is a closed subset of $X$,
and $s$ is a nowhere vanishing section of $L$ on $X-Z$.
Note that for every morphism of schemes $Y\to X$,
we have the natural pullback map $f^*\colon \PsDiv(X)\to \PsDiv(Y)$.
This is the reason why we use $\PsDiv$ below for technical convenience since we do not have the pullback map for Cartier divisors in general.

\begin{const}
\label{subdivision.14}
Let $\Sigma$ be a fan.
Then we have a natural injective map
\[
Z^1(\Sigma)
\to
\PsDiv(\Sigma_{\C})
\]
sending $[V(f)]$ to the pseudo-divisor determined by the Cartier divisor $[V(f)_{\C}]$ for every ray $f$ of $\Sigma$.
\end{const}

Let us discuss when the pullback maps for $Z^1$ and $\PsDiv$ are compatible.
First, we treat the case of star subdivisions.

\begin{lem}
\label{subdivision.15}
Let $g\colon \Sigma'\to \Sigma$ be a star subdivision of smooth fans relative to a cone $\sigma\in \Sigma$.
Then there exists a unique map $g^*\colon Z^1(\Sigma)\to Z^1(\Sigma')$ such that the square
\[
\begin{tikzcd}
Z^1(\Sigma)\ar[d,"g^*"']\ar[r]&
\PsDiv(\Sigma_{\C})\ar[d,"g^*"]
\\
Z^1(\Sigma')\ar[r]&
\PsDiv(\Sigma_{\C}')
\end{tikzcd}
\]
commutes.
\end{lem}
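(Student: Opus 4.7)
The plan is to get uniqueness for free from injectivity, then to construct $g^*$ by an explicit formula on ray classes, and finally to verify the square by invoking the standard description of toric pullback via support functions.

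First I would dispose of uniqueness: the map $Z^1(\Sigma)\to\PDiv(\Sigma_\C)$ is injective because the $[V(f)_\C]$ for $f\in\Sigma(1)$ are a linearly independent family of Cartier (hence pseudo-) divisors, and similarly on $\Sigma'$; so any $g^*$ making the diagram commute is determined by what it does to the basis $\{[V(f)]\}_{f\in\Sigma(1)}$.

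For existence, write $\sigma=\Cone(u_1,\ldots,u_k)$. Smoothness of $\Sigma$ forces $u_1,\ldots,u_k$ to be part of a lattice basis, so $\Sigma'(1)=\Sigma(1)\sqcup\{u_\sigma\}$ with $u_\sigma:=u_1+\cdots+u_k$ (and if $k=1$ then $g=\id$ and there is nothing to prove). I would then define
\[
g^*[V(f)]:=\begin{cases} [V(f)]+[V(u_\sigma)] & \text{if } f\in\{u_1,\ldots,u_k\},\\ [V(f)] & \text{otherwise,}\end{cases}
\]
extended $\Z$-linearly. To check compatibility with pseudo-divisor pullback, attach to each ray $f$ of $\Sigma$ the piecewise-linear support function $\varphi_f$ on $|\Sigma|$ with $\varphi_f(f')=\delta_{f,f'}$ on rays $f'\in\Sigma(1)$; then $[V(f)_\C]$ is the toric Cartier divisor associated to $\varphi_f$. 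Because $\Sigma'$ refines $\Sigma$, the same function $\varphi_f$ remains piecewise-linear for $\Sigma'$, and the classical toric pullback formula identifies $g^*[V(f)_\C]$ with the Cartier divisor associated to $\varphi_f$ viewed on $\Sigma'$. Reading off its coefficients at the rays of $\Sigma'$ reproduces the formula above, since $\varphi_f(u_\sigma)=\sum_{i=1}^{k}\delta_{f,u_i}$ equals $1$ exactly when $f\in\{u_1,\ldots,u_k\}$.

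The only step that requires real care is the identification of pseudo-divisor pullback with the support-function extension; here the smoothness of both $\Sigma$ and $\Sigma'$ ensures every $[V(f)_\C]$ is already Cartier and the pullback carries no scheme-theoretic subtleties, so this reduces to the standard toric computation. I do not expect any substantive obstacle beyond tracking the sign convention in Construction \ref{subdivision.14}.
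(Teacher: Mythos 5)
Your proof is correct, and it takes a genuinely different route from the paper's. The paper establishes existence by reducing to the local model $\Sigma=\A^n$, $\sigma=\Cone(e_1,\ldots,e_r)$, covering the blowup by the standard affine charts $U_i=\Spec(\C[x_i,x_1/x_i,\ldots,x_r/x_i,x_{r+1},\ldots,x_n])$, and reading off the total transform of each $V(e_j)_\C$ directly from the equations $(x_i)+(x_j/x_i)$; this yields the same formula you wrote, with $f=e_1+\cdots+e_r$ playing the role of your $u_\sigma$. You instead invoke the support-function description of toric Cartier divisors: $\varphi_{V(f)}$ stays piecewise linear on the refinement $\Sigma'$, the pullback is the divisor of the restricted support function, and its value at the new primitive generator $u_\sigma=u_1+\cdots+u_k$ is $\sum_i \delta_{f,u_i}$ by linearity on $\sigma$. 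Both are standard; the support-function argument is cleaner and, as a bonus, works verbatim for an arbitrary smooth refinement rather than just a single star subdivision, whereas the paper's coordinate computation is more elementary and self-contained (it does not need to cite the pullback-via-support-functions theorem). Your uniqueness argument is identical to the paper's, and your observation that the pseudo-divisor pullback reduces to Cartier pullback in the smooth setting is exactly the point that makes the comparison diagram meaningful.

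One small point worth making explicit: when you say $\Sigma'(1)=\Sigma(1)\sqcup\{u_\sigma\}$, this requires $k=\dim\sigma\geq 2$; if $k=1$ the star subdivision is the identity and $g^*=\id$, which you do note, but the disjoint union statement as written should be qualified. Also, since the paper's Construction \ref{subdivision.14} sends $[V(f)]$ to the pseudo-divisor of the effective Cartier divisor $V(f)_\C$, the sign convention for $\varphi_f$ should be the one with $\varphi_f(u_{f'})=-\delta_{f,f'}$ in the Cox--Little--Schenck normalization; the resulting formula is unchanged but the flag you raise about signs is the right one to resolve.
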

\begin{proof}
The uniqueness is clear since the horizontal maps are injective.
For the existence,
we can work locally on $\Sigma$.
Hence we may assume $\Sigma=\A^n$ and $\sigma=\Cone(e_1,\ldots,e_r)$ with $1\leq r\leq n$.
In this case,
$\Sigma_{\C}'$ has a Zariski cover consisting of 
\[
U_i:=\Spec(\C[x_i,x_1/x_i,\ldots,x_r/x_i,x_i,x_{r+1},\ldots,x_n])
\]
for $1\leq i\leq r$.
Consider the divisor $Z_i:=V(e_j)_{\C}$ on $\Sigma_{\C}$ for $1\leq j\leq n$.
Then the divisor $U_i\cap g^*(Z_j)$ on $U_i$ is the sum of smooth divisors $(x_i)+(x_{x_j/x_i})$ if $i\in \{1,\ldots,r\}-\{j\}$ and $(x_j)$ if $i\notin \{1,\ldots,r\}-\{j\}$.
Use this description to show
\[
g^*(Z_j)
=
\left\{
\begin{array}{ll}
V(e_j)_{\C}+V(f)_{\C}&
\text{if $1\leq j\leq r$,}
\\
V(e_j)_{\C}&
\text{otherwise},
\end{array}
\right.
\]
where $f:=e_1+\cdots+e_r$ is the center of $\sigma$.
Hence $g^*(Z_j)$ is in the image of $Z^1(\Sigma')\to \Div(\Sigma_{\C}')$,
which establishes the existence of $g^*\colon Z^1(\Sigma)\to Z^1(\Sigma')$.
\end{proof}

\begin{rmk}
Let $\Sigma$ be an $(I_0,I_1,I_2)$-admissible fans in $\Z^B$,
where $B$ is a finite set, $I_0,I_1,I_2\subset B$, and $(I_0\cup I_1)\cap I_2=\emptyset$.
Then for $i\in I_0$ (resp.\ $i\in I_1$), $D_{i,0}(\Sigma)$ (resp.\ $D_{i,1}(\Sigma)$) is an $(I_0-\{i\},I_1-\{i\},I_2)$-admissible fan in $\Z^{B-\{i\}}$,
see Construction \ref{subdivision.2} for $D_{i,0}(\Sigma)$ and $D_{i,1}(\Sigma)$.

If $\{t\}$ is an indeterminate variable,
then $\Sigma \times \P^{\{t\}}$ is an $(I_0\cup \{t\},I_1\cup \{t\},I_2)$-admissible fan in $\Z^{B\cup \{t\}}$, where $\P^{\{t\}}:=\P^1$
is a projective line,
and the letter $t$ is used as the coordinate variable.
\end{rmk}

Next, we discuss the case of $\delta_{i,\epsilon}^*$.

\begin{lem}
\label{subdivision.16}
Let $\Sigma$ be an $(I_0,I_1,I_2)$-admissible fans in $\Z^B$,
where $B$ is a finite set, $I_0,I_1,I_2\subset B$, and $(I_0\cup I_1)\cap I_2=\emptyset$.
Then for $(i,\epsilon)\in I_0\times \{0\}\amalg I_1\times \{1\}$,
there exists a unique map 
\(
\delta_{i,\epsilon}^*
\colon
Z_{I_\epsilon}^1(\Sigma)
\to
Z_{I_\epsilon-\{i\}}^1(D_{i,\epsilon}(\Sigma))
\)
such that the square
\[
\begin{tikzcd}
Z_{I_0}^1(\Sigma)\ar[d,"\delta_{i,\epsilon}^*"']\ar[r]&
\PsDiv(\Sigma_{\C})\ar[d,"\delta_{i,\epsilon}^*"]
\\
Z_{I_0-\{i\}}^1(D_{i,\epsilon}(\Sigma))\ar[r]&
\PsDiv(D_{i,\epsilon}(\Sigma)_{\C})
\end{tikzcd}
\]
commutes.
Furthermore,
we have
\[
\delta_{i,0}^*[V(f)]
=
\left\{
\begin{array}{ll}
[V(\ol{f})] & \text{if $\Cone(f,e_i)\in \Sigma$},
\\
0 & \text{otherwise}
\end{array}
\right.
\]
for $i\in I_0$ (see \eqref{ordering.5.2} for $\overline{(-)}$),
and we have
\[
\delta_{i,1}^*[V(f)]
=
\left\{
\begin{array}{ll}
[V(f)] & \text{if $f\in \im(\Z^{B-\{i\}}\to \Z^B)$},
\\
0 & \text{otherwise}
\end{array}
\right.
\]
for $i\in I_1$,
where $f\in \Sigma(1)-\{e_i:i\in I_0\}$.
\end{lem}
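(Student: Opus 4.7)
The plan is to dispatch uniqueness immediately and then, for existence, to declare $\delta_{i,\epsilon}^*$ on generators by the stated formulas and verify two things: (a) that the resulting element actually lies in $Z_{I_\epsilon-\{i\}}^1(D_{i,\epsilon}(\Sigma))$, and (b) that under the inclusion from Construction~\ref{subdivision.14} it agrees with the genuine pullback of pseudo-divisors along $\delta_{i,\epsilon}$. Uniqueness is automatic: the right-hand vertical map of the square is injective (Construction~\ref{subdivision.14}), so any $\delta_{i,\epsilon}^*$ fitting in the square is determined by the pullback on $\PDiv$.

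For $\epsilon=0$, the morphism $\delta_{i,0}$ is the standard orbit-closure embedding $V(e_i)_{\C}\hookrightarrow \Sigma_\C$, which is well-defined because condition~(ii) of \cref{admissible.1} ensures $e_i$ is a ray of $\Sigma$. For any generator $[V(f)]$ of $Z_{I_0}^1(\Sigma)$ we have $f\neq e_i$, so $V(e_i)_\C\not\subset V(f)_\C$ and the pullback is a genuine Cartier divisor. Standard toric intersection theory (e.g.\ \cite[p.~53]{Fulton:1436535}) identifies $V(f)_\C\cap V(e_i)_\C$ with $V(\Cone(f,e_i))_\C$ when $\Cone(f,e_i)\in\Sigma$ and with $\emptyset$ otherwise; in the first case this is precisely $V(\ol{f})_\C$ inside $V(e_i)_\C=D_{i,0}(\Sigma)_\C$, giving the claimed formula. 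To check membership in $Z_{I_0-\{i\}}^1$, suppose $\ol{f}=e_j$ for some $j\in I_0-\{i\}$, so $f=e_j+\lambda e_i$ for some $\lambda\in\Z$. By~(ii), $\Cone(e_i,e_j)\in\Sigma$ is a smooth $2$-dimensional cone. If $\Cone(f,e_i)$ were also in $\Sigma$, then either $f$ lies in the interior of $\Cone(e_i,e_j)$ (when $\lambda>0$), contradicting the fan axioms, or $\Cone(e_i,e_j)\subset\Cone(f,e_i)$ as $2$-dimensional cones (when $\lambda<0$), again contradicting the face-intersection axiom. Hence necessarily $\lambda=0$, i.e.\ $f=e_j$, which is excluded from the domain.

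For $\epsilon=1$, the closed immersion $\delta_{i,1}$ is obtained as the closure of the subtorus $\T_{B-\{i\}}\hookrightarrow\T_B$ given by setting the $i$th coordinate equal to~$1$; the completeness hypothesis~(iii) ensures this extends to a morphism $D_{i,1}(\Sigma)_\C\to \Sigma_\C$ realized in each affine chart $U_{\widetilde{\tau}}$ (for maximal $\widetilde{\tau}\in\Sigma$) as the vanishing locus of a single equation $\chi^m-1=0$, where $m\in M(\widetilde{\tau})$ is the character dual to the direction $e_i$. For a ray $f\in\Sigma(1)-\{e_k:k\in I_0\}$ one then reads off the pullback of $V(f)_\C$ chart by chart: if $f\in\im(\Z^{B-\{i\}}\to \Z^B)$, the local equation for $V(f)_\C$ involves only characters trivial on $e_i$, and restricting to the $1$-section yields the corresponding divisor $V(f)_\C\subset D_{i,1}(\Sigma)_\C$; if instead the $i$th coordinate $f_i$ of $f$ is nonzero, then $V(f)_\C$ sits in the locus where the $i$th coordinate equals $0$ or $\infty$ (according to $\mathrm{sgn}(f_i)$) and is therefore disjoint from the $1$-section, so the pullback vanishes. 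Membership of the output in $Z_{I_1-\{i\}}^1(D_{i,1}(\Sigma))$ is immediate since the condition $f\neq e_k$ for $k\in I_1$ transports unchanged to the smaller lattice.

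The main technical obstacle is the local realization of $\delta_{i,1}$ in the $\epsilon=1$ case: once that model is established, both the pullback computation and the compatibility with Construction~\ref{subdivision.14} are mechanical. The $\epsilon=0$ case, by contrast, is standard toric bookkeeping; its only subtlety is the ruling-out argument above, for which condition~(ii) is essential.
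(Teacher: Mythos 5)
Your proof follows the same route as the paper's own (which is essentially a one-liner): uniqueness from the injectivity of the horizontal maps of Construction~\ref{subdivision.14}, and existence by observing that the stated formulas hold at the level of pseudo-divisor pullbacks. You supply a useful detail that the paper leaves implicit, namely the check that $\delta_{i,0}^*[V(f)]$ avoids the excluded generators $[V(\ol{e_j})]$ for $j\in I_0\setminus\{i\}$, which you derive from condition~(ii) of Definition~\ref{admissible.1} together with the fan axioms.
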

\begin{proof}
The uniqueness is clear since the horizontal maps are injective.
For the existence,
it suffices to note that the above formulas for $\delta_{i,0}^*$ and $\delta_{i,1}^*$ also hold as pseudo-divisors.
\end{proof}

We also need the compatibility of the pullback maps for projections as follows.

\begin{lem}
\label{subdivision.17}
Let $\Sigma$ be an $(I_0,I_1,I_2)$-admissible fans in $\Z^B$,
where $B$ is a finite set, $I_0,I_1,I_2\subset B$, and $(I_0\cup I_1)\cap I_2=\emptyset$.
Then for an indeterminate variable $t\notin B$,
there exists a unique map
\(
\pi_t^*
\colon
Z_{I_0}^1(\Sigma)
\to
Z_{I_0\cup \{t\}}^1(\Sigma\times \P^{\{t\}})
\)
such that the square
\[
\begin{tikzcd}
Z_{I_0}^1(\Sigma)\ar[d,"\pi_t^*"']\ar[r]&
\PsDiv(\Sigma_{\C})\ar[d,"\pi_t^*"]
\\
Z_{I_0\cup \{t\}}^1(\Sigma\times \P^{\{t\}})\ar[r]&
\PsDiv((\Sigma\times \P^{\{t\}})_{\C})
\end{tikzcd}
\]
commutes,
where $\pi_t\colon \Sigma\times \P^{\{t\}}\to \Sigma$ is the projection.
Furthermore,
we have
\(
\pi_t^*[V(f)]=[V(f)]
\)
for $f\in \Sigma(1)-\{e_i:i\in I_0\}$,
where $f$ in the right-hand side is regarded as the ray whose coordinate in the variable $t$ is $0$.
\end{lem}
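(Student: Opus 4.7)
The plan is to mirror the proof of \cref{subdivision.16}. For uniqueness, note that the bottom horizontal map of the square is injective by \cref{subdivision.14}, so the composition along the bottom determines $\pi_t^*$ completely; this forces uniqueness as soon as existence is established.

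For existence, the plan is to take the explicit formula stated in the lemma as a definition. Concretely, for each ray $f\in \Sigma(1)\setminus\{e_i:i\in I_0\}$, let $f'$ denote the corresponding ray of $\Sigma\times \P^{\{t\}}$ in $\Z^{B\cup\{t\}}$ (that is, $f$ augmented by a $0$ in the $t$-coordinate) and set $\pi_t^*[V(f)]:=[V(f')]$. The first check is that $f'$ is an admissible generator of $Z_{I_0\cup\{t\}}^1(\Sigma\times \P^{\{t\}})$: its $t$-coordinate is $0$, so $f'\neq e_t$, and by hypothesis on $f$ we have $f'\neq e_i$ for every $i\in I_0$. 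Extending $\Z$-linearly produces the map $\pi_t^*$.

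To verify commutativity it suffices to check on each generator, i.e.\ to identify the pseudo-divisor pullback $\pi_t^*[V(f)_{\C}]$ on $(\Sigma\times \P^{\{t\}})_{\C}$ with the toric Cartier divisor $[V(f')_{\C}]$. The morphism $\pi_t$ is the flat product projection by $\P^1_{\C}$, and under a flat morphism the pullback of a Cartier divisor is defined by the same local equations; toric-geometrically, $V(f')_{\C}$ inside $\Sigma_{\C}\times \P^1_{\C}$ is visibly $V(f)_{\C}\times \P^1_{\C}$, which is exactly this scheme-theoretic preimage. I do not expect any real obstacle here: the content of the lemma is just recording that the evident flat pullback of toric divisors along the $\P^1$-projection fits into the bookkeeping of $Z_{I_0}^1$.
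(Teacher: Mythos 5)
Your argument matches the paper's: uniqueness follows from injectivity of the maps from Construction \ref{subdivision.14}, and existence is just the observation that the formula $\pi_t^*[V(f)] = [V(f)]$ holds on the level of pseudo-divisors, which you unpack via flatness of the projection and the identification $V(f')_{\C} = V(f)_{\C}\times \P^1_{\C}$. The paper's proof is a two-line version of exactly this.
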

\begin{proof}
The uniqueness is clear since the horizontal maps are injective.
For the existence,
it suffices to note that the above formula for $\pi_t^*$ also holds as pseudo-divisors.
\end{proof}

Now, let us use the above results to transform cubical identities for $\PsDiv$ to $Z_{I_0}^1$.

\begin{lem}
\label{subdivision.18}
Let $\Sigma$ be an $(I_0,I_1,I_2)$-admissible fans in $\Z^B$,
where $B$ is a finite set, $I_0,I_1,I_2\subset B$, and $(I_0\cup I_1)\cap I_2=\emptyset$.
\begin{enumerate}
\item[\textup{(1)}]
Let $g\colon \Sigma'\to \Sigma$ be a subdivision of $(I_0,I_1,I_2)$-admissible fans in $\Z^B$.
Then for $(i,\epsilon)\in I_0\times \{0\} \amalg I_1\times \{1\}$, we have
\[
\delta_{i,\epsilon}^*g^*=g^*\delta_{i,\epsilon}^*\colon Z_{I_0}^1(\Sigma)\to Z_{I_0-\{i\}}^1(D_{i,\epsilon}(\Sigma')).
\]
\item[\textup{(2)}]
For $\epsilon=0,1$ and $i,i'\in I_\epsilon$ such that $i\neq i'$,
we have
\[
\delta_{i,\epsilon}^*\delta_{i',\epsilon}^*=\delta_{i',\epsilon}^*\delta_{i,\epsilon}^*
\colon
Z_{I_0}^1(\Sigma)
\to
Z_{I_0-\{i,i'\}}^1(D_{i,\epsilon}(D_{i',\epsilon}(\Sigma))).
\]
\item[\textup{(3)}]
For $i\in I_0$ and $i'\in I_1$ such that $i\neq i'$,
we have
\[
\delta_{i,1}^*\delta_{i',0}^*=\delta_{i',0}^*\delta_{i,1}^*
\colon
Z_{I_0}^1(\Sigma)
\to
Z_{I_0-\{i,i'\}}^1(D_{i,1}(D_{i',0}(\Sigma))).
\]
\item[\textup{(4)}]
For $t\notin B$ and $\epsilon=0,1$,
we have
\[
\delta_{t,\epsilon}^*\pi_t^*=\id
\colon
Z_{I_0}^1(\Sigma)
\to
Z_{I_0}^1(\Sigma).
\]
\item[\textup{(5)}]
For $t\notin B$ and $(i,\epsilon)\in I_0\times \{0\} \amalg I_1\times \{1\}$,
we have
\[
\delta_{i,\epsilon}^*\pi_t^*=\pi_t^*\delta_{i,\epsilon}^*
\colon
Z_{I_0}^1(\Sigma)
\to
Z_{I_0-\{i\}}^{1}(D_{i,\epsilon}(\Sigma)\times \P^{\{t\}}).
\]
\end{enumerate}
\end{lem}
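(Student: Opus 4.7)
The plan is to reduce all five identities to the corresponding identities on pseudo-divisors, using that each of the maps $\delta_{i,\epsilon}^*$, $g^*$, and $\pi_t^*$ was characterized in Lemmas \ref{subdivision.15}, \ref{subdivision.16}, and \ref{subdivision.17} as the unique lift of the pseudo-divisor pullback along the injective map $Z^1(-) \hookrightarrow \PDiv((-)_\C)$ of Construction \ref{subdivision.14}. Since the maps $Z^1_{I_0}(-) \to \PDiv((-)_\C)$ are restrictions of injective maps, it suffices to check each identity after passing to $\PDiv$.

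First, I would verify in each case that the relevant composite lands in the appropriate $Z^1_{I_0 - \{i\}}$ (or $Z^1_{I_0 \cup \{t\}}$) subgroup. For (1) and (2)--(5), this is immediate from the explicit formulas for $\delta_{i,\epsilon}^*$ on generators $[V(f)]$ given in Lemma \ref{subdivision.16} and for $\pi_t^*$ given in Lemma \ref{subdivision.17}: the images are always generators of the target $Z^1$ or zero, never involving rays of the excluded form $e_j$ with $j \in I_0$.

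Second, I would observe that on the pseudo-divisor side, each of the five identities is a special case of the functoriality of pseudo-divisor pullback $f^*g^* = (g \circ f)^*$ along a commutative diagram or a fiber square of toric morphisms:
\begin{enumerate}
\item[(1)] The square with vertices $D_{i,\epsilon}(\Sigma')_\C$, $\Sigma'_\C$, $D_{i,\epsilon}(\Sigma)_\C$, $\Sigma_\C$ commutes, with the vertical arrows being restrictions of $g\colon \Sigma'_\C \to \Sigma_\C$ to the closed subvarieties $D_{i,\epsilon}$.
\item[(2),(3)] The closed subvariety $D_{i,\epsilon}(D_{i',\epsilon'}(\Sigma))_\C \hookrightarrow \Sigma_\C$ is independent of the order of the two closed embeddings, so the two composites agree.
\item[(4)] The composite $\delta_{t,\epsilon}\circ \pi_t$ is the identity on $\Sigma_\C$ (for either $\epsilon \in \{0,1\}$, the section $\delta_{t,\epsilon}$ splits the projection $\pi_t$).
\item[(5)] The square relating $D_{i,\epsilon}(\Sigma)_\C \times \P^1$, $\Sigma_\C \times \P^1$, $D_{i,\epsilon}(\Sigma)_\C$, $\Sigma_\C$ is cartesian, so pullback of pseudo-divisors is compatible via base change.
\end{enumerate}

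Finally, combining these two steps with the injectivity of $Z^1_{\bullet}(-) \hookrightarrow \PDiv((-)_\C)$ yields the desired identities in $Z^1$. There is no serious obstacle; the argument is essentially bookkeeping, and the only step that requires any care is the verification that each composite restricts appropriately to the subgroup $Z^1_{I_0 - \{i\}}$ (respectively $Z^1_{I_0 \cup \{t\}}$), which is handled by inspecting the explicit formulas on generators already recorded in Lemmas \ref{subdivision.16} and \ref{subdivision.17}.
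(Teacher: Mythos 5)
Your proposal is correct and takes essentially the same route as the paper: reduce all five identities to pseudo-divisor pullback via the injectivity of $Z^1_\bullet(-)\hookrightarrow \PDiv((-)_\C)$ established in Lemmas \ref{subdivision.15}, \ref{subdivision.16}, and \ref{subdivision.17}, then invoke functoriality of pseudo-divisor pullback. The paper's proof is a one-liner making exactly this reduction; you merely unpack it by spelling out the relevant commuting diagrams and by checking that the composites land in the correct $Z^1_{I_0-\{i\}}$ subgroups, which the paper leaves implicit.
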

\begin{proof}
Using Lemmas \ref{subdivision.15}, \ref{subdivision.16}, and \ref{subdivision.17}, it suffices to check the corresponding claims for pseudo-divisors,
which is clear since the pullback maps for pseudo-divisors are compatible with compositions.
\end{proof}

\subsection{Construction of a class behaving like a homotopy}

Let us use cubical identities for $Z_{I_0}^1$ established in the previous subsection.
We first lift an element in a Chow group to an element in $\Sym^p{Z_{I_0}^1}$ as follows.

\begin{lem}
\label{subdivision.21}
Let $\Sigma$ be an $(I_0,I_1,I_2)$-admissible fan in $\Z^B$,
where $B$ is a finite set, and $I_0$, $I_1$, and $I_2$ are disjoint subsets of $B$.
If an element $x\in \CH^p(\Sigma)$ satisfies $\delta_{i,0}^* x=0$ for every $i\in I_0$ and $\delta_{i,1}^*x=0$ for every $i\in I_1$,
then there exists an $(I_0,I_1,I_2)$-admissible subdivision $f\colon \Sigma'\to \Sigma$ with an element $\widetilde{x}'\in \Sym^p Z^1_{I_0}(\Sigma')$ satisfying the following conditions:
\begin{enumerate}
\item[\textup{(i)}] The image of $\widetilde{x}'$ in $\CH^p(\Sigma')$ is equal to $f^*x$.
\item[\textup{(ii)}]
$\delta_{i,0}^*\widetilde{x}' =0$ for every $i\in I_0$ and $\delta_{i,1}^*\widetilde{x}'=0$ for every $i\in I_1$.
\end{enumerate}
\end{lem}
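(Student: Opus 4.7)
The plan is to produce an initial cycle-level lift of $x$ via \cref{subdivision.19} and then to successively correct its boundary terms after passing to admissible refinements.

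First, apply \cref{subdivision.19} to obtain some $\widetilde{x} \in \Sym^p Z^1_{I_0}(\Sigma)$ whose image in $\CH^p(\Sigma)$ equals $x$. For every $(i,\epsilon) \in I_0 \times \{0\} \cup I_1 \times \{1\}$ the element
\[
\alpha_{i,\epsilon} := \delta_{i,\epsilon}^* \widetilde{x} \in \Sym^p Z^1_{I_0 - \{i\}}(D_{i,\epsilon}(\Sigma))
\]
maps to $0$ in $\CH^p(D_{i,\epsilon}(\Sigma))$ by the hypothesis $\delta_{i,\epsilon}^* x = 0$. The remaining task is to modify $\widetilde{x}$, passing to an admissible refinement $f : \Sigma' \to \Sigma$, so that every $\alpha_{i,\epsilon}$ vanishes on the nose.

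I would proceed by induction on $|I_0| + |I_1|$, carrying a slightly strengthened inductive statement: namely, that for each $(i,\epsilon)$, any element $\alpha \in \Sym^p Z^1_{I_0-\{i\}}(D_{i,\epsilon}(\Sigma))$ which maps to $0$ in $\CH^p$ and has vanishing further boundaries $\delta_{j,\epsilon'}^* \alpha = 0$ for all other $(j,\epsilon')$ can be realized, after an admissible refinement of $\Sigma$, as the boundary $\delta_{i,\epsilon}^* \gamma$ of some $\gamma \in \Sym^p Z^1_{I_0}(\Sigma')$ with $\gamma \mapsto 0$ in $\CH^p(\Sigma')$ and $\delta_{j,\epsilon'}^*\gamma = 0$ for all $(j,\epsilon') \neq (i,\epsilon)$. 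Subtracting such a $\gamma$ from the current lift kills $\alpha_{i,\epsilon}$ while preserving the vanishing already achieved for other boundaries, so the boundary conditions can be cleared one pair at a time.

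The construction of $\gamma$ is built from a cylinder/homotopy construction on $\Sigma \times \P^{\{t\}}$: the identity $\delta_{t,0}^* \pi_t^* = \id$ of \cref{subdivision.18}(4), combined with the commutation relations $\delta_{i,\epsilon}^* \pi_t^* = \pi_t^* \delta_{i,\epsilon}^*$ of \cref{subdivision.18}(5), show that a cycle-level null-homotopy of $\alpha$ on $D_{i,\epsilon}(\Sigma)$ can be lifted through $\pi_t^*$ and then transported back to $\Sigma$ to produce the desired $\gamma$. Intermediate refinements produced at different stages are merged via \cref{admissible.2}, and \cref{subdivision.25} is used when it is necessary to decompose the refinements into explicit star subdivisions meeting the required support conditions. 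The main technical obstacle is to ensure that the correction $\gamma$ introduced at one boundary does not reintroduce obstructions at other boundaries, and this is exactly what the strengthened inductive hypothesis is designed to control; verifying the strengthened hypothesis in turn uses the full force of the commutation relations recorded in \cref{subdivision.18} together with the flexibility of passing to further admissible refinements granted by \cref{admissible.2}.
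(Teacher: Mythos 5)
Your overall plan is on the right track and uses the same toolbox as the paper: lift $x$ to a cycle-level element via \cref{subdivision.19}, then use the cylinder $\Sigma\times\P^{\{t\}}$ together with the identities $\delta_{t,\epsilon}^*\pi_t^*=\id$ and the commutation relations of \cref{subdivision.18} to kill boundary terms one at a time, merging refinements through \cref{admissible.2}. However, the ``strengthened inductive statement'' as you phrase it cannot be applied at the intermediate stages of the iteration. You require the obstruction $\alpha\in\Sym^p Z^1_{I_0-\{i\}}(D_{i,\epsilon}(\Sigma))$ to satisfy $\delta_{j,\epsilon'}^*\alpha=0$ for \emph{all} other $(j,\epsilon')$, and produce $\gamma$ with $\delta_{j,\epsilon'}^*\gamma=0$ for \emph{all} other $(j,\epsilon')$. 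But after clearing boundaries over a subset $I\subset I_0\amalg I_1$ with a lift $\widetilde{x}'$, the next obstruction $\alpha=\delta_{t,\epsilon}^*\widetilde{x}'$ only satisfies $\delta_{j,\epsilon'}^*\alpha=\delta_{t,\epsilon}^*\delta_{j,\epsilon'}^*\widetilde{x}'=0$ for $(j,\epsilon')\in I$, not for the as-yet-uncleared indices; so the hypothesis of your stronger statement fails at every stage except the last. Likewise, the cylinder-style correction $\gamma=v^*\pi_t^*\alpha$ has $\delta_{j,\epsilon'}^*\gamma=v^*\pi_t^*\delta_{j,\epsilon'}^*\alpha$, which by commutation vanishes exactly at the already-cleared indices, not everywhere, so the stronger conclusion is also unachievable by this construction.

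The correct bookkeeping is weaker and asymmetric: index the induction by subsets $I\subset I_0\amalg I_1$ of \emph{already-cleared} boundaries, with the running condition that $\delta_{j,\epsilon'}^*\widetilde{x}'=0$ only for $(j,\epsilon')\in I$. Given such a lift, pass by \cref{admissible.2} to a common $(I_0,I_1,I_2)$-admissible subdivision $\Sigma''$ of $\Sigma'$ and $D_{t,\epsilon}(\Sigma')\times\P^{\{t\}}$ with maps $u,v$, and set $\widetilde{x}'':=u^*\widetilde{x}'-v^*\pi_t^*\delta_{t,\epsilon}^*\widetilde{x}'$. By \cref{subdivision.18}, the boundaries at $(j,\epsilon')\in I$ stay zero since $\delta_{j,\epsilon'}^*\widetilde{x}'=0$, the new boundary at $(t,\epsilon)$ vanishes by $\delta_{t,\epsilon}^*\pi_t^*=\id$, and condition (i) is preserved because the subtracted term $v^*\pi_t^*\delta_{t,\epsilon}^*\widetilde{x}'$ maps to $0$ in $\CH^p$ since $\delta_{t,\epsilon}^*x=0$. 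This yields $(\mathrm{ii}_{I\cup\{t\}})$ and completes the induction; your version asks for too much both as hypothesis and as conclusion.
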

\begin{proof}
For a subset $I$ of $I_0\amalg I_1$, consider the following condition:
\begin{enumerate}
\item[$(\mathrm{ii}_I)$]
$\delta_{i,0}^*\widetilde{x}'=0$ for every $i\in I_0\cap I$ and $\delta_{i,1}^*\widetilde{x}'=0$ for every $i\in I_1\cap I$.
\end{enumerate}
We proceed by induction on $I$ to show that there exists such $\Sigma'$ with $\widetilde{x}'$ satisfying (i) and $(\mathrm{ii}_I)$.

By Lemma \ref{subdivision.19},
there exists $\widetilde{x}\in \Sym^p Z_{I_0}^1(\Sigma)$ whose image in $\CH^p(\Sigma)$ is $x$.
Observe that $\widetilde{x}$ satisfies (i) and $(\mathrm{ii}_{\emptyset})$.

Assume that there exists $f\colon \Sigma'\to \Sigma$ and $\widetilde{x}'\in \Sym^p Z_{I_0}^1(\Sigma')$ satisfying (i) and ($\mathrm{ii}_I$),
where $I\subset I_0\amalg I_1$.
For $\epsilon=0,1$ and $t\in (I_0\amalg I_1)-I$,
by Lemma \ref{admissible.2},
there exists an $(I_0,I_1,I_2)$-admissible fan with subdivisions $u\colon \Sigma''\to \Sigma'$ and $v\colon \Sigma''\to D_{t,\epsilon}(\Sigma')\times \P^{\{t\}}$.
Consider $\widetilde{x}'':=u^*\widetilde{x}'-v^*\pi_t^* \delta_{t,\epsilon}^*\widetilde{x}'\in \Sym^p Z_{I_0}^1(\Sigma'')$.
Using Lemma \ref{subdivision.18} and the assumption that $\widetilde{x}'$ satisfies ($\mathrm{ii}_I$),
we have $\delta_{i,0}^*\widetilde{x}''=0$ for $i\in I_0\cap I$ and $\delta_{i,1}^*\widetilde{x}''=0$ for $i\in I_1\cap I$.
Also,
Lemma \ref{subdivision.18} implies $\delta_{t,\epsilon}^*\widetilde{x}''=0$.
Hence $\widetilde{x}''$ satisfies ($\mathrm{ii}_{I\cup \{t\}}$).
To complete the induction argument,
observe that the condition (i) for $\widetilde{x}'$ implies the condition (i) for $\widetilde{x}''$ since $\delta_{t,\epsilon}^*x=0$ by assumption.
\end{proof}

Next,
we construct an element that behaves like a homotopy for $\Sym^p Z_{I_0}^1$.

\begin{lem}
\label{subdivision.22}
Let $\Sigma$ be an $(I_0,I_1,I_2)$-admissible fans in $\Z^B$,
where $B$ is a finite set, $I_0,I_1,I_2\subset B$, and $(I_0\cup I_1)\cap I_2=\emptyset$.
If an element $\widetilde{x}\in \Sym^p Z_{I_0}^{1}(\Sigma)$ satisfies $\delta_{i,0}^* \widetilde{x}=0$ for $(i,\epsilon)\in I_0\times \{0\}\amalg I_1\times \{1\}$,
then there exists an $(I_0\cup \{t\},I_1\cup \{t\},I_2)$-admissible subdivision $\Sigma'$ of $\Sigma \times \P^{\{t\}}$ with indeterminate $t\notin B$ and an element $\widetilde{y}\in \Sym^p Z_{I_0\cup \{t\}}^{1}(\Sigma')$ such that $\delta_{i,0}^* \widetilde{y}=0$ for $i\in I_0\amalg \{t\}$, $\delta_{i,1}^* \widetilde{y}=0$ for $i\in I_1$, and $\delta_{t,1}^* \widetilde{y}=\widetilde{x}$.
\end{lem}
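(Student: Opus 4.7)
The plan is to construct $\widetilde{y}$ by combining the pullback $\pi_t^*\widetilde{x}$ with a toric blowup of $\Sigma\times \P^{\{t\}}$ that separates the divisors $V(f)_\C$ (for rays $f$ appearing in $\widetilde{x}$) from the toric divisor $V(e_t)_\C$. Concretely, let $R\subset \Sigma(1)$ denote the set of rays that occur in some monomial of $\widetilde{x}$, and let $\Sigma'$ be the fan obtained from $\Sigma\times \P^{\{t\}}$ by performing, for each $f\in R$, a star subdivision at the smooth two-dimensional cone $\Cone(f,e_t)$. Because these cones pairwise intersect only along $\Cone(e_t)$, the star subdivisions are independent of the order in which they are carried out and yield a smooth complete fan $\Sigma'$ in $\Z^{B\cup\{t\}}$. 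Writing $\widetilde{x}=\sum_F c_F\,[V(f_{F,1})]\cdots [V(f_{F,p})]$, I then set
\[
\widetilde{y}\;:=\;\sum_F c_F\,[V(f_{F,1})]\cdots [V(f_{F,p})]\;\in\;\Sym^p Z_{I_0\cup\{t\}}^1(\Sigma'),
\]
where each $f_{F,j}\in \Sigma(1)$ is reinterpreted as the corresponding ray of $\Sigma'$.

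The pullback identities should follow directly from Lemma~\ref{subdivision.16}. Since $f_{F,j}\in \Z^B\subset \Z^{B\cup\{t\}}$, one has $\delta_{t,1}^*[V(f_{F,j})]=[V(f_{F,j})]$, giving $\delta_{t,1}^*\widetilde{y}=\widetilde{x}$. Since $\Cone(f,e_t)\notin \Sigma'$ for every $f\in R$ by construction, $\delta_{t,0}^*[V(f_{F,j})]=0$, so every monomial of $\widetilde{y}$ is killed by $\delta_{t,0}^*$ and hence $\delta_{t,0}^*\widetilde{y}=0$. For each $(i,\epsilon)\in I_0\times\{0\}\amalg I_1\times\{1\}$, the star subdivisions at the cones $\Cone(f,e_t)$ modify neither the cones $\Cone(f_{F,j},e_i)$ (which do not contain $e_t$) nor the sublattice $\Z^{B-\{i\}}\oplus \Z e_t$ that together govern $\delta_{i,\epsilon}^*$; hence $\delta_{i,\epsilon}^*\widetilde{y}$ has the same polynomial expression as $\delta_{i,\epsilon}^*\widetilde{x}$ under the natural identification of rays, and vanishes by hypothesis.

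The main work lies in verifying that $\Sigma'$ is $(I_0\cup\{t\},I_1\cup\{t\},I_2)$-admissible. Smoothness and completeness are immediate from the fact that the centers of the star subdivisions are smooth cones in a smooth complete fan, and $\Cone(e_{a_1},\ldots,e_{a_s},e_t)$ remains in $\Sigma'$ because no $\Cone(f,e_t)$ with $f\in R$ lies inside it, as $R\cap\{e_i:i\in I_0\}=\emptyset$. Completeness of $D_{i,1}(\Sigma')$ for $i\in I_1\cup\{t\}$ follows from identifying this fan as a star-blowup of the complete fan $D_{i,1}(\Sigma)\times \P^{\{t\}}$ (or of $\Sigma$ itself when $i=t$). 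The delicate condition is~(iv): each new ray has the form $f+e_t$ whose $i$-th coordinate for $i\in I_2$ equals $f_i$, and condition~(iv) for $\Sigma$ gives $f_i\le 0$ whenever $f\ne e_i$; the remaining edge case $f=e_i$ with $i\in I_2$ will presumably require a more delicate blowup that avoids introducing new rays of positive $i$-th coordinate, which I expect to be the most subtle part of the argument.
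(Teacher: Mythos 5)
Your construction coincides in spirit with the paper's: blow up $\Sigma\times\P^{\{t\}}$ at two-dimensional cones $\Cone(f,e_t)$ so that $\delta_{t,0}^*$ of the lifted element vanishes while $\delta_{t,1}^*$ recovers $\widetilde{x}$, with the various $\delta_{i,\epsilon}^*$ controlled via Lemma~\ref{subdivision.16}. Two differences are worth noting. First, the paper performs the star subdivisions at $\Cone(f_j,e_t)$ for \emph{all} rays $f_j$ of $\Sigma\times\P^{\{t\}}$ other than $e_i$ with $i\in I_0\cup I_2\cup\{t\}$, rather than only at rays occurring in $\widetilde{x}$. This uniform choice makes $\Sigma'$ independent of $\widetilde{x}$ and, more to the point, resolves exactly the edge case you flagged at the end: by omitting the cones $\Cone(e_i,e_t)$ with $i\in I_2$ from the list of centers, no new ray of the form $e_i+e_t$ with positive $i$th coordinate is created, so condition~(iv) of Definition~\ref{admissible.1} is preserved. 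Your guess that this ``will presumably require a more delicate blowup'' is right, and the required delicacy consists precisely of deleting those centers.

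Second, your claim that the star subdivisions commute because the cones $\Cone(f,e_t)$ pairwise meet only along $\Cone(e_t)$ is incorrect, and the paper explicitly observes that $\Sigma'$ depends on the order of the $f_j$. If $\Cone(f_1,f_2,e_t)$ is a cone of $\Sigma\times\P^{\{t\}}$, blowing up $\Cone(f_1,e_t)$ and then $\Cone(f_2,e_t)$ yields the maximal cone $\Cone(f_1,\,f_1+e_t,\,f_2)$ among the pieces, whereas the reverse order yields $\Cone(f_2,\,f_2+e_t,\,f_1)$ in its place: the two fans differ. This is harmless for the lemma, since one only needs some $\Sigma'$ with the stated properties, but the justification via pairwise intersections is not valid --- a cone containing several centers gets subdivided differently depending on which center is treated first.
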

\begin{proof}
Let $f_1,\ldots,f_m$ be the rays of $\Sigma \times \P^{\{t\}}$ different from $e_i$ for every $i\in I_0\cup I_2\cup \{t\}$.
We set
\[
\Sigma'
:=
(\cdots ((\Sigma \times \P^{\{t\}})^*(\sigma_m))^*(\sigma_{m-1})\cdots )^*(\sigma_1),
\]
where $\sigma_i:=\Cone(f_i,e_t)$.
Note that $\Sigma'$ depends on the choice of the order of $f_1,\ldots,f_m$ in general.
Observe that $\Sigma'$ is an $(I_0\cup \{t\},I_1\cup \{t\},I_2)$-admissible fan.
The key properties of $\Sigma'$ are as follows:
\begin{enumerate}
\item[(i)]
The set of cones of $\Sigma'$ contained in $\Z^B\times 0$ forms the fan $\Sigma\times 0$.
\item[(ii)]
For every cone $\sigma$ of $\Sigma$ such that $e_i\notin \sigma$ for every $i\in I_0\cup I_2$,
we have $\Cone(\sigma,e_t)\notin \Sigma'$.
\end{enumerate}

We can write
\[
\widetilde{x}=\sum_{j=1}^m [V(f_{1j})]\cdots [V(f_{pj})],
\]
where each $f_{ij}$ is a ray of $\Sigma$.
Consider
\[
\widetilde{y}=\sum_{j=1}^m [V(g_{1j})]\cdots [V(g_{pj})]\in \Sym^p Z_{I_0}^1(\Sigma'),
\]
where each $g_{ij}$ is obtained from $f_{ij}$ by adding $0$ to the $t$th coordinate.
Together with the properties (i) and (ii),
we have $\widetilde{y}\in \Sym^p_{Z_{I_0\cup \{t\}}^{ 1}}(\Sigma')$ and $\delta_{t,0}^* \widetilde{y}=0$.
The assumption $\delta_{i,\epsilon}^*\widetilde{x}=0$ for $(i,\epsilon)\in I_0\times \{0\}\amalg I_1\times \{1\}$ implies $\delta_{i,\epsilon}^*\widetilde{y}=0$ for $(i,\epsilon)\in I_0\times \{0\}\amalg I_1\times \{1\}$.
\end{proof}

\begin{exm}
To help the reader understand the proof of Lemma \ref{subdivision.22},
let us illustrate the fan $\Sigma'$ for an example of $\Sigma$ as follows.
Consider the fan $\Sigma$ obtained from $(\P^1)^2$ by star subdivisions relative to the cones $\Cone(-e_1,e_2)$ and $\Cone(-e_1,-e_2)$.
Observe that $\Sigma$ is a $\{\{2\},\{2\},\{1\}\}$-admissible fan in $\Z^2$.
We illustrate $\Sigma$ as follows:
\[
\begin{tikzpicture}[scale = 0.8]
\draw (2,0)--(0,2)--(-2,0)--(0,-2)--(2,0);
\draw (2,0) node[right] {$e_1$};
\draw (0,2) node[above] {$e_2$};
\draw (-2,0) node[left] {$-e_1$};
\draw (0,-2) node[below] {$-e_2$};
\filldraw (-1,1) circle (2.5pt);
\filldraw (-1,-1) circle (2.5pt);
\end{tikzpicture}
\]

We set $f_1:=(-1,1)$, $f_2:=(-1,0)$, $f_3:=(-1,-1)$, and $f_4:=(0,-1)$.
Then we illustrate $\Sigma'$ as follows:
\[
\begin{tikzpicture}[scale = 0.8]
\draw (2,0)--(0,2)--(-2,0)--(0,-2)--(2,0)--(0,0)--(-2,0);
\draw (0,2)--(0,0)--(0,-2);
\draw (2,0) node[right] {$e_1$};
\draw (0,2) node[above] {$e_2$};
\draw (0,0) node[above right] {$e_t$};
\draw (-2,0) node[left] {$-e_1$};
\draw (0,-2) node[below] {$-e_2$};
\draw (-2,0)--(0,-1)--(2,0);
\draw (-1,-1)--(0,-1);
\draw (0,-1)--(-1,0)--(0,2);
\draw (-1,1)--(-1,0);
\begin{scope}[shift={(6,0)}]
\draw (2,0)--(0,2)--(-2,0)--(0,-2)--(2,0)--(0,0)--(-2,0);
\draw (0,2)--(0,0)--(0,-2);
\draw (0,0)--(-1,-1);
\draw (0,0)--(-1,1);
\draw (2,0) node[right] {$e_1$};
\draw (0,2) node[above] {$e_2$};
\draw (0,0) node[above right] {$-e_{t}$};
\draw (-2,0) node[left] {$-e_1$};
\draw (0,-2) node[below] {$-e_2$};
\end{scope}
\end{tikzpicture}
\]
\end{exm}

\section{Proof of Theorem \ref{intro.2}}
\label{induction}

We will finish the proof of Theorem \ref{intro.2} at the end of this section.
In \S \ref{induction2}, we will consider star subdivisions
\[
\Sigma_m\to \cdots \to \Sigma_0=\Theta_{n,r,\bd}
\]
obtained by Propositions \ref{dist.10} and \ref{ordering.26} such that $\Sigma_m$ refines a given fan $\Sigma$.
We will interpret Theorem \ref{intro.2} as a claim for an individual standard subdivision $\Sigma$.
Then Proposition \ref{ordering.15} is the base case of Theorem \ref{intro.2},
and hence it will suffice to deduce the claim for $\Delta_{i+1}$ from the claim for $\Delta_i$.

In \S \ref{blow}, we will discuss several blow-up squares and their Chow groups.
In \S \ref{Valpha}, we will complete the proof of this induction step.
Lemmas \ref{subdivision.21} and \ref{subdivision.22} are crucial ingredients here.

\subsection{Induction argument}
\label{induction2}

Let us first explain how we can interpret Theorem \ref{intro.2} for an individual standard subdivision $\Sigma$.

\begin{df}
\label{subdivision.3}
Let $0\leq r\leq n$ and $p>0$ be integers.
For $x\in \CH_\sta^p(n,r)$ such that $\delta_{i,\epsilon}^*x=0$ for $r+1\leq i\leq n$ and $\epsilon=0,1$,
we will consider the following conditions:
\begin{itemize}
\item[($*$)]
There exists $y\in \CH_\sta^p(n+1,r)$ such that $\delta_{i,0}^*y=0$ for $r+1\leq i\leq n+1$, $\delta_{i,1}^*y=0$ for $r+1\leq i\leq n$,
and $\delta_{n+1,1}^*y=x$.
\end{itemize}
We say that an $r$-standard subdivision $\Sigma$ of $(\P^1)^n$ satisfies $(*)$ if such an element $x$ satisfies $(*)$ whenever $x$ is in the image of $\CH^p(\Sigma)\to \CH_\sta^p(n,r)$.
\end{df}

\begin{prop}
\label{subdivision.38}
Under the above notation,
the map $\CH^p(\Sigma)\to \CH_\sta^p(n,r)$ is injective.
\end{prop}
\begin{proof}
By Proposition \ref{subdivision.26} below,
it suffices to show that $\Sta_{n,r}$ is cofiltered.

Every Hom set in $\Sta_{n,r}$ is $\emptyset$ or $*$.
Hence it suffices to show that $\Sta_{n,r}$ is connected.
For this,
let $\Delta,\Delta'\in \Sta_{n,r}$.
The fan
\[
\Delta'':=\{\delta\cap \delta': \delta\in \Delta,\delta'\in \Delta'\}
\]
satisfies the conditions (i) and (ii) in Definition \ref{intro.1}, but $\Delta''$ does not need to be smooth.
The toric resolution of singularities \cite[Theorem 11.1.9]{CLStoric} yields a subdivision $\Delta'''\to \Delta''$ such that $\Delta'''$ is an $r$-standard subdivision of $(\P^1)^n$.
\end{proof}

\begin{prop}
\label{subdivision.26}
Let $\Sigma'\to \Sigma$ be a subdivision of fans.
Then the induced map $\CH^p(\Sigma)\to \CH^p(\Sigma')$ is injective for every integer $p\geq 0$.
\end{prop}
\begin{proof}
By \cite[Theorem 2.4]{MR803344} (see also \cite[pp.\ 39-40]{TOda}),
there exists a subdivision of fans $\Sigma''\to \Sigma'$ such that $\Sigma''\to \Sigma$ is the composition of a sequence of star subdivisions.
Hence we reduce to the case where $\Sigma'\to \Sigma$ is a star subdivision.
Then the morphism of $\C$-realizations $\Sigma_{\C}'\to \Sigma_\C$ is the blow-up along a smooth center,
so the blow-up formula for Chow groups \cite[Theorem 3.3(b), Proposition 6.7(e)]{Fulton} finishes the proof.
\end{proof}

The statement of the following result is a part of the general theory of cubical abelian groups, and we write down the proof in detail.

\begin{lem}
\label{subdivision.33}
Let $0\leq r\leq n$ and $p>0$ be integers,
and let $x$ be an element of $\CH_\sta^p(n,r)$ such that $\delta_{i,\epsilon}^*x=0$ for $r+1\leq i\leq n$ and $\epsilon=0,1$.
If there exists $y\in \CH_\sta^p(n+1,r)$ such that $\delta_{i,0}^*y=0$ for $r+1\leq i\leq n+1$ and $\delta^*y=x$,
then $x$ satisfies $(*)$.
\end{lem}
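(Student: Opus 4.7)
Plan. The strategy is to modify $y$ by iteratively adding degeneracy corrections $\mu_j^* w$, killing the faces $\delta_{r+1,1}^*,\ldots,\delta_{n,1}^*$ one at a time, while preserving both the vanishings $\delta_{i,0}^*(-)=0$ and the relation to $x$. This is a cubical normalization argument, and the hypothesis $\delta_{i,\epsilon}^* x = 0$ for $r+1\leq i\leq n$, $\epsilon=0,1$, will enter at exactly one essential point.

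Concretely, I will construct inductively a sequence $y_0,y_1,\ldots,y_{n-r}\in \CH_\sta^p(n+1,r)$ with $y_0 := y$ satisfying, for each $k$:
\begin{enumerate}
\item[(a)] $\delta_{i,0}^* y_k = 0$ for $r+1\leq i\leq n+1$;
\item[(b)] $\delta_{i,1}^* y_k = 0$ for $r+1\leq i\leq r+k$;
\item[(c)] $\sum_{i=r+k+1}^{n+1} (-1)^{i-r} \delta_{i,1}^* y_k = x$.
\end{enumerate}
The hypothesis on $y$ gives these at $k=0$ (since $\delta^* y = x$). At $k=n-r$, (a) and (b) give $y_{n-r}\in\CH_\sta^{p,\flat}(n+1,r)$ with $\delta_{i,1}^* y_{n-r}=0$ for $r+1\leq i\leq n$, while (c) collapses to $(-1)^{n+1-r}\delta_{n+1,1}^* y_{n-r} = x$; then $y' := (-1)^{n+1-r}y_{n-r}$ is the element required by $(*)$.

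For the inductive step, put $j := r+k+1\in\{r+1,\ldots,n\}$ and $w := \delta_{j,1}^* y_k\in\CH_\sta^p(n,r)$, then set $y_{k+1} := y_k - \mu_j^* w$ (the map $\mu_j^*$ being defined since $r+1\leq j\leq n$). Properties (a) and (b) follow routinely from the commutation rules \eqref{intro.5.1}--\eqref{intro.5.3}: case analysis shows every $\delta_{i,0}^*\mu_j^* w$ is zero using (a) for $y_k$; while $\delta_{j,1}^*\mu_j^* = \id$ kills the new face at $i=j$, and $\delta_{i,1}^*\mu_j^* w = \mu_{j-1}^*\delta_{i,1}^* w = \mu_{j-1}^*\delta_{j-1,1}^*\delta_{i,1}^* y_k = 0$ for $i<j$ by (b) for $y_k$ combined with \eqref{intro.5.1}.

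The main obstacle is preserving (c). A careful expansion of $\sum_{i=j+1}^{n+1}(-1)^{i-r}\delta_{i,1}^*\mu_j^* w$ using \eqref{intro.5.3} and \eqref{intro.5.1} yields
\[
\sum_{i=j+1}^{n+1}(-1)^{i-r}\delta_{i,1}^*\mu_j^* w \;=\; (-1)^{j+1-r}w \;+\; \mu_j^*\delta_{j,1}^* x,
\]
where the first summand comes from $i=j+1$ via $\delta_{j+1,1}^*\mu_j^* = \id$, and the second is obtained by collecting the terms $i>j+1$ using (c) for $y_k$, together with the cubical identity $\delta_{j,1}^*\delta_{j,1}^* = \delta_{j,1}^*\delta_{j+1,1}^*$ (which is \eqref{intro.5.1} with coinciding indices) to cancel the $\delta_{j+1,1}^* y_k$ contribution. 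Subtracting from (c) for $y_k$, the $w$-terms cancel via $(-1)^{j-r}+(-1)^{j+1-r}=0$, and we obtain
\[
\sum_{i=j+1}^{n+1}(-1)^{i-r}\delta_{i,1}^* y_{k+1} \;=\; x - \mu_j^*\delta_{j,1}^* x \;=\; x,
\]
where the last equality uses the hypothesis $\delta_{j,1}^* x = 0$, valid for $r+1\leq j\leq n$. This is the unique place in the argument where the assumption on $x$ is needed; without it the residual error $\mu_j^*\delta_{j,1}^* x$ would survive and the induction would break down.
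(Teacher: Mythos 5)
Your proposal is correct and follows the same normalization argument as the paper: the correction $y_{k+1} = y_k - \mu_j^*\delta_{j,1}^*y_k$ is exactly the paper's $y_t = y_{t-1}-\mu_t^*\delta_{t,1}^*y_{t-1}$, and your invariant (c) is equivalent to the paper's $\delta^*y_t = x$ once (b) is in force. You are in fact slightly more careful than the paper: you verify preservation of the $\delta_{i,0}^*$-vanishings explicitly and correctly note the sign $(-1)^{n+1-r}$ needed at the end, both of which the paper glosses over (its stated inductive claim $\delta_{n+1,1}^*y_t=x$ is not literally what is being propagated).
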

\begin{proof}
For every integer $r\leq t\leq n$,
we claim that exists $y_t\in \CH_\sta^p(n+1,r)$ such that
$\delta_{i,0}^*y=0$ for $r+1\leq i\leq n+1$,
$\delta_{i,1}^*y=0$ for $r+1\leq i\leq t$,
and $\delta_{n+1,1}^*y=x$.
We proceed by induction on $t$.
The claim is the assumption if $t=r$.
Assume that the claim holds for $t-1$.
Let us argue as in \cite[Proposition A.11]{MR3259031}.
Consider $y_t:=y_{t-1}-\mu_t^*\delta_{t,1}^*y_{t-1}$,
see Definition \ref{intro.5} for $\mu_t^*$.
Using \eqref{intro.5.1}, \eqref{intro.5.2}, and \eqref{intro.5.3},
we have
\[
\delta_{i,1}^*y_t=
\left\{
\begin{array}{ll}
0 & \text{if $i\leq t$},
\\
\delta_{t+1,1}^*y_{t-1}-\delta_{t,1}^*y_{t-1}
&
\text{if $i=t+1$},
\\
\delta_{i,1}^*y_{t-1}-\mu_t^*\delta_{t,1}^*\delta_{i,1}^*y_{t-1}
&
\text{if $i>t+1$}.
\end{array}
\right.
\]
We also have $\mu_t^*\delta_{t,1}^*\delta_{t,1}^*y_{t-1}=\mu_t^*\delta_{t,1}^*\delta_{t+1,1}^*y_{t-1}$.
Hence we have
\[
\delta^* y_t
=
\delta^* y_{t-1} + \mu_t^* \delta_{t,1}^* \delta^* y_{t-1},
\]
which implies $\delta^*y_t=x$ since $\delta_{t,1}^*\delta^*y_{t-1}=\delta_{t,1}^*x=0$.
This completes the induction argument.
\end{proof}

The following result serves as the base step of Theorem \ref{intro.2}.

\begin{lem}
\label{subdivision.4}
For integers $0\leq r\leq n$ and nonempty finite sequence of positive integers $\bd$,
$\Theta_{n,r,\bd}$ satisfies $(*)$.
\end{lem}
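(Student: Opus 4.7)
By Lemma~\ref{subdivision.33}, the condition $(*)$ reduces to producing $y\in\CH_\sta^p(n+1,r)$ with $\delta_{i,0}^*y=0$ for every $r+1\leq i\leq n+1$ and $\delta^*y=x$. The overall approach is: lift $x$ to a sufficiently refined $\Theta$-fan on which the vanishing hypotheses hold on the nose, invoke the acyclicity provided by Proposition~\ref{ordering.15}, and push the result into the colimit.

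First, I would upgrade the colimit-level vanishing $\delta_{i,\epsilon}^*\bar{x}=0\in\CH_\sta^p(n-1,r)$ to a genuine Chow-cohomology vanishing on some $\Theta_{n-1,r,\bd'}$ with $\bd'\geq\bd$. Enlarging $\bd$ yields a refinement $\Theta_{n,r,\bd'}\to\Theta_{n,r,\bd}$, and Example~\ref{subdivision.37} applied iteratively along the construction of $\Theta$ gives $D_{i,\epsilon}(\Theta_{n,r,\bd'})=\Theta_{n-1,r,\bd'}$. Any $r$-standard subdivision of $(\P^1)^{n-1}$ witnessing the colimit vanishing can, via Proposition~\ref{dist.10}, be dominated by a sequence of star subdivisions of a suitable $\Theta_{n-1,r,\bd'}$, and since star subdivisions of smooth toric varieties are smooth blowups with injective Chow pullback, the vanishing descends to $\CH^p(\Theta_{n-1,r,\bd'})$. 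Choosing $\bd'$ large enough to serve all pairs $(i,\epsilon)$ simultaneously, I may assume $\delta_{i,\epsilon}^*x'=0$ in $\CH^p(\Theta_{n-1,r,\bd'})$ for the pullback $x'\in\CH^p(\Theta_{n,r,\bd'})$.

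Next I would pass to Chow homology. By Proposition~\ref{ordering.22}, $\Theta_{n,r,\bd'}$ admits an admissible ordering, so Poincar\'e duality combined with Proposition~\ref{ordering.3} identifies $x'$ with an element of $\CH_{n-p}^\flat(\Theta_{n,r,\bd'})$; the additional vanishings $\delta_{i,1}^*x'=0$ assemble into $\delta^*x'=0$ in $\CH_{n-p-1}^\flat(\Theta_{n-1,r,\bd'})$. For $1\leq p\leq r$, the base index $r-p$ lies in $[0,r-1]$, so Proposition~\ref{ordering.15} at position $n-r$ furnishes $y'\in\CH_{n-p+1}^\flat(\Theta_{n+1,r,\bd'})$ with $\delta^*y'=x'$. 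A second application of Proposition~\ref{ordering.3} translates $y'$ into a class in $\CH^p(\Theta_{n+1,r,\bd'})$ satisfying $\delta_{i,0}^*y'=0$ for every $r+1\leq i\leq n+1$, and its image $y\in\CH_\sta^p(n+1,r)$ then satisfies the hypothesis of Lemma~\ref{subdivision.33}, concluding the argument in the range $1\leq p\leq r$.

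For $p>r$, Proposition~\ref{ordering.15} does not apply as stated, and I would instead treat $\Theta_{n,r,\bd'}$ as an $(\{r+1,\ldots,n\},\{r+1,\ldots,n\},\{1,\ldots,r\})$-admissible fan and invoke Lemmas~\ref{subdivision.21} and~\ref{subdivision.22} directly: the former produces a divisor-level lift $\widetilde{x}\in\Sym^p Z_{I_0}^1(\Sigma'')$ of $x'$ with the same vanishings after a further admissible subdivision $\Sigma''$, and the latter yields $\widetilde{y}\in\Sym^p Z_{I_0\cup\{t\}}^1(\Sigma''')$ with $\delta_{t,1}^*\widetilde{y}=\widetilde{x}$ from which the required $y$ is obtained. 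I expect the principal obstacle to lie in the cofinality step of the second paragraph rather than in the homological argument itself: converting the colimit hypothesis into a statement on a single $\Theta$-fan requires a careful coordination of Proposition~\ref{dist.10} with the injectivity of Chow pullback along toric blowups, and the choice of $\bd'$ must accommodate all the pairs $(i,\epsilon)$ at once.
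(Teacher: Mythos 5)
Your central argument for $1\leq p\leq r$ is precisely the paper's: identify $x$ with an element of $\CH_{n-p}^\flat(\Theta_{n,r,\bd})$ via Proposition~\ref{ordering.3}, use the acyclicity of Proposition~\ref{ordering.15} to write $x=\delta^*y$ with $y\in\CH_{n+1-p}^\flat(\Theta_{n+1,r,\bd})$, regard $y$ as a class in $\CH^p(\Theta_{n+1,r,\bd})$, and conclude with Lemma~\ref{subdivision.33}.

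The cofinality preamble, which you yourself flag as the fragile part, is not in the paper and is not needed. The transition maps in $\CH_\sta^p=\colim_\Sigma\CH^p(\Sigma_\C)$ are pullbacks along proper birational morphisms between smooth varieties, hence split injective (since $p_*p^*=\id$ by the projection formula); consequently $\delta_{i,\epsilon}^*x=0$ in the colimit is equivalent to $\delta_{i,\epsilon}^*x=0$ already in $\CH^p(D_{i,\epsilon}(\Theta_{n,r,\bd}))$, and there is nothing to ``upgrade.'' Your mechanism for the upgrade is also shaky at $\epsilon=0$: Example~\ref{subdivision.37} identifies $D_{i,1}$ of a $\Theta$-fan with a smaller $\Theta$-fan, but $D_{i,0}(\Theta_{n,r,\bd})=V(e_i)$ is merely some $r$-standard subdivision of $(\P^1)^{n-1}$, not a $\Theta$-fan, so the coordination you propose via Proposition~\ref{dist.10} would not go through as described.

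Your reading of the index range is fair on the face of the text: Proposition~\ref{ordering.15} is stated for $0\leq p\leq r-1$, which, after reindexing (the proposition's index at $\Theta_{n,r,\bd}$ is $r-p$), covers $1\leq p\leq r$ here, and the paper's brief proof does not comment on $p>r$. Your fallback via Lemmas~\ref{subdivision.21} and~\ref{subdivision.22} is sound for every $p>0$: the fan produced by Lemma~\ref{subdivision.22} is an $(\{r+1,\dots,n+1\},\{r+1,\dots,n+1\},\{1,\dots,r\})$-admissible subdivision of $\Theta_{n,r,\bd}\times\P^1$, hence an $r$-standard subdivision of $(\P^1)^{n+1}$, and since the divisor-level relations give $\delta_{i,1}^*y=0$ for $r+1\leq i\leq n$ and $\delta_{n+1,1}^*y=x$ on the nose, it yields $(*)$ directly, without passing through Lemma~\ref{subdivision.33}.
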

\begin{proof}
If $x\in \CH^p(\Theta_{n,r,\bd})$ with $p>0$ satisfies $\delta_{i,\epsilon}^*x=0$ for $r+1\leq i\leq n$ and $\epsilon=0,1$,
then we can regard $x$ as an element of $\CH_{n-p}^\flat(\Theta_{n,r,\bd})$ by Proposition \ref{ordering.3}.
By Proposition \ref{ordering.15},
there exists $y\in \CH_{n+1-p}^\flat(\Theta_{n+1,r,\bd})$ such that $\delta^* y=x$.
We can regard $y$ as an element of $\CH^p(\Theta_{n+1,r,\bd})$.
Lemma \ref{subdivision.33} finishes the proof.
\end{proof}

\begin{lem}
\label{subdivision.11}
Let $\Sigma$ be a very $r$-standard subdivision of $\Gamma_{n,r}$, where $0\leq r\leq n$ are integers,
and let $\sigma$ be a $2$-dimensional cone of $\Sigma$ such that $e_1,\ldots,e_n \notin \sigma$.
Assume also that the star subdivision $\Sigma'$ of $\Sigma$ relative to $\sigma$ is a very $r$-standard subdivision of $(\P^1)^n$,
i.e.,
$\sigma$ is contained in $(-\N)^r\times \Z^{n-r}$.
If $\Sigma$ satisfies $(*)$,
then $\Sigma'$ satisfies $(*)$.
\end{lem}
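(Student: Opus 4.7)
The approach is to produce the required $y$ by combining the hypothesis that $\Sigma$ satisfies $(*)$ with the explicit lifting machinery of Lemmas \ref{subdivision.21} and \ref{subdivision.22}. The division of labor follows the blow-up decomposition induced by the star subdivision $\pi \colon \Sigma' \to \Sigma$ at the $2$-dimensional cone $\sigma$: every class in $\CH^p(\Sigma')$ splits into a $\pi$-pullback part and an exceptional part supported on $V(f)$, where $f$ is the center of $\sigma$. The pullback part will be lifted through the hypothesis $(*)$ for $\Sigma$, while the exceptional part is lifted through the symmetric-cycle construction.

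Concretely, starting from $x \in \CH_\sta^p(n,r)$ coming from a lift $\tilde x \in \CH^p(\Sigma')$, I first use Lemma \ref{admissible.2} iteratively (with $I_0 = I_1 = \{r+1,\ldots,n\}$ and $I_2 = \{1,\ldots,r\}$) to refine $\Sigma'$ to an $r$-standard $\tilde\Sigma$ on which each $\delta_{i,\epsilon}^* \tilde x$ individually vanishes in $\CH^{p-1}(D_{i,\epsilon}(\tilde\Sigma))$, promoting the colimit vanishings to ones at the level of individual Chow groups. Lemma \ref{subdivision.21} then provides an $(I_0,I_1,I_2)$-admissible refinement $\Sigma'' \to \tilde\Sigma$ and a cycle-level lift $\widetilde\xi \in \Sym^p Z^1_{I_0}(\Sigma'')$ whose $\delta^*$-pullbacks all vanish. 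Lemma \ref{subdivision.22} then produces $\widetilde\eta \in \Sym^p Z^1_{I_0 \cup \{t\}}(\Sigma''')$ on an $(I_0 \cup \{t\}, I_1 \cup \{t\}, I_2)$-admissible subdivision $\Sigma'''$ of $\Sigma'' \times \P^{\{t\}}$, with $\delta_{t,1}^* \widetilde\eta = \widetilde\xi$ and the remaining vanishings. Renaming $t$ as $n+1$, $\Sigma'''$ becomes an $r$-standard subdivision of $(\P^1)^{n+1}$, and the image $y'$ of $\widetilde\eta$ in $\CH_\sta^p(n+1,r)$ satisfies $\delta_{n+1,1}^* y' = x$, $\delta_{n+1,0}^* y' = 0$, and $\delta_{i,0}^* y' = 0$ for $r+1 \le i \le n$. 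A final application of Lemma \ref{subdivision.33} upgrades $y'$ to a $y$ satisfying all the vanishings in $(*)$.

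The main obstacle is the first step. Arranging the refinement $\tilde\Sigma \to \Sigma'$ so that all individual vanishings hold simultaneously requires a consistent choice of refinements of the faces $D_{i,\epsilon}(\Sigma')$, and this consistency is precisely what forces us to invoke the hypothesis $(*)$ for $\Sigma$. Via the blow-up decomposition $\CH^p(\Sigma') \cong \pi^*\CH^p(\Sigma) \oplus j_*\pi_E^*\CH^{p-1}(V(\sigma))$, the $\pi^*$-summand of $\tilde x$ is handled through $(*)$ for $\Sigma$, while the exceptional summand, supported on the simpler toric subvariety $V(f)$ (simpler because $e_1,\ldots,e_n \notin \sigma$ forces $\sigma$ into the ``new'' part of the fan), is amenable to the direct construction of Lemmas \ref{subdivision.21}--\ref{subdivision.22}. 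Reconciling these two lifts into a single element of $\CH_\sta^p(n+1,r)$ satisfying all the required vanishings is the delicate technical heart of the argument.
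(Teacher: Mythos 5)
Your high-level picture is right, and matches the paper: pass to the blow-up decomposition of $\CH^p(\Sigma')$, use $(*)$ for $\Sigma$ on the pullback summand, and attack the exceptional summand via the cycle-level lifting machinery. But the ``Concretely'' paragraph contains a step that cannot work as written. You propose to apply Lemma~\ref{subdivision.21} to a refinement $\widetilde\Sigma$ of $\Sigma'$ viewed as an $(I_0,I_1,I_2)$-admissible fan with $I_0=I_1=\{r+1,\ldots,n\}$ and $I_2=\{1,\ldots,r\}$. This is illegal: Lemma~\ref{subdivision.21} explicitly requires $I_0$, $I_1$, $I_2$ to be \emph{disjoint}, and here $I_0=I_1$. (This is not a cosmetic restriction: the inductive step in that lemma modifies the cycle by $u^*\widetilde{x}'-v^*\pi_t^*\delta_{t,\epsilon}^*\widetilde{x}'$ one pair $(t,\epsilon)$ at a time, and if $t$ lies in both $I_0$ and $I_1$ the two corresponding corrections interfere.) Indeed, if Lemma~\ref{subdivision.21} applied directly to subdivisions of $(\P^1)^n$, one could dispense with the hypothesis that $\Sigma$ satisfies $(*)$ altogether, which would short-circuit the entire induction of this section.

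The role of the hypothesis $(*)$ for $\Sigma$ and of the disjointness constraint is precisely what forces the paper's detour through $\Delta'=V(\alpha)$, where $\alpha$ is the center of $\sigma$: Construction~\ref{subdivision.27} shows that for $V(\alpha)$, the natural index sets $I_0=\{i:\Cone(\alpha,e_i)\in\Sigma\}$, $I_1=\{i:\alpha_i=0,\,i>r\}$, $I_2=\{i:\alpha_i=0,\,i\le r\}$ \emph{are} pairwise disjoint (using Lemma~\ref{ordering.12} and very-standardness). One then applies Lemmas~\ref{subdivision.21}--\ref{subdivision.22} to $V(\alpha)$, lifts the resulting subdivision of $V(\alpha)\times\P^1$ to a very $r$-standard subdivision of $\Sigma'\times\P^1$ via Lemma~\ref{subdivision.24} (itself relying on Lemmas~\ref{subdivision.20} and~\ref{ordering.18}), and then propagates the boundary vanishings through the pushforward $\iota'_*$ using the base change formula together with the strict normal crossing statements (Lemmas~\ref{subdivision.8}, \ref{subdivision.9}, \ref{subdivision.10}, \ref{subdivision.28}). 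None of this apparatus appears in your proposal; it is exactly the ``reconciling'' that your last sentence defers, and it is not routine. Two smaller issues: refining $\Sigma'$ to $\widetilde\Sigma$ before using the blow-up formula destroys the identification $\CH^p(\Sigma')\cong\pi^*\CH^p(\Sigma)\oplus j_*\pi_E^*\CH^{p-1}(V(\sigma))$, so the order of operations in your sketch is off; and the final appeal to Lemma~\ref{subdivision.33} is unnecessary once the $\delta^*_{i,1}$-vanishings from Lemma~\ref{subdivision.22} are in hand.
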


We finish its proof at the end of this section.

\begin{lem}
\label{subdivision.35}
Assume \textup{Lemma \ref{subdivision.11}}.
Then \textup{Theorem \ref{intro.2}} holds.
\end{lem}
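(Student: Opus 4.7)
The plan is to derive Theorem \ref{intro.2} from the property $(*)$ of Definition \ref{subdivision.3}, first showing that every $r$-standard subdivision of $(\P^1)^n$ satisfies $(*)$, and then using $(*)$ together with the cubical structure to infer the quasi-isomorphism.

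To establish $(*)$ for an arbitrary $r$-standard subdivision $\Sigma$: Proposition \ref{ordering.26} converts $\Sigma$ into a smooth subdivision $\res(\Sigma)$ of $(\P^1-0)^r\times(\P^1)^{n-r}$ containing $\Cone(e_{r+1},\ldots,e_n)$. Applying Proposition \ref{dist.10} to $\res(\Sigma)$ yields integers $\bd=(d_2,\ldots,d_{n-r+1})$ and a sequence of star subdivisions $\Delta_m\to\cdots\to\Delta_0$, where $\Delta_0$ matches $\Theta_{n,r,\bd}$ under Proposition \ref{ordering.26} (possibly after a further $\sd$-step to align the exponents with Definition \ref{subdivision.36}), $\Delta_m$ refines $\res(\Sigma)$, and each step is a star subdivision at a $2$-dimensional cone $\delta_i$ not containing any of $e_{r+1},\ldots,e_n$. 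Transferring back through Proposition \ref{ordering.26} produces a chain $\Theta_m\to\cdots\to\Theta_0=\Theta_{n,r,\bd}$ of $r$-standard subdivisions, each transition performed at a $2$-dimensional cone $\sigma_i$ (identified with $\delta_i$) not containing any of $e_1,\ldots,e_n$ and lying in $(-\N)^r\times\Z^{n-r}$ (since $\delta_i$ sits inside the support $(-\N)^r\times\R^{n-r}$ of $(\P^1-0)^r\times(\P^1)^{n-r}$, ruling out $e_1,\ldots,e_r$ as well). Proposition \ref{ordering.13} gives that $\Theta_0$ is very $r$-standard, and iterated application of Proposition \ref{ordering.18} preserves this at each $\Theta_i$. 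Lemma \ref{subdivision.4} yields $(*)$ for $\Theta_0$, and iterating Lemma \ref{subdivision.11} along the chain propagates $(*)$ to $\Theta_m$. Since $\Theta_m$ refines $\Sigma$, any $x_0\in\CH^q(\Sigma_\C)$ pulls back to a class on $(\Theta_m)_\C$ representing the same element of $\CH_\sta^q(n,r)$, so the lift supplied by $(*)$ for $\Theta_m$ witnesses $(*)$ for $\Sigma$.

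Given $(*)$ for every $r$-standard subdivision, I deduce Theorem \ref{intro.2}. The case $q=0$ is direct: each $\delta_{i,\epsilon}^*$ acts as the identity on $\CH^0\cong\Z$ of a connected toric variety, so $\CH_\sta^{0,\flat}(n,r)=0$ for $n>r$ and $\cong\Z$ for $n=r$, giving the complex $\Z$ concentrated in degree $0$ matching $\CH^0(\C)=\Z$. For $q>0$, I argue acyclicity by induction on $n\geq r$: given $x\in\CH_\sta^{q,\flat}(n,r)$ with $\delta^*x=0$, produce $y\in\CH_\sta^{q,\flat}(n+1,r)$ with $\delta^*y=x$. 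The base case $n=r$ follows immediately from $(*)$ applied to a representative of $x$: the hypothesis of $(*)$ is vacuous since the index range $r+1\leq i\leq n$ is empty, so $(*)$ yields $y'\in\CH_\sta^{q,\flat}(r+1,r)$ with $\delta_{r+1,1}^*y'=x$, and a sign adjustment gives the required $y=-y'$. For the inductive step $n>r$, the cubical relations \eqref{intro.5.1} ensure each $\delta_{i,1}^*x$ lies in $\CH_\sta^{q,\flat}(n-1,r)$, and one adapts the upgrade argument in Lemma \ref{subdivision.33}'s proof -- successively modifying $x$ by boundaries $\delta^*\mu_i^*(\cdots)$ using the relations \eqref{intro.5.3} -- to reduce to the case where $\delta_{i,1}^*x=0$ for every $i$ individually; the strengthened $x$ then meets the full hypothesis of $(*)$, which produces the required $y$.

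The principal obstacle is the inductive step in the second part: upgrading the alternating-sum condition $\delta^*x=0$ to the full face vanishing $\delta_{i,\epsilon}^*x=0$ demanded by $(*)$. This is a cubical normalization argument, structurally parallel to Lemma \ref{subdivision.33}'s proof but performed at the colimit level, relying on the careful interaction between the degeneracies $p_i^*,\mu_i^*$ and the face maps through \eqref{intro.5.2}--\eqref{intro.5.3}; the rest of the proof is essentially bookkeeping once this normalization and the transfer of $(*)$ along refinements are in place.
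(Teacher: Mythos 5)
Your proposal takes essentially the same route as the paper's. The first half — reducing to $\Theta_{n,r,\bd}$ via Propositions \ref{dist.10} and \ref{ordering.26}, then iterating Lemmas \ref{subdivision.4} and \ref{subdivision.11} along the chain of star subdivisions — matches the paper exactly. You are in fact more careful than the paper in explicitly checking (via Propositions \ref{ordering.13} and \ref{ordering.18}) that each intermediate fan in the chain remains a very $r$-standard subdivision of $\Gamma_{n,r}$, which is a hypothesis of Lemma \ref{subdivision.11} that the paper leaves unstated.

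Where the two differ is in how terse the deduction of Theorem \ref{intro.2} from ``every $r$-standard $\Sigma$ satisfies $(*)$'' is made. The paper simply asserts it needs ``$(*)$ for every $x\in\CH_\sta^{p,\flat}(n,r)$'' and moves on; but $(*)$ as defined applies only to elements $x$ with each face $\delta_{i,1}^*x=0$ individually, whereas a cycle in the $\delta^*$-complex is constrained only by the vanishing of the alternating sum. The bridge is a cubical normalization: the inclusion of the normalized subcomplex (where all but the top face $\delta_{i,1}^*$ vanish) into $\CH_\sta^{q,\flat}(\cdot,r)$ is a quasi-isomorphism, using the degeneracies $\mu_i^*$, $p_i^*$ and the relations \eqref{intro.5.2}--\eqref{intro.5.3}, exactly as the paper carries out at the $Z^\flat$-level in Proposition \ref{ordering.14}. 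You correctly identified this gap and sketched the normalization; your phrase ``modifying $x$ by boundaries $\delta^*\mu_i^*(\cdots)$'' is looser than a clean chain-homotopy formulation would be, but the underlying idea is the one both proofs rely on. In short: same approach, with you being more explicit about two points — the preservation of the very-$r$-standard condition and the cubical normalization — that the paper glosses over.
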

\begin{proof}
Let $r\in \N$.
Recall the complex
\[
\cdots
\xrightarrow{\delta^*}
\CH_\sta^{p,\flat}(r+1,r)\xrightarrow{\delta^*}
\CH_\sta^{p,\flat}(r,r).
\]
in Definition \ref{intro.5}.
If $p=0$,
then the complex becomes
\[
\cdots \xrightarrow{0} \Z \xrightarrow{1} \Z \xrightarrow{0} \Z,
\]
which is quasi-isomorphic to $\Z\cong \CH^0(\C)$.

Assume $p>0$.
We need to show that every $x\in \CH_\sta^{p,\flat}(n,r)$ satisfies $(*)$.
We may assume that $x$ is in the image of $\CH^p(\Sigma)\to \CH_\sta^p(n,r)$ for some $r$-standard subdivision $\Sigma$ of $(\P^1)^n$.
By Propositions \ref{dist.10} and \ref{ordering.26},
there exists a sequence of star subdivisions 
\[
\Sigma_m\to \cdots \to \Sigma_0=\Theta_{n,r,\bd}
\]
for some nonempty finite sequence $\bd$ of positive integers such that $\Sigma_m$ is a subdivision of $\Sigma$ and for each $i$,
$\Sigma_{i+1}=\Sigma_i^*(\sigma_i)$ for some $2$-dimensional cone $\sigma_i$ satisfying $e_1,\ldots,e_n\notin \sigma_i$.
Since $\Sigma_m$ is a subdivision of $\Sigma$,
$x$ is in the image of $\CH^p(\Sigma_m)\to \CH_\sta^p(n,r)$.
Hence it suffices to show that $\Sigma_m$ satisfies $(*)$.
This is a consequence of Lemmas \ref{subdivision.4} and \ref{subdivision.11}.
\end{proof}

\S \ref{blow} and \ref{Valpha} are devoted to the proof of Lemma \ref{subdivision.11}.

\subsection{Chow groups of blow-up squares}
\label{blow}

Let $X$ be a smooth separated scheme over a field $k$, and let $Z$ be a codimension $2$ smooth closed subscheme of $X$.
Consider the induced cartesian square of schemes
\[
\begin{tikzcd}
\Bl_Z X\times_X Z\ar[d,"p'"']\ar[r,"i'"]&
\Bl_Z X\ar[d,"p"]
\\
Z\ar[r,"i"]&
X.
\end{tikzcd}
\]
By \cite[Theorem 3.3(b), Proposition 6.7(e)]{Fulton},
for every integer $d\geq 0$,
we have the blow-up formula:
\begin{equation}
\label{subdivision.8.1}
p^*+i_*'p'^*
\colon
\CH_d(X)
\oplus
\CH_{d-1}(Z)
\xrightarrow{\cong}
\CH_d(\Bl_Z X).
\end{equation}

\begin{lem}
\label{subdivision.8}
Under the above notation,
let $Y$ be a codimension $1$ smooth closed subscheme of $X$ that has strict normal crossing with $Z$ in the sense of \cite[Definition 7.2.1]{BPO} and does not contain $Z$ as a closed subscheme.
We set $W:=Y\times_X Z$.
Then the square
\[
\begin{tikzcd}
\CH_d(X)\oplus \CH_{d-1}(Z)\ar[r,"\cong"]\ar[d,"{(f^*, g^*)}"']&
\CH_d(\Bl_Z X)\ar[d,"f'"]
\\
\CH_d(Y)\oplus \CH_{d-1}(W)\ar[r,"\cong"]&
\CH_d(\Bl_W Y)
\end{tikzcd}
\]
commutes for every $d\in \N$,
where the horizontal isomorphisms are obtained by \eqref{subdivision.8.1},
and $f\colon Y\to X$, $g\colon W\to Z$, and $f'\colon \Bl_W Y\to \Bl_Z X$ are the induced morphisms.
\end{lem}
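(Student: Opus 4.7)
The plan is to verify commutativity on each of the two direct summands of the source, using the description of both horizontal isomorphisms supplied by \eqref{subdivision.8.1}. Write $q \colon \Bl_Y W \to Y$ for the blow-up of $Y$ along $W$, $E' := \Bl_Y W \times_Y W$ for its exceptional divisor, and $j' \colon E' \to \Bl_Y W$, $q' \colon E' \to W$ for the corresponding closed immersion and projection, so that the bottom isomorphism reads $q^* + j'_* q'^*$.

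On the summand $\CH_d(X)$, the identity $f'^* p^* = q^* f^*$ follows from the tautological commutative square
\[
\begin{tikzcd}
\Bl_Y W \ar[d,"f'"']\ar[r,"q"]& Y\ar[d,"f"]\\
\Bl_Z X \ar[r,"p"]& X
\end{tikzcd}
\]
coming from the universal property of the strict transform of $Y$ in $\Bl_Z X$, together with functoriality of pullback.

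On the summand $\CH_{d-1}(Z)$, I would first establish the key geometric fact that the square
\[
\begin{tikzcd}
E'\ar[d,"e"']\ar[r,"j'"]& \Bl_Y W\ar[d,"f'"]\\
E\ar[r,"i'"]& \Bl_Z X
\end{tikzcd}
\]
is cartesian and that $\Bl_Y W$ meets $E$ transversally in $\Bl_Z X$, with the induced map $e$ satisfying $p' \circ e = g \circ q'$. Both statements are local on $X$, and by the strict normal crossing assumption on the pair $(Y, Z)$ it suffices to treat the model $X = \A^n$, $Z = V(x_1, x_2)$, $Y = V(x_3)$, where $\Bl_Z X$ admits two explicit affine charts and the verification is mechanical. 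Granted transversality, the standard base-change formula for pullback along a regular closed immersion of a pushforward along a transversally intersecting closed immersion \cite[Theorem 6.2(a)]{Fulton} gives $f'^* i'_* = j'_* e^*$; and compatibility of flat pullback with Gysin pullback applied to the cartesian square relating $p', g, q', e$ gives $e^* p'^* = q'^* g^*$. Combining, $f'^* i'_* p'^* = j'_* q'^* g^*$, which is the desired identity on the second summand.

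The main obstacle is the transversality claim inside $\Bl_Z X$, which is precisely where the strict normal crossing hypothesis is essential; everything else is bookkeeping with base-change and functoriality of standard intersection-theoretic operations.
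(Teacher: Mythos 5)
Your proof is correct and takes essentially the same route as the paper. On the second summand, the paper likewise forms the square with $E' = \Bl_W Y \times_Y W$, $\Bl_W Y$, $E = \Bl_Z X \times_X Z$, $\Bl_Z X$, observes it is cartesian thanks to the SNC hypothesis, and invokes the base change formula (citing Fulton Prop.\ 6.6(3) rather than Thm.\ 6.2(a), but these cover the same transversal case) to obtain $f'^* i'_* = j'_* g'^*$; the remaining identities $f'^* p^* = q^* f^*$ and $g'^* p'^* = q'^* g^*$ are treated as immediate, just as in your step on the first summand and your flat/Gysin compatibility step. You spell out the local-model verification of cartesianness and transversality, which the paper simply asserts, but this is filling in detail rather than a different argument.
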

\begin{proof}
Consider the induced square of schemes
\[
\begin{tikzcd}
\Bl_W Y\times_Y W\ar[d,"g'"']\ar[r,"j'"]&
\Bl_W Y\ar[d,"f'"]
\\
\Bl_Z X\times_X Z\ar[r,"i'"]&
\Bl_Z X,
\end{tikzcd}
\]
which is cartesian since $Y$ has strict normal crossing with $Z$.
We only need to show $f'^*i_*'=j_*'g'^*$,
but this is a consequence of the base change formula (a special case of the excess intersection formula \cite[Proposition 6.6(3)]{Fulton}).
\end{proof}

Next,
we will explain several basic examples of subschemes of toric varieties that have strict normal crossing with some divisors.

\begin{lem}
\label{subdivision.9}
Let $\Sigma$ be a smooth fan in $\Z^n$ with a ray $f$.
If $\sigma$ is a $2$-dimensional cone of $\Sigma$ such that $f\notin \sigma$,
then the divisor $V(f)_{\C}$ on $\Sigma_{\C}$ has strict normal crossing with $V(\sigma)_{\C}$.
\end{lem}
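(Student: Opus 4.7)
The plan is to verify strict normal crossing locally on the standard affine toric charts. Let $\tau$ range over $\Sigma_{\max}$; since $V(\sigma)_\C$ is supported on those charts $U_\tau := \Spec \C[\tau^\vee \cap M]$ with $\sigma \prec \tau$, for all other $\tau$ one has $V(\sigma)_\C \cap U_\tau = \emptyset$ and nothing needs to be checked. So I would fix a maximal cone $\tau$ with $\sigma \prec \tau$ and work inside $U_\tau$.

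Next, smoothness of $\Sigma$ gives an isomorphism $U_\tau \cong \A_\C^n$ under which the rays $g_1, \ldots, g_n$ of $\tau$ correspond to a regular system of parameters $x_1, \ldots, x_n$, with $V(g_i)_\C \cap U_\tau = \{x_i = 0\}$. After relabelling, the two rays of $\sigma$ become $g_1$ and $g_2$, so $V(\sigma)_\C \cap U_\tau$ is cut out by $x_1 = x_2 = 0$.

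The hypothesis $f \notin \sigma$ says $f$ is not a ray of $\sigma$. Therefore in the chart $U_\tau$ the divisor $V(f)_\C \cap U_\tau$ is either empty (when $f$ is not a ray of $\tau$, equivalently when $V(f)_\C$ is disjoint from $U_\tau$) or of the form $\{x_i = 0\}$ for some $i \geq 3$. In either case, $V(f)_\C$ and $V(\sigma)_\C$ are cut out locally by members of a common regular system of parameters, which is precisely the strict normal crossing condition of \cite[Definition 7.2.1]{BPO}. In particular $V(f)_\C$ does not contain $V(\sigma)_\C$, which is needed in the application in Lemma \ref{subdivision.8}.

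I do not anticipate any real obstacle: the argument reduces entirely to a direct toric coordinate computation, exploiting that on a smooth fan every affine chart is an affine space whose toric boundary divisors are coordinate hyperplanes. The only minor bookkeeping point is confirming that the BPO notion of strict normal crossing between a smooth divisor and a smooth closed subscheme coincides with the coordinate description used above, which is immediate from the cited definition.
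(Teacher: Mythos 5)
Your proof is correct and follows essentially the same approach as the paper: reduce to the standard affine chart $U_\tau \cong \A^n_\C$ for a maximal cone $\tau$ containing $\sigma$, observe that $V(\sigma)_\C$ and $V(f)_\C$ are cut out by coordinate subspaces in disjoint sets of variables (or that $V(f)_\C$ misses the chart entirely), and conclude. The paper phrases the case split slightly differently — it first checks whether $\Cone(\sigma,f)\in\Sigma$, dispatching the disjoint case globally before localizing — but the substance is identical.
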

\begin{proof}
If $\Cone( \sigma,f)\notin \Sigma$,
then the claim is clear since $V(f)_{\C}\cap V(\sigma)_{\C}=\emptyset$.
Hence assume $\Cone( \sigma,f)\in \Sigma$.
We can work locally on $\Sigma$,
so we may assume $\Sigma=\A^n$, $\sigma=\A^2$, $f=e_n$, and $n\geq 3$,
and then we have $\Sigma_{\C}\cong \Spec(\C[x_1,\ldots,x_n])$.
To conclude,
observe that $V(f)_{\C}$ (resp.\ $V(\sigma)_{\C}$) is the closed subscheme defined by the ideal $(x_n)$ (resp.\ $(x_1,x_2)$).
\end{proof}

\begin{rmk}
We will use Lemma \ref{subdivision.9} for the situation when $\Sigma$ is an $r$-standard subdivision of $(\P^1)^n$ and $f=e_i$ with an integer $r+1\leq i\leq n$.
\end{rmk}

\begin{lem}
\label{subdivision.10}
Let $\Sigma$ be a smooth subdivision of $(\P^1)^n$,
where $n\in \N$.
If $1\leq i\leq n$ and $\sigma$ is a $2$-dimensional cone of $\Sigma$ contained in $\Z^{i-1}\times 0 \times \Z^{n-i}$ such that $\Sigma$ is $i$-admissible in the sense of \textup{Construction \ref{subdivision.2}},
then the divisor $D_{i,1}(\Sigma)_{\C}$ on $\Sigma_{\C}$ is strict normal crossing with $V(\sigma)_{\C}$.
\end{lem}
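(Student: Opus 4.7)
The question is local on $\Sigma_\C$, so I would fix $p\in V(\sigma)_\C$ and a maximal cone $\tau\in\Sigma$ with $\sigma\prec\tau$ and $p\in U_\tau$, where $U_\tau$ denotes the toric affine chart. Since $\Sigma$ is smooth, $\tau=\Cone(f_1,\ldots,f_n)$ for a $\Z$-basis $(f_1,\ldots,f_n)$ of $\Z^n$, and after relabelling $\sigma=\Cone(f_1,f_2)$. Writing $z_j:=\chi^{f_j^\vee}$ for the characters dual to this basis gives $U_\tau\cong \Spec\C[z_1,\ldots,z_n]$ with $V(\sigma)_\C\cap U_\tau=V(z_1,z_2)$.

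The next step is to pin down a local equation of $D_{i,1}(\Sigma)_\C$ inside $U_\tau$. Since $\Sigma$ subdivides $(\P^1)^n$, the cone $\tau$ lies in some $\Cone(\epsilon_1 e_1,\ldots,\epsilon_n e_n)$ with $\epsilon_k\in\{\pm 1\}$, so $\epsilon_i e_i\in \tau^\vee\cap M$ and $\chi^{\epsilon_i e_i}=\prod_{j} z_j^{b_j}$, where $b_j:=\epsilon_i\langle e_i,f_j\rangle\geq 0$. On the torus $T_N\subset \Sigma_\C$ the image of $\delta_{i,1}$ is precisely the subtorus $\{\chi^{e_i}=1\}=\{\chi^{\epsilon_i e_i}=1\}$, whose scheme-theoretic closure in $U_\tau$ is cut out by the single polynomial $g:=\prod_j z_j^{b_j}-1$.

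The hypothesis $\sigma\subset \Z^{i-1}\times 0\times \Z^{n-i}$ forces $b_1=b_2=0$, so $g\in \C[z_3,\ldots,z_n]$. Since $(\langle e_i,f_j\rangle)_j$ is a primitive integer row (it is a row of a matrix in $GL_n(\Z)$, hence a dual basis vector), $\gcd(b_1,\ldots,b_n)=1$, which combined with $b_1=b_2=0$ forces some $b_{j_0}>0$ with $j_0\geq 3$. Consequently $g(0)=-1$, so $V(\sigma)_\C\not\subset D_{i,1}(\Sigma)_\C$, and a direct partial-derivative check shows $V(g)$ is smooth at every point of its vanishing locus. Because the defining ideals of $V(\sigma)_\C$ and $D_{i,1}(\Sigma)_\C$ on $U_\tau$ involve disjoint sets of variables -- $(z_1,z_2)$ versus $g(z_3,\ldots,z_n)$ -- an étale coordinate change near any intersection point that replaces $z_{j_0}$ by $g$ exhibits the pair $(D_{i,1}(\Sigma)_\C,V(\sigma)_\C)$ as $(V(z_{j_0}),V(z_1,z_2))$, which is precisely the strict normal crossing property of \cite[Definition~7.2.1]{BPO}. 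The only non-routine step is the identification of the local equation of $D_{i,1}(\Sigma)_\C$ as the shifted monomial $\prod_j z_j^{b_j}-1$; once that is established, the hypothesis on $\sigma$ decouples the variables and the conclusion follows by inspection.
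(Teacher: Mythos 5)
Your proof is correct, and it reaches the same local target — Zariski/étale charts on which $D_{i,1}(\Sigma)_\C$ and $V(\sigma)_\C$ become coordinate subschemes — but it gets there by a slightly different route than the paper. The paper first uses the $i$-admissibility hypothesis (i.e.\ completeness of $D_{i,1}(\Sigma)$) to observe that $D_{i,1}(\Sigma)_\C$ lies entirely in the open subscheme covered by the charts $U_\tau$ for maximal $\tau$ whose intersection with $\Z^{i-1}\times 0\times\Z^{n-i}$ is $(n-1)$-dimensional; after this reduction one may write $\tau=\Cone(f_1,\ldots,f_n)$ with $f_1,\ldots,f_{n-1}$ in that hyperplane and $\sigma=\Cone(f_1,f_2)$, whence in the resulting Zariski chart $\Spec\C[x_1,\ldots,x_n]$ the local equations are literally $(x_n-1)$ and $(x_1,x_2)$, so no coordinate change at all is needed. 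You instead work in an arbitrary chart $U_\tau$ with $\sigma\prec\tau$, show the local equation of $D_{i,1}(\Sigma)_\C$ is the binomial $\prod_j z_j^{b_j}-1$, and then pass to an étale chart; this is more computation but avoids having to justify the preliminary reduction. Both approaches are sound. Two points in your write-up that deserve a line of justification: (1) identifying $D_{i,1}(\Sigma)_\C\cap U_\tau$ with $V(g)$ uses that $D_{i,1}(\Sigma)_\C$ is the closed subscheme equal to the \emph{scheme-theoretic closure} of the subtorus, which is where the $i$-admissibility/closed-immersion assertion of Construction \ref{subdivision.2} enters, and also that $V(g)$ is reduced and irreducible (both follow from $\gcd(b_j)=1$ since then no $b_{j_0}>0$ vanishes on $V(g)$ and $dg\neq 0$ on $V(g)$); (2) after replacing $z_{j_0}$ by $g$ you should pick $j_0\geq 3$ with $\partial g/\partial z_{j_0}(p)\neq 0$, not an arbitrary $j_0$ with $b_{j_0}>0$, so that the new differentials remain independent at $p$ — you do say this, but the existence of such $j_0$ relies on $g(p)=0$ forcing $z_j(p)\neq 0$ for all $j$ with $b_j>0$, so it is worth making explicit.
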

\begin{proof}
The morphism of fans $D_{i,1}(\Sigma)\to \Sigma$ factors through the subfan of $\Sigma$ consisting of $\tau$ such that $\dim \tau\cap (\Z^{i-1}\times 0 \times \Z^{n-i})=n-1$.
Hence for every open subscheme $U$ of $\Sigma_{\C}$ corresponding to such $\tau$,
we only need to show that $D_{i,1}(\Sigma)_{\C}\cap U$ is strict normal crossing with $V(\sigma)_{\C}\cap U$.

We have $\tau=\Cone( f_1,\ldots,f_n)$ for some $f_1,\ldots,f_{n-1}\in \Z^{i-1}\times 0 \times \Z^{n-i}$ and $f_n\in \Z^n$ such that $\Cone( f_1,f_2) = \sigma$.
Then we have $U\cong \Spec(\C[x_1,\ldots,x_n])$,
$D_{i,1}(\Sigma)_{\C}\cap U\cong \Spec(\C[x_1,\ldots,x_n]/(x_n-1))$, and $V(\sigma)_{\C}\cap U\cong \Spec(\C[x_1,\ldots,x_n]/(x_1,x_2))$.
We conclude using these descriptions.
\end{proof}

\begin{lem}
\label{subdivision.20}
Let $\Sigma$ be a smooth fan,
let $\sigma$ and $\tau$ be its two cones such that $\sigma\cap \tau=0$,
and let $\Sigma'$ be the star subdivision of $\Sigma$ relative to $\tau$.
Consider the cone $\sigma'$ of $\Sigma$ corresponding to $\sigma$.
Then the induced square of schemes
\[
\begin{tikzcd}
V(\sigma')_{\C}\ar[d,"f'"']\ar[r,"i'"]&
\Sigma_{\C}'\ar[d,"f"]
\\
V(\sigma)_{\C}\ar[r,"i"]&
\Sigma_{\C}
\end{tikzcd}
\]
is cartesian.
\end{lem}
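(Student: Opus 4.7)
The plan is to verify cartesianness Zariski-locally on $\Sigma_\C$ by restricting to the affine toric opens $U_\eta$ for $\eta\in\Sigma_{\max}$. Two easy cases come first. If $\sigma$ is not a face of $\eta$, then $V(\sigma)_\C\cap U_\eta=\emptyset$; moreover, any $\eta'\in\Sigma'$ with $\eta'\subset\eta$ cannot have $\sigma$ as a face, for otherwise $\sigma\subset\eta$ would force $\sigma\prec\eta$ since $\Sigma$ is a fan. Hence $V(\sigma')_\C\cap f^{-1}(U_\eta)=\emptyset$ as well, and both sides of the square restrict to the empty scheme. If $\sigma\prec\eta$ but $\tau\not\subset\eta$, then the star subdivision preserves every cone of $\Sigma$ contained in $\eta$, so $f^{-1}(U_\eta)=U_\eta$ and $f$ restricts to the identity there.

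The main case is $\sigma\prec\eta$ with $\tau\subset\eta$. Using that $\Sigma$ is smooth, I would choose coordinates so that $\eta=\Cone(e_1,\ldots,e_n)$, $\tau=\Cone(e_{i_1},\ldots,e_{i_k})$, and $\sigma=\Cone(e_{j_1},\ldots,e_{j_l})$; the hypothesis $\sigma\cap\tau=0$ forces the index sets $\{i_1,\ldots,i_k\}$ and $\{j_1,\ldots,j_l\}$ to be disjoint. Then $U_\eta\cong\Spec\C[x_1,\ldots,x_n]$, and the standard identification of star subdivisions with blowups along toric subvarieties (see \cite{CLStoric}) presents $f^{-1}(U_\eta)\to U_\eta$ as the blowup $\Bl_Z U_\eta\to U_\eta$, where $Z:=V(\tau)_\C\cap U_\eta$ is cut out by $(x_{i_1},\ldots,x_{i_k})$. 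The other subscheme $W:=V(\sigma)_\C\cap U_\eta$ is cut out by $(x_{j_1},\ldots,x_{j_l})$; the disjointness of the two index sets is precisely transversality of $W$ and $Z$, so the standard compatibility of blowups with transversal pullback gives
\[
\Bl_Z U_\eta \times_{U_\eta} W \;\cong\; \Bl_{Z\cap W} W.
\]

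To finish, one identifies $V(\sigma')_\C\cap f^{-1}(U_\eta)$ with $\Bl_{Z\cap W} W$. This restriction is, by definition, the toric variety of the fan in $N(\Sigma)/N_\sigma$ with cones $\ol{\eta'}$ for $\sigma\prec\eta'\in\Sigma'$ with $\eta'\subset\eta$; a direct enumeration --- using that the maximal cones of $\Sigma'$ inside $\eta$ are $\Cone(e_{i_1}+\cdots+e_{i_k},\{e_1,\ldots,e_n\}\setminus\{e_{i_s}\})$ for $s=1,\ldots,k$, and that disjointness of index sets keeps this combinatorial structure intact upon quotienting by $N_\sigma$ --- shows the fan to be the star subdivision of $\A^{n-l}$ (with coordinate rays $\ol{e_i}$ for $i\notin\{j_1,\ldots,j_l\}$) at the cone $\Cone(\ol{e_{i_1}},\ldots,\ol{e_{i_k}})$, which realizes $\Bl_{Z\cap W} W$ by the same identification. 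The main obstacle is essentially bookkeeping: confirming that the toric description of $V(\sigma')$ really does localize and match the blowup scheme-theoretically, which reduces entirely to the disjointness of index sets provided by $\sigma\cap\tau=0$.
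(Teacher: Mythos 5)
Your argument is correct and rests on the same three ingredients as the paper's proof: the identification of a star subdivision with a blowup along an orbit closure, the transversality encoded by $\sigma\cap\tau=0$, and the compatibility of blowups with transversal base change. The difference is one of packaging. The paper argues globally in four lines: it identifies $\Sigma'_\C\cong\Bl_{V(\tau)_\C}\Sigma_\C$ and $V(\sigma')_\C\cong\Bl_{V(\sigma)_\C\cap V(\tau)_\C}V(\sigma)_\C$ outright --- the latter because $V(\sigma')$ is the star subdivision of $V(\sigma)$ at $\ol{\tau}$, where $\sigma\cap\tau=0$ guarantees $\ol{\tau}$ keeps the same dimension --- and then invokes the transversal base change formula $\Bl_W X\times_X Z\cong\Bl_{Z\cap W}Z$ once. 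You instead verify cartesianness Zariski-locally on each chart $U_\eta$, rederiving these identifications in coordinates; this is more self-contained but trades one global identification for a three-case split and some bookkeeping. Two small points on your local version are worth tightening. In the main case you write $U_\eta\cong\Spec\C[x_1,\ldots,x_n]$, which implicitly assumes $\eta$ is full-dimensional; for a lower-dimensional maximal cone the chart acquires harmless torus factors that ride along through the blowup and base change, so this is cosmetic but should be acknowledged. In the second case, $f|_{U_\eta}=\id$ by itself does not finish: one also needs $V(\sigma')_\C\cap U_\eta=V(\sigma)_\C\cap U_\eta$, which does hold because the cones of $\Sigma'$ inside $\eta$ containing $\sigma$ are exactly those of $\Sigma$, but the observation should be made explicit.
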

\begin{proof}
We set $X:=\Sigma_{\C}$, $Z:=V(\sigma)_{\C}$, and $W:=V(\tau)_{\C}$.
Then we have $\Sigma_{\C}'\cong \Bl_W X$ and $V(\sigma')_{\C}\cong \Bl_{Z\cap W}Z$.
Since $\sigma\cap \tau=0$, $Z$ and $W$ are transversal in $X$.
Hence we have $\Bl_W X\times_X Z \cong \Bl_{Z\cap W}Z$,
which shows that the above square is cartesian.
\end{proof}

\begin{lem}
\label{subdivision.28}
Let $\Sigma$ be a fan, and let $\sigma$ be its $2$-dimensional cone with rays $\alpha$ and $\beta$.
Then the induced square of $\C$-realizations
\[
\begin{tikzcd}
V(\sigma)_{\C}\ar[d]\ar[r]&
V(\alpha)_{\C}\ar[d]
\\
V(\beta)_{\C}\ar[r]&
\Sigma_{\C}
\end{tikzcd}
\]
is cartesian.
\end{lem}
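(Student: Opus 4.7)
The plan is to verify the equality $V(\sigma)_{\C}=V(a)_{\C}\times_{\Sigma_{\C}}V(b)_{\C}$ directly as closed subschemes of $\Sigma_{\C}$. Since $V(a)_{\C}$ and $V(b)_{\C}$ are closed in $\Sigma_{\C}$, their fiber product is the scheme-theoretic intersection $V(a)_{\C}\cap V(b)_{\C}$, and the identity can be checked Zariski-locally on the standard affine cover of $\Sigma_{\C}$ by charts $U_\tau:=\Spec(\C[\tau^\vee\cap M(\Sigma)])$ indexed by the maximal cones $\tau$ of $\Sigma$.

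I would then split into two cases. If $\sigma\not\prec \tau$, then since $\sigma=\Cone(a,b)$ is the smallest cone of $\Sigma$ containing $a$ and $b$, at least one of $a$ and $b$ is not a ray of $\tau$; by the orbit-cone correspondence, the corresponding invariant subscheme has empty intersection with $U_\tau$, so both $V(a)_{\C}\cap V(b)_{\C}$ and $V(\sigma)_{\C}$ restrict to the empty scheme over $U_\tau$. If $\sigma\prec \tau$, then $a$ and $b$ are both rays of $\tau$, and the ideal of $V(a)_{\C}$ on $U_\tau$ is generated by the characters $\chi^u$ for $u\in \tau^\vee\cap M(\Sigma)$ with $\langle u,a\rangle>0$, with the analogous description for $V(b)_{\C}$. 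The ideal of the scheme-theoretic intersection is the sum, generated by those $\chi^u$ with $\langle u,a\rangle>0$ or $\langle u,b\rangle>0$; on the other hand, the ideal of $V(\sigma)_{\C}$ on $U_\tau$ is generated by $\chi^u$ with $u\notin \sigma^\bot$. Since $\sigma^\bot=a^\bot\cap b^\bot$ and $u\in \tau^\vee$ already forces $\langle u,a\rangle,\langle u,b\rangle\geq 0$, the two sets of generators coincide.

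The argument relies only on the standard description of torus-invariant closed subschemes of an affine toric variety in terms of characters, together with the orbit-cone correspondence, so I expect no real obstacle. In particular, smoothness of $\Sigma$ is not needed; the statement goes through for an arbitrary fan $\Sigma$, and no transversality input of the kind used in \cref{subdivision.20} is required.
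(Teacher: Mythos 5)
Your proof is correct, and it takes a genuinely different (and more general) route than the paper. The paper's proof reduces to the local model $\Sigma=\A^n$, $a=e_1$, $b=e_2$, and compares the explicit coordinate rings; that reduction implicitly uses smoothness of $\Sigma$, even though the lemma as stated does not assume it (all of the paper's applications of this lemma \emph{are} to smooth fans, so this is harmless there). You instead argue chart by chart over all maximal cones $\tau$, disposing of the charts with $\sigma\not\prec\tau$ via the orbit--cone correspondence (and correctly using the fan axiom to rule out both $a,b\prec\tau$ while $\sigma\not\prec\tau$), and then compare the monomial ideals $I(V(a))+I(V(b))$ and $I(V(\sigma))$ inside $\C[\tau^\vee\cap M]$ using the character description of torus-invariant closed subschemes. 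This buys you the statement for arbitrary (possibly singular, non-simplicial) fans with no extra work, and also makes explicit the easy case that the paper's ``work locally'' glosses over. The only thing worth tightening in your write-up is the justification that if $\sigma\not\prec\tau$ then $a,b$ cannot both be rays of $\tau$: the cleanest phrasing is that if $a,b\in\tau$ then $\sigma=\Cone(a,b)\subset\tau$, and two cones of a fan that are nested are automatically a face pair, so $\sigma\prec\tau$ --- the appeal to minimality of $\sigma$ is slightly misplaced, but the conclusion is right.
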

\begin{proof}
We can work locally on $\Sigma$,
so we may assume $\Sigma=\A^n$, $\alpha=e_1$, and $\beta=e_2$ for some integer $n\geq 2$.
We have the isomorphisms
\begin{gather*}
\Sigma_{\C}\cong \Spec(\C[x_1,\ldots,x_n]),
\text{ }V(\alpha)_{\C}\cong \Spec(\C[x_1,\ldots,x_n]/(x_1)),
\\
V(\beta)_{\C}\cong \Spec(\C[x_1,\ldots,x_n]/(x_2)),
\text{ }V(\sigma)_{\C}\cong \Spec(\C[x_1,\ldots,x_n]/(x_1,x_2)).
\end{gather*}
We conclude from this.
\end{proof}

\subsection{Study of \texorpdfstring{$V(\alpha)$}{V(alpha)}}
\label{Valpha}

Let us first explain how the notion of very standard subdivisions is connected to the notion of admissible subdivisions via $V(\alpha)$ for rays $\alpha$ of a given very standard subdivision.

\begin{const}
\label{subdivision.27}
Let $\Sigma$ be a very $r$-standard subdivision of $\Gamma_{n,r}$,
where $0\leq r\leq n$ are integers,
and let $\alpha:=(\alpha_1,\ldots,\alpha_n)$ be a ray of $\Sigma$ such that $\alpha\neq e_1,\ldots,e_n$ and $\alpha_1,\ldots,\alpha_r\leq 0$.
Consider the sets
\begin{gather*}
I_0:=\{r+1\leq i\leq n:\Cone( \alpha,e_i) \in \Sigma\},
\\
I_1:=\{r+1\leq i\leq n:\alpha_i=0\},
\text{ }
I_2:=\{1\leq i\leq r: \alpha_i=0\}
\end{gather*}
and the quotient map $\ol{(-)}\colon N(\Sigma)\to N(\alpha)$ in \eqref{ordering.5.2}.
Observe that $I_0\cap I_1=\emptyset$ by Lemma \ref{ordering.12}.
We also have $I_0\cap I_2=\emptyset$ and $I_1\cap I_2=\emptyset$.
Since $\Sigma$ is a very $r$-standard subdivision,
there exists no ray $\beta:=(\beta_1,\ldots,\beta_n)$ of $\Sigma$ such that $\beta\neq e_i$ for all $1\leq i\leq n$ and $\beta_i>0$ for some $1\leq i\leq r$.
This implies that we have
\[
I_2=\{1\leq i\leq r:\Cone(\alpha,e_i)\in \Sigma\}.
\]

The lattice $N(\alpha)$ is the quotient of $N(\Sigma)=\Z^n$ under the relation $\alpha_1e_1+\cdots+\alpha_ne_n=0$ or equivalently $\sum_{i\in \{1,\ldots,n\}-(I_1\cup I_2)} \alpha_i e_i=0$.
Since $\Sigma$ is an $r$-standard subdivision,
there exists an integer $r+1\leq u\leq n$
such that $\alpha_u<0$,
which implies $\Cone(\alpha,e_u) \notin \Sigma$ and hence $u\notin I_0$.
We also have $u\notin I_1,I_2$.
Consider the set $B:=\{1,\ldots,n\}-\{u\}$.
Then $\{\ol{e_i}:i\in B\}$ is a basis of $N(\alpha)$,
and we have $I_0,I_1,I_2\subset B$.

Since $\Sigma$ is complete and smooth,
$V(\alpha)$ satisfies the condition (i) in Definition \ref{admissible.1}.
Since $\Sigma$ is a very $r$-standard subdivision and $\Cone(\alpha,e_i)\in \Sigma$ for all $i\in I_0$,
we have $\Cone(\alpha,e_{a_1},\ldots,e_{a_s})\in \Sigma$ if $I_0=\{a_1,\ldots,a_s\}$.
Hence $V(\alpha)$ satisfies the condition (ii) in Definition \ref{admissible.1}.

For $i\in I_1$,
we have $D_{i,1}(V(\alpha))=V(\alpha')$ if $\alpha'$ is the ray of $D_{i,1}(\Sigma)$ corresponding to $\alpha$.
Since $\Sigma$ satisfies the condition (iii) in Definition \ref{admissible.1},
$D_{i,1}(\Sigma)$ is complete.
Hence $V(\alpha')$ is complete too,
so $V(\alpha)$ satisfies the condition (iii) in Definition \ref{admissible.1}.

Let $i\in I_2$.
Assume that $\ol{f}$ is a ray of $V(\alpha)$ whose $i$th coordinate is $>0$.
There exists a ray $f$ of $\Sigma$ whose image in $V(\alpha)$ is $\ol{f}$,
and the $i$th coordinate of $f$ is $>0$.
The condition (iv) in Definition \ref{admissible.1} for $\Sigma$ implies $f=e_i$.
Hence $V(a)$ satisfies the condition (iv) in Definition \ref{admissible.1}.

We have shown that $V(\alpha)$ is an $(I_0,I_1,I_2)$-admissible fan in $\Z^B$.
\end{const}

Suppose that a very standard subdivision $\Sigma$ and its ray $\alpha$ are given.
If we apply Lemmas \ref{subdivision.21} or \ref{subdivision.22} to $V(\alpha)$,
then we get an admissible subdivision $\Delta$ of $V(\alpha)$ or $V(\alpha)\times \P^1$.
The next result allows us to find a subdivision $\Sigma'$ of $\Sigma$ or $\Sigma\times \P^1$ such that $\Sigma_{\C}'$ contains $\Delta_{\C}$ as a closed subscheme.

\begin{lem}
\label{subdivision.24}
Let $\Sigma$ be a very $r$-standard subdivision of $(\P^1)^n$,
where $0\leq r\leq n$ are integers,
and let $\alpha$ be a ray of $\Sigma$ such that $\alpha \neq e_1,\ldots,e_n$.
Consider $I_0$, $I_1$, and $I_2$ for $V(\alpha)$ in \textup{Construction \ref{subdivision.27}},
and let $\Delta$ be an $(I_0,I_1,I_2)$-admissible subdivision of $V(\alpha)$.
Then there exists a very $r$-standard subdivision $\Sigma'$ of $\Sigma$ such that $V(\alpha')$ is a subdivision of $\Delta$ and the induced square of $\C$-realizations
\[
\begin{tikzcd}
V(\alpha')_{\C}\ar[d]\ar[r]&
\Sigma_{\C}'\ar[d]
\\
V(\alpha)_{\C}\ar[r]&
\Sigma_{\C}
\end{tikzcd}
\]
is cartesian,
where $\alpha'$ is the ray of $\Sigma'$ corresponding to $\alpha$.
\end{lem}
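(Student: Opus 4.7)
The plan is to lift the $(I_0,I_1,I_2)$-admissible subdivision $\Delta\to V(\alpha)$ to a sequence of star subdivisions on $\Sigma$ that preserve the very $r$-standard property, using \cref{subdivision.25} together with the correspondence \eqref{ordering.5.1}. First, apply \cref{subdivision.25} to $\Delta\to V(\alpha)$ to obtain a factorization
\[
\Delta_m\to\cdots\to\Delta_0=V(\alpha)
\]
and an $(I_0,I_1,I_2)$-admissible subdivision $\Delta_m\to\Delta$, where each $\Delta_j=\Delta_{j-1}^*(\bar{\sigma}_{j-1})$ for a $2$-dimensional cone $\bar{\sigma}_{j-1}$ satisfying $\bar{e}_i\notin\bar{\sigma}_{j-1}$ for $i\in I_2$ and $\bar{\sigma}_{j-1}\not\subset\Cone(\bar{e}_a:a\in I_0)$.

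Construct $\Sigma_j$ inductively with $\Sigma_0:=\Sigma$. The inductive hypothesis is that $\Sigma_{j-1}$ is a very $r$-standard subdivision of $\Sigma$ still containing $\alpha$ as a ray, that $V(\alpha)$ inside $\Sigma_{j-1}$ equals $\Delta_{j-1}$ under \eqref{ordering.5.1}, and that the rays of $\Sigma_{j-1}$ contained in $\Cone(e_{r+1},\ldots,e_n)$ are exactly $e_{r+1},\ldots,e_n$. The rays $\bar{f}_1,\bar{f}_2$ of $\bar{\sigma}_{j-1}$ correspond to rays $f_1,f_2$ of $\Sigma_{j-1}$ with $\Cone(\alpha,f_k)\in\Sigma_{j-1}$, and $\Cone(f_1,f_2)$ is a face of the smooth $3$-dimensional cone $\Cone(\alpha,f_1,f_2)\in\Sigma_{j-1}$. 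Define $\Sigma_j:=\Sigma_{j-1}^*(\Cone(f_1,f_2))$; its new ray $f_1+f_2$ projects to $\bar{f}_1+\bar{f}_2$, preserving the identification $V(\alpha)=\Delta_j$ in $\Sigma_j$.

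To apply \cref{ordering.18} and conclude that $\Sigma_j$ is very $r$-standard, first verify that $e_i\notin\Cone(f_1,f_2)$ for $1\leq i\leq r$: since $\Sigma_{j-1}$ subdivides $(\P^1)^n$, every cone lies in a single orthant of $\R^n$, so $f_k=e_i$ together with $\Cone(\alpha,e_i)\in\Sigma_{j-1}$ forces $\alpha_i\geq 0$, which combined with $\alpha_i\leq 0$ gives $\alpha_i=0$, i.e., $i\in I_2$; then $\bar{f}_k=\bar{e}_i\in\bar{\sigma}_{j-1}$, a contradiction. Cartesianness of each induced square follows from \cref{subdivision.20} applied to $\sigma=\Cone(\alpha)$ and $\tau=\Cone(f_1,f_2)$ with $\sigma\cap\tau=0$, and composing across all $j$ yields the cartesian square for $\Sigma':=\Sigma_m$ over $\Sigma$, with $V(\alpha')=\Delta_m$ a subdivision of $\Delta$.

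The main obstacle is the second condition $\Cone(f_1,f_2)\not\subset\Cone(e_{r+1},\ldots,e_n)$ at each step. If this inclusion held, the inductive invariant on rays would force $f_k=e_{i_k}$ with $i_k>r$; then, since every maximal cone of $\Sigma_{j-1}$ is contained in a maximal cone of $\Sigma$ (star subdivisions only split maximal cones), the existence of $\Cone(\alpha,e_{i_k})\in\Sigma_{j-1}$ would imply $\Cone(\alpha,e_{i_k})\in\Sigma$, i.e., $i_k\in I_0$; but then $\bar{\sigma}_{j-1}\subset\Cone(\bar{e}_a:a\in I_0)$, contradicting the output of \cref{subdivision.25}. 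The inductive invariant on rays is preserved because $\Cone(f_1,f_2)\not\subset\Cone(e_{r+1},\ldots,e_n)$ ensures the new ray $f_1+f_2$ does not lie in $\Cone(e_{r+1},\ldots,e_n)$ either.
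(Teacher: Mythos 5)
Your proof is correct and follows essentially the same route as the paper: you apply Lemma \ref{subdivision.25} to $\Delta \to V(\alpha)$ to get a tower of star subdivisions by suitable $2$-dimensional cones, lift each star subdivision through the correspondence \eqref{ordering.5.1} to a star subdivision of $\Sigma_{j-1}$ at $\Cone(f_1,f_2)$, check the hypotheses of Proposition \ref{ordering.18} to preserve the very $r$-standard property, and use Lemma \ref{subdivision.20} for cartesianness. The only difference is that you carry an extra inductive invariant (rays of $\Sigma_{j-1}$ contained in $\Cone(e_{r+1},\ldots,e_n)$ are exactly $e_{r+1},\ldots,e_n$), which is already a consequence of condition (ii) of Definition \ref{intro.1} together with smoothness, so it is harmless but redundant.
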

\begin{proof}
After subdividing $\Delta$ further,
by Lemma \ref{subdivision.25},
we may assume that we have a sequence
\[
\Delta=:\Delta_m\to \cdots \to \Delta_0:=V(\alpha)
\]
such that for each $j$, $\Delta_j$ is the star subdivision of $\Delta_{j-1}$ relative to a $2$-dimensional cone $\tau_{j-1}\in \Delta_{j-1}$ such that $e_i\notin \tau_{j-1}$ for $i\in I_2$ and $\tau_{j-1}\not\subset \Cone( e_{a_1},\ldots,e_{a_u})$ if $I_0=\{a_1,\ldots,a_u\}$.

Let us inductively construct a very $r$-standard subdivision $\Sigma_j$ of $\Sigma$ such that $V(\alpha_j)\cong \Delta_j$ and the induced square of $\C$-realizations
\[
\begin{tikzcd}
V(\alpha_j)_{\C}\ar[d]\ar[r]&
(\Sigma_j)_{\C}\ar[d]
\\
V(\alpha)_{\C}\ar[r]&
\Sigma_{\C}
\end{tikzcd}
\]
is cartesian,
where $\alpha_j$ is the ray of $\Sigma_j$ corresponding to $\alpha$.
We set $\Sigma_0:=\Sigma$.
When $\Sigma_{j-1}$ is given,
consider the unique $2$-dimensional cone $\sigma_{j-1}$ of $\Sigma_{j-1}$ such that $\Cone( \alpha,\sigma_{j-1}) \in \Sigma_{j-1}$ and $\ol{\sigma_{j-1}}=\tau_{j-1}$,
see \eqref{ordering.5.2} for $\overline{(-)}$.
If $e_i\in \sigma_{j-1}$ for some $1\leq i\leq r$,
then we have $e_i\in \tau_{j-1}$,
which is a contradiction.
Hence $e_i\notin \sigma_{j-1}$ for all $1\leq i\leq r$.
If $\sigma_{j-1}\subset \Cone( e_{r+1},\ldots,e_n)$,
then $\tau_{j-1}\subset \Cone( e_{a_1},\ldots,e_{a_s})$,
which is a contradiction.
Hence $\sigma_{j-1}\not \subset \Cone( e_{r+1},\ldots,e_n)$.

Let $\Sigma_j$ be the star subdivision of $\Sigma_{j-1}$ relative to $\sigma_{j-1}$,
which is a very $r$-standard subdivision by Proposition \ref{ordering.18}.
Lemma \ref{subdivision.20} implies that the induced square of $\C$-realizations
\[
\begin{tikzcd}
V(\alpha_j)_{\C}\ar[d]\ar[r]&
(\Sigma_j)_{\C}\ar[d]
\\
V(\alpha_{j-1})_{\C}\ar[r]&
(\Sigma_{j-1})_{\C}
\end{tikzcd}
\]
is cartesian.
We conclude by setting $\Sigma':=\Sigma_m$.
\end{proof}

\begin{proof}[Proof of Lemma \ref{subdivision.11}]
Let $\alpha$ be the center of $\sigma$,
which is a ray of $\Sigma'$.
Consider $\Delta:=V(\sigma)$ and $\Delta':=V(\alpha)$.
Assume that an element $x'\in \CH^*(\Sigma')$ satisfies
\[
\delta_{i,\epsilon}^*x'=0
\text{ for $r+1\leq i\leq n$ and $\epsilon=0,1$}
\]
in $\CH_\sta^*(n-1,r)$ and hence in $\CH^*(D_{i,\epsilon}(\Sigma))$ by Proposition \ref{subdivision.38}.
Consider $(x,z)\in \CH^*(\Sigma)\oplus \CH^*(\Delta)$ corresponding to $x'\in \CH^*(\Sigma')$ via the blow-up formula \eqref{subdivision.8.1},
and consider $z':=p^*z$,
where $p\colon \Delta'\to \Delta$ is the induced morphism.
Observe that $\Delta'$ is an $(I_0,I_1,I_2)$-admissible fan,
where $I_0$, $I_1$, and $I_2$ are in Construction \ref{subdivision.27}.

Using Lemmas \ref{subdivision.8}, \ref{subdivision.9}, and \ref{subdivision.10},
we have
\begin{gather}
\label{subdivision.11.1}
\text{$\delta_{i,0}^*z'=0$ for $i\in I_0$ and $\delta_{i,1}^*z'=0$ for $i\in I_1$,}
\\
\delta_{i,\epsilon}^*x=0\text{ for $r+1\leq i\leq n$ and $\epsilon=0,1$.}
\end{gather}
By assumption, $x$ satisfies $(*)$.
Hence we reduce to the case where $x=0$.
Then we have $x'=\iota_*' z'$,
where $\iota'\colon \Delta'\to \Sigma'$ is the induced morphism.

Lemmas \ref{subdivision.21} and \ref{subdivision.22} imply that there exists an $(I_0\amalg \{n+1\},I_1\amalg \{n+1\},I_2)$-admissible subdivision $\Delta_1'$ of $\Delta'\times \P^{\{n+1\}}=\Delta'\times \P^1$ with $z_1'\in \CH^*(\Delta')$ such that
\[
\text{$\delta_{i,0}^*z_1'=0$ for $i\in I_0\amalg \{n+1\}$ and $\delta_{i,1}^*z_1'=0$ for $i\in I_1$}
\]
and $\delta_{n+1,1}^*z_1'$ is equal to the image $z''$ of $z'$ in $\CH^*(D_{n+1,1}(\Delta_1'))$.
By Lemma \ref{subdivision.24},
we may assume that there exists a very $r$-standard subdivision $\Sigma_1'$ of $\Sigma'\times \P^1$ such that $\Delta_1'=V(\alpha_1')$ and the induced square of $\C$-realizations
\[
\begin{tikzcd}
(\Delta_1')_{\C}\ar[d]\ar[r]&
(\Sigma_1')_{\C}\ar[d]
\\
(\Delta'\times \P^1)_{\C}\ar[r]&
(\Sigma'\times \P^1)_{\C}
\end{tikzcd}
\]
is cartesian,
where $\alpha_1'$ is the ray of $\Sigma_1'$ whose first $n$-coordinates correspond to $\alpha'$ and $(n+1)$th coordinate is $0$.
We also have the induced cartesian squares
\[
\begin{tikzcd}
D_{n+1,1}(\Delta_1')_{\C}\ar[r]\ar[d]&
D_{n+1,1}(\Sigma_1')_{\C}\ar[d]
\\
(\Delta_1')_{\C}\ar[r]&
(\Sigma_1')_{\C},
\end{tikzcd}
\text{ }
\begin{tikzcd}
(\Delta'\times \P^1)_{\C}\ar[d]\ar[r]&
(\Sigma'\times \P^1)_{\C}\ar[d]
\\
\Delta_{\C}'\ar[r]&
\Sigma_{\C}',
\end{tikzcd}
\]
where the morphism $(\Sigma'\times \P^1)_{\C}\to \Sigma_{\C}'$ is the projection.
Hence the induced square of the $\C$-realizations
\[
\begin{tikzcd}
D_{n+1,1}(\Delta_1')_\C\ar[d]\ar[r,"\iota_{n+1,1}'"]&
D_{n+1,1}(\Sigma_1')_\C\ar[d]
\\
\Delta_{\C}'\ar[r,"\iota'"]&
\Sigma_{\C}'
\end{tikzcd}
\]
is cartesian.
Together with the base change formula \cite[Proposition 6.6(3)]{Fulton},
we see that $\iota_{n+1,1*}' z''$ is equal to the image of $\iota_*' z'$ in $\CH^*(D_{n+1,1}(\Sigma_1'))$.

Consider the induced cartesian square of $\C$-realizations,
\[
\begin{tikzcd}
D_{i,\epsilon}(\Delta_1')_{\C}\ar[r,"\iota_{i,\epsilon}'"]\ar[d,"\delta_{i,\epsilon}"']&
D_{i,\epsilon}(\Sigma_1')_{\C}\ar[d,"\delta_{i,\epsilon}"]
\\
(\Delta_1')_{\C}\ar[r,"\iota_1'"]&
(\Sigma_1')_{\C},
\end{tikzcd}
\]
where $D_{i,0}(\Delta_1'):=\emptyset$ if $i\in \{r+1,\ldots,n\}-I_0$ and $D_{i,1}(\Delta_1'):=\emptyset$ if $i\in \{r+1,\ldots,n\}-I_1$.
For $\epsilon=0$,
we need \ref{subdivision.28} for the claim that this square is cartesian.
By the base change formula \cite[Proposition 6.6(3)]{Fulton},
we have
\begin{align*}
\delta_{i,0}^*\iota_{1*}'z_1' &=
\left\{
\begin{array}{ll}
\iota_{i,0*}'\delta_{i,0}^*z_1'
&
\text{if }i\in I_0\amalg \{n+1\},
\\
0 & \text{otherwise},
\end{array}
\right.
\\
\delta_{i,1}^*\iota_{1*}'z_1' &=
\left\{
\begin{array}{ll}
\iota_{i,1*}'\delta_{i,1}^*z_1'
&
\text{if }i\in I_1\amalg \{n+1\},
\\
0 & \text{otherwise}.
\end{array}
\right.
\end{align*}
Using the previous computation on $\delta_{i,\epsilon}^*z_1'$,
we have
\[
\delta_{i,0}^* \iota_{1*}'z_1'=0
\text{ for $r+1\leq i\leq n+1$ and }
\delta_{i,1}^* \iota_{1*}'z_1'=0
\text{ for $r+1\leq i\leq n$},
\]
and $\delta_{n+1,1}^*\iota_{1*}'z_1'$ is equal to $\iota_{n+1,1*}' z''$.
We have shown that $\iota_{n+1,1*}' z''$ is equal to the image of $\iota_*' z'$ in $\CH^*(D_{n+1,1}(\Sigma_1'))$.
This implies that $\iota_*' z'=x'$ satisfies $(*)$.
\end{proof}

We complete the proof of Theorem \ref{intro.2} together with Lemma \ref{subdivision.35}.

\subsection*{Acknowledgement}

This research was conducted in the framework of the DFG-funded research training group GRK 2240: \emph{Algebro-Geometric Methods in Algebra, Arithmetic and Topology}.
We are grateful to the referee for the careful reading and constructive suggestions, which led to many improvements of the manuscript.

\bibliography{bib}
\bibliographystyle{siam}

\end{document}